\documentclass[10 pt]{amsart}

\usepackage{amsmath}
\usepackage{amssymb}
\usepackage{amsfonts}
\usepackage{mathrsfs}
\usepackage[OT2,T1]{fontenc}
\usepackage{pst-node}
\usepackage{tikz}
\usetikzlibrary{matrix,arrows}
\usepackage{tikz-cd}
\usepackage{enumerate}

\mathchardef\ordinarycolon\mathcode`\:
\mathcode`\:=\string"8000
\begingroup \catcode`\:=\active
  \gdef:{\mathrel{\mathop\ordinarycolon}}
\endgroup

\DeclareSymbolFont{cyrletters}{OT2}{wncyr}{m}{n}
\DeclareMathSymbol{\Sha}{\mathalpha}{cyrletters}{"58}

\newtheorem{thm}{Theorem}[section]
\newtheorem{prop}[thm]{Proposition}
\newtheorem{lem}[thm]{Lemma}
\newtheorem{cor}[thm]{Corollary}

\newtheorem{defn}[thm]{Definition}

\newtheorem{thm*}{Theorem}

\renewcommand{\bold}{\boldsymbol}
\newcommand{\mf}{\mathfrak}

\renewcommand{\a}{\alpha}

\renewcommand{\c}{\gamma}
\newcommand{\G}{\Gamma}
\renewcommand{\d}{\delta}
\newcommand{\D}{\Delta}

\renewcommand{\l}{\lambda}

\newcommand{\m}{\mu}

\newcommand{\s}{\sigma}
\renewcommand{\t}{\tau}
\newcommand{\w}{\omega}

\newcommand{\p}{\mf{p}}
\newcommand{\q}{\mf{q}}

\newcommand{\mc}{\mathcal}
\newcommand{\mb}{\mathbb}
\newcommand{\mr}{\mathrm}
\newcommand{\ms}{\mathscr}

\newcommand{\leqs}{\leqslant}
\newcommand{\geqs}{\geqslant}
\newcommand{\isom}{\cong}
\newcommand{\sq}{\sqrt}
\renewcommand{\i}{\infty}

\newcommand{\ovl}{\overline}

\renewcommand{\iff}{\Leftrightarrow}

\newcommand{\mto}{\mapsto}
\newcommand{\inj}{\hookrightarrow}

\renewcommand{\sb}{\subset}

\newcommand{\xto}{\xrightarrow}

\renewcommand{\O}{\Omega}

\newcommand{\Q}{\mb{Q}}
\newcommand{\Z}{\mb{Z}}

\newcommand{\C}{\mb{C}}

\renewcommand{\o}{\mc{O}}

\DeclareMathOperator{\Gal}{Gal}

\DeclareMathOperator{\coker}{coker}

\DeclareMathOperator{\Frob}{Frob}
\DeclareMathOperator{\res}{res}

\DeclareMathOperator{\rank}{rank}

\DeclareMathOperator{\ord}{ord}

\DeclareMathOperator{\Hom}{Hom}
\DeclareMathOperator{\End}{End}

\DeclareMathOperator{\N}{N}
\DeclareSymbolFont{cyrletters}{OT2}{wncyr}{m}{n}
\DeclareMathSymbol{\Sha}{\mathalpha}{cyrletters}{"58}

\title[On the main conjecture of Iwasawa theory]
  {On the main conjecture of Iwasawa theory for certain non-cyclotomic $\mathbb{Z}_p$-extensions}
\date{}
\author{YUKAKO KEZUKA}
\thanks{This research was partially supported by the SFB 1085 ``Higher invariants'' at the University of Regensburg, funded by the Deutsche Forschungsgemeinschaft (DFG)}

\begin{document}

\maketitle

\begin{abstract}Let $K=\mathbb{Q}(\sqrt{-q})$, where $q$ is any prime number congruent to $7$ modulo $8$, with ring of integers $\o$ and Hilbert class field $H$. Suppose $p\nmid [H:K]$ is a prime number which splits in $K$, say $p\o=\mathfrak{p}\mathfrak{p}^*$. Let $H_\infty=HK_\infty$ where $K_\infty$ is the unique $\mathbb{Z}_p$-extension of $K$ unramified outside $\mathfrak{p}$. Write $M(H_\infty)$ for the maximal abelian $p$-extension of $H_\infty$ unramified outside the primes above $\p$, and set $X(H_\infty)=\Gal(M(H_\infty)/H_\infty)$. In this paper, we establish the main conjecture of Iwasawa theory for the Iwasawa module $X(H_\infty)$. As a consequence, we have that if $X(H_\infty)=0$, the relevant $L$-values are $\mf{p}$-adic units. In addition, the main conjecture for $X(H_\infty)$ has implications toward (a) the BSD Conjecture for a class of CM elliptic curves; (b) weak $\p$-adic Leopoldt conjecture.
\end{abstract}

\section{Introduction}

The study of the main conjectures of Iwasawa theory, which relates the $p$-adic $L$-functions to the characteristic power series of certain Iwasawa modules, has been very influential in the development of modern number theory, and has been applied to a wide circle of problems in which values of $L$-functions play a key role. It provided, for example, one of the most fruitful approaches to understanding the conjecture of Birch and Swinnerton-Dyer. The strongest general result known in this direction so far is for elliptic curves $E$ defined over an imaginary quadratic field $K$ with complex multiplication by the ring of integers of $K$. For these curves, Rubin showed \cite[Theorem 11.1]{rubin}, by first proving appropriate main conjectures, that if $L(E/K,1)\neq 0$, then the $p$-part of the Birch--Swinnerton-Dyer conjecture holds for all $p$ not dividing the number of roots of unity in $K$. In particular, the prime $p=2$ is always omitted, and in many ways this is the most interesting prime, because the product of Tamagawa factors which appears in the formula of the Birch--Swinnerton-Dyer conjecture can be divisible by a large power of $2$. However, the arguments used in the proof seem very difficult to extend to cover this case, except when $E$ has potential ordinary reduction at the primes above $2$. In this paper, we study a family of quadratic twists of elliptic curves with complex multiplication which are no longer defined over $K$, but over the Hilbert class field $H$ of $K$. We formulate and prove certain main conjectures at primes including $p=2$.

 Let $K=\mb{Q}(\sq{-q})$, where $q$ is a prime congruent to $7$ modulo $8$. Then the discriminant of $K$ is equal to $-q$, so the class number $h$ of $K$ is odd by genus theory. Let $\o$ denote the ring of integers of $K$, and let $H$ be the Hilbert class field of $K$. Let $p$ denote a prime such that $(p,q)=1$ and $p$ splits in $K$, say $p\o=\mf{p}\mf{p}^*$. Write $H_\infty=HK_\infty$ where $K_\infty$ is the unique $\Z_p$-extension of $K$ unramified outside $\mf{p}$, given by global class field theory. Let $\mathscr{G}=\Gal(H_\infty/K)$ and $G=\Gal(H_\infty/K_\infty)$, and assume $(p,h)=1$. Denote by $M(H_\infty)$ the maximal abelian $p$-extension of $H_\infty$ unramified outside the primes of $H_\infty$ above $\p$, and write
\[X(H_\infty)=\mathrm{Gal}(M(H_\infty)/H_\infty).\]
Then $X(H_\infty)$ is a finitely generated torsion module over the Iwasawa algebra $\Z_p[[\ms{G}]]$. Let $\ms{I}$ be the ring of integers of the completion of the maximal unramified extension of $K_\mf{p}$. 
We briefly recall the structure theorem for finitely generated torsion modules over the Iwasawa algebra $\Lambda_\ms{I}(\ms{G})=\ms{I}[[\ms{G}]]$ of $\ms{G}$ with coefficients in $\ms{I}$.  Given a finitely generated torsion $\Lambda_\ms{I}(\ms{G})$-module $M$ and $\chi\in G^*:=\Hom(G,\mathbb{C}_p^\times)$, write $M^\chi$ for the largest submodule of $M$ on which $G$ acts via $\chi$. Since $p\nmid \#(G)$ by assumption, any $\Lambda_\ms{I}(\ms{G})$-module decomposes into the direct sum of its $\chi$-components. Furthermore, $\Lambda_\ms{I}(\ms{G})^\chi$ is (non-canonically) isomorphic to the ring $\ms{I}[[T]]$ of formal power series in indeterminate $T$ with coefficients $\ms{I}$. Thus, the well-known structure theorem for finitely generated torsion $\ms{I}[[T]]$-modules easily implies that there exist elements $f_1,\ldots ,f_r$ of $\Lambda_\ms{I}(\ms{G})$ and pseudo-isomorphisms
$$\oplus_{j=1}^r \Lambda_\ms{I}(\ms{G})/(f_i)\to M \text{\;\;\; and \;\;\;}M\to \oplus_{j=1}^r \Lambda_\ms{I}(\ms{G})/(f_i).$$
The ideal $(\prod_{i=1}^r f_i)\Lambda_\ms{I}(\ms{G})$ is an invariant of $M$ called the characteristic ideal of $M$, and is denoted by $\mathrm{char}(M)$. Furthermore, for every $\chi$, we will denote by $\mathrm{char}\left(M^\chi \right)\sb \Lambda_\ms{I}(\ms{G})^\chi$ the characteristic ideal of the $\Lambda_\ms{I}(\ms{G})^\chi$-module $M^\chi$. Similarly, given a finitely generated torsion $\Z_p[[\ms{G}]]$-module $X$, we will denote by $\mathrm{char}\left(X\right)$ and $\mathrm{char}\left(X^\chi\right)$ the characteristic ideals of $X\hat{\otimes}_{\Z_p}\ms{I}$ and $(X\hat{\otimes}_{\Z_p}\ms{I})^\chi$, respectively. 

In this paper, we shall prove the main conjecture of Iwasawa theory for the extension $H_\infty/H$. We remark that the case $p=2$ is specifically excluded from \cite[Chapter III]{dS} where de Shalit formulates the main conjecture and gives some evidence in favour of it. We construct in Section \ref{section4.2} an $\ms{I}$-valued pseudo-measure $\nu_\mathfrak{p}$ on $\mathscr{G}$ which interpolates relevant Hecke $L$-values. For a precise statement of this result, see Theorem \ref{thm4.2.7}. Define $\bold{\varphi}=I_\ms{I}(\mathscr{G})\nu_\mathfrak{p}$, where $I_\ms{I}(\mathscr{G})$ denotes the augmentation ideal of $\Lambda_\ms{I}(\ms{G})$. We can now state the main result of this paper.

\begin{thm*}[Theorem \ref{mc}, Main Conjecture for $H_\infty/H$]
For every $\chi\in G^*$, we have
\[\mathrm{char}\left(X(H_\infty)^\chi\right)=\bold{\varphi}^\chi.\]
\end{thm*}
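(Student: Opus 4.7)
The plan is to adapt the classical Euler system and class field theory approach of de Shalit \cite[Chapter III]{dS} and Rubin \cite{rubin}, extending it to cover the case $p=2$. The proof splits into three stages: translation to a unit module, one divisibility via elliptic units, and the reverse divisibility via an analytic class number formula argument.

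First, I would translate the statement into one about unit groups. Let $U_\infty$ denote the inverse limit along $H_\infty/H$ of principal semi-local units at the primes above $\mathfrak{p}$, let $\overline{E_\infty}$ be the closure of the diagonal image of global units in $U_\infty$, and let $\overline{C_\infty}\subseteq \overline{E_\infty}$ be the submodule generated by elliptic units. Global class field theory provides an exact sequence of $\Lambda_{\mathscr{I}}(\mathscr{G})$-modules relating $X(H_\infty)\hat\otimes_{\mathbb{Z}_p}\mathscr{I}$ to $U_\infty/\overline{E_\infty}$. Coleman's theorem together with the Coates--Wiles logarithmic derivative identify the image of $\overline{C_\infty}^\chi$ inside $U_\infty^\chi$ with the ideal generated by $\boldsymbol{\varphi}^\chi$. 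The main conjecture thus reduces to the classical index formula
\[\mathrm{char}\bigl((\overline{E_\infty}/\overline{C_\infty})^\chi\bigr) = \mathrm{char}\bigl((U_\infty/\overline{E_\infty})^\chi\bigr)\]
relating elliptic and global units, via the exact sequence for $U_\infty/\overline{C_\infty}^\chi$.

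Second, I would establish one divisibility by running the Euler system machinery on the norm-compatible family of elliptic units defined on the ray class fields $H(\mathfrak{f})$ with $\mathfrak{f}$ coprime to $\mathfrak{p}$. Using Kolyvagin's derivative construction, Chebotarev's theorem, and Rubin's organisation of the argument, I expect to obtain
\[\mathrm{char}\bigl(X(H_\infty)^\chi\bigr) \;\Big|\; \boldsymbol{\varphi}^\chi\]
for every $\chi\in G^*$. Third, for the reverse divisibility I would combine Iwasawa's analytic class number formula, applied layer by layer in $H_\infty/H$, with standard control theorems for $X(H_\infty)$ to match the two characteristic ideals up to units; the odd class number of $K$ and the assumption $(p,h)=1$ remove the usual obstructions from genus theory.

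The principal obstacle is the prime $p=2$, which de Shalit explicitly excludes. Coleman's theory and the $p$-adic logarithm behave subtly at $p=2$; the Kolyvagin derivative construction relies on generators of cyclic quotients of $2$-power order, where sign ambiguities appear; and the comparison $\overline{E_\infty}/\overline{C_\infty}$ is more delicate than in the odd case. The use of the unramified completion $\mathscr{I}$ and of a \emph{pseudo}-measure $\nu_\mathfrak{p}$ rather than a genuine measure are precisely the technical devices introduced to handle these difficulties uniformly, with the augmentation factor in the definition $\boldsymbol{\varphi}=I_{\mathscr{I}}(\mathscr{G})\nu_\mathfrak{p}$ absorbing the pole at the trivial character.
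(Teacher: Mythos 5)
Your high-level strategy matches the paper's: (a) use class field theory and Coleman theory to identify $\mathrm{char}\bigl((U_{H_\infty}/\bar{\mc{C}}_{H_\infty})^\chi\bigr)$ with $\bold{\varphi}^\chi$; (b) run the Euler system of elliptic units to get a one-sided divisibility; (c) close the gap by computing Iwasawa invariants on both sides via the analytic class number formula together with Kronecker's second limit formula. You also correctly identify where the $p=2$ delicacy enters (the pseudo-measure, the augmentation ideal absorbing the pole, sign ambiguities in the Kolyvagin derivative, passage to $\ms{I}$).

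There is, however, a concrete error in your first stage, and a point where the argument's logical structure is not quite what you describe. The reduction is \emph{not} to the formula
\[
\mathrm{char}\bigl((\bar{\mc{E}}_{H_\infty}/\bar{\mc{C}}_{H_\infty})^\chi\bigr) = \mathrm{char}\bigl((U_{H_\infty}/\bar{\mc{E}}_{H_\infty})^\chi\bigr),
\]
which is false in general. From the four-term exact sequence
\[
0\to \bar{\mc{E}}_{H_\infty}/\bar{\mc{C}}_{H_\infty}\to U_{H_\infty}/\bar{\mc{C}}_{H_\infty}\to X(H_\infty)\to A(H_\infty)\to 0
\]
one gets $\mathrm{char}(\bar{\mc{E}}/\bar{\mc{C}})\cdot\mathrm{char}(X) = \mathrm{char}(U/\bar{\mc{C}})\cdot\mathrm{char}(A)$, and since $\mathrm{char}(U/\bar{\mc{C}})=\bold{\varphi}$ the main conjecture is equivalent to
\[
\mathrm{char}\bigl((\bar{\mc{E}}_{H_\infty}/\bar{\mc{C}}_{H_\infty})^\chi\bigr) = \mathrm{char}\bigl(A(H_\infty)^\chi\bigr),
\]
i.e.\ elliptic-units-versus-class-group, not elliptic-units-versus-semilocal-units. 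This is exactly Theorem \ref{thm4.1} in the paper, and it is the formulation the Euler system is designed to attack.

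Second, in your stages (b) and (c) you treat the two divisibilities as if they were proved independently. In fact at $p=2$ the Euler system only yields $\mathrm{char}_\Lambda(A(H_\infty)^\chi)\mid 2^{6k+6}\,\mathrm{char}_\Lambda(\bar{\mc{E}}_{H_\infty}/\bar{\mc{C}}_{H_\infty})^\chi$: the restriction map to $H_n(\bold{\mu}_{2^{\ell+1}})$ has a kernel annihilated by $4$ (Lemma \ref{lem6}), and the argument can only kill $I(\ms{G})$ rather than $I(D_\mf{p})$ when choosing $\lambda$, so an unavoidable power of $2$ remains. This parasitic factor is not removed by a separate reverse divisibility; instead the Coates--Wiles computation of the Iwasawa invariants of $X(H_\infty)$ (Corollary \ref{cor4.1}) and the Kronecker-limit-formula computation of those of the $\p$-adic $L$-function (Theorem \ref{thm5.1}) show the $\mu$- and $\lambda$-invariants of the two sides agree exactly, and it is only \emph{this}, combined with the divisibility, that forces the power of $2$ to vanish and gives equality of characteristic ideals. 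Your write-up should make that logic explicit, since otherwise the $p=2$ case does not close.
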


We end this section by outlining the proof and applications of Theorem \ref{mc}. The proof closely follows the methods of Rubin \cite{rubin}. Later, Gonzalez-Avil\'{e}s extended Rubin's methods to study the case $p=2$ \cite[Theorem 3.5.1]{gon} using quadratic twists of the elliptic curve $X_0(49)$ which is defined over $\Q$ and has complex multiplication by the ring of integers of $\Q(\sqrt{-7})$. In the proof, it is vitally important that $2$ is a potentially ordinary prime for all quadratic twists of $X_0(49)$, since $2$ splits in $\Q(\sqrt{-7})$. Unfortunately, there are no other elliptic curves with complex multiplication defined over $\Q$ for which $2$ is a potentially ordinary prime, since $\Q(\sqrt{-7})$ is the only imaginary quadratic field with class number one in which $2$ splits. We will thus extend their ideas by looking at a family of elliptic curves which are no longer defined over $\Q$ or $K$. In \cite{gr1}, Gross proved the existence of an elliptic curve $A(q)$ defined over the field $J$ of index $2$ in $H$ with complex multiplication by $\o$ and minimal discriminant $-q^3$. In the case $q=7$, we have $A(7)=X_0(49)$. Let $E$ be any quadratic twist of $A(q)$ by a quadratic extension of the form $H(\sq{\lambda})/H$ of discriminant prime to $2q$, $\lambda\in K^\times$. Let $p$ be a prime such that $E$ has good reduction at all places of $H$ above $p$, and $p$ splits in $K$, say $p\o=\mf{p}\mf{p}^*$. The theory of complex multiplication then tells us that the reduction of $E$ is of ordinary type at all primes of $H$ above $p$, a fact which will be important for the arguments of Iwasawa theory to follow. In particular, $p=2$ satisfies these conditions, and we will pay special attention to this case which is quite different in nature. Note that the ideals in $\o$ are in not in general principal since the class number of $K$ is not equal to $1$ if $q>7$. Given a non-zero ideal $\mf{a}$ of $\o$, define $E_\mf{a}=\cap_{\alpha\in \mf{a}} E_\alpha$, where $E_\alpha=\ker \left(E(\bar{F})\xto{\alpha}E(\bar{F}))\right)$. Let $F_n=H(E_{\mf{p}^n})$, and set
\[F_\i=H(E_{\p^\i}), \;\;\; \mf{H}=\Gal(F_\i/H).\]
Let $\o_\mf{p}$ be the ring of integers of $K_\mf{p}=\Q_p$. We make an observation that in the case $q>7$, we do not have $H=K$, so that the formal group $\widehat{E}$ of $E$ at a place $v$ of $H$ where $E$ has good reduction is not in general a Lubin--Tate group of $E$ over $H_v$. We overcome this difficulty by an argument which involves relative Lubin--Tate groups introduced by de~Shalit \cite[Chapter I \S 1]{dS}, showing that there is a canonical isomorphism $\chi_\mf{p}: \mf{H}\to \o_\mf{p}^\times$ given by the action of $\mf{H}$ on $E_{\mf{p}^\infty}$, and
\[\mf{H}=\Delta\times \Gamma,\]
where $\Delta$ is cyclic of order $p-1$ or $2$, if $p>2$ or $p=2$, and $\Gamma$ is isomorphic to $\o_\mf{p}$. 

Write $\bar{\mc{E}}_{H_\infty}$ and $U_{H_\infty}$ for the groups of global units and principal semi-local units defined in Section \ref{section4.4}. Let $A(H_\infty)$ be the projective limit of the $p$-primary part of the ideal class group of $H_n=F_n\cap H_\infty$ with respect to the norm maps, and let $\bar{\mc{C}}_{H_\infty}$ be the group of elliptic units defined in Section \ref{section4.3}.  Global class field theory provides an exact sequence of $\Z_p[[\ms{G}]]$-modules
\begin{equation}\label{eq4.2}
0\to \bar{\mc{E}}_{H_\infty}/\bar{\mc{C}}_{H_\infty}\to U_{H_\infty}/\bar{\mc{C}}_{H_\infty}\to X(H_\infty)\to A(H_\infty)\to 0.
\end{equation} 
We prove in Chapter \ref{ch6} that 
\[\mathrm{char}\left((U_{H_\infty}/\bar{\mc{C}}_{H_\infty})^\chi\right)=\bold{\varphi}^\chi\]
for every $\chi\in G^*$. In Chapter \ref{ch5}, we construct an Euler system of the elliptic units $\bar{\mc{C}}_{H_\infty}$. Note that in \cite[Chapter II]{dS}, de Shalit sketches rather elaborate proofs of the functional equations that are satisfied by the elliptic function $\Theta$ (see \cite[II.2.3]{dS}) in order to study the properties of the elliptic units, referring back to arguments of Robert and Kubert--Lang. In particular, in \cite[Chapter III]{dS} de Shalit takes a $12$th root of $\Theta$ and shows a norm compatibility relation satisfied by the elliptic units, modulo roots of unity. He then uses that $w_\p$, the number of roots of unity in $K$ congruent to $1$ modulo $\p$, is equal to $1$ for $p>2$. In this paper, we will deal with a $12$th root of $\Theta$ from the beginning, and all the functional equations will be proved directly by simple, purely algebraic arguments with rational functions on $E$, in the spirit of the Appendix of \cite{coa}. 

We then use a variant of \v{C}ebotarev's theorem and induction to establish a divisibility relation between the characteristic ideal of $\left(\bar{\mc{E}}_{H_\infty}/\bar{\mc{C}}_{H_\infty}\right)^\chi$ and that of $A(H_\infty)^\chi$ in $\Z_p[[\Gamma]]$. Since the characteristic ideals of a $\Gamma$-module behave well under extension of scalars, this implies the following divisibility relation in $\Lambda_\ms{I}(\Gamma)=\ms{I}[[\Gamma]]$:
\begin{thm*}[Theorem \ref{thm4.1}]
\begin{enumerate}
\item If $p$ is an odd prime, we have
 \[\mathrm{char}\left(X(H_\infty)^\chi\right)\mid \mathrm{char}\left((U_{H_\infty}/\bar{\mc{C}}_{H_\infty})^\chi\right).\]
\item If $p=2$, we have
 \[\mathrm{char}\left(X(H_\infty)^\chi\right)\mid \bold{\pi}^{k} \mathrm{char}\left((U_{H_\infty}/\bar{\mc{C}}_{H_\infty})^\chi\right)\]
for some integer $k\geqs 0$, where $\bold{\pi}$ is a uniformiser of $\ms{I}$.
\end{enumerate}
 \end{thm*}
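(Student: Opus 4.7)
The plan is to convert the stated divisibility into an upper bound on the ideal class group via the four-term exact sequence, and then to obtain that bound via Rubin's Euler system machinery applied to the elliptic units $\bar{\mc{C}}_{H_\infty}$ constructed in Chapter~\ref{ch5}.

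First I would invoke the multiplicativity of characteristic ideals in the exact sequence \eqref{eq4.2}. Decomposing by $\chi\in G^*$ (which is legitimate since $p\nmid\#G$) and using that each term in \eqref{eq4.2} is a finitely generated torsion $\Lambda_\ms{I}(\ms{G})$-module, one gets
\[
\mathrm{char}\bigl((\bar{\mc{E}}_{H_\infty}/\bar{\mc{C}}_{H_\infty})^\chi\bigr)\cdot\mathrm{char}\bigl(X(H_\infty)^\chi\bigr)
=\mathrm{char}\bigl((U_{H_\infty}/\bar{\mc{C}}_{H_\infty})^\chi\bigr)\cdot\mathrm{char}\bigl(A(H_\infty)^\chi\bigr).
\]
Hence the desired divisibility $\mathrm{char}(X(H_\infty)^\chi)\mid \mathrm{char}((U_{H_\infty}/\bar{\mc{C}}_{H_\infty})^\chi)$ (up to a factor $\bold{\pi}^k$ when $p=2$) is equivalent to
\[
\mathrm{char}\bigl(A(H_\infty)^\chi\bigr)\mid \mathrm{char}\bigl((\bar{\mc{E}}_{H_\infty}/\bar{\mc{C}}_{H_\infty})^\chi\bigr)
\]
(again up to the same $\bold{\pi}^k$ when $p=2$). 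So the whole game is to bound the $\chi$-part of the unramified Iwasawa module $A(H_\infty)$ by the $\chi$-part of the index of elliptic units in the global units.

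Second, to obtain this bound I would run the Euler system argument of \cite{rubin} with input the elliptic units of Chapter~\ref{ch5}. Concretely: working one $\chi$ at a time, pick a non-trivial element $c_1\in\bar{\mc{C}}_{H_\infty}^\chi$ and apply Kolyvagin's derivative operator to the Euler system evaluated at squarefree products $\mf{s}=\prod\ell_i$ of auxiliary rational primes. The resulting classes, via Kummer theory and the global duality between $A(H_\infty)^\chi$ and the Selmer-type quotient of units modulo elliptic units, produce annihilators of successive quotients of $A(H_\infty)^\chi$. The inductive step in Rubin's argument uses a variant of \v{C}ebotarev's theorem: given a finite layer $H_n$ and a prescribed finite $\Z_p[\ms{G}]$-module quotient, one must find an auxiliary prime $\ell$ splitting completely in an appropriate finite extension of $H_\infty$ and whose Frobenius projects to a chosen element of a chosen Galois module, so that the local conditions at $\ell$ match the elliptic-unit Euler factor. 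Summing over sufficiently many choices of $\mf{s}$ gives enough relations in the $\chi$-component to force the claimed divisibility in $\Lambda_\ms{I}(\G)$. Characteristic ideals of $\G$-modules commute with $-\hat{\otimes}_{\Z_p}\ms{I}$, so a bound proved first in $\Z_p[[\G]]$ transports to $\Lambda_\ms{I}(\G)$.

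Finally, the $p=2$ case requires the correction term $\bold{\pi}^k$ for the reason emphasized in the introduction: at $p=2$ one has $w_\p=2$ rather than $1$, so the norm-compatibility relations among the elliptic units in Chapter~\ref{ch5} (obtained directly, before taking $12$th roots, from purely algebraic identities of rational functions on $E$) hold only modulo roots of unity at the bottom of the tower, and the corresponding Euler system relations are satisfied only up to a bounded power of the uniformiser $\bold{\pi}$ of $\ms{I}$. Following Gonzalez-Avil\'es \cite{gon}, one tracks this loss throughout the derivative construction and sees that it contributes at most a factor of $\bold{\pi}^k$ for some $k\geqs 0$ depending on $\chi$; this gives statement (2). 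The main obstacle is the \v{C}ebotarev step in the presence of the nontrivial $\Delta$-factor of $\mf{H}$ when $p=2$: choosing auxiliary primes that realize the desired Frobenius on the twisted module $A(H_\infty)^\chi$ while simultaneously seeing the right Euler factors at $\ell$ requires a careful verification that the relevant Galois representations on $E_{\p^n}$ have large enough image, which is the place where the hypothesis that $E$ is a quadratic twist of $A(q)$ with the specified ordinary behaviour at $p$ is really used.
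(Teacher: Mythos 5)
Your overall architecture matches the paper's: use the multiplicativity of characteristic ideals in the four-term exact sequence \eqref{eq4.2} to reduce the claim to the divisibility
$\mathrm{char}(A(H_\infty)^\chi)\mid\mathrm{char}\bigl((\bar{\mc{E}}_{H_\infty}/\bar{\mc{C}}_{H_\infty})^\chi\bigr)$
(up to a power of the uniformiser when $p=2$), and then establish that divisibility by Rubin's Euler system machinery applied to the elliptic units, with the induction over auxiliary primes chosen via \v{C}ebotarev. The paper's proof of Theorem~\ref{thm4.1} is exactly that reduction, followed by an appeal to the already-proved Theorem~\ref{thm9} and Corollary~\ref{cor5}, and your sketch of the derivative-class induction is in the same spirit as Section~\ref{section5.3}.

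However, your explanation of \emph{why} the correction factor $\bold{\pi}^k$ appears at $p=2$ is wrong, and the error is consequential. You attribute it to the norm-compatibility relations for the elliptic units holding only modulo roots of unity because $w_\mf{p}=2$. In fact the paper is explicit that this is precisely the defect it removes: by working with the $12$th root $R_\mf{a}$ from the outset and proving the distribution and norm relations by purely algebraic manipulations with rational functions on $E$, the norm compatibilities in Corollary~\ref{cor1}, Proposition~\ref{prop4.1} and Proposition~\ref{prop13} hold \emph{exactly}, with no ambiguity by roots of unity, for $p=2$ as well as for $p>2$. The Euler system relations \eqref{es2} and \eqref{es3} in the definition are genuine equalities. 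The actual source of $\bold{\pi}^k$ at $p=2$, as stated in the introduction and proved in Lemma~\ref{lem6}, is the failure of injectivity of the Kummer-theoretic restriction map
\[
\res:\; H_n^\times/(H_n^\times)^{p^\ell}\;\longrightarrow\; H_n(\bold{\mu}_{p^{\ell+e}})^\times/\bigl(H_n(\bold{\mu}_{p^{\ell+e}})^\times\bigr)^{p^\ell},
\]
whose kernel is $H^1\bigl(\Gal(H_n(\bold{\mu}_{p^{\ell+e}})/H_n),\bold{\mu}_{p^\ell}\bigr)$. For $p>2$ this group vanishes because $\Gal(\Q(\bold{\mu}_{p^{\ell}})/\Q)$ is cyclic, but for $p=2$ the group $\Gal(\Q(\bold{\mu}_{2^{\ell+1}})/\Q)\simeq\Z/2\times\Z/2^{\ell-1}$ is not cyclic, and one only gets $4\cdot\ker(\res)=0$. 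This loss propagates through Theorem~\ref{thm14} (the $p^{3e}$ and $p^{e}$ factors, and the need $\mf{c}\in p^eI(\ms{G})A(H_n)^\chi$) and accumulates to $2^{6k+6}$ in Theorem~\ref{thm9}(ii). Your final remark that the \v{C}ebotarev difficulty at $p=2$ is about the image of the Galois representation on $E_{\mf{p}^n}$ and "where the Gross-curve hypothesis is really used" also misses the point: the Gross-curve hypothesis enters in setting up the relative Lubin--Tate theory, the $\mf{p}$-adic $L$-function and the elliptic units, while the $p=2$ pathology in the \v{C}ebotarev step is purely the non-cyclicity of the $2$-cyclotomic Galois group and the paucity of roots of unity in $H_n$, not any failure of big image.
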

The uniformizer $\bold{\pi}$ in part (ii) of the above theorem appears due to the fact that the restriction map in Lemma \ref{lem6} is not necessarily injective for $p=2$.

 In Chapter \ref{ch6}, we finish the proof of the main conjecture by showing that $X(H_\infty)$ and $U_{H_\infty}/\bar{\mc{C}}_{H_\infty}$ have the same Iwasawa invariants. We first follow the paper of Coates and Wiles \cite{coa-wil2} to compute the Iwasawa invariants of $X(H_\infty)$, and then compute the Iwasawa invariants of $U_{H_\infty}/\bar{\mc{C}}_{H_\infty}$ using the analytic class number formula and Kronecker's second limit formula. 

As a corollary, Theorem \ref{mc} implies that $X(H_\infty)=0$ if and only if $\bold{\varphi}$ is a unit. It is known that (see \cite[Section 5]{ckl}), when $p=2$, $X(H_\infty)=0$ for all primes $q\equiv 7\bmod 8$ with $q<500$ and $q\neq 431$. Thus Theorem \ref{thm4.2.7} gives us information on the $\mf{p}$-adic valuation of $\frac{L(\ovl{\psi}_{E/H}^k,k)}{\O_\i(E/H)^{k}}$ for positive integers $k$ with $k\equiv 0\bmod \#(\Delta)$, where $\psi_{E/H}$ denote the Gr\"{o}ssencharacter of $E/H$ and $\O_\i(E/H)$ denotes the complex period. In particular, it tells us in this case that $\bold{\varphi}$ is a unit if and only if \mbox{$(\chi_\mf{p}(\gamma)^2 -1)L(\ovl{\psi}_{E/H}^2,2)/\O_\i(E/H)^2$} is a unit at $\mf{p}$, where $\gamma$ is a topological generator of $\Gamma$. It follows that $X(H_\infty)=0$ if and only if $\ord_\mathfrak{p}\left(\frac{L(\ovl{\psi}_{E/H}^2,2)}{\O_\i(E/H)^{2}}\right)=-1$ or $-3$, if $p>2$ or $p=2$. This can be checked numerically in the case $E=X_0(49)$ and $p=2$. Indeed, we can compute using the computer package Magma that $L(\ovl{\psi}_{E/H}^2,2)/\O_\i(E/H)^2=\frac{1}{8}$, and we have $\ord_\mf{p}\left(\chi_\mf{p}(\gamma)^2-1\right)=3$ since $\gamma$ is a topological generator of $\Gamma\simeq 1+4\o_\mf{p}$. 
We remark also that the main conjecture for $H_\infty/H$ is an important step to proving the main conjecture for $F_\infty/F$, which can be used to study the $p$-part of the Birch--Swinnerton-Dyer Conjecture for $E/H$. Furthermore, for $p=2$, the construction of the $\mf{p}$-adic $L$-function in Chapter \ref{ch4} and the computation of the Iwasawa invariants in Chapter \ref{ch6} of this paper are crucial for the proof in  \cite{ckl} that $X(H_\infty)$ is a finitely generated $\mathbb{Z}_p$-module (the case $p>2$ was proven independently by Gillard \cite[Theorem 3.4]{Gi} and Schneps \cite[Theorem IV]{Sch}). This can be applied to prove the weak $\mf{p}$-adic Leopoldt conjecture for certain non-abelian extensions. Recall that the Leopoldt conjecture holds for a number field $F$ if the Leopoldt defect of $F$ vanishes, or equivalently, if the $p$-adic regulator $R_{p}(F)$ of $F$ (see \eqref{regulator} for a definition with respect to $\mf{p}$) does not vanish. For abelian extensions of $K$,  this follows from Baker's theorem \cite{ba} on linear forms in the $p$-adic logarithms of algebraic
numbers, which was proven by Brumer \cite[Theorem 1]{br}. Now, for $p=2$, it can be shown by an argument involving Nakayama's lemma that $X(\mathcal{F}_\infty)$ is a finitely generated $\Z_2$-module for any quadratic extension $\mathcal{F}$ of $H$ and $\mathcal{F}_\infty=\mathcal{F}H_\infty$. This allows us to prove the weak $\mathfrak{p}$-adic Leopoldt conjecture for a class of non-abelian extensions $\mathcal{F}_\infty/K$, which asserts that the $\p$-adic defect of Leopoldt
is always bounded as one goes up the $\Z_p$-extension $\mathcal{F}_\infty/\mathcal{F}$, or equivalently, that  $X(\mathcal{F}_\infty)$ is $\mathbb{Z}_p[[\Gamma]]$-torsion (see \cite[Lemma 14]{coa2}). As a consequence, we obtain that $E(H_\infty)$ and $E(\mathcal{F}_\infty)$ modulo torsion are finitely generated abelian groups. This is discussed further in \cite{ckl}.

\section{The Gross Curves}\label{ch3}

Take $q$ to be any prime number with $q\equiv 7 \bmod 8$. Let $K=\Q(\sq{-q})$, and fix an embedding $K\inj \C$. Let $E$ be an elliptic curve over $\C$ with $\End_\C(E)=\o$, the ring of integers of $K$. Since $K$ has prime discriminant, the class number, which we denote by $h$, is odd. In the case $q=7$, we can take $E$ to be any quadratic twist of the elliptic curve $A=X_0(49)$ with equation
\[A: y^2+xy=x^3-x^2-2x-1.\]
This is the only family of quadratic twists of elliptic curves with complex multiplication defined over $\Q$ for which $2$ is a potentially ordinary prime, since $q=7$ is the only case in which $K$ has class number one. In general, the theory of complex multiplication tells us that the modular invariant $j(\o)$ is a real number which satisfies an irreducible equation of degree $h$ over $K$, and the Hilbert class field $H$ of $K$ is given by $H=K(j(\o))$.  Furthermore, given a rational prime $p$, $E$ has potentially good ordinary reduction at all primes of $H$ above $p$ if and only if $p$ splits in $K$.

From now on, let $p$ be a prime number such that $E$ has good ordinary reduction at all primes of $H$ above $p$, and $p$ splits in $K$, say $p\o=\p\p^*$. We also define $J=\Q(j(\o))$, which satisfies $[H:J]=2$. Then for any prime number $q$ with $q\equiv 7\bmod 8$, Gross showed in \cite[Theorem 12.2.1]{gr1} that there exists an elliptic curve $A(q)$ which is defined over $J$ with $\End_H(E)=\o$. In the simplest case $q=7$, we have $A(7)=X_0(49)$. This is done by constructing a Gr\"{o}ssencharacter $\psi_q$ of $H$. Let $\mf{a}$ be an integral ideal of $H$. Define $\psi_q$ to be the unique  Gr\"{o}ssencharacter with conductor $(\sq{-q})$ such that, if $\mf{a}$ is an integral ideal of $H$ with $(\mf{a},q)=1$, then
\[\psi_q(\mf{a})=\a,\]
where $\a$ is the unique generator of the principal ideal $\mr{N}_{H/K}(\mf{a})$ which is a square in $\o/\sq{-q}\o$. In particular, we have $\s(\psi_q)=\psi_q$ for all $\s\in \Gal(H/\Q)$.
This defines an isogeny class of elliptic curves defined over $H$ with Gr\"{o}ssencharacter $\psi_q$, $j$-invariant equal to $j(\o)$ and complex multiplication by $\o$. Gross showed that we can pick out  in this isogeny class a unique elliptic curve $A(q)$ defined over $J$ with Gr\"{o}ssencharacter $\psi_{A(q)/H}=\psi_q$ such that $\End_H(A(q))=\o$, $j(A(q))=j(\o)$ and the minimal discriminant ideal is equal to $(-q^3)$. In addition, Gross found an explicit equation for $A(q)$ over $J$. We will show this via a slightly different method. Let us consider a generalised Weierstrass equation of $A(q)$ of the form 
\[y^2+a_1xy+a_3y=x^3+a_2x^2+a_4x+a_6\]
with $a_i\in H$. Let $\D(A(q))$ denote the discriminant for this equation. We claim that we can have $a_i\in J$ with $\D(A(q))=-q^3$. Given an integral ideal $\mf{a}$ of $\o$, let $\s_\mf{a}$ denote the image of $\mf{a}$ via the Artin isomorphism from the ideal class group of $K$ to $G=\Gal(H/K)$, and let $\lambda(\mf{a})$ denote the unique isogeny from $A(q)$ to $B=A(q)^{\s_\mf{a}}$ of degree $\mr{N}\mf{a}$ defined over $H$, characterised by 
\[\lambda(\mf{a})(\mf{u})=\s_\mf{a}(\mf{u})\]
for any $\mf{u}\in A(q)[\mf{c}]$ with $(\mf{c}, \mf{a})=1$.
Let $x'$, $y'$ be the coordinates of any generalised Weierstrass equation for $B$, and let $\D(B)$ be the discriminant of this equation. We write
\[\w_{A(q)}=\frac{dx}{2y+a_1x+a_3}, \;\; \w_{B}=\frac{dx'}{2y'+a_1'x'+a_3'}\]
for the N\'{e}ron differentials. Then we see that the value $\Lambda(\mf{a})\in H^\times$ defined by
\[\lambda(\mf{a})^*(\w_{B})=\Lambda(\mf{a})\w_{A(q)}\]
is such that $\D(B)\Lambda(\mf{a})^{12}$ is independent of the choice of Weierstrass equation for $B$. Further, it is shown in \cite[Appendix, Theorem 8]{coa} that there exists a unique $c_{A(q)}(\mf{a})\in H^\times$ such that $c_{A(q)}(\mf{a})$ gives a canonical $12$th root in $H$ of
\[\frac{\D(A(q))^{\deg \lambda(\mf{a})}}{\D(B)\Lambda(\mf{a})^{12}}=\frac{\D(A(q))^{\mathrm{N}\mf{a}-1}}{\Lambda(\mf{a})^{12}}.\]
Taking appropriate values for $\mf{a}$, we see in particular that $\D(A(q))$ has a $6$th root in $H$. By the definition of $j$ (see \cite[\S 1]{gr1}), $j(A(q))$ has a cube root in $H$ and $j(A(q))-1728$ has a square root in $H$. Note that the only roots of unity in $H$ are $\pm1$, so $j(A(q))$ in fact has a cube root in $J$. Now we have the following.

\begin{prop} The curve $A(q)$ has a model over $J$
\begin{align}\label{eq2}y^2=x^3+\frac{mq}{2^4\cdot 3}x-&\frac{nq^2}{2^5\cdot 3^3}\;\;\;\;\text{ where}\\
 m^3=j(A(q)) \;\;\text{ and }\;\;&n^2=\frac{j(A(q))-1728}{-q},\nonumber
\end{align}
with discriminant equal to $-q^3$. Here, we take the positive square root for $n$.
\end{prop}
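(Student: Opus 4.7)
The strategy is to pass to a short Weierstrass form for $A(q)$ over $J$ with discriminant exactly $-q^3$, and then extract the coefficients using the standard identities $c_4 = -48A$, $c_6 = -864B$, $c_4^3 = j\D$, and $c_6^2 = (j-1728)\D$. Gross's theorem provides a generalised Weierstrass model of $A(q)$ over $J$ with discriminant $-q^3$, and since $2, 3 \in J^\times$ the $J$-rational substitutions $y \mapsto y - (a_1 x + a_3)/2$ and $x \mapsto x - a_2/3$ produce a short form $y^2 = x^3 + Ax + B$ with $A, B \in J$ and $\D = -q^3$.

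Setting $j := j(A(q))$ and substituting $\D = -q^3$ into the identities gives $(-48A)^3 = -jq^3$ and $(-864B)^2 = -(j-1728)q^3$. Defining $m := 48A/q \in J$ yields $m^3 = j$, matching the cube root of $j$ in $J$ already established in the preceding discussion via the Galois argument using $[H:J]=2$ and $\mu(H) = \{\pm 1\}$. Hence $A = mq/(2^4 \cdot 3)$. For $B$, rewriting gives $(-864B/q^2)^2 = (j-1728)/(-q)$, which shows $(j-1728)/(-q) \in (J^\times)^2$; writing its positive square root as $n \in J$ produces $B = \mp n q^2/(2^5 \cdot 3^3)$. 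A direct substitution into $\D = -16(4A^3 + 27B^2)$ then verifies $\D = -m^3 q^3/1728 + (j-1728)q^3/1728 = -q^3$, using $m^3 = j$.

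The main obstacle is fixing the sign of $B$. The only $J$-rational changes of coordinates preserving short form are $(x, y) \mapsto (u^2 x, u^3 y)$ with $u \in J^\times$, which scale $B$ by $u^{-6}$. Since $J$ is totally real (as $j(\o)$ is real), $u^{-6} > 0$ in every embedding, so the sign of $B$ is a $J$-isomorphism invariant: the two candidates $y^2 = x^3 + Ax \pm n q^2/(2^5 \cdot 3^3)$ are quadratic twists by $\sq{-1} \notin J$ and hence non-isomorphic over $J$. Pinning down the negative sign amounts to verifying $c_6(A(q)) > 0$ under the chosen embedding $J \inj \R$. I would do this either by tracking the sign through Gross's normalisation of the Gr\"{o}ssencharacter $\psi_q$ (with $\psi_q(\mf{a})$ chosen as a square in $\o/\sq{-q}\o$), or as a cross-check in the base case $q = 7$ where $A(7) = X_0(49)$ has $c_6 = 1323 > 0$.
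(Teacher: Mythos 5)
Your plan is mathematically coherent up to the final step, and the observation that the sign of $c_6$ is a $J$-isomorphism invariant because $J$ is totally real (so the two sign choices are genuinely non-isomorphic, being twists by $-1 \notin (J^\times)^2$) is a correct and nice point. But you never actually determine the sign. You offer two ways you ``would'' do it — tracking through Gross's normalisation of $\psi_q$, or checking the base case $q=7$ — and neither is carried out. The $q=7$ computation is explicitly only a ``cross-check'': it shows the claimed sign is \emph{consistent} with one example, but does not rule out a $q$-dependence of the sign, so it proves nothing. The other route is left as a gesture. This is a genuine gap; the proof is not complete.

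The paper's argument is structured the other way around and avoids this. Rather than short-forming a pre-existing model of $A(q)$ and solving for coefficients, it \emph{writes down} the explicit curve \eqref{eq2} over $J$ (having established $m \in J$ from the cube-root-in-$J$ argument in the preceding discussion, and $n \in J$ because $n^2 = (j(A(q))-1728)/(-q)$ is a positive real and $n \in H$), verifies directly by computation that this curve has $j$-invariant $j(A(q))$ and discriminant $-q^3$, and then invokes Gross, \cite[Proposition 3.5]{gr2}, which supplies a $J$-isomorphism to $A(q)$. That citation is precisely what replaces your unresolved sign determination: Gross's uniqueness result for the minimal model already pins down the correct twist, so there is no residual sign ambiguity to chase. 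You also quietly assume Gross's model over $J$ has discriminant exactly $-q^3$ (not merely minimal discriminant ideal $(-q^3)$); that too is most cleanly absorbed by citing \cite[Proposition 3.5]{gr2} rather than arguing from the minimal model directly. If you keep your backward derivation, you should still close by appealing to Gross's result rather than an empirical check.
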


\begin{proof} The arguments above show that $m\in J$, and $n\in H$. But $j(A(q))-1728$ and $-q$ are both negative, so $n\in J$ as well. An easy computation then shows that indeed the curve defined by equation \eqref{eq2} has discriminant $-q^3$ and $j$--invariant equal to $j(A(q))$. Now, \cite[Proposition 3.5]{gr2} shows that there is an isomorphism over $J$ from this curve to $A(q)$. 
\end{proof}

The coefficients of the \eqref{eq2} are integral in $J$, except perhaps  at $2$ and $3$. It is not known in general how to write a global minimal equation for $A(q)$ over $J$ explicitly for $q>7$, although Gross has shown that it exists over $J$ (see \cite[Proposition 3.2]{gr2}). Using a classical $2$-descent, Gross showed that, for all $q\equiv 7\bmod 8$, we have \cite[Theorem 22.4.1]{gr1}:
\[A(q)(J)=\Z/2\Z, \;\; A(q)(H)=\Z/2\Z\times \Z/2\Z, \;\; \Sha(A(q)/J)(2)^G=0.\]
There is one additional property of the curves $A(q)$ which is important in carrying out arguments of Iwasawa theory for them. Let $A(q)_{\text{tor}}$ denote the torsion subgroup of $A(q)(\ovl{J})$. By the theory of complex multiplication, $H(A(q)_{\text{tor}})$ is an abelian extension of $H$. In fact, more is true since $\psi_{A(q)/H}$ satisfies $\psi_{A(q)/H}=\varphi_{A(q)}\circ \mr{N}_{H/K}$ where $\varphi_{A(q)}$ is a Gr\"{o}ssencharacter of $K$ with conductor $(\sq{-q})$, and a theorem of Shimura \cite[Theorem 7.44 ]{Sh} states that the existence of such a $\varphi_{A(q)}$ is equivalent to $H(A(q)_{\text{tor}})$ being an abelian extension of $K$.

In what follows, we assume $E$ is a quadratic twist of $A(q)$ by a quadratic extension of $H$ of the form $H(\sq{\l})$, where $\l$ is some non-zero element of $K$ and the discriminant of $H(\sq{\l})/H$ is prime to $2q$. Thus, in particular, $E$ has good ordinary reduction at the primes of $H$ above $2$. 

\begin{prop}\label{thm4} We have 
\begin{align*}\psi_{E/H}&=\varphi_{K}\circ \mr{N}_{H/K},
\end{align*}
where $\varphi_{K}$ is a Gr\"{o}ssencharacter of $K$.
 \end{prop}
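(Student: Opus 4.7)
The strategy is to invoke the theorem of Shimura quoted in the discussion just before the proposition: for an elliptic curve $E$ over $H$ with complex multiplication by $\o$, the Grössencharacter $\psi_{E/H}$ factors through $\mr{N}_{H/K}$ if and only if $H(E_{\mr{tor}})$ is abelian over $K$. Since this characterization already took care of $A(q)$, it will suffice to verify the same abelianness property for the twist $E$.

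The plan is as follows. First I would set $L=H(\sq{\lambda})$, so that $E$ becomes isomorphic to $A(q)$ over $L$; fix such an isomorphism $\iota:E_L\xto{\sim} A(q)_L$. Then any $P\in E_{\mr{tor}}(\ovl{H})$ has its image $\iota(P)$ in $A(q)_{\mr{tor}}$ defined over $H(A(q)_{\mr{tor}})$, so $P$ itself is defined over the compositum $H(A(q)_{\mr{tor}})\cdot L$. Consequently
\[H(E_{\mr{tor}})\sbe H(A(q)_{\mr{tor}})(\sq{\lambda}).\]
Next I would argue that the right-hand field is abelian over $K$. We already know from Shimura's criterion applied to $A(q)$ that $H(A(q)_{\mr{tor}})/K$ is abelian (this was the content of the discussion preceding the proposition). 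Meanwhile, $\lambda\in K^\times$, so $K(\sq{\lambda})/K$ is a quadratic, hence abelian, extension of $K$. A compositum of abelian extensions of $K$ inside $\ovl{K}$ is abelian over $K$, so
\[H(A(q)_{\mr{tor}})(\sq{\lambda})=H(A(q)_{\mr{tor}})\cdot K(\sq{\lambda})\]
is abelian over $K$, and therefore so is its subextension $H(E_{\mr{tor}})$.

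Applying Shimura's theorem \cite[Theorem 7.44]{Sh} now yields a Grössencharacter $\varphi_K$ of $K$ with $\psi_{E/H}=\varphi_K\circ\mr{N}_{H/K}$, which is precisely the claim. The only subtle point is the verification that $\iota(E_{\mr{tor}})=A(q)_{\mr{tor}}$ and that $\iota$ is indeed defined over $L=H(\sq{\lambda})$, both of which are immediate from the standard description of quadratic twists, so I do not expect any serious obstacle in executing the argument. The essential input is the hypothesis $\lambda\in K^\times$, which is what lets $\sq{\lambda}$ contribute an abelian (rather than merely solvable) extension over $K$.
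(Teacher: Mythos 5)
Your proof is correct, but it runs in the opposite direction from the paper's argument. The paper proves the factorization directly and explicitly: since $E$ is the quadratic twist of $A(q)$ by the quadratic character $\chi_{\mathcal{M}}$ of $H$ cutting out $\mathcal{M}=H(\sqrt{\lambda})=HM$ (with $M=K(\sqrt{\lambda})$ quadratic over $K$), one has $\psi_{E/H}=\psi_{A(q)/H}\chi_{\mathcal{M}}$, and the key observation is that $\chi_{\mathcal{M}}=\chi_M\circ\mathrm{N}_{H/K}$ by class field theory because $\mathcal{M}$ is the base change of a quadratic extension of $K$. Combined with $\psi_{A(q)/H}=\varphi_{A(q)}\circ\mathrm{N}_{H/K}$, this gives $\varphi_K=\varphi_{A(q)}\chi_M$ explicitly. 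Shimura's theorem is then invoked \emph{after} the proposition to deduce that $H(E_{\mathrm{tor}})/K$ is abelian, as a consequence of the factorization. You instead prove abelianness of $H(E_{\mathrm{tor}})/K$ first (via the containment $H(E_{\mathrm{tor}})\subseteq H(A(q)_{\mathrm{tor}})\cdot K(\sqrt{\lambda})$, with both constituents abelian over $K$), and then use the converse direction of Shimura's criterion to obtain the existence of $\varphi_K$. Both arguments are valid; the paper's approach has the small advantage of producing $\varphi_K$ explicitly, which the author uses later, whereas your route only delivers existence.
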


\begin{proof} We have remarked that $\psi_{A(q)/H}=\varphi_{A(q)}\circ \mr{N}_{H/K}$. Now, $E$ is a twist of $A(q)$ by a quadratic extension $\mc{M}$ of $H$ which we assumed to be of the form $HM$ where $M$ is a quadratic extension of $K$. Let $\chi_\mc{M}$ (resp. $\chi_M$) be the quadratic character of $H$ (resp. $K$) defining $\mc{M}$ (resp $M$). Then we have $\chi_{\mc{M}}=\chi_M\circ \mathrm{N}_{H/K}$ by class field theory. Now, since $\mc{M}/H$ has discriminant prime to $p$, we have $\psi_{E/H}=\psi_{A(q)/H}\chi_\mc{M}$.  It follows that we can take $\varphi_{K}=\varphi_{A(q)}\chi_M$.
\end{proof} 

Applying \cite[Theorem 7.44]{Sh}, we immediately obtain that the field $H(E_{\text{tor}})$ is abelian over $K$. It also follows that $E$ is isogeneous over $H$ to all of its conjugates under $G$, since we have $\psi_{E/H}=\psi_{E^\s/H}$, that is, $E$ and $E^\s$ have isomorphic Galois representations on their Tate modules, and so, they are isogenous over $H$ by Faltings' theorem.

Given an ideal $\mf{b}$ of $\o$ prime to the conductor $\mf{g}$ of the Gr\"{o}ssencharacter $\varphi_K$, let $\sigma_\mf{b}$ be the Artin symbol of $\mf{b}$ for $H/K$. Let
\[\lambda_{E}(\mf{b}): E\to E^{\sigma_\mf{b}}\]
denote the unique $H$-isogeny whose kernel is $E_\mf{b}$, obtained by restricting the Serre--Tate character of the abelian variety $B/K$ \cite[Theorem 10]{ser-tat}, which is the restriction of scalars of $E$ from $H$ to $K$. If $v$ is any place of $H$, we write $H_v$ for the completion of $H$ at $v$, and write $\o_v$ for its ring of integers. In the case $q=7$, we have $H=K$, so that for every place $v$ of $H$ where $E$ has good reduction, the formal group $\widehat{E}$ of $E$ at $v$ is a Lubin--Tate group of $E$ over $H_v$. However, if $q>7$, this is no longer true because $\psi_{E/H}(v)$ will no longer be a local parameter of $H_v$ in general. We first briefly discuss how one handles this situation. 

Let $v$ be any place of $H$ lying above a prime $w$ of $K$ such that $E$ has good reduction at $v$, and let $\s_v\in G$ be the Frobenius at $v$. Let $\lambda_E(v): E\to E^{\s_v}$ denote the unique isogeny induced by the isogeny $\lambda_{E}(w)$. We remark that the isogeny $\lambda_E(v)$ is defined by the same formulae which define the isogeny $\lambda(v): A(q)\to A(q)^{\sigma_v}$. To see this, recall the notations in the proof of Proposition \ref{thm4} and let $\tau$ be the  nontrivial element of $\Gal(\mc{M}/H)$. Then $E(H)$ is isomorphic to the $-1$ eigenspace for the action of $\Gal(\mc{M}/H)$ on $A(q)(\mc{M})$, that is, the points on $A(q)(\mc{M})$ on which $\t$ acts as $-1$. But we have $\lambda(v)(-P)=-\lambda(v)(P)$ since isogeny preserves the group law, and also we clearly have $\chi_\mc{M}(\tau)=-1$. Hence $\lambda(v)$ is independent of twist by $\chi_\mc{M}$. This induces a homomorphism
\[\widehat{\lambda}_E(v): \widehat{E}\to \widehat{E}^{\s_v},\]
 of formal groups of the curves $E$ and $E^{\s_v}$ at $v$, defined over the ring of integers $\o_v$ of $H_v$. Thus, we can view $\widehat{\lambda}_E(v)$ as an element of $\o_v[[t]]$ satisfying
\begin{equation}\label{LT}\widehat{\lambda}_E(v)(t)\equiv \Lambda(v) t \bmod \text{degree } 2, \;\; \widehat{\lambda}_E(v)(t)\equiv t^{q_w}\bmod v,
\end{equation}
where $\Lambda(v)$ is an element of $\o_v$ and $q_w$ denotes the cardinality of the residue field of the restriction $w$ of $v$ to $K$. Now, we can apply $\s_v^i$ for $i=1,\ldots , f_v$, where $f_v$ denotes the residue degree of $v$ in $H/K$, to $\lambda_E(v)$ and $\widehat{\lambda}_E(v)$. Then we see that 
\[\mr{N}_{H_v/K_w}\Lambda(v)=\psi_{E/H}(v),\]
since $\prod_{i=1}^{f_v} \s_v^i \lambda_E(v)$ is the unique element of $\End_H(E)=\o$ which reduces modulo $v$ to the Frobenius endomorphism at $v$. Thus $\widehat{E}$ is not itself a Lubin--Tate group, but $\widehat{E}$ together with the homomorphism $\widehat{\lambda}_E(v): \widehat{E}\to \widehat{E}^{\s_v}$ is a relative Lubin--Tate group, which was studied by de Shalit in \cite[I \S 1]{dS}. The theory of Lubin--Tate groups generalises to relative Lubin--Tate groups, and in particular, we have the following (\cite[Proposition I.1.8]{dS}):

\begin{thm}\label{thm10} Let $v$ be any place of $H$ where $E$ has good reduction, and let $w$ be its restriction to $K$. Then for any $n\geqs 1$, the extension $H_v(E_{w^n})/H_v$ is totally ramified, and its Galois group is isomorphic to $\left(\o/w^n\right)^\times$.
\end{thm}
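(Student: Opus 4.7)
The plan is to reduce Theorem~\ref{thm10} to de~Shalit's structure theorem for relative Lubin--Tate groups (\cite[Proposition I.1.8]{dS}), applied to the pair $(\widehat{E},\widehat{\lambda}_E(v))$ constructed in the paragraphs immediately preceding the theorem statement.

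The first step is to identify the torsion group $E_{w^n}$, viewed inside $E(\bar{H}_v)$, with the $w^n$-torsion of the formal group $\widehat{E}$. Because $E$ has good reduction at $v$ and has complex multiplication by $\o$, the formal group $\widehat{E}$ over $\o_v$ inherits the structure of a formal $\o_w$-module of height one. In particular, every point of $E_{w^n}$ reduces to the identity modulo $v$ and lies in $\widehat{E}(\mathfrak{m}_{\bar{H}_v})$. A comparison of orders, using $|E_{w^n}|=|\o/w^n|=|\widehat{E}[w^n]|$, then gives $E_{w^n}=\widehat{E}[w^n]$ as $\o_w$-modules. The congruence $\widehat{\lambda}_E(v)(t)\equiv t^{q_w}\bmod v$ together with the norm relation $\mr{N}_{H_v/K_w}\Lambda(v)=\psi_{E/H}(v)$ recorded in \eqref{LT} confirms this, as suitably composing iterates of $\widehat{\lambda}_E(v)$ with its $\s_v$-twists recovers multiplication by $\psi_{E/H}(v)$ on $\widehat{E}$, cutting out $E_{w^n}$ level by level.

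Given the identification $E_{w^n}=\widehat{E}[w^n]$, the theorem becomes a direct application of \cite[Proposition I.1.8]{dS}: the field obtained by adjoining to $H_v$ the level-$n$ torsion of a relative Lubin--Tate formal group is a totally ramified extension of $H_v$, whose Galois group is canonically isomorphic, via the natural $\o_w$-module action on the torsion points, to $(\o_w/w^n)^\times=(\o/w^n)^\times$. Combining this with the identification from the previous step yields the theorem.

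The main technical point is the identification $E_{w^n}=\widehat{E}[w^n]$; it rests on the good reduction of $E$ at $v$ and on the CM structure ensuring that $\widehat{E}$ is a height-one formal $\o_w$-module. Once this is in hand, the conclusion is a direct appeal to de~Shalit's relative Lubin--Tate theory, and no further delicate computation is needed.
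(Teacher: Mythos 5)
Your proposal takes essentially the same approach as the paper, which in fact gives no proof of its own: Theorem~\ref{thm10} is stated as an immediate consequence of \cite[Proposition I.1.8]{dS}, after the preceding paragraphs have exhibited $\widehat{E}$ together with $\widehat{\lambda}_E(v)$ as a relative Lubin--Tate group over $\o_v$. Your additional step — identifying $E_{w^n}$ with $\widehat{E}[w^n]$ via good ordinary reduction, the height-one formal $\o_w$-module structure, and a counting argument — is a correct elaboration of what the paper leaves implicit when it invokes de~Shalit's proposition.
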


We define $F_n=H(E_{\mf{p}^n})$, and $F=F_2$ or $F_1$, according as $p=2$ or $p>2$. Set
\[F_\i=H(E_{\p^\i}), \;\;\; \mf{H}=\Gal(F_\i/H).\] 
Then by Theorem \ref{thm10}, we have a character $\chi_\p: \mf{H}\to \o_\p^\times=\Z_p^\times$ giving the action of $\mf{H}$ on $E_{\p^\i}$, which is an isomorphism. We write
 $\mf{H}=\D\times \G$, where $\D=\Gal\left(F/H\right)$ is cyclic of order $2$ or $p-1$ if $p=2$ or $p>2$ and $\G=\Gal\left(F_\i/F\right)$ is isomorphic to $\Z_p$.

\section{Construction of the $\mf{p}$-adic $L$-functions}\label{ch4}
\subsection{Construction of the $\mf{p}$-adic $L$-function for $H_\infty/H$}\label{section4.2}~
\vspace{5 pt}\\
Let $K_\infty$ denote the unique $\mathbb{Z}_p$-extension of $K$ unramified outside $\p$, and let $H_\infty$ denote the composite field $HK_\infty$. Then $H_\infty$ is a subfield of $F_\infty$ such that $H_\infty/H$ is a $\mathbb{Z}_p$-extension, and it is clear that $H_\infty=F_\infty^\Delta$. We now construct the $\mf{p}$-adic $L$-function which interpolates the values of the $L(\ovl{\psi}_{E/H}^k,k)$ for integers $k\geqslant 1$ with $k\equiv 0 \bmod \#(\Delta)$. This gives rise to the $\mf{p}$-adic $L$-function for $H_\infty/H$, and it is an essential ingredient for the main conjecture related to the Birch--Swinnerton-Dyer conjecture for $E/H$ at $p=2$. The construction of the $\mf{p}$-adic $L$-function interpolating the values at $k\equiv 1\bmod \#(\D)$ can be found in \cite{kez}. We will follow the ideas in \cite{coa-gol} which deals with the case $p>2$. The case $p=2$ cannot be found in literature.

Write $x$, $y$ for the coordinates of $E/H$. We fix a generalised global minimal Weierstrass equation for $E$ over $H$, which exists by \cite[Proposition 3.2]{gr2}, to be
\begin{equation}\label{eq4.1}y^2+a_1 x y+a_3 y=x^3+a_2 x^2+a_4 x+a_6.
\end{equation}
Recall that $G$ denotes the Galois group of $H$ over $K$. Then applying $\sigma\in G$ to \eqref{eq4.1} gives a generalised global minimal Weierstrass equation for $E^\sigma/H$.
Let $\omega^\sigma$ be the N\'{e}ron differential on $E^\sigma$, and note that the discriminant of this equation $\Delta(E^\sigma)$ is equal to $(\Delta(E))^\sigma=\D(E)$. 
Let $L$ (resp. $L_\sigma$) be the period lattice of the Neron differential on our global minimal Weierstrass equation for $E$ (resp. $E^\sigma$). Then there exists $\O_\i\in\C^\times$ such that $L=\O_\i\o$. 
 The uniformisation $\Phi: \C/L\xto{\sim} E(\C)$ is accomplished through
\begin{small}
\[\Phi(z,L)=\left(\wp(z,L)-\frac{((a_1)^2+4a_2)}{12}, \frac{1}{2}\left(\wp'(z,L)-a_1\left(\wp(z,L)-\frac{((a_1)^2+4a_2)}{12}\right)-a_3\right)\right).\]
\end{small}

Given a principal ideal $\mf{a}=(\alpha)$ with $\alpha\in\o$ and $(\mf{a},6\mf{f})=1$, define
\[R_\mf{a}(P)=c_E(\mf{a})\prod\limits_U\left(x(P)-x(U)\right)^{-1},\]
where $U$ runs over any set of representatives of $E_\mf{a}\backslash\{\o\}$ modulo $\{\pm 1\}$, and $c_E(\mf{a})$ is an element of $H$ whose $12$th power is equal to $\D(E)^{\mathrm{N}\mf{a}-1}/\Lambda(\mf{a})^{12}$, where $\Lambda(\mf{a})\in H^\times$ satisfies
\[\lambda_E(\mf{a})^*(\omega^{\sigma_\mf{a}})=\Lambda(\mf{a})\omega.\]
  Thus $R_\mf{a}(P)$ is a rational function on $E$ with coefficients in $H$. Let us write $P$ for the generic point on $E^\sigma$ with coordinates $(x, y)$. Applying $\sigma\in G$ to the coefficients of $R_\mf{a}(P)$, we obtain a rational function $R_\mf{a}^\sigma(P)$ on the curve $E^\sigma/H$. For ease of notation, we will work with $E$ but the arguments are identical if we replace this by $E^\sigma$ and consider rational functions on $E^\sigma$ over $H$. 
  
\begin{prop}\label{prop3}  Let $\mf{b}$ be an integral ideal of $K$ with $(\mf{b}, \mf{a})=1$. Then we have
\[R_\mf{a}^{\sigma_\mf{b}}(\lambda_{E}(\mf{b})(P))=\prod\limits_{R\in E_\mf{b}}R_\mf{a}(P \oplus R).\]
\end{prop}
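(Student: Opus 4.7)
The plan is to show that both sides of the claimed identity define rational functions on $E$ with the same divisor, and then to pin down the remaining multiplicative scalar by comparing leading Taylor coefficients at the origin $O\in E$.

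\textit{Matching divisors.} Since $(\mf{a},6\mf{f})=1$, the set $E_\mf{a}$ contains no nontrivial $2$-torsion, so the representatives $U$ genuinely come in $\{\pm 1\}$-pairs and each factor $x(P)-x(U)$ has divisor $(U)+(-U)-2(O)$. Hence
$$\divi(R_\mf{a})=(\mr{N}\mf{a}-1)(O)-\sum_{V\in E_\mf{a}\setminus\{O\}}(V).$$
Translating by $R\in E_\mf{b}$ shifts this divisor by $-R$, and summing over $R\in E_\mf{b}$ (using $-E_\mf{b}=E_\mf{b}$) yields
$$\divi\!\Bigl(\prod_{R\in E_\mf{b}}R_\mf{a}(P\oplus R)\Bigr)=(\mr{N}\mf{a}-1)\sum_{R\in E_\mf{b}}(R)-\sum_{V\in E_\mf{a}\setminus\{O\}}\sum_{R\in E_\mf{b}}(V\oplus R).$$
On the other side, $R_\mf{a}^{\sigma_\mf{b}}$ has the analogous divisor on $E^{\sigma_\mf{b}}$. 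Because $(\mf{a},\mf{b})=1$ we have $E_{\mf{a}\mf{b}}=E_\mf{a}\oplus E_\mf{b}$, so $\lambda_E(\mf{b})$ restricts to an isomorphism $E_\mf{a}\xrightarrow{\sim}E_\mf{a}^{\sigma_\mf{b}}$, and pulling back under the separable isogeny $\lambda_E(\mf{b})$ (with kernel $E_\mf{b}$) reproduces exactly the divisor computed above. Hence the two sides agree up to a nonzero multiplicative scalar.

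\textit{Leading term at $O$.} Let $t$ be the uniformizer at $O$ associated to the N\'eron differential $\omega_E=(1+O(t))dt$, so $x=t^{-2}+O(t^{-1})$ and $(x-x(U))^{-1}=t^{2}(1+O(t))$; hence
$$R_\mf{a}(P)=c_E(\mf{a})\,t^{\mr{N}\mf{a}-1}+O(t^{\mr{N}\mf{a}}).$$
In the right-hand side only the $R=O$ factor vanishes at $P=O$, giving leading coefficient $c_E(\mf{a})\prod_{R\in E_\mf{b}\setminus\{O\}}R_\mf{a}(R)$ (well-defined since $E_\mf{a}\cap E_\mf{b}=\{O\}$). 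For the left-hand side, $\widehat{\lambda}_E(\mf{b})(t)=\Lambda(\mf{b})t+O(t^{2})$, and applying $\sigma_\mf{b}$ to the expansion of $R_\mf{a}$ produces leading coefficient $c_E(\mf{a})^{\sigma_\mf{b}}\Lambda(\mf{b})^{\mr{N}\mf{a}-1}$.

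\textit{The main obstacle.} The proposition thus reduces to the constant identity
$$c_E(\mf{a})^{\sigma_\mf{b}}\,\Lambda(\mf{b})^{\mr{N}\mf{a}-1}=c_E(\mf{a})\prod_{R\in E_\mf{b}\setminus\{O\}}R_\mf{a}(R),$$
which is where the canonicity of the $12$th root $c_E(\mf{a})$ enters essentially. I would verify it by raising both sides to the $12$th power, using the defining relation $c_E(\mf{a})^{12}=\Delta(E)^{\mr{N}\mf{a}-1}/\Lambda(\mf{a})^{12}$ together with the $G$-invariance of $\Delta(E)$ and the cocycle $\Lambda(\mf{a}\mf{b})=\Lambda(\mf{a})^{\sigma_\mf{b}}\Lambda(\mf{b})$ arising from the factorization $\lambda_E(\mf{a}\mf{b})=\lambda_{E^{\sigma_\mf{b}}}(\mf{a})\circ\lambda_E(\mf{b})$; the residual $12$th-root-of-unity ambiguity is then removed by invoking the canonical choice established in \cite[Appendix, Theorem 8]{coa}.
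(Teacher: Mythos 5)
Your proposal is correct and follows essentially the same approach as the paper: show both sides have the same divisor (using that $\lambda_E(\mf{b})$ has kernel $E_\mf{b}$ and is injective on $E_\mf{a}$), then determine the multiplicative constant via the normalization $c_E(\mf{a})$, deferring the precise constant verification to the cited work of Coates. You have simply made explicit the divisor and leading-coefficient computations that the paper leaves to the reader.
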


\begin{proof} Recall that the kernel of $\lambda_{E}(\mf{b})$ is $E_\mf{b}$, and $\lambda_{E}(\mf{b})$  is injective on $E_\mf{a}$ since $(\mf{b},\mf{a})=1$. Hence, the left hand side and the right hand side of the above equation have the same divisor, and 
\[\frac{R_\mf{a}^{\sigma_\mf{b}}(\lambda_{E}(\mf{b})(P))}{\prod\limits_{R\in E_\mf{b}}R_\mf{a}(P \oplus R)}\]
is a non-zero element of $H$. It can be shown, thanks to the unique scaling factor $c_E(\mf{a})$ in our definition of the rational functions, that this constant is equal to $1$. See \cite[Appendix, Theorem 4]{coa} for details.
\end{proof}

Let $k$ be a positive even integer, so that the conductor of $\varphi_K^k$ is $(1)$. We write $P_n$ for a primitive $\p^{n}$-division point of $E$. Note that $R_\mathfrak{a}(P)$ has a zero of order $\mathrm{N}\mathfrak{a}-1$ at $P=\mathcal{O}$, and $R_\mf{a}(P_n)$ is not a unit. To get rid of this zero at $P=\o$, define the index set\\
\begin{small}
 \begin{equation}\label{defi}I=\{(\mf{a}_i,n_i), \;\; i=1,\ldots , r, \;\mf{a}_i=(\alpha_i)\sb \mathcal{O},(\mf{a}_i,6\mf{p})=1, n_i\in\Z \text{ with }\sum\limits_{i=1}^rn_i(\mathrm{N}\mf{a}_i-1)=0\}.
\end{equation}
\end{small}
Given $\mc{D}=(\mf{a}_i,n_i)\in I$, define
\[R_\mc{D}(P)=\prod_{i=1}^rR_{\mf{a}_i}(P)^{n_i}.\]
Then $R_\mc{D}(P)$ has no zero at $P=\o$, and $R_\mc{D}(P_n)$ is a unit, as we will see in Corollary \ref{cor2}. 

Define $G_k(L)=\sum\limits_{w\in L\backslash\{0\}}\frac{1}{w^k}$ for $k\geqs 3$, $G_2(L)=\lim_{s\to 0+}\sum\limits_{w\in L\backslash\{0\}}w^{-2}|w|^{-2s}$ and $G_1(L)=0$.

\begin{prop} Let $\mf{s}$ be an integral ideal of $K$ prime to $\mf{f}$ such that $\sigma_\mf{s}=\sigma$. Then for any $\mc{D}=(\mf{a}_i,n_i)\in I$ and $k\geqs 2$ an even integer, we have
\[\left(\frac{d}{dz}\right)^k\log R_\mc{D}^\sigma(\Phi(z,L_\sigma))|_{z=0}=\sum\limits_{i=1}^r-n_i(k-1)!\frac{\varphi_K^k(\mf{s})}{\Lambda(\mf{s})^k\Omega_\infty^k}\left(\mathrm{N}\mf{a}_i -\alpha_i^k\right)L(\ovl{\varphi}_K^k, \sigma,k)).\]
\end{prop}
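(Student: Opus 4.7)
The plan is to reduce the computation to a Taylor-series calculation involving the Weierstrass sigma function, and then identify the resulting Eisenstein sums with the Hecke $L$-value via the Gr\"ossencharacter $\varphi_K$. First I would use the uniformisation to rewrite $x(P) - x(U) = \wp(z, L_\sigma) - \wp(u, L_\sigma)$ for $U = \Phi(u, L_\sigma)$, and substitute the classical identity
\[\wp(z) - \wp(u) = -\frac{\sigma(z-u)\sigma(z+u)}{\sigma(z)^2\sigma(u)^2}\]
(all lattices $L_\sigma$) to express $R_\mf{a}^\sigma(\Phi(z, L_\sigma))$ as a product of sigma-function quotients indexed by a set of representatives of $E_\mf{a}^\sigma \setminus \{\o\}$ modulo $\pm 1$.

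Next I would take the logarithm and differentiate $k$ times at $z = 0$. The two key inputs are the (regularised) Taylor expansion
\[\log\sigma(z, L) - \log z = -\sum_{k\geqs 2}\frac{G_k(L)}{k}z^k,\]
whose $k = 2$ coefficient requires the Eisenstein-Kronecker definition of $G_2$ given in the paper, together with $(\log\sigma)' = \zeta$ and $\zeta' = -\wp$, which give $(d/dz)^k\log\sigma(z\pm u)|_{z=0} = -\wp^{(k-2)}(u, L_\sigma)$ for even $k$. For a single $\mf{a}$, the $-2\log\sigma(z)$ piece carries a logarithmic singularity of order $(\mr{N}\mf{a} - 1)\log z$ at $z=0$, matching the zero of $R_\mf{a}^\sigma(\Phi(z, L_\sigma))$; these singularities cancel in $R_\mc{D}^\sigma$ by the condition $\sum n_i(\mr{N}\mf{a}_i - 1) = 0$ defining $I$, leaving a holomorphic function whose Taylor coefficients can be read off directly.

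Assembling the pieces, each $\mf{a} = (\alpha)$ in $\mc{D}$ contributes $-(k-1)!(\mr{N}\mf{a} - \alpha^k)G_k(L_\sigma)$, via the identity
\[\sum_{U \in E_\mf{a}^\sigma \setminus \{\o\}}\wp^{(k-2)}(u, L_\sigma) = (k-1)!\left(G_k(\mf{a}^{-1}L_\sigma) - G_k(L_\sigma)\right) = (k-1)!(\alpha^k - 1)G_k(L_\sigma),\]
whose first equality uses the partial-fractions expansion $\wp^{(k-2)}(u) = (k-1)!\sum_{w \in L_\sigma}(u+w)^{-k}$ (regularised for $k = 2$) and whose second uses $\mf{a}^{-1}L_\sigma = \alpha^{-1}L_\sigma$ together with the homogeneity $G_k(\mu L) = \mu^{-k}G_k(L)$. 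To conclude, I would identify $G_k(L_\sigma)$ with the desired $L$-value. Since $\lambda_E(\mf{s})$ has kernel $E_\mf{s}$ and satisfies $\lambda_E(\mf{s})^*\omega^\sigma = \Lambda(\mf{s})\omega$, one has $L_\sigma = \Lambda(\mf{s})\Omega_\infty\mf{s}^{-1}$, so $G_k(L_\sigma) = (\Lambda(\mf{s})\Omega_\infty)^{-k}\sum_{\gamma \in \mf{s}^{-1}\setminus 0}\gamma^{-k}$. Reparametrising integral ideals $\mf{b}$ in the class of $\sigma_\mf{s}$ as $\gamma\mf{s}$, using $\ovl{\varphi}_K(\mf{s})\varphi_K(\mf{s}) = \mr{N}\mf{s}$ and the fact that $\varphi_K^k$ has trivial conductor for even $k$, converts this sum (up to an overall normalising constant) into $\varphi_K(\mf{s})^k L(\ovl{\varphi}_K^k, \sigma, k)$. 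Summing over $\mc{D}$ with weights $n_i$ yields the proposition.

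The main obstacle is the $k = 2$ case: with the standard $\sigma$-function the $z^2$-coefficient of $\log\sigma(z) - \log z$ vanishes identically (cancelled by the exponential factor in the Weierstrass product), so one must pass to the normalised sigma function $\tilde\sigma(z) = \exp(-\tfrac{1}{2}G_2(L)z^2)\sigma(z)$, or equivalently invoke the Eisenstein-Kronecker regularisation directly, to obtain the correct second derivative $-G_2(L_\sigma)$. The sigma identity above is invariant under this modification because the exponential factors cancel symmetrically between numerator and denominator, so no other step is affected. A secondary technical point is that the constant $c_E(\mf{a})$ in the definition of $R_\mf{a}$ is chosen exactly so that the sigma-function identity holds on the nose (cf.\ Proposition \ref{prop3} and the appendix of \cite{coa}), so no stray constants are introduced at any step.
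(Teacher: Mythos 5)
Your proposal is correct and follows the same overall strategy as the paper's proof: express $R_\mc{D}^\sigma(\Phi(z,L_\sigma))$ as a product of (normalised) Weierstrass sigma-function quotients, read off the Taylor coefficients of the logarithm at $z=0$ as Eisenstein sums $G_k(L_\sigma)$, and then identify these with the partial Hecke $L$-values. The difference is one of packaging. The paper goes through Goldstein--Schappacher in two places: it cites their Theorem~1.9 to write $R_\mc{D}^\sigma(\Phi(z,L_\sigma))^2$ directly in terms of the ratios $\Theta^2(z,L_\sigma)^{\mathrm{N}\mf{a}_i}/\Theta^2(z,\alpha_i^{-1}L_\sigma)$, and it cites their Proposition~5.5 for the identity $G_k(L_{\sigma_\mf{b}})=\varphi_K^k(\mf{b})\Lambda(\mf{b})^{-k}\Omega_\infty^{-k}L(\ovl{\varphi}_K^k,\sigma_\mf{b},k)$. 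You instead unpack both of these from scratch: you pass from $\wp(z)-\wp(u)=-\sigma(z-u)\sigma(z+u)/\sigma(z)^2\sigma(u)^2$ to the same sigma-quotient, then use the partial-fractions expansion $\wp^{(k-2)}(u)=(k-1)!\sum_{w}(u+w)^{-k}$ to regroup the inner sums over $E_{\mf{a}}^\sigma\setminus\{\o\}$ into $G_k(\alpha^{-1}L_\sigma)-G_k(L_\sigma)$, and derive the $L$-value identification directly from $L_\sigma=\Lambda(\mf{s})\Omega_\infty\mf{s}^{-1}$ rather than citing it. This yields a more self-contained argument; what it costs is having to handle the conditional convergence at $k=2$ by hand, a point you address correctly by noting that either passing to $\Theta(z,L)=\exp(-G_2(L)z^2/2)\sigma(z,L)$ (as the paper does from the start) or invoking the Eisenstein--Kronecker regularisation of $G_2$ resolves it, with the sigma-quotient identity unaffected because the Gaussian exponentials cancel between numerator and denominator. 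One small bookkeeping remark: when you say ``assembling the pieces, each $\mf{a}=(\alpha)$ contributes $-(k-1)!(\mathrm{N}\mf{a}-\alpha^k)G_k(L_\sigma)$'', this only comes out right once the $(\mathrm{N}\mf{a}-1)\log\sigma(z)$ piece is included via its $\Theta$-regularised Taylor coefficient $-(k-1)!G_k(L_\sigma)$; together with your $(k-1)!(\alpha^k-1)G_k(L_\sigma)$ from the $u$-sum this gives $-(k-1)!(\mathrm{N}\mf{a}-\alpha^k)G_k(L_\sigma)$ as stated, so the arithmetic does close up.
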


\begin{proof}Let $\mf{L}=\mathbb{Z}\omega_1+\mathbb{Z}\omega_2$ be a complex lattice, whose basis is ordered so that $\omega_1/\omega_2$ belongs to the upper half plane. We will modify the Weierstrass $\s$-function slightly, and define
\[\Theta(z,\mf{L})=\exp\left\{-G_2(\mf{L})\frac{z^2}{2}\right\}\s(z,\mf{L}).\]
Recall that for any integer $k\geqs 1$, we can define the Kronecker--Eisenstein series
\[H_k(z,s,\mf{L})=\sum\limits_{w\in \mf{L}}\frac{(\overline{z}+\overline{w})^k}{\:\: |z+w|^{2s}},\]
where the sum in taken over all $w\in \mf{L}$, except $-z$ if $z\in\mf{L}$. This series converges for $\rm{Re}(s)>\frac{k}{2}+1$, and it has analytic continuation to the whole complex $s$-plane. The non-holomorphic Eisenstein series $\mathcal{E}_k^*(z,\mf{L})$ is defined by $\mathcal{E}_k^*(z,L):=H_k(z,k,\mf{L})$.
Furthermore, it is well-known that (see \cite[Corollary 1.7]{gol-sch}) for any $z_0\in \C\backslash \mf{L}$, we have

\begin{equation}\label{eq4.1.7}\frac{d}{dz}\log \Theta(z+z_0,\mf{L})=\ovl{z}_0A(\mf{L})^{-1}+\sum\limits_{k=1}^\infty (-1)^{k-1}\mathcal{E}_k^*(z_0,\mf{L})z^{k-1},
\end{equation}
where $A(\mf{L})=\frac{\omega_1\ovl{\omega_2}-\omega_2\ovl{\omega_1}}{2\pi i}$. By \cite[Theorem 1.9]{gol-sch}, for any principal integral ideal $\mf{a}=(\alpha)$ with $(\mf{a},6\mf{f})=1$, we have
\[\frac{\Theta^2(z,L_\sigma)^{\mathrm{N}\mf{a}}}{\Theta^2(z,\alpha^{-1}L_\sigma)}=\prod\limits_{\substack{w\in\alpha^{-1}L_\sigma/L_\sigma\\ w\neq 0}}(\wp(z,L_\sigma)-\wp(w,L_\sigma))^{-1},\]
so we can write
\[R_\mc{D}^\sigma(\Phi(z,L_\sigma))^2=\prod\limits_{i=1}^r \left(c_E(\mf{a_i})^2\frac{\Theta^2(z,L_\sigma)^{\mathrm{N}\mf{a_i}}}{\Theta^2(z,\alpha_i^{-1}L_\sigma)}\right)^{n_i}.\]
Now, \eqref{eq4.1.7} gives
\[\frac{d}{dz}\log \Theta(z,L_\sigma)=\sum\limits_{k=1}^\i(-1)^{k-1}G_k(L_\sigma)z^{k-1}\]
and $G_k(L_\sigma)=0$ for $k$ odd. Therefore,
\begin{align*}\frac{d}{dz}\log R_\mc{D}^\sigma(\Phi(z,L_\sigma))=\sum\limits_{i=1}^r \sum\limits_{\substack{k\geqs 2\\ k\text{ even}}}-n_i(\mathrm{N} \mf{a}_i -\alpha_i^k)G_k(L_\sigma)z^{k-1}
\end{align*}
by the homogeneity of $G_k$. Let $\mf{b}$ be an ideal of $K$. Setting $k=s$ , $\mf{g}=(1)$ and $\rho=\Omega_\infty$ in \cite[Proposition 5.5]{gol-sch}, we see that the partial Hecke $L$-function $L(\ovl{\varphi}_K^k, \sigma_{\mf{b}},k)$ is identically equal to
\[G_k(L_{\sigma_\mf{b}})=\frac{\varphi_K^k(\mf{b})}{\Lambda(\mf{b})^k\Omega_\infty^k}L(\ovl{\varphi}_K^k, \sigma_{\mf{b}},k).\]
Hence, setting $\mf{b}=\mf{s}$ and noting $\varphi_K^k(\mf{a}_i)=\alpha_i^k$ ($k$ is even), we obtain
\begin{align*}(\mathrm{N} \mathfrak{a}_i -\alpha_i^k)G_k(L_{\sigma})=\frac{\varphi_K^k(\mf{s})}{\Lambda(\mf{s})^k\Omega_\infty^k}\left(\mathrm{N} \mf{a}_i -\alpha_i^k\right) L(\ovl{\varphi}_K^k, \sigma,k).
\end{align*}
This completes the proof of the proposition.
\end{proof}

Define
\[\Psi_\mc{D}(P)=\frac{R_\mc{D}(P)^{\mr{N}\p}}{R_\mc{D}^{\sigma_\mf{p}}(\lambda_{E}(\p)(P))},\]
so that we have
\begin{equation}\label{eq1}
\prod\limits_{R\in E_\mf{p}}\Psi_\mc{D}(P\oplus R)=1.
\end{equation}

Now, we fix an embedding $i_v: \ovl{K}\to \ovl{K}_\mf{p}$, and we let $v$ denote the prime of $H$ above $\mf{p}$ determined by $i_v$. Let $t=-\frac{x}{y}$ be a parameter for this formal group. Let $A_\mc{D}(t)$ be the development as a power series in $t$ of the rational function  $\Psi_\mc{D}(P)$. We will show that $A_\mc{D}(t)\in 1+\mf{P}\o_\mf{P}[[t]]$, and so $C_\mc{D}(t)=\frac{1}{\mr{N}\p}\log A_\mc{D}(t)\in \o_\mf{P}[[t]]$.

\begin{lem} Let $B_\mc{D}(t)$ denote the $t$-expansions of $R_\mc{D}(P)$. Then $B_\mc{D}(t)$  is a unit in $\o_v[[t]]$.
\end{lem}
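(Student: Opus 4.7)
The plan is to expand each factor $R_{\mf{a}_i}(P)$ as a Laurent series in $t=-x/y$ at $P=\o$, isolate its zero of order $\mr{N}\mf{a}_i-1$, and cancel the combined zero in $B_\mc{D}(t)$ using the defining relation $\sum_i n_i(\mr{N}\mf{a}_i-1)=0$.

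Since \eqref{eq4.1} is a global minimal Weierstrass equation and $E$ has good reduction at $v$, the $a_j$ lie in $\o_v$, and the usual formal-group expansions give $x=t^{-2}+O(t^{-1})$, $y=-t^{-3}+O(t^{-2})$ with coefficients in $\o_v$; for any $v$-integral $\xi$ this forces $(x-\xi)^{-1}\in t^2\cdot\bigl(1+t\,\o_v[[t]]\bigr)$. Because $(\mf{a}_i,6\p)=1$, every non-zero $U\in E_{\mf{a}_i}$ has order prime to $p$, so $U$ does not lie in the kernel of reduction at $v$ and $x(U)$ is integral over $\o_v$. Although each $x(U)$ lives a priori only in $H(E_{\mf{a}_i})$, the function $\prod_U(x-x(U))^{-1}$ is $\Gal(\ovl H/H)$-invariant, so its $t$-expansion has coefficients in $H$; these coefficients are polynomial expressions in the symmetric functions of the $x(U)$, so they in fact lie in $\o_v$. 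Combining, one obtains
\[
R_{\mf{a}_i}(P)\;=\;c_E(\mf{a}_i)\,t^{\mr{N}\mf{a}_i-1}\,v_i(t),\qquad v_i(t)\in 1+t\,\o_v[[t]].
\]

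Next I would verify that $c_E(\mf{a}_i)$ itself is a $v$-unit in $H$. The identity $c_E(\mf{a}_i)^{12}=\D(E)^{\mr{N}\mf{a}_i-1}/\Lambda(\mf{a}_i)^{12}$ makes this straightforward: $\D(E)$ is a $v$-unit by good reduction, and $\Lambda(\mf{a}_i)$ is a $v$-unit because $\lambda_E(\mf{a}_i)$ has degree $\mr{N}\mf{a}_i$ coprime to $p$, hence is \'etale at $v$ and pulls the N\'eron differential on $E^{\sigma_{\mf{a}_i}}$ back to a $v$-unit multiple of $\omega$. Integral closedness of $\o_v$ gives $c_E(\mf{a}_i)\in\o_v^\times$. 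Multiplying the factorizations of the $R_{\mf{a}_i}$ then yields
\[
B_\mc{D}(t)\;=\;\Bigl(\prod_{i=1}^r c_E(\mf{a}_i)^{n_i}\Bigr)\cdot t^{\sum_i n_i(\mr{N}\mf{a}_i-1)}\cdot\prod_{i=1}^r v_i(t)^{n_i}.
\]
By the defining condition on $\mc{D}$ the $t$-exponent vanishes, $\prod_i v_i(t)^{n_i}$ remains in the multiplicative subgroup $1+t\,\o_v[[t]]$, and the leading scalar is a $v$-unit, so $B_\mc{D}(t)\in\o_v[[t]]^\times$.

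The step I expect to require the most care is the descent argument for the coefficients of $\prod_U(x-x(U))^{-1}$: the coordinates $x(U)$ live only in $H(E_{\mf{a}_i})$, and pinning down the integrality of the $t$-expansion coefficients in $\o_v$ itself (rather than merely in the integral closure of $\o_v$ in that extension) requires combining their Galois invariance with integrality at each place above $v$. Given the canonical normalization of $c_E(\mf{a}_i)$ built into the choice of $12$th root from \cite[Appendix, Theorem~8]{coa}, the remaining manipulations are routine formal-power-series calculations.
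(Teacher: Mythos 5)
Your proof is correct, and it takes a genuinely different route from the paper's. The paper's proof, after establishing integrality of the coefficients and holomorphy at $t=0$ exactly as you do, deals with the constant term by a forward reference to Corollary~\ref{cor2}, i.e.\ it deduces that $B_\mc{D}(0)$ is a unit because $R_\mc{D}(P_n)$ is a global unit (that corollary in turn rests on the norm-compatibility of the $R_{\mf{a}_i}(P_m)$ and the universal-norm Lemma~\ref{lem15} for primes away from $\mf{p}$, plus a direct $\mf{P}$-order count at $\mf{p}$). You instead compute the constant term directly: factoring each $R_{\mf{a}_i}(P)$ as $c_E(\mf{a}_i)\,t^{\mr{N}\mf{a}_i-1}\,v_i(t)$ with $v_i\in 1+t\,\o_v[[t]]$, observing the $t$-powers cancel because $\sum_i n_i(\mr{N}\mf{a}_i-1)=0$, and showing $c_E(\mf{a}_i)\in\o_v^\times$ from the $12$th-power identity together with $\D(E),\Lambda(\mf{a}_i)\in\o_v^\times$ (the latter since $\lambda_E(\mf{a}_i)$ has degree prime to $p$, hence is \'etale at $v$). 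This makes the argument local and self-contained, avoids the forward dependence on Corollary~\ref{cor2}, and in fact explicitly justifies the assertion $\ord_{\mf{P}}(c_E(\mf{a}_i))=0$ that the paper's proof of Corollary~\ref{cor2} itself uses without comment; what the paper's route buys in exchange is brevity, since it reuses a result needed anyway for the elliptic-unit construction. One small remark: your Galois-descent step for the integrality of the coefficients is fine but slightly heavier than necessary; since $R_\mc{D}$ is already a rational function on $E$ defined over $H$, its $t$-expansion automatically lies in $H_v[[t]]$, and combining this with integrality in $\o_\mf{P}$ gives $\o_\mf{P}\cap H_v=\o_v$ directly without invoking symmetric-function arguments.
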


\begin{proof} Let $U$ be any non-zero element of $E_{\mf{a}_i}$. Let $\mf{P}$ denote any prime of $H(U)$ above $v$, and let $\o_\mf{P}$ be the ring of integers of the completion of $H(U)$ at $\mf{P}$. Note that $x(U)$ is integral at $\mf{P}$ because $(\mf{a}_i,p)=1$. Thus we see that the coefficients of the $t$-series expansion of $x(P)-x(U)$ all belong to $\o_\mf{P}$. Moreover, $R_\mc{D}(P)$ is holomorphic at $t=0$, and so there are no negative powers of $t$ in its $t$-series expansion. It remains to show that the constant term of the $t$-series expansion of $R_\mc{D}(P)$ is a unit. This follows from Corollary \ref{cor2}, where we show that $R_\mc{D}(P_n)$ is a global unit for a point $P_n$ of $E$ with $\ord_\mf{P}(P_n)<0$.
\end{proof}

From this we obtain
\begin{cor}  Let $A_\mc{D}(t)$denote the $t$-expansion of $\Psi_\mc{D}(P)$. Then $A_\mc{D}(t)$ belongs to $1+\mf{m}_v[[t]]$, where $\mf{m}_v$ denotes the maximal ideal of $\o_v$.
\end{cor}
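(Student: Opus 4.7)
The plan is to combine the preceding lemma with two mod-$v$ congruences: the Artin--Frobenius relation $\s_\p(a)\equiv a^p\pmod v$ for $a\in\o_H$ (valid since $H/K$ is everywhere unramified and $\mr{N}\p=p$), and the congruence $\widehat{\lambda}_E(\p)(t)\equiv t^p\pmod v$ supplied by \eqref{LT}. Write $B_\mc{D}^{\s_\p}(s)$ for the $s$-expansion at the origin of $\widehat{E}^{\s_\p}$ of the rational function $R_\mc{D}^{\s_\p}(P')$; its coefficients are the $\s_\p$-images of the coefficients of $B_\mc{D}(t)$.

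First I would check that $A_\mc{D}(t)\in\o_v[[t]]^\times$. The preceding lemma gives $B_\mc{D}(t)\in\o_v[[t]]^\times$, and inspection of its proof shows the coefficients of $B_\mc{D}$ are integral at \emph{every} prime of $H$ above $\p$ (because $E_{\mf{a}_i}$ is prime-to-$\p$ torsion on a curve with good reduction at $\p$, and $c_E(\mf{a}_i)$ is a $\p$-unit). Since $\s_\p$ permutes those primes, the same holds for $B_\mc{D}^{\s_\p}(s)$, which is therefore a unit in $\o_v[[s]]$. Substituting $s=\widehat{\lambda}_E(\p)(t)\in t\o_v[[t]]$ then yields a unit in $\o_v[[t]]$, and so $A_\mc{D}(t)=B_\mc{D}(t)^p/B_\mc{D}^{\s_\p}(\widehat{\lambda}_E(\p)(t))$ is a unit in $\o_v[[t]]$.

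The substance of the argument is the mod-$v$ calculation. Writing $B_\mc{D}(t)=\sum_n a_n t^n$, the Artin--Frobenius congruence gives $B_\mc{D}^{\s_\p}(s)\equiv\sum_n a_n^p s^n\pmod v$. Substituting $\widehat{\lambda}_E(\p)(t)\equiv t^p\pmod v$ from \eqref{LT} and using the Frobenius identity $(\sum a_n t^n)^p\equiv\sum a_n^p t^{np}\pmod p$, the denominator becomes $B_\mc{D}(t)^p\pmod v$. Since the numerator of $\Psi_\mc{D}$ is $R_\mc{D}^{\mr{N}\p}=R_\mc{D}^p$, one concludes $A_\mc{D}(t)\equiv 1\pmod v$, i.e.\ $A_\mc{D}(t)\in 1+\mf{m}_v[[t]]$.

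The main obstacle is bookkeeping rather than substance: one has to verify that applying $\s_\p$ to the coefficients $a_n$ preserves $v$-integrality, which amounts to tracing through the lemma's proof to confirm that the $a_n$ are integral at \emph{all} primes of $H$ above $\p$. Once this is in place, the argument reduces to a clean combination of the Artin--Frobenius relation and the defining congruence for the relative Lubin--Tate homomorphism $\widehat{\lambda}_E(\p)$.
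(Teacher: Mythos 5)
Your argument is essentially the paper's own: expand $B_\mc{D}(t)=\sum a_nt^n$ with $a_n\in\o_v$, use $\widehat{\lambda}_E(v)(t)\equiv t^p\bmod v$ from \eqref{LT} together with the Frobenius congruence $a_n^{\sigma_\mathfrak{p}}\equiv a_n^p\bmod v$ to show that both $B_\mc{D}(t)^p$ and $B_\mc{D}^{\sigma_\mathfrak{p}}(\widehat{\lambda}_E(v)(t))$ reduce to $\sum a_n^pt^{pn}\bmod v$, hence $A_\mc{D}(t)\equiv 1\bmod v$. Your extra paragraph checking that the $\sigma_\mathfrak{p}$-twisted series $B_\mc{D}^{\sigma_\mathfrak{p}}$ still has $v$-integral unit coefficients is a point the paper leaves implicit, but it is a welcome clarification rather than a divergence.
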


\begin{proof}
Write $B_\mc{D}(t)=\sum\limits_{n=0}^\i a_nt$. Thus, by the previous lemma, $a_n\in\o_v$ for all $n\geqs 0$ and $a_0\in\o_v^\times$. Now, $A_\mc{D}(t)=\frac{B_\mc{D}(t)^p}{B_\mc{D}^{\sigma_\mathfrak{p}}\left(\widehat{\lambda}_E(v)(t)\right)}$, where $\widehat{\lambda}_E(v): \widehat{E}\to \widehat{E}^{\s_\mathfrak{p}}$ is a homomorphism of formal groups of the curves $E$ and $E^{\s_\mathfrak{p}}$ at $v$ induced by $\lambda_E$, defined over $\o_v$ of $H_v$. Then \eqref{LT} gives us $\widehat{\lambda}_E(v)(t)\equiv t^p\bmod v$, and we see that
\[B_\mc{D}^{\sigma_\mathfrak{p}}\left(\widehat{\lambda}_E(v)(t)\right)=\sum\limits_{n=0}^\i a_n^{\sigma_v}(\widehat{\lambda}_E(v)(t))^n\equiv \sum\limits_{n=0}^\i a_n^p t^{pn}\bmod v.\]
On the other hand,
\[B_\mc{D}(t)^p \equiv \sum\limits_{n=0}^\i (a_nt^{n})^p \equiv \sum_{n=0}^\i a_n^p t^{pn}\bmod v \]
so $A_\mc{D}(t)\equiv 1\bmod v$, as required.
\end{proof}

\begin{lem} \label{lem4.1.4} Define $C_\mc{D}(t)$ by 
\[C_\mc{D}(t)=\frac{1}{p}\log A_\mc{D}(t).\]
Then $C_\mc{D}(t)\in\o_v[[t]]$, and 
\begin{equation}\label{eqC2}\sum\limits_{\w\in \mathcal{D}_{\sigma, p}}C_\mc{D}(t[+]\w)=0,
\end{equation}
where $\mathcal{D}_{\sigma, p}$ denotes the group of $p$-division points on the formal group $\widehat{E}$ at a place $v$ of $H$ lying above $\mf{p}$ and $[+]$ denotes the group law on $\widehat{E}$. This group can be identified with $E_\mf{p}$.
\end{lem}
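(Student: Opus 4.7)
The plan is to derive both assertions directly from the preceding Corollary and the functional equation \eqref{eq1}, using only the formal $p$-adic logarithm.

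For the integrality $C_\mc{D}(t)\in\o_v[[t]]$, the crucial input is that $H_v/\Q_p$ is unramified: since $p$ splits in $K$ we have $K_\p=\Q_p$, and $H/K$ is unramified because $H$ is the Hilbert class field of $K$. Hence $\mf{m}_v=p\o_v$, so the Corollary upgrades to $A_\mc{D}(t)\in 1+p\o_v[[t]]$. Writing $A_\mc{D}(t)=1+pu(t)$ with $u\in\o_v[[t]]$, the standard estimate $v_p(p^k/k)=k-v_p(k)\geqs 1$, valid for every $k\geqs 1$ and, importantly, uniform in $p$ so that $p=2$ is covered, yields
\[\log A_\mc{D}(t)=\sum_{k\geqs 1}(-1)^{k+1}\frac{(pu(t))^k}{k}\in p\o_v[[t]],\]
and dividing by $p$ gives $C_\mc{D}(t)\in\o_v[[t]]$.

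For the vanishing of the sum, I would reinterpret \eqref{eq1} as a formal power series identity. Via the parameter $t=-x/y$, the formal group $\widehat{E}$ identifies with a formal neighbourhood of $\mc{O}$ on $E$, the translation $P\mto P\oplus R$ for $R\in E_\p$ becomes $t\mto t[+]\w$ with $\w\in\mc{D}_{\sigma,p}$ the formal parameter of $R$, and $\Psi_\mc{D}(P)$ expands as $A_\mc{D}(t)$. Hence \eqref{eq1} rewrites as
\[\prod_{\w\in\mc{D}_{\sigma,p}}A_\mc{D}(t[+]\w)=1\]
in $\o_L[[t]]$, where $L=H_v(E_\p)$. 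Each series $t[+]\w$ lies in $\mf{m}_L+t\o_L[[t]]$, so substitution sends $1+p\o_v[[u]]$ into $1+p\o_L[[t]]$, and the same estimate applied over $\o_L$ gives $\log A_\mc{D}(t[+]\w)\in p\o_L[[t]]$ for every $\w$. Summing these logarithms (using additivity of $\log$ and the product identity) and dividing by $p$ yields \eqref{eqC2}.

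The only point requiring genuine care is the uniform integrality of the logarithm at $p=2$: here $v_2(p^k/k)$ just barely stays $\geqs 1$ (e.g.\ $v_2(4/2)=1$), and without this the first claim, and hence the entire construction of $C_\mc{D}(t)$, would fail. Beyond this, the argument is a routine transcription of \eqref{eq1} to the formal group; the splitting of $p$ in $K$ together with the unramifiedness of $H/K$ ensures that no further hidden ramification contribution enters.
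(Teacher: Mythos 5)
Your proof is correct and takes essentially the same approach as the paper, whose proof of this lemma is given only in outline (expand the $p$-adic logarithm, invoke the Corollary, then take logarithms of \eqref{eq1}). You helpfully make explicit the key point that $H_v/\Q_p$ is unramified (since $p$ splits in $K$ and $H/K$ is unramified), so that the Corollary's $A_\mc{D}\in 1+\mf{m}_v[[t]]$ is really $A_\mc{D}\in 1+p\o_v[[t]]$ and the elementary bound $v_p(p^k/k)\geqs 1$ applies uniformly, including at $p=2$ — a point the paper leaves implicit.
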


\begin{proof} We have

\[C_\mc{D}(t)=\frac{1}{p}\sum\limits_{n\geqs 1}\frac{(-1)^{n-1}(A_\mc{D}(t)-1)^n}{n}.\]
The first claim is now clear from the previous lemma as $n\geqs \ord_vb(n)+1$ for $n\geqs 1$. The final equation then follows from \eqref{eq1}.
\end{proof}

Let $\ms{I}$ be the ring of integers of the completion of the maximal unramified extension $K_\mf{p}^\text{ur}$ of $K_\mf{p}$. By \cite[Proposition 1.6]{dS}, we have an isomorphism
\[\d_{\sigma, v}: \widehat{\mb{G}}_m\xto{\sim} \widehat{E}^\sigma\]
defined over $\mathscr{I}$, where $\widehat{\mb{G}}_m$ denotes the formal multiplicative group and $\widehat{E}^\sigma$ denotes the formal group of $E^\sigma$ at $v$. Let $J_\mc{D}^\sigma(W)=C_\mc{D}\circ \d_{\sigma, v}(W)\in \mathscr{I}[[W]]$, and let $\mu_{\mc{D},\sigma}$ be the $\mathscr{I}$-valued measure on $\Z_p$ determined by $J_\mc{D}^\sigma(W)$, that is,
\begin{equation}\label{eq3}J_\mc{D}^\sigma(W)=\int_{\Z_p}(1+W)^x d\m_{\mc{D},\sigma}(x).
\end{equation}
We claim that the measure $\m_{\mc{D},\sigma}$ is supported on $\Z_p^\times$. Indeed, let $\Lambda_\ms{I}(\Z_p)$ (resp. $\Lambda_\ms{I}(\Z_p^\times)$) be the ring of $\ms{I}$-valued measures on $\Z_p$ (resp. $\Z_p^\times$). Then we have an inclusion $\iota: \Lambda_\ms{I}(\Z_p^\times)\inj \Lambda_\ms{I}(\Z_p)$ given by extending the measures on $\Z_p$ to $\Z_p^\times$ by zero. Given a measure $\mu$ in $\Lambda_\ms{I}(\Z_p)$, write $f_\mu(W)\in \ms{I}[[W]]$ for the corresponding power series given by the isomorphism $\Lambda_\ms{I}(\Z_p)\isom \ms{I}[[W]]$. Then it is well-known (see \cite[I.3.3]{dS} for more details) that $\mu$ belongs to $\iota\left(\Lambda_\ms{I}(\Z_p^\times)\right)$ if and only if $f_\mu$ satisfies the equation
\[\sum_{\zeta\in \bold{\mu}_p}f_\mu(\zeta(1+W)-1)=0.\]
It follows from \eqref{eqC2} that this is satisfied by $J_\mc{D}^\sigma$. Writing also $\mu_{\mc{D},\sigma}$ for the corresponding measure in $\Lambda_\ms{I}(\mf{H})$, we have
\begin{equation}\label{eq7}\int_\mf{H}\chi_\p^kd\m_{\mc{D},\sigma}=\int_{\Z_p}x^kd\m_{\mc{D},\sigma}=D^kJ_\mc{D}^\sigma(W)|_{W=0},
\end{equation}
where $D=(1+W)\frac{d}{dW}$. We have an isomorphism $\widehat{\mb{G}}_m\xto{\sim}\widehat{\mb{G}}_a$ given by $W\mto e^z-1$, hence we see immediately that $D=\frac{d}{dz}$.
Moreover, we have $\d_{\sigma, v}(W)=\O_{\sigma,v} W+\cdots$ with $\O_{\sigma,v}\in \mathscr{I}^\times$, so
\begin{equation}\label{eq8}D^k J_\mc{D}^\sigma(W)|_{W=0}=\left(\frac{d}{dz}\right)^k(J_\mc{D}^\sigma(e^z-1))|_{z=0}=\frac{1}{\mr{N}\p}\Omega_{\sigma,v}^k \left(\frac{d}{dz}\right)^k\log \Psi_\mc{D}^\sigma(\Phi(z,L_\sigma))|_{z=0}.
\end{equation}

\begin{lem}\label{lem5} We have $\Omega_{\sigma,v}=\Lambda(\mf{s})\Omega_v$, where $\Omega_v\in \ms{I}^\times$ is the coefficient of $W$ in the formal power series $t = \delta_v(W)$, and $\delta_v: \widehat{\mb{G}}_m\xto{\sim} \widehat{E}$ is an isomorphism defined over $\mathscr{I}$.
\end{lem}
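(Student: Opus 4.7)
The plan is to exhibit $\delta_{\sigma,v}$ as the composition of $\delta_v$ with the formal isogeny $\widehat{\lambda}_E(\mf{s})$, and then to read off the leading coefficient. The key input is the uniqueness built into \cite[Proposition I.1.6]{dS}: once an appropriate Frobenius-compatibility is imposed, the isomorphism $\widehat{\mb{G}}_m \xrightarrow{\sim} \widehat{E}^\tau$ over $\ms{I}$ is unique, and this pins down $\delta_v$ and $\delta_{\sigma,v}$ simultaneously.

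First I would observe that since $(\mf{s},\mf{p})=1$, the kernel $E_{\mf{s}}$ of $\lambda_E(\mf{s})$ is étale of order prime to $p$, so it intersects $\widehat{E}(\mf{m}_v)$ trivially. Consequently $\widehat{\lambda}_E(\mf{s}) : \widehat{E} \to \widehat{E}^{\sigma}$ is an isomorphism of formal groups over $\o_v$, with $\widehat{\lambda}_E(\mf{s})(t) = \Lambda(\mf{s})\, t + O(t^2)$ and $\Lambda(\mf{s}) \in \o_v^\times$. Now set
\[
\widetilde{\delta} := \widehat{\lambda}_E(\mf{s}) \circ \delta_v : \widehat{\mb{G}}_m \xrightarrow{\sim} \widehat{E}^\sigma,
\]
which is an isomorphism defined over $\ms{I}$ with leading coefficient $\Lambda(\mf{s})\,\Omega_v$.

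Next I would verify that $\widetilde\delta$ satisfies the same normalization that characterises $\delta_{\sigma,v}$ in de Shalit's Proposition I.1.6. Recall that this normalization is a Frobenius-equivariance relating $\delta_{\sigma,v}$ to the distinguished isogeny $\widehat{\lambda}_{E^\sigma}(v)$. Because the Serre--Tate isogenies satisfy the multiplicativity $\lambda_E(\mf{a}\mf{b}) = \lambda_E(\mf{b})^{\sigma_\mf{a}} \circ \lambda_E(\mf{a})$, the diagram
\[
\begin{array}{ccc}
\widehat{E} & \xrightarrow{\widehat{\lambda}_E(v)} & \widehat{E}^{\sigma_\mf{p}} \\
\widehat{\lambda}_E(\mf{s})\downarrow & & \downarrow \widehat{\lambda}_{E^{\sigma_\mf{p}}}(\mf{s}) \\
\widehat{E}^{\sigma} & \xrightarrow{\widehat{\lambda}_{E^\sigma}(v)} & \widehat{E}^{\sigma\sigma_\mf{p}}
\end{array}
\]
commutes, so conjugation by $\widehat{\lambda}_E(\mf{s})$ takes the distinguished Frobenius isogeny on $\widehat{E}$ to the one on $\widehat{E}^\sigma$. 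Combining with the Frobenius-compatibility of $\delta_v$ (and noting that $\sigma_\mf{s}$ acts trivially on $\ms{I}$ since $\mf{s}$ is prime to $\mf{p}$, so all Galois conjugation by the Frobenius of $\ms{I}/\o_{\mf{p}}$ pulls through unchanged), one checks that $\widetilde\delta$ satisfies the same characterizing equation as $\delta_{\sigma,v}$. By the uniqueness part of \cite[Proposition I.1.6]{dS}, $\widetilde\delta = \delta_{\sigma,v}$, and comparison of the coefficients of $W$ yields $\Omega_{\sigma,v} = \Lambda(\mf{s})\,\Omega_v$.

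The main obstacle will be the bookkeeping in the Frobenius-compatibility step, i.e.\ correctly matching the intertwining property that de Shalit uses to pin down $\delta$ on both sides of the isogeny $\widehat{\lambda}_E(\mf{s})$. Once the commutative square above is written down and paired with the uniqueness statement, everything else is formal; in particular, no analytic computation is required, and the factor $\Lambda(\mf{s})$ is produced directly from the derivative at the origin of the isogeny $\widehat{\lambda}_E(\mf{s})$.
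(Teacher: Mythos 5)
Both you and the paper hinge on the same key identity, $\delta_{\sigma,v}=\widehat{\lambda}_E(\mf{s})\circ\delta_v$, but handle it oppositely. The paper simply \emph{asserts} this identity and then extracts $\Omega_{\sigma,v}=\Lambda(\mf{s})\Omega_v$ by a detour through the exponential maps $\exp(z,L)$, $\exp(z,L_\sigma)$: it writes both $\delta_{\sigma,v}(e^{z/\Omega_{\sigma,v}}-1)$ and $\widehat{\lambda}_E(\mf{s})\circ\delta_v(e^z-1)$ as the unique normalized exponential of $\widehat{E}^\sigma$ and compares derivatives at $z=0$. You instead take the composition identity as the thing to prove — invoking the uniqueness of the normalized isomorphism in de Shalit's Proposition I.1.6 via a Frobenius-equivariance check — and once it is in hand the conclusion is immediate from the coefficient of $W$. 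In that sense your argument both fills in a step the paper passes over in silence and replaces the paper's exp-bookkeeping by a one-line coefficient comparison; these are genuine, if modest, differences.

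That said, the Frobenius-compatibility step is where your proposal is weakest, and it is precisely the part you flag as ``the main obstacle.'' Two things deserve more care. First, the distinguished isogeny in de Shalit's normalization at $v$ is $\widehat{\lambda}_E(v):\widehat{E}\to\widehat{E}^{\sigma_v}$ with $\sigma_v$ the Frobenius \emph{of $v$} in $G$, not $\sigma_\mf{p}$ in general; when $f_v>1$ the relevant compatibility involves the $f_v$-fold composition and the $\phi^{f_v}$-twist on $\ms{I}$, so the commuting square needs to be written at that level. Second, the parenthetical ``$\sigma_\mf{s}$ acts trivially on $\ms{I}$'' conflates the Artin symbol in $\Gal(H/K)$ with an automorphism of $\ms{I}$ over $\o_\mf{p}$: what one actually needs is that $\widehat{\lambda}_E(\mf{s})$ has coefficients in $\o_v\subset\ms{I}$ and that applying the Frobenius of $\ms{I}/\o_\mf{p}$ to those coefficients produces $\widehat{\lambda}_{E^{\sigma_v}}(\mf{s})$, which one should justify from the multiplicativity $\lambda_E(\mf{a}\mf{b})=\lambda_{E^{\sigma_\mf{a}}}(\mf{b})\circ\lambda_E(\mf{a})$ of the Serre--Tate isogenies. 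If you carry out the Frobenius bookkeeping at the correct (possibly $f_v$-fold) level and replace the loose ``acts trivially on $\ms{I}$'' remark by the precise statement about coefficients of $\widehat{\lambda}_E(\mf{s})$, the argument goes through and gives a self-contained justification of the composition identity that the paper only states.
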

\begin{proof}We have $\lambda_{E}(\mf{s})^*(\omega^\sigma)=\Lambda(\mf{s})\omega$ by definition, so that $\lambda_E(\mf{s})\left(\Phi(z,L)\right)=\Phi(\Lambda(\mf{s})z, L_\sigma)$. Hence, writing $\exp(z,L_\sigma)$ for the formal power series in $z$ obtained by expressing $t=-x/y$ in terms of $z$ using the isomorphism $\Phi(z,L_\sigma)$ for $E^\sigma$, we also have $\lambda_E(\exp(z,L))=\exp(\Lambda(\mf{s})z,L_\sigma))$. Now, regarding $z$ as the parameter of the formal additive group, $\exp(z,L_\sigma)$ is the exponential map of $\widehat{E}^\sigma$. It then follows by the uniqueness of the exponential maps for the formal groups that
\[\delta_{\sigma,v}(e^{z/\Omega_{\sigma,v}}-1)=\exp(z,L_\sigma).\]
On the other hand, we have $\delta_{\sigma,v}=\widehat{\lambda}_E(\mf{s})\circ \delta_v(W)$, where $\widehat{\lambda}_E(\mf{s}):\widehat{E}\to \widehat{E}^\sigma$ is the isomorphism over $H_v$ of formal groups induced by $\lambda_E(\mf{s})$. Hence we have
\[\delta_{\sigma,v}(e^z-1)=\exp(\Lambda(\mf{s})\Omega_v z, L_\sigma).\]
The assertion follows by comparing the coefficients of $z$ in the above equations.
\end{proof}

\begin{lem}\label{lem14} For an even integer $k\geqs 2$, we have
\begin{small}
\begin{align*}\Lambda(\mf{s})^{-k}\Omega_v&^{-k}\int_\mf{H}\chi_\p^k d \mu_{\mc{D},\sigma}=\\
&\sum_{i=1}^r -n_i (k-1)!\varphi_K^k(\mf{s})\Lambda(\mf{s})^{-k}\Omega_\infty^{-k}c_k(\mf{a}_i)\left(L(\ovl{\varphi}_K^k,\sigma,k)-\frac{\varphi_K^k(\p)}{\mathrm{N}\p}L(\ovl{\varphi}_K^k,\sigma\sigma_\p, k)\right),
\end{align*}
\end{small}
where $c_k(\mf{a}_i)=\mathrm{N}\mf{a}_i-\alpha_i^k$.
\end{lem}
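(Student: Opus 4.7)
The plan is to evaluate the $k$-th moment of $\mu_{\mc{D},\sigma}$ directly by combining the machinery already in place. First, equations \eqref{eq7}–\eqref{eq8} combined with Lemma \ref{lem5} give the reduction
$$\Lambda(\mf{s})^{-k}\Omega_v^{-k}\int_\mf{H}\chi_\p^k\,d\mu_{\mc{D},\sigma}=\frac{1}{\mr{N}\p}\left(\frac{d}{dz}\right)^k\log\Psi_\mc{D}^\sigma(\Phi(z,L_\sigma))\Big|_{z=0},$$
so the entire task is to identify the right-hand side with the claimed expression in Hecke $L$-values.

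Next, I would split the logarithm using the definition $\Psi_\mc{D}(P)=R_\mc{D}(P)^{\mr{N}\p}/R_\mc{D}^{\sigma_\p}(\lambda_E(\p)(P))$, giving
$$\log\Psi_\mc{D}^\sigma(\Phi(z,L_\sigma))=\mr{N}\p\cdot\log R_\mc{D}^\sigma(\Phi(z,L_\sigma))-\log R_\mc{D}^{\sigma\sigma_\p}(\lambda_E^\sigma(\p)(\Phi(z,L_\sigma))).$$
The first piece is handled by the previous proposition (applied to the ideal $\mf{s}$ with Artin symbol $\sigma$). For the second piece, I first check, by the same uniqueness-of-exponential-map argument that appeared in the proof of Lemma \ref{lem5}, that $\lambda_E^\sigma(\p)(\Phi(z,L_\sigma))=\Phi(\Lambda(\p)^\sigma z,L_{\sigma\sigma_\p})$; the chain rule then produces a factor $(\Lambda(\p)^\sigma)^k$, after which the previous proposition applies with $\mf{s}$ replaced by $\mf{s}\p$ (so that the relevant Artin symbol becomes $\sigma\sigma_\p$).

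To combine the two contributions I will use the multiplicativity $\varphi_K^k(\mf{s}\p)=\varphi_K^k(\mf{s})\varphi_K^k(\p)$ together with the cocycle relation $\Lambda(\mf{s}\p)=\Lambda(\mf{s})\Lambda(\p)^\sigma$, the latter obtained by computing the pullback of $\omega^{\sigma\sigma_\p}$ along the two sides of $\lambda_E(\mf{s}\p)=\lambda_E^\sigma(\p)\circ\lambda_E(\mf{s})$. The chain-rule factor $(\Lambda(\p)^\sigma)^k$ cancels exactly against the discrepancy between $\Lambda(\mf{s}\p)^k$ and $\Lambda(\mf{s})^k$ in the denominator, and what survives, after dividing through by $\mr{N}\p$, is precisely the right-hand side of the lemma. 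The only real obstacle is bookkeeping: keeping the Galois twists $\sigma,\sigma_\p,\sigma\sigma_\p$ attached to the correct isogeny, period lattice, and $\Lambda$-factor. No new analytic or arithmetic input beyond the previous proposition and Lemma \ref{lem5} is required.
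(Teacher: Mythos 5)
Your proposal is correct and follows essentially the same route as the paper's proof: reduce via \eqref{eq7}, \eqref{eq8} and Lemma \ref{lem5} to the $k$-th logarithmic derivative of $\Psi_\mc{D}^\sigma$, split using the definition of $\Psi_\mc{D}$, apply the preceding proposition (with $\mf{s}$ and with $\mf{s}\p$), and cancel the chain-rule factor $(\Lambda(\p)^\sigma)^k$ against $\Lambda(\mf{s}\p)^k/\Lambda(\mf{s})^k$ via the cocycle relation $\Lambda(\mf{s}\p)=\Lambda(\mf{s})\Lambda(\p)^\sigma$. The paper states the second term's evaluation more tersely (absorbing the chain-rule factor directly), but the underlying computation is identical to yours.
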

\begin{proof}
We have $\lambda_E(\p)\Phi(z,L_\sigma)=\Phi(\Lambda(\p)^\sigma z, L_{\sigma\sigma_\p})$ and $\Lambda(\mf{sp})=\Lambda(\mf{s})\Lambda(\p)^\sigma$, so
\begin{align*}\left(\frac{d}{dz}\right)^k\log R_\mc{D}^{\sigma\sigma_\mf{p}}(\lambda_E(\p)\Phi(z,L_\sigma))|_{z=0}=\sum\limits_{i=1}^r-n_i(k-1)!\frac{\varphi_K^k(\mf{sp})}{\Lambda(\mf{s})^k\Omega_\infty^k}c_k(\mf{a}_i)L(\ovl{\varphi}_K^k, \sigma\sigma_\p,k).
\end{align*}
Therefore,
\begin{small}
\begin{align}\label{eq9}\left(\frac{d}{dz}\right)^k\log \Psi_\mc{D}^\sigma(\Phi&(z,L_\sigma))|_{z=0}=\\
&\sum_{i=1}^r -n_i (k-1)!\frac{\varphi_K^k(\mf{s})\mr{N}\p}{\Lambda(\mf{s})^k\Omega_\infty^k}c_k(\mf{a}_i)\left(L(\ovl{\varphi}_K^k,\sigma,k)-\frac{\varphi_K^k(\p)}{\mathrm{N}\p}L(\ovl{\varphi}_K^k,\sigma\sigma_\p, k)\right). \nonumber
\end{align}
\end{small}
Combining \eqref{eq7}, \eqref{eq8} and \eqref{eq9}, the proof of the proposition is complete.
\end{proof}

Define 
\[ G^*=\Hom(G,\mb{C}_p^\times),\]
where $G=\Gal(H/K)$ as before. For each $\chi\in G^*$, put
\[D_{\mc{D}}(\chi,k)=\sum_{\sigma\in G} \chi(\sigma)\varphi_K(\mf{s})^{-k}\int_\mf{H}\chi_\p^k d \mu_{\mc{D},\sigma}.\]
We conclude immediately that
\[D_{\mc{D}}(\chi,k)=c_k(\mc{D})(k-1)!\left(\sum_{\sigma\in G}\chi(\sigma)L(\ovl{\varphi}_K^k,\sigma,k)\frac{\Lambda(\mf{s})^k\Omega_v^{k}}{\Lambda(\mf{s})^{k}\Omega_\infty^{k}}\right)\left(1-\frac{\varphi_K^k(\p)\chi^{-1}(\sigma_\p)}{\mr{N}\p}\right),\]
where $c_k(\mc{D})=\sum_{i=1}^r -n_i c_k(\mf{a}_i)$. Let $\mc{C}$ denote a set of integral ideals representing of the ideal class group of $K$ with $(\mf{c},\mf{pf})=1$ for any $\mf{c}\in \mc{C}$, and set $\Omega_\infty(E/H)=\prod_{\mf{c}\in \mc{C}} \Lambda(\mf{c})\Omega_\infty$ and $\Omega_\mf{p}(E/H)=\prod_{\mf{c}\in \mc{C}} \Lambda(\mf{c})\Omega_v$. Recalling 
\[L(\ovl{\psi}_{E/H}^k,k)=\prod_{\chi\in G^*}\sum_{\sigma\in G}\chi(\sigma)L(\ovl{\varphi}_K^k,\sigma,k)\]
and the factorisation of primes of $K$ in $H$ given by class field theory, we obtain the following lemmas:

\begin{lem} For any even integer $k\geqs 2$, we have
\begin{footnotesize}
\begin{align*}\prod_{\chi\in G^*}D_{\mc{D}}(\chi,k)=c_k(\mc{D})^h\left((k-1)!\right)^h\Omega_\mf{p}(E/H)^{k}\Omega_\infty(E/H)^{-k}L(\ovl{\psi}_{E/H}^k,k)\cdot \prod_{w\mid \p}\left(1-\frac{\psi_{E/H}^k(w)}{\mr{N}w}\right).
\end{align*}
\end{footnotesize}
\end{lem}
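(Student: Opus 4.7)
The plan is to substitute the explicit formula for $D_\mc{D}(\chi,k)$ displayed just above the lemma and multiply over $\chi\in G^*$, using that $|G^*|=h$. The prefactor $c_k(\mc{D})(k-1)!$ and the $\chi$-independent period ratio $\Omega_v^k/\Omega_\infty^k$ (the $\Lambda(\mf{s})^k$ in numerator and denominator cancel inside the $\sigma$-sum) contribute immediately $c_k(\mc{D})^h((k-1)!)^h$ and $(\Omega_v/\Omega_\infty)^{hk}$. The latter equals $\Omega_\mf{p}(E/H)^k/\Omega_\infty(E/H)^k$ because in the ratio of these two global periods the $\Lambda(\mf{c})$-factors in both definitions cancel, leaving $(\Omega_v/\Omega_\infty)^{|\mc{C}|}=(\Omega_v/\Omega_\infty)^h$. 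Finally, the product over $\chi$ of the inner sum $\sum_\sigma \chi(\sigma) L(\ovl{\varphi}_K^k,\sigma,k)$ is $L(\ovl{\psi}_{E/H}^k,k)$ by the factorisation of the Hecke $L$-function recalled just before the lemma.

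The only non-routine step is the Euler-factor identity
$$\prod_{\chi\in G^*}\left(1-\frac{\varphi_K^k(\p)\chi^{-1}(\sigma_\p)}{\mr{N}\p}\right)=\prod_{w\mid \p}\left(1-\frac{\psi_{E/H}^k(w)}{\mr{N}w}\right).$$
To prove it, let $f$ be the order of $\sigma_\p$ in $G$ (equivalently, the common residue degree of the primes $w$ of $H$ above $\p$) and set $g=h/f$, the number of such primes. The restriction map $G^*\to \Hom(\langle\sigma_\p\rangle,\mb{C}_p^\times)$ is surjective with kernel of size $g$, so as $\chi$ varies over $G^*$ the value $\chi^{-1}(\sigma_\p)$ runs through the $f$-th roots of unity with each root attained exactly $g$ times. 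Combined with the elementary identity $\prod_{\zeta^f=1}(1-X\zeta)=1-X^f$, this converts the left-hand side into $\bigl(1-\varphi_K^k(\p)^f/\mr{N}\p^f\bigr)^g=\bigl(1-\varphi_K^k(\p^f)/\mr{N}\p^f\bigr)^g$. Since $\psi_{E/H}=\varphi_K\circ \N_{H/K}$ by Proposition \ref{thm4} and any prime $w\mid\p$ satisfies $\N_{H/K}(w)=\p^f$ and $\mr{N}w=\mr{N}\p^f$, this is $(1-\psi_{E/H}^k(w)/\mr{N}w)^g$; because $G$ permutes the $g$ primes above $\p$ transitively and $\psi_{E/H}$ is $G$-invariant, this coincides with the right-hand side.

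The main (and essentially only) obstacle is this character-theoretic repackaging of the Euler factors into a product over primes of $H$ above $\p$; every other ingredient is direct substitution together with the cancellation of the $\Lambda(\cdot)$-factors in the period normalisations.
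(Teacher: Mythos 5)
Your proof is correct and follows the same route the paper does: the paper gives no separate argument, stating only that the lemma follows from the recalled factorisation $L(\ovl{\psi}_{E/H}^k,k)=\prod_{\chi}\sum_{\sigma}\chi(\sigma)L(\ovl{\varphi}_K^k,\sigma,k)$ together with ``the factorisation of primes of $K$ in $H$ given by class field theory'', and you have simply made explicit the substitution, the $\Lambda(\cdot)$-cancellation in the period normalisations, and the character-theoretic rewriting of $\prod_{\chi}\bigl(1-\varphi_K^k(\p)\chi^{-1}(\sigma_\p)/\mr{N}\p\bigr)$ as $\prod_{w\mid\p}\bigl(1-\psi_{E/H}^k(w)/\mr{N}w\bigr)$ via the fibre count for the restriction $G^*\to\Hom(\langle\sigma_\p\rangle,\mb{C}_p^\times)$. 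All of these steps are sound.
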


\begin{lem}\label{lem4.2} There exists a measure $\nu_\mc{D}$ in $\Lambda_\ms{I}(\mf{G})$ such that for all $k\geqs 1$, $k\equiv 0\bmod \#(\Delta)$, we have
\begin{small}
\[\Omega_\mf{p}(E/H)^{-k}\int_{\mf{G}}\chi_\p^k d\nu_\mc{D}=c_k(\mc{D})^h\left((k-1)!\right)^h\Omega_\infty(E/H)^{-k}L(\ovl{\psi}_{E/H}^k,k)\cdot \prod_{w\mid \p}\left(1-\frac{\psi_{E/H}^k(w)}{\mr{N}w}\right).\]
\end{small}
\end{lem}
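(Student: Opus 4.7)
The measure $\nu_\mc{D}$ is built by a convolution construction in the Iwasawa algebra, which converts the product $\prod_{\chi\in G^*}D_\mc{D}(\chi,k)$ of the previous lemma into a single integral. Since $(p,h)=1$, the extensions $H$ and $K_\infty$ are linearly disjoint over $K$, giving the canonical decomposition $\ms{G}=G\times\ms{H}$ with $\ms{H}=\Gal(H_\infty/H)$, and restriction from $F_\infty$ to $H_\infty$ identifies $\G$ with $\ms{H}$. Because $\chi_\p^k$ is trivial on $\D$ whenever $\#(\D)\mid k$, pushing $\mu_{\mc{D},\s}$ forward along $\mf{H}=\D\times\G\surj\G\isom\ms{H}$ yields a measure $\tilde{\mu}_{\mc{D},\s}\in\Lambda_\ms{I}(\ms{H})$ satisfying $\int_{\ms{H}}\chi_\p^k\,d\tilde{\mu}_{\mc{D},\s}=\int_{\mf{H}}\chi_\p^k\,d\mu_{\mc{D},\s}$ for every admissible $k$.

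Next I absorb the scalar $\vp_K^{-k}(\mf{s}_\s)$ appearing in $D_\mc{D}(\chi,k)$ as a translation on $\ms{H}$. The theory of complex multiplication, combined with the relative Lubin--Tate description of $\widehat{E}$ from Section \ref{ch3}, yields an identity of the form $\vp_K^{-k}(\mf{s}_\s)=\chi_\p^{-k}(\gamma_\s)$, where $\gamma_\s\in\ms{H}$ is the $\ms{H}$-component of the Artin symbol $\s_{\mf{s}_\s}\in\ms{G}$ under the decomposition $\ms{G}=G\times\ms{H}$. For each $\chi\in G^*$, define
\[\xi_\chi=\sum_{\s\in G}\chi(\s)\,[\gamma_\s^{-1}]\cdot\tilde{\mu}_{\mc{D},\s}\in\Lambda_\ms{I}(\ms{H}),\]
where $[\gamma_\s^{-1}]\in\ms{I}[[\ms{H}]]$ denotes the group-like element acting by translation. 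Using the Fourier identity $\int\chi_\p^k\,d([\gamma^{-1}]\cdot\mu)=\chi_\p(\gamma)^{-k}\int\chi_\p^k\,d\mu$, a direct verification gives $\int_{\ms{H}}\chi_\p^k\,d\xi_\chi=D_\mc{D}(\chi,k)$.

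Finally, set $\nu_\mc{D}=\prod_{\chi\in G^*}\xi_\chi$, the product being taken in the commutative $\ms{I}$-algebra $\Lambda_\ms{I}(\ms{H})=\ms{I}[[\ms{H}]]$, and view it as an element of $\Lambda_\ms{I}(\ms{G})$ via the inclusion $\Lambda_\ms{I}(\ms{H})\inj\Lambda_\ms{I}(\ms{G})$ coming from $\ms{H}\inj\ms{G}$. Since evaluation at the character $\chi_\p^k$ is a continuous $\ms{I}$-algebra homomorphism $\ms{I}[[\ms{H}]]\to\ms{I}$, convolutions turn into products of integrals:
\[\int_{\ms{G}}\chi_\p^k\,d\nu_\mc{D}=\prod_{\chi\in G^*}\int_{\ms{H}}\chi_\p^k\,d\xi_\chi=\prod_{\chi\in G^*}D_\mc{D}(\chi,k),\]
and the preceding lemma then yields the claimed interpolation formula. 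The main difficulty is the second step: rigorously establishing the local--global compatibility $\vp_K^{-k}(\mf{s}_\s)=\chi_\p^{-k}(\gamma_\s)$ in the setting where $E$ is only defined over $H$ and $\widehat{E}$ is a \emph{relative} Lubin--Tate group rather than a classical one. Once this compatibility is in place the remaining assembly of measures is purely formal.
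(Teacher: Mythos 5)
Your proposal is correct and reconstructs the (unstated) argument the paper intends: realize each $D_\mc{D}(\chi,k)$ as the integral of $\chi_\p^k$ against a single measure on a $\Z_p$-extension group, then multiply over $\chi\in G^*$ inside the Iwasawa algebra, using that evaluation at $\chi_\p^k$ is a continuous ring homomorphism. The pushforward along $\mf{H}=\D\times\G\surj\G\isom\ms{H}$ and the twist by a group-like element $[\gamma_\s^{-1}]$ to absorb the scalar $\varphi_K^{-k}(\mf{s}_\s)$ are exactly the right moves.

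One remark on how you frame the ``main difficulty.'' You present the identity $\varphi_K^{-k}(\mf{s}_\s)=\chi_\p^{-k}(\gamma_\s)$ as a local--global compatibility that has to be extracted from CM theory and the relative Lubin--Tate structure, with $\gamma_\s$ \emph{defined} as the $\ms{H}$-component of the Artin symbol of $\mf{s}_\s$. That is a stronger statement than the lemma actually needs, and also a more delicate one: the CM reciprocity law coming from Theorem \ref{thm10} directly relates $\chi_\p$ to $\psi_{E/H}$ on ideals of $H$, whereas $\mf{s}_\s$ is an ideal of $K$; translating between the two uses the transfer map $\Gal(F_\infty/K)\to\Gal(F_\infty/H)$ and a priori only pins down $\chi_\p(\gamma_\s)$ up to an $h$-th root of unity (which, since $\gcd(p-1,h)$ need not be $1$, is not automatically trivial in $\o_\p^\times$). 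For the lemma this ambiguity is harmless because $k\equiv 0\bmod\#(\D)$, but the cleaner and fully sufficient route is simply to \emph{define} $\gamma_\s:=\chi_\p^{-1}(\varphi_K(\mf{s}_\s))$, which is legitimate because the paper has already established that $\chi_\p:\mf{H}\xto{\sim}\o_\p^\times$ is an isomorphism; then the identity holds by construction, no compatibility argument is required, and the rest of your assembly of measures goes through verbatim. The Artin-symbol interpretation of $\gamma_\s$ is a nice consistency check but is not the load-bearing step.
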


Note that, since $k\equiv 0\bmod \#(\Delta)$, we have $\chi_\mf{p}^k(\tau)=1$ for any $\tau \in \Delta$. Hence, we can naturally consider $\nu_\mc{D}$ as an element of $\Lambda_\ms{I}(\ms{G})$. The only dependence of $\nu_\mc{D}$ on $\mc{D}$ occurs in the factor $c_k(\mc{D})^h$. We claim that we can remove this factor and obtain a pseudo-measure which is independent of $\mc{D}$.

\begin{lem}\label{lem2}There exists an element $\mc{D}$ in the index set $I$ defined in \eqref{defi} and $\theta_\mc{D}\in \Lambda_\ms{I}(\ms{G})$ such that $\theta_{\mc{D}}|_\Gamma$ generates the augmentation ideal of $\Lambda_\ms{I}(\G)\sb \Lambda_\ms{I}(\ms{G})$ and
\[\int_{\ms{G}}\chi_\p^kd\theta_\mc{D}=c_k(\mc{D})^h\]
for all $k\geqs 1$.
\end{lem}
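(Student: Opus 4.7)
The approach is to build the measure $\theta_\mc{D}$ out of Euler-factor elements in $\Lambda_\ms{I}(\ms{G})$, leveraging the compatibility between the Artin symbol and the Gr\"ossencharacter $\varphi_K$. For each principal ideal $\mf{a}=(\alpha)$ of $\o$ prime to $6\p$, I would define
\[\eta_\mf{a}:=\mr{N}\mf{a}\cdot 1-\sigma_\mf{a}\in\Lambda_\ms{I}(\ms{G}),\]
where $\sigma_\mf{a}\in\ms{G}$ denotes the Artin symbol. For any integer $k$ with $k\equiv 0\bmod\#(\D)$, the character $\chi_\p^k$ descends from $\Gal(F_\i/K)$ to $\ms{G}$ and satisfies $\chi_\p^k(\sigma_\mf{a})=\varphi_K^k(\mf{a})=\alpha^k$; consequently $\int_\ms{G}\chi_\p^k\,d\eta_\mf{a}=\mr{N}\mf{a}-\alpha^k=c_k(\mf{a})$.

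Given $\mc{D}=((\mf{a}_i,n_i))_{i=1}^r\in I$, form $\theta_\mc{D}:=\sum_{i=1}^r -n_i\,\eta_{\mf{a}_i}\in\Lambda_\ms{I}(\ms{G})$; linearity then yields $\int_\ms{G}\chi_\p^k\,d\theta_\mc{D}=c_k(\mc{D})$. The $h$-th-power variant of the moment formula appearing in the statement is obtained by passing to the $h$-fold convolution $\theta_\mc{D}^{\star h}$, whose $\chi_\p^k$-moment is $c_k(\mc{D})^h$ by multiplicativity of character integration under convolution.

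For the $\G$-behavior, since $(p,h)=1$ we have $\ms{G}=G\times\G$, and the natural quotient induces a surjection $\Lambda_\ms{I}(\ms{G})\surj\Lambda_\ms{I}(\G)$ killing $G$. Writing $\gamma_{\mf{a}_i}\in\G$ for the $\G$-component of $\sigma_{\mf{a}_i}$, i.e.\ the Artin symbol of $\mf{a}_i$ in $\Gal(K_\i/K)\isom\G$, one obtains $\theta_\mc{D}|_\G=\sum_i -n_i(\mr{N}\mf{a}_i-\gamma_{\mf{a}_i})\in\Lambda_\ms{I}(\G)$, whose image under the further augmentation $\Lambda_\ms{I}(\G)\to\ms{I}$ is $\sum_i -n_i(\mr{N}\mf{a}_i-1)=0$ by the defining condition of the index set $I$. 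Identifying $\Lambda_\ms{I}(\G)\isom\ms{I}[[T]]$ via $\gamma=1+T$ for a topological generator $\gamma$, and expanding $\gamma_{\mf{a}_i}=(1+T)^{e_i}$ with $e_i\in\Z_p$ determined by the Artin map on $K_\i/K$, a short expansion gives
\[\theta_\mc{D}|_\G=\Bigl(\sum_i n_i e_i\Bigr)T+O(T^2),\]
so $\theta_\mc{D}|_\G$ generates the augmentation ideal $(T)$ precisely when $\sum_i n_i e_i\in\ms{I}^\times$.

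The remaining task is to exhibit a $\mc{D}\in I$ realizing the unit condition above; this is a \v{C}ebotarev density exercise. One picks primes $\mf{l}_1,\mf{l}_2$ of $\o$ coprime to $6\p$ whose associated pairs $(\mr{N}\mf{l}_j-1,\,e_j)\in\Z_p^2$ are $\Z_p$-linearly independent, and then solves the resulting $2\times 2$ linear system (clearing denominators if needed) to obtain integer multiplicities $n_j$ with $\sum_j n_j(\mr{N}\mf{l}_j-1)=0$ while $\sum_j n_j e_j\in\Z_p^\times$. \textbf{The main obstacle} is the simultaneous satisfaction of these two constraints, and it is overcome by the density of Artin symbols provided by \v{C}ebotarev's theorem, which ensures that the map $\mf{l}\mapsto(\mr{N}\mf{l}-1,\,e_\mf{l})$ has image of maximal $\Z_p$-rank in $\Z_p^2$, so two well-chosen primes suffice.
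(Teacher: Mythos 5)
Your proposal correctly identifies $\eta_\mf{a} = \mr{N}\mf{a} - \sigma_\mf{a}$ as the natural building block, and your expansion showing $\theta_\mc{D}|_\Gamma = \left(\sum_i n_i e_i\right)T + O(T^2)$, using $\sum_i n_i(\mr{N}\mf{a}_i - 1) = 0$, is correct as far as it goes. But there are two genuine gaps.

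The most serious is the $h$-fold convolution. You compute the $\chi_\p^k$-moment of $\theta_\mc{D}$ to be $c_k(\mc{D})$ and then pass to $\theta_\mc{D}^{\star h}$ to reach $c_k(\mc{D})^h$. This is incompatible with the other requirement in the lemma: convolution is just multiplication in $\Lambda_\ms{I}(\Gamma)$, so if $\theta_\mc{D}|_\Gamma = uT + O(T^2)$ with $u \in \ms{I}^\times$, then $\theta_\mc{D}^{\star h}|_\Gamma = u^h T^h + O(T^{h+1})$, which generates $(T^h) \subsetneq (T)$ as soon as $h > 1$. The convoluted element therefore fails to generate the augmentation ideal, and the construction of $\nu_\mf{p}$ in \eqref{eqnup} would break. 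In the paper the exponent $h$ does not arise by iterating $\theta_\mc{D}$: the map $i$ of \eqref{mapi} and the measure $\nu_\mc{D}$ of Lemma \ref{lem4.2} are built with a product over $\chi \in G^*$, so the $h$-th power is carried by the pairing itself, while $\theta_\mc{D}$ stays of degree one in the Artin symbols.

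The second gap is in the \v{C}ebotarev step. The index set $I$ of \eqref{defi} requires each $\mf{a}_i$ to be a \emph{principal} ideal $(\alpha_i)$ prime to $6\mf{p}$. Picking two prime ideals $\mf{l}_1, \mf{l}_2$ of $\o$ by density gives no such principality unless you further impose complete splitting in $H/K$, and you would then have to check that the $\Z_p$-independence of the pairs $(\mr{N}\mf{l} - 1, e_\mf{l})$ survives this restriction; none of this is addressed. Both difficulties vanish in the paper's argument, which avoids \v{C}ebotarev entirely: by the Chinese remainder theorem one picks a single $\alpha \in \o$ with $\alpha \equiv 1 \bmod \mf{p}^{m+1}$ and $\alpha \equiv 1+p^m \bmod \mf{p}^{*\,m+1}$ (with $m=1$ or $2$ as $p>2$ or $p=2$), and takes $\mc{D}$ with $\mf{a}_1 = (\alpha)$, $\mf{a}_2 = (\ovl{\alpha})$, $n_1=1$, $n_2=-1$. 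Since $\mr{N}\mf{a}_1 = \mr{N}\mf{a}_2$, the norm terms cancel and $\theta_\mc{D} = \sigma_\mf{a} - \sigma_{\ovl{\mf{a}}}$. Writing $\sigma_\mf{a}|_\Gamma = \gamma^a$, $\sigma_{\ovl{\mf{a}}}|_\Gamma = \gamma^b$ and applying the $p$-adic logarithm, the congruences force $a \equiv 0 \bmod p$ and $b \not\equiv 0 \bmod p$, so $\theta_\mc{D}|_\Gamma = \gamma^a(1-\gamma^{b-a})$ with $b-a$ a $p$-adic unit, which is visibly a generator of the augmentation ideal.
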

\begin{proof}
Choose $\a\in\o$ so that $\a\equiv1\bmod \p^{m+1}$, $\a\equiv 1+p^m \bmod \p^{*m+1}$ where $m=1$ or $2$ according as $p>2$ or $p=2$, and define $\mf{a}=(\a)$. Take $\mf{a}_1=\mf{a}$, $\mf{a}_2=\ovl{\mf{a}}$, $n_1=1$, $n_2=-1$. Then $(\{\mf{a}_1,\mf{a}_2\},\{n_1,n_2\})\in I$. Write $\sigma_\mf{a}$ for the Artin symbol $(\mf{a}, H_\infty/K)$ of $\mf{a}$ for $H_\infty/K$. Note that $(\mf{a},H/K)=1$ since $\mf{a}$ is principal, so that we can consider $\sigma_\mf{a}$ as an element of $\Gamma$. We will show that the measure
\begin{align*}\theta_\mc{D}=-(\mathrm{N}\mf{a}-\s_\mf{a}-(\mathrm{N}\ovl{\mf{a}}-\s_{\ovl{\mf{a}}}))=\s_\mf{a}-\s_{\ovl{\mf{a}}},
\end{align*}
has the desired property. Indeed, we have $\chi_\p^k(\theta_\mc{D})=c_k(\mc{D})^h$, so it remains to show that $\theta_{\mc{D}}|_\G$ generates the augmentation ideal of $\mathscr{I}[[\G]]$. In order to do this, let us fix a topological generator of $\c$ of $\G$, and write $\s_\mf{a}|_\G=\c^a$, $\s_{\ovl{\mf{a}}}|_\G=\c^b$ where $a,b\in\Z_p$. It suffices to show that $\theta_{\mc{D}}|_\G=(1-\c)\cdot u$ for $u\in\Z_p[[\G]]^\times$. Now, we have $\G\simeq \Z_p$ and $\frac{1}{p^m}\log: 1+p^m\Z_p\to \Z_p$ sending $1+p^mx\mto \frac{1}{p^m}\sum\limits_{i=1}^\i (-1)^{i-1}\left(\frac{p^m x}{i}\right)^i$ is an isomorphism.  Hence $\p^{m+1}\mid \a-1$ implies $a\equiv 0\bmod p$, and $\a^*$ generates $1+p^m\o_\p$ so $b\not\equiv 0\bmod p$. Now,
\begin{align*}\s_\mf{a}|_\G-\s_{\ovl{\mf{a}}}|_\G=\c^a-\c^b=\c^a(1-\c^{b-a}),
\end{align*}
where clearly $\c^a$ is a unit, and also  $b-a\not\equiv 0\bmod p$ so $1-\c^{b-a}$ is a product of $(1-\c)$ and a unit, as required.
\end{proof}

We define
\begin{equation}\label{eqnup}\nu_\mf{p}=\nu_\mc{D}/\theta_\mc{D}.
\end{equation}

This is a pseudo-measure, since $(1-\c)\cdot\frac{1}{\theta_\mc{D}}$ is a unit by the proof of Lemma \ref{lem2}. The following is an immediate consequence of Lemma \ref{lem4.2} and Lemma \ref{lem2}.

\begin{thm}\label{thm4.2.7} There exists a unique element $\nu_\mf{p}$ belonging to the quotient field $\Lambda_\ms{I}(\ms{G})$ such that, for all integers $k\geqs 1$ with $k\equiv 0\bmod \#(\D)$, we have
\[\O_\mf{p}(E/H)^{-k}\int_{\ms{G}}\chi_\mf{p}^k d\nu_\mf{p}=\left((k-1)!\right)^h \O_\i(E/H)^{-k} L(\ovl{\psi}_{E/H}^k,k) \prod_{v\in P}\left(1-\frac{\psi_{E/H}^k(v)}{\mr{N}v}\right).\]
Furthermore, the denominator of $\nu_\mf{p}$ is given by $\gamma-1$, so that $(\gamma-1)\nu_\mf{p}\in \Lambda_\ms{I}(\ms{G})$.
\end{thm}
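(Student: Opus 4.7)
The plan is to obtain $\nu_\mf{p}$ as a formal consequence of Lemmas \ref{lem4.2} and \ref{lem2} via the ratio $\nu_\mf{p} = \nu_\mc{D}/\theta_\mc{D}$ of \eqref{eqnup}; all the hard analytic work has been done, and what remains is purely algebraic.

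First I would verify that $\theta_\mc{D}$ is not a zero divisor so that the ratio makes sense in the total ring of fractions. Since $\mf{a}=(\alpha)$ is principal, both Artin symbols $\sigma_\mf{a},\sigma_{\ovl{\mf{a}}}\in\ms{G}$ restrict trivially to $H$ and hence lie in $\Gamma=\Gal(H_\infty/H)$. Thus $\theta_\mc{D}=\sigma_\mf{a}-\sigma_{\ovl{\mf{a}}}$ actually sits in the integral-domain subring $\Lambda_\ms{I}(\Gamma)\sb\Lambda_\ms{I}(\ms{G})$, and Lemma \ref{lem2} gives the factorization $\theta_\mc{D}=(\gamma-1)u$ with $u\in\Lambda_\ms{I}(\Gamma)^\times$. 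This factorization immediately yields the denominator assertion of the theorem: $(\gamma-1)\nu_\mf{p}=u^{-1}\nu_\mc{D}\in\Lambda_\ms{I}(\ms{G})$.

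Next, for the interpolation formula, with the explicit $\mc{D}$ chosen in the proof of Lemma \ref{lem2} one computes $c_k(\mc{D})=\alpha^k-\ovl{\alpha}^k$, which is nonzero for every $k\geqs 1$ since $\alpha\notin\R$ (by the congruences $\alpha\equiv 1\bmod\p^{m+1}$ and $\alpha\equiv 1+p^m\bmod\p^{*m+1}$). Hence the scalar $\int_\ms{G}\chi_\mf{p}^k\,d\theta_\mc{D}=c_k(\mc{D})^h$ from Lemma \ref{lem2} is invertible in $\C_p$, and dividing the identity of Lemma \ref{lem4.2} by it cancels $c_k(\mc{D})^h$ on the right-hand side to produce exactly the interpolation formula claimed in the theorem.

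Uniqueness of $\nu_\mf{p}$ in the quotient field (and, implicitly, independence of the choice of $\mc{D}$) then follows by the standard observation that the integrals against the infinite family of characters $\chi_\mf{p}^k$ ($k\geqs 1$, $k\equiv 0\bmod\#(\Delta)$) determine a pseudo-measure on $\ms{G}$ whose denominator is bounded by $\gamma-1$. There is no serious obstacle in this theorem: the entire content lies in verifying that the normalization factor $c_k(\mc{D})^h$ is invertible, and the substantive analytic work has already been carried out in the two preceding lemmas.
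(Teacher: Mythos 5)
Your proposal is correct and follows essentially the same route as the paper: the paper defines $\nu_\mf{p}=\nu_\mc{D}/\theta_\mc{D}$ in \eqref{eqnup}, observes that $(1-\gamma)\cdot\theta_\mc{D}^{-1}$ is a unit by the factorization established in the proof of Lemma \ref{lem2}, and deduces the interpolation formula by combining Lemma \ref{lem4.2} with Lemma \ref{lem2}. The only small caveat is your justification that $c_k(\mc{D})=\alpha^k-\ovl{\alpha}^k\neq 0$: the fact that $\alpha\notin\R$ is not by itself enough (e.g.\ $i^4=(-i)^4$); what actually gives it is that $\alpha/\ovl{\alpha}$ is not a root of unity, which follows from the congruences since they force $\alpha\neq\pm\ovl{\alpha}$ and $K=\Q(\sqrt{-q})$ has only $\pm 1$ as roots of unity (equivalently, one can see it $\mf{p}$-adically, where $\chi_\mf{p}(\sigma_\mf{a})$ and $\chi_\mf{p}(\sigma_{\ovl{\mf{a}}})$ are distinct elements of the torsion-free group $1+p^m\Z_p$).
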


If we in addition assume $(p,h)=1$, the idempotents $e_\chi:=\frac{1}{\#(G)}\sum_{g\in G}\chi^{-1}(g)g$ corresponding to any $\chi\in G^*$ lie inside $\Lambda_\ms{I}(\ms{G})$, and thus we can decompose $\nu_\mf{p}$ as a sum of elements in $e_\chi\Lambda_\ms{I}(\Gamma)$. Given $\chi\in G^*$, let $\nu_\mf{p}^\chi\in\Lambda_\ms{I}(\Gamma)$ denote the $\chi$-part of $\nu_\mf{p}$ in the decomposition. Then we have shown that $\nu_\mf{p}^\chi\in \Lambda_\ms{I}(\Gamma)$ for every $\chi\neq 1$, and $(\gamma-1)\nu_\mf{p}^\chi \in \Lambda_\ms{I}(\Gamma)$ for $\chi=1$. Thus, identifying $\Lambda_\ms{I}(\Gamma)$ with $\ms{I}[[T]]$ via the map sending $\gamma$ to $1+T$, we have $\nu_\mf{p}^\chi\in \ms{I}[[T]]/T$ when $\chi$ is trivial. The pseudo-measure $\nu_\mf{p}$ will be used for the main conjecture for $H_\infty/H$.

\subsection{Elliptic Units}\label{section4.3}~
\vspace{5 pt}\\
In this section, we will use the rational function $R_\mc{D}$ to generate elliptic units.

For $n\geqs 1$ and $\sigma\in G$, let $P_n^{(\sigma)}$ be a primitive $\mf{p}^n$-division point on $E^\sigma$ satisfying $\lambda_{E^\sigma}(\mf{p}) P_n^{(\sigma)}=P_{n-1}^{(\sigma\sigma_\mf{p})}$, where $\sigma_\mf{p}$ is the Artin symbol of $\mf{p}$ for $H/K$. Note that we can write $P_n^{(\sigma)}=\Phi(\rho, L_\sigma)$ for some $\rho\in\mathfrak{p}^{-n}L_\sigma\backslash L_\sigma$. Given an  integral ideal $\mf{b}$ of $K$ prime to $\mf{a}_i$ and $\mf{p}$, the image of $P_n$ under the Artin symbol of $\mf{b}$ for $H(E_{\mf{p}^n})/K$ is $\lambda_{E}(\mf{b})(P_n)$, so a choice of  $P_n^{(\sigma_\mf{b})}$ for the Artin symbol $\sigma_\mf{b}$ of $\mf{b}$ for $H/K$ is given by 
\[P_n^{(\sigma_\mf{b})}=\Phi(\Lambda(\mf{b})\rho, L_{\sigma_\mf{b}}),\]
which is a point on $E^{\sigma_\mf{b}}$.

It can be shown that $R_\mf{a}(P_n)\in K(\p^n)$, where $K(\p^n)$ denotes the ray class field of $K$ modulo $\p^n$ (see \cite[Theorem 4.3.1]{kez}). 

\begin{prop}\label{cor1}For any integer $m\geqs 2$, we have
\[\mr{N}_{F_m/F_{m-1}}R_\mf{a}(P_m)=R_\mf{a}^{\sigma_\mf{p}}(P_{m-1}^{(\sigma_\mf{p})}),\]
where $\sigma_\mf{p}=(\mf{p}, H/K)$ denotes the Artin symbol of $\mf{p}$ for the extension $H/K$.
\end{prop}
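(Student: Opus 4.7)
The plan is to deduce the statement directly from the distribution relation of Proposition \ref{prop3} applied to $\mf{b}=\mf{p}$, after identifying the product appearing there with the norm from $F_m$ to $F_{m-1}$. Since $(\mf{a},6\mf{p})=1$ by our standing hypothesis on the index set $I$, the hypothesis $(\mf{b},\mf{a})=1$ of Proposition \ref{prop3} is satisfied, and we may take $P=P_m$.

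First, I would observe that $R_\mf{a}$ is a rational function with coefficients in $H\sbe F_{m-1}$, and that $P_m\in E(F_m)$, so $R_\mf{a}(P_m)\in F_m$ and it makes sense to form $\mr{N}_{F_m/F_{m-1}}R_\mf{a}(P_m)$. Next, I would invoke the compatibility built into the system $\{P_n^{(\s)}\}$: the recipe $P_n^{(\sigma_\mf{b})}=\Phi(\Lambda(\mf{b})\rho, L_{\sigma_\mf{b}})$ together with the fact that $\lambda_E(\mf{p})\circ \Phi(-,L)=\Phi(\Lambda(\mf{p})\cdot -,L_{\sigma_\mf{p}})$ gives
\[\lambda_E(\mf{p})(P_m)=P_{m-1}^{(\sigma_\mf{p})}.\]
Proposition \ref{prop3} applied with $\mf{b}=\mf{p}$ then yields
\[R_\mf{a}^{\sigma_\mf{p}}\bigl(P_{m-1}^{(\sigma_\mf{p})}\bigr)=\prod_{R\in E_\mf{p}}R_\mf{a}(P_m\oplus R).\]

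It remains to recognise the right-hand side as the norm from $F_m$ to $F_{m-1}$. Because the coefficients of $R_\mf{a}$ lie in $H\sbe F_{m-1}$, each $\t\in\Gal(F_m/F_{m-1})$ acts only on the argument, so $\t(R_\mf{a}(P_m))=R_\mf{a}(\t(P_m))$. By Theorem \ref{thm10} (applied locally at a prime above $\mf{p}$, and globalised using that $F_m/H$ is totally ramified above $\mf{p}$ with Galois group $(\o/\mf{p}^m)^\times$), the restriction $\chi_\mf{p}$ identifies $\Gal(F_m/F_{m-1})$ with $(1+\mf{p}^{m-1}\o_\mf{p})/(1+\mf{p}^m\o_\mf{p})$. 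Writing $\chi_\mf{p}(\t)=1+\pi^{m-1}u_\t$ for a uniformiser $\pi$ of $\mf{p}$, we have $\t(P_m)=P_m\oplus(\pi^{m-1}u_\t\cdot P_m)$, and as $\t$ varies the translation part runs through all of $E_\mf{p}$ bijectively. Hence
\[\mr{N}_{F_m/F_{m-1}}R_\mf{a}(P_m)=\prod_{\t\in\Gal(F_m/F_{m-1})}R_\mf{a}(\t(P_m))=\prod_{R\in E_\mf{p}}R_\mf{a}(P_m\oplus R),\]
and the statement follows.

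There is no serious obstacle here; the only points worth double-checking are the two bookkeeping matters above: the compatibility $\lambda_E(\mf{p})(P_m)=P_{m-1}^{(\sigma_\mf{p})}$ of the chosen division-point system, and the identification of $\Gal(F_m/F_{m-1})$ with $E_\mf{p}$ via translation, both of which are immediate from the setup.
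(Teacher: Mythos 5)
Your proposal is correct and takes essentially the same route as the paper: identify the Galois conjugates of $P_m$ under $\Gal(F_m/F_{m-1})$ with the translates $P_m\oplus R$ for $R\in E_\mf{p}$, apply the distribution relation of Proposition \ref{prop3} with $\mf{b}=\mf{p}$, and invoke the compatibility $\lambda_E(\mf{p})(P_m)=P_{m-1}^{(\sigma_\mf{p})}$ built into the choice of the division-point system. You spell out the identification of $\Gal(F_m/F_{m-1})$ with $E_\mf{p}$ via $\chi_\mf{p}$ in slightly more detail than the paper, but the argument is the same.
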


\begin{proof} Write $\Phi(v,L)=P_m$. The conjugates $\Phi(v,L)^\tau$ of $\Phi(v,L)$ as $\tau$ runs over $\Gal(F_{mh}/F_{(m-1)h})$ are $\Phi(v+u, L)$ for $\Phi(u,L)\in E_\mf{p}$. Hence
\[\mathrm{N}_{m,n}R_\mf{a}(\Phi(v,L))=\prod_{u\in \mf{p}^{-1}L/L}R_\mf{a}(\Phi(v+u, L)).\]
But by Proposition \ref{prop3}, the right hand side is equal to $R_\mf{a}^{\sigma_\mf{p}}\left(\lambda_{E}(\mf{p})(\Phi(v, L))\right)=R_\mf{a}^{\sigma_\mf{p}}\left(\Phi(\Lambda(\mf{p}) v, L_{\sigma_\mf{p}})\right)$, and $\Phi(\Lambda(\mf{p}) v, L_{\sigma_\mf{p}})$ is a primitive $\mf{p}^{m-1}$ torsion point of $E^{\sigma_\mf{p}}$. Hence $\Phi(\Lambda(\mf{p}) v, L_{\sigma_\mf{p}})=P_{m-1}^{(\sigma_\mf{p})}$ by our choice of $\mf{p}$-power torsion points.
\end{proof}

Let  $L$ be an arbitrary finite extension of $K$. We say that $a\in L$ is a \emph{universal norm} from $L(E_{\mf{p}^\infty})$ if it is a norm from $L(E_{\mf{p}^n})$ for every $n\geqs 0$. The following is well-known (see \cite[Lemma 5]{coa}).

\begin{lem}\label{lem15} Let $L$ be a finite extension of $K$, and $a\in L^\times$ a universal norm from $L(E_{\mf{p}^\infty})$. Then every prime which divides $a$ lies above $\mf{p}$.
\end{lem}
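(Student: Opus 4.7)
Fix a prime $\q$ of $L$ with $\q\nmid\p$; the goal is to show $\ord_\q(a)=0$. For every $n\geqs 0$, choose $b_n\in L_n^\times$ with $a=\mr{N}_{L_n/L}(b_n)$, where $L_n=L(E_{\p^n})$. The norm-valuation formula for a number-field extension, assembled from its local contributions, gives
\[\ord_\q(a)\;=\;\sum_{\mathfrak{Q}\mid\q} f(\mathfrak{Q}/\q)\,\ord_{\mathfrak{Q}}(b_n),\]
so $\ord_\q(a)$ is divisible by $\min_{\mathfrak{Q}\mid\q}f(\mathfrak{Q}/\q)$ for every $n$. It therefore suffices to show that these residue degrees tend to infinity with $n$, equivalently that the Frobenius at $\q$ has infinite order in $\Gal(L_\infty/L)$, where $L_\infty=L(E_{\p^\infty})$.

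The plan is to read this off from the action of $\Gal(L_\infty/L)$ on $E_{\p^\infty}$. By the character $\chi_\p$ of Section \ref{ch3} (applied to $L$ in place of $H$), $\Gal(L_\infty/L)$ embeds into $\o_\p^\times=\Z_p^\times$. Assume first that $\q$ is a prime of good reduction of $E$, which excludes only finitely many $\q$. Then by N\'eron--Ogg--Shafarevich, $\chi_\p$ is unramified at $\q$, so $\Frob_\q$ is a well-defined element of $\Gal(L_\infty/L)$, and the theory of complex multiplication identifies $\chi_\p(\Frob_\q)=\psi_{E/L}(\q)$. Since $\q\nmid\p$, the value $\psi_{E/L}(\q)\in\o$ is a $\p$-adic unit, and we are reduced to showing it has infinite order in $\o_\p^\times$.

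The core argument: if $\psi_{E/L}(\q)^k=1$ in $\o_\p$ for some $k\geqs 1$, then by injectivity of the embedding $\o\inj\o_\p$ the same identity holds in $\o=\o_K$, so $\psi_{E/L}(\q)$ is a root of unity in $\o_K$. For $K=\Q(\sq{-q})$ with $q\geqs 7$, the only roots of unity in $\o_K$ are $\pm 1$, so $\psi_{E/L}(\q)=\pm 1$, giving $\mr{N}\q=\psi_{E/L}(\q)\ovl{\psi}_{E/L}(\q)=1$, a contradiction. Hence $\Frob_\q$ has infinite order in $\Gal(L_\infty/L)$; the order of its image in $\Gal(L_n/L)$ then tends to $\infty$ as $n\to\i$, and these orders are precisely the residue degrees $f(\mathfrak{Q}/\q)$ above. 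Thus $\ord_\q(a)=0$.

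For the finitely many primes $\q\nmid\p$ at which $E$ has bad reduction (in our setting, only those above $q$), the same reasoning applies after passing to the quotient of $\Gal(L_\infty/L)$ by the (necessarily finite) inertia image at $\q$, so the argument goes through uniformly. The main obstacle is the infinite-order assertion for $\psi_{E/L}(\q)$, which is resolved by the rigidity of $\o_K^\times$ combined with $\mr{N}\q>1$.
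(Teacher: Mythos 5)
The paper does not prove this lemma; it only cites it as well-known, referring to \cite[Lemma 5]{coa}, and your argument is a correct reconstruction of the standard proof there. The essential points are all present and sound: the norm--valuation identity $\ord_\q(a)=\sum_{\mathfrak{Q}\mid\q}f(\mathfrak{Q}/\q)\ord_{\mathfrak{Q}}(b_n)$, the reduction to showing that the residue degrees grow without bound (equivalently that $\Frob_\q$ has infinite order in $\Gal(L_\infty/L)$), and the key CM input $\chi_\p(\Frob_\q)=\psi_{E/L}(\q)\in\o_K$, which cannot be a root of unity because $|\psi_{E/L}(\q)|^2=\mr{N}\q>1$ while $\o_K^\times=\{\pm1\}$ for $q\geqs 7$; your handling of the finitely many bad-reduction primes via the finiteness of the inertia image is also fine (one could equally replace $L$ by a finite extension over which $E$ acquires good reduction everywhere outside $\p$). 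One cosmetic slip worth noting: $\ord_\q(a)$ is a priori divisible by the $\gcd$ of the $f(\mathfrak{Q}/\q)$, not by their minimum, but since $L_n/L$ is Galois all these residue degrees coincide, so the conclusion is unaffected.
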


\begin{cor}\label{cor2} $R_\mc{D}(P_n)$ are global units.
\end{cor}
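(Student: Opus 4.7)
The plan is to combine a universal-norm argument (which will use Lemma~\ref{lem15} to constrain the divisor of $R_\mc{D}(P_n)$ away from $\p$) with a short formal-group computation at the primes above $\p$.

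For the first step, I would iterate the norm relation of Proposition~\ref{cor1}. Multiplying that relation over $i$ with exponents $n_i$ gives
\[\mr{N}_{F_m/F_{m-1}}\,R_\mc{D}(P_m)=R_\mc{D}^{\s_\p}\!\left(P_{m-1}^{(\s_\p)}\right),\]
and applying the analogous identity for $E^{\s_\p},E^{\s_\p^2},\ldots$ and iterating $f$ times (where $f$ is the order of $\s_\p$ in $G$) one obtains $\mr{N}_{F_{n+f}/F_n}R_\mc{D}(P_{n+f})=R_\mc{D}(P_n^{(\s_\p^f)})$, which is a Galois conjugate of $R_\mc{D}(P_n)$. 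Since any Galois conjugate of $R_\mc{D}(P_{n+f})$ is again the value of $R_\mc{D}$ at a primitive $\p^{n+f}$-division point (the coefficients of $R_\mc{D}$ lying in $H$), it follows that $R_\mc{D}(P_n)$ is literally a norm from $F_{n+f}$, hence by transitivity from every $F_m$ with $m\geqs n$. Thus $R_\mc{D}(P_n)$ is a universal norm from $F_\i=H(E_{\p^\i})$ to $F_n$, and Lemma~\ref{lem15} implies that every prime of $F_n$ dividing it lies above $\p$.

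For the second step, fix a prime $\mf{P}\mid\p$ of $F_n$ and let $v$ denote its restriction to $H$. By good reduction at $v$ and Theorem~\ref{thm10}, $P_n$ lies in the formal group $\widehat{E}/\o_v$ and $t(P_n)=-x(P_n)/y(P_n)$ is a uniformizer of $\mf{P}$. Because $(\mf{a}_i,\p)=1$, each $x(U)$ for $U\in E_{\mf{a}_i}\setminus\{O\}$ is $v$-integral, so the $t$-expansion $B_\mc{D}(t)$ of $R_\mc{D}(P)$ has coefficients in $\o_v$; hence $R_\mc{D}(P_n)\equiv B_\mc{D}(0)\pmod{\mf{P}}$. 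Expanding via $x=t^{-2}+\cdots$ gives $R_\mf{a}(P)=c_E(\mf{a})\,t^{\mr{N}\mf{a}-1}(1+O(t))$, so the balance condition $\sum_i n_i(\mr{N}\mf{a}_i-1)=0$ in the definition of $I$ yields $B_\mc{D}(0)=\prod_i c_E(\mf{a}_i)^{n_i}$. From $c_E(\mf{a})^{12}=\D(E)^{\mr{N}\mf{a}-1}/\Lambda(\mf{a})^{12}$, with $\D(E)$ a $v$-unit by good reduction and $\Lambda(\mf{a})$ a $v$-unit because $\widehat{\lambda}_E(\mf{a})$ is an isomorphism of formal groups when $(\mf{a},\p)=1$, it follows that $B_\mc{D}(0)$—and therefore $R_\mc{D}(P_n)$—is a unit at $\mf{P}$.

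The main technical hurdle is the careful bookkeeping in the universal-norm step: the primitive division points $P_m^{(\s)}$ are defined only up to the action of $\mf{H}$, so one must verify that the iterated norm indeed returns to a Galois conjugate of $R_\mc{D}(P_n)$, which is exactly what is needed to invoke Lemma~\ref{lem15}. Once this is set up correctly, the remainder is essentially a formal-group power-series computation.
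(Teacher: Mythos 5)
Your argument is correct and follows the same two-step strategy as the paper: a universal-norm argument via Proposition~\ref{cor1} and Lemma~\ref{lem15} to control the divisor away from $\mf{p}$, followed by a local computation at $\mf{P}\mid\mf{p}$ using the balance condition $\sum_i n_i(\mr{N}\mf{a}_i-1)=0$. Your careful handling of the $\sigma_\mf{p}$-twist in the norm relation (iterating $f=\mathrm{ord}(\sigma_\mf{p})$ times to return to a Galois conjugate on $E$) spells out something the paper compresses into ``norm compatible in the tower.'' The local step is organized slightly differently: the paper computes $\ord_\mf{P}(R_{\mf{a}_i}(P_n))$ directly from $\ord_\mf{P}\left(x(P_n)-x(U)\right)=\ord_\mf{P}\left(x(P_n)\right)$, whereas you pass to the $t$-expansion, identify the constant term $B_\mc{D}(0)=\prod_i c_E(\mf{a}_i)^{n_i}$, and show it is a $v$-unit via $c_E(\mf{a})^{12}=\D(E)^{\mr{N}\mf{a}-1}/\Lambda(\mf{a})^{12}$; this actually justifies the paper's unremarked assertion that $\ord_\mf{P}(c_E(\mf{a}_i))=0$, and it inverts the logical order of the paper's preceding lemma (which deduces the unit property of $B_\mc{D}(0)$ \emph{from} Corollary~\ref{cor2}). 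Both versions rest on the same three facts --- $v$-integrality of $x(U)$, the balance condition, and the unit property of $c_E(\mf{a}_i)$ --- so they are substantively the same. One tiny remark: you do not need $t(P_n)$ to be a uniformizer of $\mf{P}$, only that $t(P_n)\in\mf{m}_\mf{P}$, which is automatic since $P_n\in E_{\mf{p}^n}$ lies in the kernel of reduction at $v\mid\mf{p}$.
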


\begin{proof} We note that if $\mc{D}=(\mf{a}_i, n_i)$, then $R_{\mf{a}_i}(P_n)$ is a unit outside $\mf{p}$ again by Lemma \ref{lem15} because $R_{\mf{a}_i}(P_m)$  ($m=1,2,\ldots$) is norm compatible in the tower $F_\infty$ over $F$ by Corollary \ref{cor1}. If $\mf{P}\mid \mf{p}$ is a prime of $F_n$, we have $\ord_\mf{P}(x(P_n))<0$ but $\ord_\mf{P}(x(U))\geqs 0$ for any $U\in E_{\mf{a}_i}\backslash \{\o\}$, giving $\ord_\mf{P}(x(P_n)-x(U))=\ord_\mf{P}(x(P_n))$. Recalling that $\ord_\mf{P}(c_E(\mf{a}_i))=0$, we have $\ord_\mf{P}(R_{\mf{a}_i}(P_n))=\frac{1}{2}(\mr{N}\mf{a}_i-1)\ord_\mf{P}(x(P_n))$, because $(E_{\mf{a}_i}\backslash\{\o\})/\{\pm 1\}$ has order $\frac{1}{2}(\mr{N}\mf{a}_i-1)$. Hence
\begin{align*}\ord_\mf{P}(R_{\mc{D}}(P_n))&=\frac{1}{2}\ord_\mf{P}(x(P_n))\sum_{i}n_i(\mr{N}\mf{a}_i-1)=0,
\end{align*}
since $\sum_{i}n_i(\mr{N}\mf{a}_i-1)=0$ by the definition of $\mc{D}$. It follows that  $R_{\mc{D}}(P_n)$ is a unit. 
\end{proof}

Let $H_n=F_n\cap H_\infty$. For $n\geqs 1+e$ with $e=0$ or $1$ according as $p$ is odd or even, we have $[H_n:H]=p^{n-1-e}$. Note in particular that $H_n=H$ for $0\leqs n<1+e$. 
Furthermore, for each $n\geqs 0$, the classical theory of complex multiplication shows that $F_n=H(E_{\mathfrak{p}^n})$ contains the field $K(\mathfrak{p}^n)$, the ray class field of $K$ modulo $\mathfrak{p}^n$. Then if $p=2$, we have $H_n=K(\p^n)$ and
\[H_\infty=K(\mathfrak{p}^\infty)=\bigcup_n K(\mathfrak{p}^n)\]
is a $\Z_p$-extension of $H$. We identify $\Gamma$ with $\mathrm{Gal}(H_\infty/H)$. If $p>2$, $[K(\p^n):H]=\frac{1}{2}p^{n-1}(p-1)$ and $K(\p^n)$ strictly contains $H_n$.

Let $U_{H_n}$ denote the group of semi-local units of $H_n\otimes_K K_\p=\oplus_{\mathfrak{P}\mid \mathfrak{p}}H_{n,\mathfrak{P}}$ which are congruent to $1$ modulo the primes above $\mathfrak{p}$. We denote by $U_{H_\infty}$ the projective limit of the groups $U_{H_n}$ with respect to the norm maps. Similarly denote by $U_{F_n}$ and $U_{F_\infty}$ the corresponding objects for $F_n$ and $F_\infty$. Let $\mf{R}_\mc{D}^\sigma(P_n^{(\sigma)})=N_{K(\p^n)/H_n}R_\mc{D}^\sigma(P_n^{(\sigma)})$. In particular, $\mf{R}_\mc{D}^\sigma(P_n^{(\sigma)})=R_\mc{D}^\sigma(P_n^{(\sigma)})$ if $p=2$. 

\begin{defn} \begin{enumerate}
\item Define the group $\mc{C}_{H_n}$ to be the group generated by $\mf{R}_\mc{D}^\sigma(P_n^{(\sigma)})$ for all $\sigma\in G$, as $\mc{D}$ runs over the index set $I$. 
\item
We let $\bar{\mc{C}}_{H_n}$ denote the closure of $\mc{C}_{H_n}$ in $U_{H_n}$, and define the group of elliptic units
\[\bar{\mc{C}}_{H_\infty}=\varprojlim \bar{\mc{C}}_{H_n}\sb U_{H_\infty}\]
where the inverse limit is taken with respect to the norm maps.
\end{enumerate}
\end{defn}
Note that $\mc{C}_{H_n}$ is stable under the action of $\Gal(H_n/K)$, and does not depend on the choice of $P_n$. Note also that the roots of unity in $H_n$ are just $\{\pm 1\}$.

Given $\mf{u}=(u_n)\in U_{F_\infty}$, let $g_\mf{u}(W)\in \o_F\otimes_\o \o_\mf{p}[[W]]$ denote the Coleman power series  of $\mf{u}$ (see \cite[Theorem I.2.2]{dS} for more details), where $\o_F$ denotes the ring of integers of $F$. We write
\[\widetilde{\log}\: g_\mf{u}(W)=\log \: g_\mf{u}(W)-\frac{1}{p}\sum_{\omega\in\mc{D}_{\sigma,\mf{p}}}\log g_\mf{u}(W[+]\omega),\]
where we recall that $\mc{D}_{\sigma,\mf{p}}=\widehat{E}^\sigma_\mf{p}$ can be identified with $E^\sigma_\mf{p}$. It is well-known \cite[Lemma I.3.3]{dS} that $\widetilde{\log} \: g_\mf{u}(W)$ has integral coefficients. Define
\begin{equation*}i: U_{F_\infty}\to \Lambda_\ms{I}(\Gal(F_\infty/K))
\end{equation*}
by
\[\mf{u}\mapsto \mu_\mf{u}:=\prod_{\chi\in G^*}\sum_{\sigma\in G}\chi(\sigma)\varphi_K(\mf{s})^{-k}\mu_{\mf{u},\sigma},\]
where $\mu_{\mf{u},\sigma}$ is the measure satisfying
\begin{equation}\label{eq4.3.9}\widetilde{\log} \: g_\mf{u}\circ \delta_{\sigma,v}(W)=\int_{\mf{H}}(1+W)^{\chi_\mf{p}(\tau)}d\mu_{\mf{u},\sigma}(\tau).
\end{equation}
This induces an injective pseudo-isomorphism (\cite[Proposition III.1.3]{dS})
\begin{equation}\label{mapi} i: U_{F_\infty}\hat{\otimes}_{\Z_p}\ms{I}\to \Lambda_\ms{I}(\Gal(F_\infty/K)).
\end{equation}
Let $\mf{u}_\mc{D}=(R_\mc{D}^\sigma(P_n^{(\sigma)}))$. Then by construction, $\widetilde{\log} \: g_{\mf{u}_\mc{D}} = C_\mc{D}^\sigma$ where $C_\mc{D}^\sigma$ is defined in Lemma \ref{lem4.1.4}, and thus $i(\mf{u}_\mc{D})=\nu_\mc{D}=\prod_{\chi\in G^*}\sum_{\sigma\in G}\chi(\sigma)\varphi_K(\mf{s})^{-k}\mu_{\mc{D},\sigma}$. \\

\subsection{Statement of the Main Conjecture for $H_\infty/H$}\label{section4.4}~
\vspace{5 pt}\\
From now on, we always assume that $(p,h)=1$, where $h$ denotes the class number of $K$. This implies that $H_\infty/H$ is totally ramified at all primes above $\mf{p}$, since $K_\infty/K$ is totally ramified at all primes above $\mf{p}$.

Denote by $M(H_\infty)$ the maximal abelian $p$-extension of $H_\infty$ unramified outside the primes of $H_\infty$ above $\p$, and write
\[X(H_\infty)=\mathrm{Gal}(M(H_\infty)/H_\infty).\]

For every $n\geqs 0$, let $\mathcal{E}_{H_n}$ be the group of global units of $H_n$, and let $\bar{\mc{E}}_{H_n}$ be the closure of $\mathcal{E}_{H_n}\cap U_{H_n}$ in $U_{H_n}$ in the $p$-adic topology. Then we define $\bar{\mc{E}}_{H_\infty}=\varprojlim \bar{\mc{E}}_{H_n}$, where the inverse limits are taken with respect to the norm maps. A standard result from global class field theory says that the Artin map induces a $\Gal(H_n/K)$-isomorphism $U_{H_n}/\bar{\mc{E}}_{H_n}\simeq \Gal(M(H_n)/L(H_n))$, where $M(H_n)$ is the maximal abelian $p$-extension of $H_n$ unramified outside of the primes of $H_n$ above $\p$, and $L(H_n)$ is the maximal unramified abelian $p$-extension of $H_n$. Hence, writing $X(H_n)=\Gal(M(H_n)/H_n)$, we have an exact sequence, and taking the projective limit over $n$, we obtain
\begin{equation}\label{eq4.3} 0\to U_{H_\infty}/\bar{\mc{E}}_{H_\infty}\to X(H_\infty)\to \mathrm{Gal}(L(H_\infty)/H_\infty)\to 0,
\end{equation}
where $L(H_\infty)=\cup_{n\geqs 0} L(H_n)$ is the maximal unramified abelian $p$-extension of $H_\infty$.

Let $A(H_n)$ denote the $p$-primary part of the ideal class group of $H_n$, and let $A(H_\infty)$ denote the inductive limit of $A(H_n)$ taken with respect to the natural maps coming from the inclusion of fields. Class field theory identifies $A(H_\infty)$ with $\Gal(L(H_\infty)/H_\i)$. Thus we obtain the fundamental exact sequence needed for the proof of the main conjecture:
\begin{equation}\label{eq4.2}
0\to \bar{\mc{E}}_{H_\infty}/\bar{\mc{C}}_{H_\infty}\to U_{H_\infty}/\bar{\mc{C}}_{H_\infty}\to X(H_\infty)\to A(H_\infty)\to 0.
\end{equation} 
Recall that $\mathscr{G}=\Gal(H_\infty/K)$. Then we have
\[\ms{G}=G\times \Gamma\]
so that characters of $G$ can naturally be considered as characters of $\ms{G}$. 

\begin{lem}\label{lem4.3}We have 
\[i(\bar{\mc{C}}_{H_\infty}\hat{\otimes}_{\Z_p}\ms{I})=I_\ms{I}(\mathscr{G})\cdot\nu_\mf{p},\]
where $i$ denotes the map in \eqref{mapi} and $I_\ms{I}(\mathscr{G})$ denotes the augmentation ideal of $\Lambda_\ms{I}(\mathscr{G})$.
\end{lem}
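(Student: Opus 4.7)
The strategy is to compute $i$ on a generating set for $\bar{\mc{C}}_{H_\infty}\hat{\otimes}_{\Z_p}\ms{I}$ and identify the resulting $\Lambda_\ms{I}(\ms{G})$-module with $I_\ms{I}(\ms{G})\.\nu_\mf{p}$. The two decisive inputs are the identity $i(\mf{u}_\mc{D})=\nu_\mc{D}$ recorded just before this lemma, and the factorisation $\nu_\mc{D}=\theta_\mc{D}\.\nu_\mf{p}$ built into the definition \eqref{eqnup} of $\nu_\mf{p}$.

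First I would show that restricting $i$ to $U_{H_\infty}\sb U_{F_\infty}$ yields a map landing inside $\Lambda_\ms{I}(\ms{G})\sb \Lambda_\ms{I}(\Gal(F_\infty/K))$. This is because $\Delta=\Gal(F_\infty/H_\infty)$ fixes $U_{H_\infty}$ pointwise, so by the $\Gal(F_\infty/K)$-equivariance of $i$ the image consists of $\Delta$-invariant measures, which is exactly the subring $\Lambda_\ms{I}(\ms{G})$. Then I would unwind the definition of $\bar{\mc{C}}_{H_\infty}$: its generators as a closed $\Lambda_\ms{I}(\ms{G})$-module are the norm-compatible sequences obtained by norming the local systems $\mf{u}_\mc{D}\in U_{F_\infty}$ down to $H_\infty$. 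The observation recorded just after Lemma \ref{lem2} shows that $\nu_\mc{D}$ already lives in $\Lambda_\ms{I}(\ms{G})$ (its support contains only characters trivial on $\Delta$, since $k\equiv 0\bmod \#\Delta$), so on the measure side the norm step is just multiplication by $\#\Delta$, a unit in $\ms{I}$ for $p$ odd. Modulo these unit factors the image of each generator equals $\nu_\mc{D}=\theta_\mc{D}\.\nu_\mf{p}$.

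For an arbitrary $\mc{D}\in I$ I would then extend the construction in Lemma \ref{lem2} by defining $\theta_\mc{D}\in\Lambda_\ms{I}(\ms{G})$ via the interpolation $\int_{\ms{G}}\chi_\mf{p}^k\,d\theta_\mc{D}=c_k(\mc{D})^h$ for $k\geqs 0$, and verify the equality $\nu_\mc{D}=\theta_\mc{D}\.\nu_\mf{p}$ by comparing interpolation formulae against Theorem \ref{thm4.2.7} and Lemma \ref{lem4.2}. Crucially $c_0(\mc{D})=\sum_i n_i(\mr{N}\mf{a}_i-1)=0$ by the defining property of $I$, so $\theta_\mc{D}$ has augmentation zero and therefore lies in $I_\ms{I}(\ms{G})$. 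This establishes the inclusion $i(\bar{\mc{C}}_{H_\infty}\hat{\otimes}_{\Z_p}\ms{I})\sb I_\ms{I}(\ms{G})\.\nu_\mf{p}$.

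For the reverse inclusion I would decompose via the idempotents $e_\chi=\frac{1}{\#G}\sum_{g\in G}\chi^{-1}(g)g\in\Lambda_\ms{I}(\ms{G})$, available because $(p,h)=1$, giving $\Lambda_\ms{I}(\ms{G})=\bigoplus_{\chi\in G^*}e_\chi\Lambda_\ms{I}(\G)$ and $I_\ms{I}(\ms{G})=e_1 I_\ms{I}(\G)\oplus\bigoplus_{\chi\neq 1}e_\chi\Lambda_\ms{I}(\G)$. The claim reduces to showing that $\{\theta_\mc{D}:\mc{D}\in I\}$ generates $I_\ms{I}(\ms{G})$, checked component by component. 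For $\chi=1$, Lemma \ref{lem2} already provides a single $\theta_{\mc{D}_0}$ generating $I_\ms{I}(\G)$. For nontrivial $\chi\in G^*$ one must exhibit a $\mc{D}_\chi\in I$ whose $\theta_{\mc{D}_\chi}$ projects to a unit in $e_\chi\Lambda_\ms{I}(\G)$, equivalently a collection $(\mf{a}_i',n_i')$ satisfying $\sum n_i'(\mr{N}\mf{a}_i'-1)=0$ with $\sum n_i'\chi(\s_{\mf{a}_i'}|_G)\in\ms{I}^\times$; this is achievable because $\chi$ is nontrivial on $G$ and the Artin symbols of ideals coprime to $6\mf{p}$ surject onto $G$. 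The main obstacle is exactly this last construction: simultaneously enforcing the combinatorial constraint defining $I$ and producing a unit in the desired $\chi$-component, plus the bookkeeping needed when $p=2$, where $\#\Delta=p=2$ causes the norm step to introduce powers of the uniformiser $\bold{\pi}$ of $\ms{I}$, consistent with the appearance of $\bold{\pi}^k$ in Theorem \ref{thm4.1}(ii).
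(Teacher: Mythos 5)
Your proof plan has the right skeleton and shares the paper's two decisive inputs: the identity $i(\mf{u}_\mc{D})=\nu_\mc{D}=\theta_\mc{D}\nu_\mf{p}$ and the reduction to showing that the ideal generated by $\{\theta_\mc{D}:\mc{D}\in I\}$ equals $I_\ms{I}(\ms{G})$. You also correctly observe that every $\theta_\mc{D}$ has vanishing augmentation because $\sum n_i(\mr{N}\mf{a}_i-1)=0$. However, the reverse inclusion for nontrivial $\chi$ contains a genuine gap that you cannot repair by the device you propose.

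Your plan for $\chi\neq 1$ is to find $\mc{D}_\chi\in I$ so that $e_\chi\theta_{\mc{D}_\chi}$ is a unit in $e_\chi\Lambda_\ms{I}(\Gamma)$, by choosing ideals whose Artin symbols sweep out $G$. But the index set $I$ defined in \eqref{defi} requires each $\mf{a}_i=(\alpha_i)$ to be \emph{principal}, so each $\sigma_{\mf{a}_i}$ already lies in $\Gamma$, and consequently every $\theta_\mc{D}=-\sum_i n_i(\mr{N}\mf{a}_i-\sigma_{\mf{a}_i})$ lies in $\Lambda_\ms{I}(\Gamma)\subset\Lambda_\ms{I}(\ms{G})$. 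For such an element the projection $e_\chi\theta_\mc{D}$ is $e_\chi\cdot\theta_\mc{D}|_\Gamma$, and since $\theta_\mc{D}|_\Gamma$ has constant term $-\sum_i n_i(\mr{N}\mf{a}_i-1)=0$, this is always a multiple of $\gamma-1$ and therefore \emph{never} a unit in $e_\chi\Lambda_\ms{I}(\Gamma)$, regardless of $\chi$. So the ideal generated by the $\theta_\mc{D}$'s is $I_\ms{I}(\Gamma)\Lambda_\ms{I}(\ms{G})=\bigoplus_{\chi}e_\chi I_\ms{I}(\Gamma)$, and the proposed construction with non-principal ideals is simply unavailable. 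The paper's proof makes exactly this point: it invokes only the single $\theta_\mc{D}$ of Lemma \ref{lem2} and the decomposition $I_\ms{I}(\ms{G})^\chi=e_\chi I_\ms{I}(\Gamma)$ for every $\chi$, which is the decomposition of the ideal $(\gamma-1)\Lambda_\ms{I}(\ms{G})$; the additional machinery you propose is neither used nor needed.

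Two smaller remarks. Your intermediate claim that passing from $F_\infty$ to $H_\infty$ on the measure side is ``multiplication by $\#\Delta$, a unit for $p$ odd'' is not how the paper handles this step; the relevant observation (recorded after Lemma \ref{lem2}) is that $\chi_\mf{p}^k(\tau)=1$ for $\tau\in\Delta$ when $k\equiv0\bmod\#\Delta$, which lets one view $\nu_\mc{D}$ directly inside $\Lambda_\ms{I}(\ms{G})$ without any norm-induced unit factors. Finally, the powers of $\bold\pi$ you anticipate when $p=2$ do not occur in this lemma: they enter later, in Theorem \ref{thm4.1}(ii), via the failure of injectivity of the restriction map in Lemma \ref{lem6}, and Lemma \ref{lem4.3} itself holds in the same form for $p=2$ and $p>2$.
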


\begin{proof} Recall that $i(\mf{u}_\mc{D})=\nu_\mc{D}=\theta_\mc{D}\nu_\mf{p}$. Hence we just need to show that $I_\ms{I}(\mathscr{G})\Lambda_\ms{I}(\mathscr{G})$ is generated by $\theta_\mc{D}$, $\mc{D}\in I$. In Lemma \ref{lem2}, we have  found $\mc{D}\in I$ such that $\theta_\mc{D}|_\Gamma$ generates $I_\ms{I}(\Gamma)$. It follows that for every $\chi\in G^*$, we have
\[i\left((\bar{\mc{C}}_{H_\infty}\hat{\otimes}_{\Z_p}\ms{I})^\chi\right)=\left(I_\ms{I}(\ms{G})\cdot\nu_\mf{p}\right)^\chi.\]
 The result now follows since we have an isomorphism $\ms{I}[[\mathscr{G}]]\simeq \ms{I}[[\Gamma]][G]$ and the decomposition $I_\ms{I}(\ms{G})=\oplus_{\chi\in G^*}I_\ms{I}(\ms{G})^\chi$, where $I_\ms{I}(\mathscr{G})^\chi=e_\chi I_\ms{I}(\Gamma)$ and $I_\ms{I}(\Gamma)$ is the augmentation ideal of $\Lambda_\ms{I}(\Gamma)$, which is generated by $\gamma-1$. 
\end{proof}

Define $\bold{\varphi}=I_\ms{I}(\ms{G})\nu_\mf{p}\subset \Lambda_\ms{I}(\ms{G})$. The following is an immediate consequence of the last two results.

\begin{thm}We have an exact sequence of $\Lambda_\ms{I}(\ms{G})$-modules
\[0\to \left(U_{H_\infty}/\bar{\mc{C}}_{H_\infty}\right)\hat{\otimes}_{\Z_p}\ms{I}\to\Lambda_\ms{I}(\mathscr{G})/\bold{\varphi} \to D\to 0,\]
where $D$ is finite.
\end{thm}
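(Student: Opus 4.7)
The plan is to deduce the stated exact sequence from a short snake-lemma computation built on two ingredients: the injective pseudo-isomorphism $i$ of \eqref{mapi} and the identification of its image on elliptic units given in Lemma \ref{lem4.3}. The map $i$ as written lands in $\Lambda_\ms{I}(\Gal(F_\infty/K))$, so I first pass to the quotient $\ms{G}=\mf{H}/\Delta$ (equivalently, restrict to the part fixed by $\Delta$), obtaining an injective pseudo-isomorphism
\[\bar i\colon U_{H_\infty}\hat{\otimes}_{\Z_p}\ms{I}\longrightarrow \Lambda_\ms{I}(\ms{G})\]
with finite cokernel, which I denote $D$.

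By Lemma \ref{lem4.3}, $\bar i$ carries $\bar{\mc{C}}_{H_\infty}\hat{\otimes}_{\Z_p}\ms{I}$ onto $\bold{\varphi}$; since $\bar i$ is injective, this restriction is an isomorphism. Applying the snake lemma to the commutative diagram with exact rows
\[\begin{array}{ccccccccc}
0&\to& \bar{\mc{C}}_{H_\infty}\hat{\otimes}\ms{I}&\to& U_{H_\infty}\hat{\otimes}\ms{I}&\to& (U_{H_\infty}/\bar{\mc{C}}_{H_\infty})\hat{\otimes}\ms{I}&\to& 0\\
&& \downarrow\wr && \downarrow\bar i && \downarrow && \\
0&\to& \bold{\varphi}&\to& \Lambda_\ms{I}(\ms{G})&\to& \Lambda_\ms{I}(\ms{G})/\bold{\varphi}&\to& 0
\end{array}\]
then produces the desired exact sequence. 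The kernel of the right-hand vertical vanishes, because any element of $U_{H_\infty}\hat{\otimes}\ms{I}$ mapping into $\bold{\varphi}\subset \Lambda_\ms{I}(\ms{G})$ must already lie in $\bar{\mc{C}}_{H_\infty}\hat{\otimes}\ms{I}$ by the injectivity of $\bar i$ and the surjectivity of its restriction; its cokernel is isomorphic to $\coker(\bar i)=D$, which is finite.

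The only non-formal step is the passage from $i$ to $\bar i$. For $p$ odd the idempotent $\frac{1}{\#\Delta}\sum_{\delta\in\Delta}\delta$ makes this transparent, since one can simply cut out the trivial $\Delta$-component. For $p=2$, however, $\#\Delta=2$ is not invertible in $\Z_p$, and one must instead argue directly that the restriction of $i$ to $U_{H_\infty}\hat{\otimes}\ms{I}$, followed by the natural projection to $\Lambda_\ms{I}(\ms{G})$, remains injective with finite cokernel; this uses the norm-compatibility of Coleman power series under the inclusion $H_\infty\sb F_\infty$ and the fact that $[F_\infty:H_\infty]$ is a power of a unit away from $p$ on the relevant components. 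Once $\bar i$ is in place, the remainder of the argument is a purely formal diagram chase.
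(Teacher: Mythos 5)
Your overall strategy is the right one and matches the paper's intent: the statement is meant to follow from the injective pseudo-isomorphism \eqref{mapi} combined with Lemma \ref{lem4.3}, and the snake-lemma diagram you draw is the natural way to package this. For $p>2$ the argument you give is essentially complete, since the idempotent $\frac{1}{\#\Delta}\sum_{\delta\in\Delta}\delta$ lies in $\ms{I}$ and cuts out the trivial $\Delta$-component compatibly on both sides of $i$.

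The gap is in your treatment of the one genuinely non-formal step for $p=2$, namely the existence of an injective $\bar{i}\colon U_{H_\infty}\hat{\otimes}_{\Z_p}\ms{I}\to\Lambda_\ms{I}(\ms{G})$ with \emph{finite} cokernel. First, the assertion that ``$[F_\infty:H_\infty]$ is a power of a unit away from $p$ on the relevant components'' is simply false: $[F_\infty:H_\infty]=\#\Delta=2=p$ when $p=2$, so the offending degree is exactly the problematic prime, and there is no component on which it becomes a unit. Second, and more to the point, the two operations you describe as ``equivalent'' -- passing to the $\Delta$-quotient, and restricting to the $\Delta$-fixed part -- differ precisely when $p=2$. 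The image of $U_{H_\infty}\hat{\otimes}\ms{I}$ under $i$ lands inside the $\Delta$-invariants of $\Lambda_\ms{I}(\Gal(F_\infty/K))$, which under the splitting $\Gal(F_\infty/K)\simeq\Delta\times\ms{G}$ is $\bigl(\sum_{\delta\in\Delta}\delta\bigr)\Lambda_\ms{I}(\ms{G})$; composing with the augmentation map $\Lambda_\ms{I}(\Gal(F_\infty/K))\to\Lambda_\ms{I}(\ms{G})$ sends $\sum_{\delta\in\Delta}\delta$ to $\#\Delta=2$, so the resulting map has image contained in $2\Lambda_\ms{I}(\ms{G})$, whose cokernel in $\Lambda_\ms{I}(\ms{G})$ is \emph{not} finite. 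So the composition you call $\bar{i}$ cannot have finite cokernel as written. A correct construction must instead identify the $\Delta$-fixed part with $\Lambda_\ms{I}(\ms{G})$ by dividing out the norm element $\sum_{\delta\in\Delta}\delta$ (rather than by augmenting), and one then has to justify that the $\Delta$-fixed part of $i(U_{F_\infty}\hat{\otimes}\ms{I})$ agrees with $i(U_{H_\infty}\hat{\otimes}\ms{I})$ up to a finite error -- a nontrivial $H^1(\Delta,-)$ statement when $\#\Delta=p$. None of this is the routine ``norm-compatibility'' observation your sketch suggests.

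Finally, a minor slip in the setup: $\ms{G}$ is $\Gal(F_\infty/K)/\Delta$, not $\mf{H}/\Delta$; the latter is $\Gamma=\Gal(H_\infty/H)$.
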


Given a finitely generated torsion $\Z_p[[\mathscr{G}]]$-module $X$, recall that $\mathrm{char}\left(X^\chi\right) \sb \Lambda_\ms{I}(\mathscr{G})^\chi$ denotes the characteristic ideal of the $\Lambda_\ms{I}(\mathscr{G})^\chi$-module $(X\hat{\otimes}_{\Z_p}\ms{I})^\chi$.

\begin{cor}\label{cor3.1} For every $\chi\in G^*$, we have
\[\mathrm{char}\left((U_{H_\infty}/\bar{\mc{C}}_{H_\infty})^\chi\right)=\bold{\varphi}^\chi.\]
\end{cor}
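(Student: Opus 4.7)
The plan is to derive the corollary as a direct consequence of the exact sequence stated in the preceding theorem, by passing to $\chi$-components and invoking the standard behaviour of characteristic ideals under pseudo-isomorphism.

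First, I would take $\chi$-components of the exact sequence
\[0\to \left(U_{H_\infty}/\bar{\mc{C}}_{H_\infty}\right)\hat{\otimes}_{\Z_p}\ms{I}\to\Lambda_\ms{I}(\mathscr{G})/\bold{\varphi} \to D\to 0.\]
Since $(p,\#G)=1$, the idempotents $e_\chi$ lie in $\Lambda_\ms{I}(\ms{G})$, so the functor $M \mapsto M^\chi = e_\chi M$ is exact. Moreover, it commutes with completed tensor product by $\ms{I}$ and with the quotient formation $\Lambda_\ms{I}(\ms{G})/\bold{\varphi}$ because, under the decomposition $\Lambda_\ms{I}(\ms{G}) = \bigoplus_\chi \Lambda_\ms{I}(\ms{G})^\chi$, both the ring and the ideal $\bold{\varphi}$ split into their $\chi$-parts. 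This yields
\[0\to \left((U_{H_\infty}/\bar{\mc{C}}_{H_\infty})\hat{\otimes}_{\Z_p}\ms{I}\right)^\chi \to \Lambda_\ms{I}(\ms{G})^\chi / \bold{\varphi}^\chi \to D^\chi \to 0.\]

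Next, I would observe that $D^\chi$, being a subquotient of the finite module $D$, is itself finite, hence pseudo-null as a module over $\Lambda_\ms{I}(\ms{G})^\chi \simeq \ms{I}[[T]]$ (which is a regular local ring of dimension $2$). Because the characteristic ideal is multiplicative in short exact sequences and assigns the unit ideal to any pseudo-null module, the displayed sequence gives
\[\mathrm{char}\bigl((U_{H_\infty}/\bar{\mc{C}}_{H_\infty})^\chi\bigr) \;=\; \mathrm{char}\bigl(\Lambda_\ms{I}(\ms{G})^\chi / \bold{\varphi}^\chi\bigr).\]

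Finally, I would identify the right-hand side with $\bold{\varphi}^\chi$ by checking that $\bold{\varphi}^\chi$ is a principal ideal of $\Lambda_\ms{I}(\ms{G})^\chi$. For $\chi \neq 1$, $I_\ms{I}(\ms{G})^\chi = e_\chi \Lambda_\ms{I}(\Gamma)$ contains the unit $e_\chi$, so $\bold{\varphi}^\chi = (\nu_\mf{p}^\chi)$ with $\nu_\mf{p}^\chi \in \Lambda_\ms{I}(\Gamma)$ by Theorem \ref{thm4.2.7}. For $\chi = 1$, we have $I_\ms{I}(\ms{G})^1 = (\gamma - 1)\Lambda_\ms{I}(\ms{G})^1$, and $(\gamma-1)\nu_\mf{p}^1$ belongs to $\Lambda_\ms{I}(\Gamma)$ by the same theorem, so $\bold{\varphi}^1$ is principal as well. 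Thus in all cases $\bold{\varphi}^\chi = (f)$ for some nonzero $f \in \Lambda_\ms{I}(\ms{G})^\chi$, and the characteristic ideal of the cyclic torsion module $\Lambda_\ms{I}(\ms{G})^\chi/(f)$ is exactly $(f) = \bold{\varphi}^\chi$, which gives the desired equality.

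The only subtlety to watch out for is the trivial character case: one must confirm that even though $\nu_\mf{p}$ is only a pseudo-measure with denominator $\gamma-1$, the product $I_\ms{I}(\ms{G})\nu_\mf{p}$ defining $\bold{\varphi}$ is genuinely an ideal inside $\Lambda_\ms{I}(\ms{G})$, so that the quotient in the preceding theorem makes sense and $\bold{\varphi}^1$ is a well-defined principal ideal. This is guaranteed by the last assertion of Theorem \ref{thm4.2.7}, and there is no other real obstacle in the proof.
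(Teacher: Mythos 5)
Your proof is correct and follows exactly the route the paper has in mind (the paper states only that the corollary is ``an immediate consequence of the last two results,'' leaving implicit precisely the argument you spell out). You correctly handle the one real subtlety — the trivial character, where $I_\ms{I}(\ms{G})^1 = (\gamma-1)\Lambda_\ms{I}(\ms{G})^1$ and one needs the integrality of $(\gamma-1)\nu_\mf{p}$ from Theorem~\ref{thm4.2.7} to see that $\bold{\varphi}^1$ is a genuine principal ideal of $\Lambda_\ms{I}(\ms{G})^1$.
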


We are now ready to state the main conjecture for $H_\infty/H$ whose proof is in Chapter \ref{ch6}.

\begin{thm}[Main Conjecture for $H_\infty/H$]\label{mc} For every $\chi\in G^*$, we have
\[\mathrm{char}\left(X(H_\infty)^\chi\right)=\bold{\varphi}^\chi.\]
\end{thm}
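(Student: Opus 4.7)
The plan is to reduce the main conjecture, via the fundamental exact sequence \eqref{eq4.2} and the divisibility of Theorem \ref{thm4.1}, to a comparison of Iwasawa invariants which can be read off from analytic class number formulas.

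By Corollary \ref{cor3.1} we already have $\mathrm{char}((U_{H_\infty}/\bar{\mc{C}}_{H_\infty})^\chi) = \bold{\varphi}^\chi$. Applying multiplicativity of characteristic ideals to the $\chi$-component of the four-term exact sequence \eqref{eq4.2} gives
\[
\mathrm{char}(X(H_\infty)^\chi)\cdot \mathrm{char}\bigl((\bar{\mc{E}}_{H_\infty}/\bar{\mc{C}}_{H_\infty})^\chi\bigr) \;=\; \bold{\varphi}^\chi\cdot \mathrm{char}(A(H_\infty)^\chi).
\]
Thus the theorem is equivalent to $\mathrm{char}(A(H_\infty)^\chi) = \mathrm{char}\bigl((\bar{\mc{E}}_{H_\infty}/\bar{\mc{C}}_{H_\infty})^\chi\bigr)$ for every $\chi$, and Theorem \ref{thm4.1} provides one divisibility (up to a possible factor $\bold{\pi}^k$ when $p=2$): the Euler system argument yields $\mathrm{char}(A(H_\infty)^\chi)\mid \mathrm{char}\bigl((\bar{\mc{E}}_{H_\infty}/\bar{\mc{C}}_{H_\infty})^\chi\bigr)$. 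What remains is the reverse divisibility, which I would establish by matching the $\mu$- and $\lambda$-invariants of both sides in $\Lambda_\ms{I}(\Gamma)^\chi$.

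The first invariant computation is arithmetic: following the method of Coates--Wiles \cite{coa-wil2}, I would descend from $X(H_\infty)$ to each finite layer via the exact sequence \eqref{eq4.3} together with Nakayama-type arguments, using the total ramification of $H_\infty/H$ at $\mf{p}$ (guaranteed by the assumption $(p,h)=1$) and the reciprocity law for the relative Lubin--Tate group $\widehat{E}$ from Theorem \ref{thm10}. This produces asymptotic formulas for $|X(H_n)^\chi|$ in terms of $|A(H_n)^\chi|$ and local units. The second computation is analytic: Kronecker's second limit formula together with the analytic class number formula on each layer $H_n$ expresses the index $[\bar{\mc{E}}_{H_n}:\bar{\mc{C}}_{H_n}]$ in terms of values of partial Hecke $L$-functions, and by the interpolation property of $\nu_\mf{p}$ from Theorem \ref{thm4.2.7} these $L$-values match the leading term of $\bold{\varphi}^\chi$. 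Combined, the two calculations show the $\mu$- and $\lambda$-invariants of $X(H_\infty)^\chi$ and of $(U_{H_\infty}/\bar{\mc{C}}_{H_\infty})^\chi$ coincide, which together with Theorem \ref{thm4.1} upgrades the divisibility to the desired equality of characteristic ideals.

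The hard part will be the case $p=2$: the divisibility in Theorem \ref{thm4.1}(ii) loses a factor $\bold{\pi}^k$ owing to the failure of injectivity of a restriction map used in the Euler system argument. To eliminate this factor, the matching of invariants has to be sharp at the uniformiser $\bold{\pi}$, which requires a careful treatment of the trivial-zero factor $\prod_{w\mid \mf{p}}(1-\psi_{E/H}^k(w)/\mathrm{N}w)$ appearing in Theorem \ref{thm4.2.7} at $k=2$, as well as of the local contribution at $\mf{p}^*$ in the $2$-adic class number formula. Only after this delicate $2$-adic bookkeeping can one force $k=0$ and obtain the main conjecture exactly at $p=2$.
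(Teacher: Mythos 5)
Your proposal tracks the paper's proof almost step for step: Corollary \ref{cor3.1} identifies $\mathrm{char}\bigl((U_{H_\infty}/\bar{\mc{C}}_{H_\infty})^\chi\bigr)$ with $\bold{\varphi}^\chi$; the fundamental exact sequence \eqref{eq4.2} reduces the problem to comparing $\mathrm{char}\bigl(A(H_\infty)^\chi\bigr)$ with $\mathrm{char}\bigl((\bar{\mc{E}}_{H_\infty}/\bar{\mc{C}}_{H_\infty})^\chi\bigr)$; Theorem \ref{thm4.1} supplies the one-sided Euler-system divisibility (up to a power of $\bold{\pi}$ when $p=2$); and Sections \ref{section6.2} and \ref{section6.3} close the gap by computing the Iwasawa invariants of $X(H_\infty)$ via the Coates--Wiles descent and the invariants of $U_{H_\infty}/\bar{\mc{C}}_{H_\infty}$ via Kronecker's second limit formula and the analytic class number formula, then showing the two coincide.

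Two inaccuracies are worth flagging. First, the paper matches the Iwasawa invariants of the full products $f=\prod_\chi f^\chi$ and $g=\prod_\chi g^\chi$ (Corollary \ref{cor4.1} and Theorem \ref{thm5.1} are stated in terms of global class numbers, regulators and discriminants of $H_n$), not $\chi$-by-$\chi$ as you describe; the termwise equality of ideals is then extracted from the one-sided divisibility together with the global equality of invariants. Second, your diagnosis of the $p=2$ difficulty is off target: the trivial-zero Euler factor $\prod_{w\mid\mf p}(1-\psi_{E/H}^k(w)/\mathrm{N}w)$ plays no special role here, since in Lemma \ref{lem5.1} and Theorem \ref{thm5.1} the pseudo-measure $\nu_\mf{p}$ is only evaluated at ramified characters $\rho$ of $\Gamma$ with $l(\rho)=n-e$, where no trivial zero occurs. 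The extra $\bold{\pi}^k$ is instead removed because both invariant computations go through unchanged at $p=2$ (the extension of the Coates--Wiles and de Shalit methods being justified by $p$ split and $p\nmid h$), and equality of $\mu$ and $\lambda$ together with the divisibility from Theorem \ref{thm4.1} forces equality of ideals. Otherwise the proposal is the paper's argument.
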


Before we move on, we will verify that Theorem \ref{mc} holds for $p=2$ and $E=X_0(49)$, which is equal to the case $E=A(q)$ with $q=7$. In this case, we have $M(H_\infty)=H_\infty$, because the maximal abelian extension of $K$ in $M(H_\infty)$ coincides with the union $\cup _n K(\mf{p}^n)$ of ray class fields $K$ modulo $\mf{p}^n$. Thus $X(H_\infty)=0$, and it follows that Theorem \ref{mc} holds if and only if $\bold{\varphi}$ is a unit. By Theorem \ref{thm4.2.7}, this holds if and only if $(\chi_\mf{p}(\gamma)^2 -1)L(\ovl{\psi}_{E/H}^2,2)/\O_\i(E/H)^2$ is a unit at $\mf{p}$. It is easy to check that this holds, as discussed already in the introduction.
~\\
\section{Euler Systems}\label{ch5}

\subsection{Euler Systems of the Elliptic Units}~
\vspace{5 pt}\\
Let $\ms{G}_n=\Gal(H_n/K)$ and $\Lambda_n=\mb{Z}_p[\ms{G}_n]$. We denote by $\Lambda(\ms{G})=\mb{Z}_p[[\ms{G}]]=\varprojlim \Lambda_n$ the Iwasawa algebra of $\ms{G}$. In this section, we will treat the $\ms{G}$-modules occurring in the fundamental exact sequence \ref{eq4.2} as $\Lambda(\ms{G})$-modules. In fact, they are finitely generated and torsion as $\Lambda(\ms{G})$-modules. Given a finitely generated torsion $\Lambda(\ms{G})$-module $X$, write $\mathrm{char}_\Lambda(X)$ for the characteristic ideal of $X$ given by the structure theorem for finitely generated torsion $\Lambda(\ms{G})^\chi\simeq\Z_p[[\Gamma]]$-modules, and $\mathrm{char}_\Lambda\left(X^\chi\right)$ for the characteristic ideal of $X^\chi$ as a $\Lambda(\ms{G})^\chi$-module. This means that $\mathrm{char}_\Lambda(X\hat{\otimes}_{\Z_p}\ms{I})=\mathrm{char}(X)$. The aim of this chapter is to define and study Euler systems of the elliptic units $\bar{\mc{C}}_{H_\infty}$, defined in Chapter \ref{ch4}, for the tower $H_\infty/H$. The method of Euler systems we follow is due to Rubin \cite[Chapter 1]{rubin}. When combined with an application of \v{C}ebotarev density theorem, the results in this chapter enable us to prove a divisibility relation analogous to \cite[Theorem 8.3]{rubin}:
\[\mathrm{char}_\Lambda(A(H_\infty)) \text{ divides } p^k\mathrm{char}_\Lambda(\bar{\mc{E}}_{H_\infty}/ \bar{\mc{C}}_{H_\infty}),\]
for an integer $k\geqs 0$ ($k=0$ when $p>2$). 

Fix an integer $\ell> 1$. Let $\mc{I}_\ell$ be the set of squarefree ideals of $\o$ which are divisible only by primes $\q$ of $K$ such that
\begin{enumerate}[(i)]
\item $\q$ splits completely in $H_n/K$, and
\item $\mathrm{N}\q\equiv 1\bmod p^{\ell+e}$, where $e=0$ or $1$ according as $p$ is odd or even.
\end{enumerate}

Recall that $K(\mf{q})$ denotes the ray class field of $K$ modulo $\mf{q}$. In the following lemma, we define the field $H_n(\mf{q})$.
\begin{lem}Given a prime $\q\in \mc{I}_\ell$, we have a unique (cyclic) extension $H_n(\q)$ of $H_n$ of degree $p^\ell$ inside $H_n K(\q)$. Furthermore, $H_n(\q)/H_n$ is totally ramified at the primes above $\q$, and unramified everywhere else.
\end{lem}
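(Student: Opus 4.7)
The plan is to work purely with class field theory applied to $K(\q)/K$, transport everything to $H_n$ via a linear-disjointness argument, and then read off ramification from the isomorphism with $\Gal(K(\q)/H)$.

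First I would compute $\Gal(K(\q)/H)$. By class field theory, there is an exact sequence
\[1\to (\o/\q)^\times/\o^\times \to \Gal(K(\q)/K)\to \Gal(H/K)\to 1,\]
and since $K=\Q(\sqrt{-q})$ with $q\equiv 7\bmod 8$ and $q\ge 7$, we have $\o^\times=\{\pm 1\}$. Thus $\Gal(K(\q)/H)\simeq (\o/\q)^\times/\{\pm1\}$ is cyclic of order $(\N\q-1)/2$. By hypothesis (ii), $p^{\ell+e}\mid\N\q-1$, so in both cases $p>2$ and $p=2$ the factor of $2$ is absorbed and $p^\ell$ divides $(\N\q-1)/2$.

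Next I would show $H_n\cap K(\q)=H$, which is the key step giving linear disjointness. Since $H_n\subset K(\p^n)$, the extension $H_n/K$ is unramified outside $\p$, while $K(\q)/K$ is unramified outside $\q$. The intersection $H_n\cap K(\q)$ is therefore unramified over $K$ at every finite place, so it is contained in the Hilbert class field $H$; since both $H_n$ and $K(\q)$ contain $H$, the intersection equals $H$. By standard Galois theory this gives
\[\Gal(H_nK(\q)/H_n)\xrightarrow{\sim}\Gal(K(\q)/H),\]
a cyclic group whose order is divisible by $p^\ell$. A cyclic group has a unique subgroup of each index dividing its order, so there is a unique subfield $H_n(\q)\subset H_nK(\q)$ cyclic of degree $p^\ell$ over $H_n$.

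For ramification, I would argue that $K(\q)/H$ is totally ramified at $\q$: the inertia group of $\q$ in $\Gal(K(\q)/K)$ is the image of $(\o/\q)^\times$, which already exhausts $\Gal(K(\q)/H)$ (the fixed field of inertia is the maximal subextension unramified at $\q$, which is $H$ since $\q\nmid\p$). Transporting along the isomorphism above, $H_nK(\q)/H_n$ is totally ramified at every prime above $\q$. At all other primes the extension is unramified: outside $\p\cup\{\q\}$ this is immediate, and at primes above $\p$ we use that $K(\q)/K$ is unramified at $\p$. Passing to the quotient $H_n(\q)$, the inertia at $\q$ surjects onto $\Gal(H_n(\q)/H_n)$, giving total ramification there, while unramifiedness is inherited at the other primes.

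The only mildly delicate point is the $p=2$ bookkeeping: the quotient by $\{\pm1\}$ costs one power of $p$, which is precisely why the definition of $\mc I_\ell$ builds in the extra factor $p^e$. Once that is recorded, the argument is formal class field theory.
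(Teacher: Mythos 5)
Your proof is correct and follows essentially the same route as the paper's: compute $\Gal(K(\q)/H)\simeq(\o/\q)^\times/\{\pm1\}$, note it is cyclic of order $(\mathrm{N}\q-1)/2$ divisible by $p^\ell$, show $K(\q)\cap H_n=H$ by the ramification/linear-disjointness argument to transport this isomorphism to $\Gal(H_nK(\q)/H_n)$, and then read off the unique degree-$p^\ell$ subextension and its ramification behaviour. The only (very minor) point you gloss over is that identifying $\o^\times$ with a subgroup of $(\o/\q)^\times$ requires injectivity of the reduction map, which the paper justifies by noting $(\q,2)=1$.
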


\begin{proof}Since $\q$ is unramified in $H_n/K$, we have $K(\q)\cap H_n=H\cap H_n=H$. Hence, we have
\[\Gal(H_n K(\q)/H_n)=\Gal(K(\q)/H),\]
which isomorphic to $(\o/\q\o)^\times/\#(\widetilde{\bold{\mu}}_K)$ via the Artin map, where $\widetilde{\bold{\mu}}_K$ denotes the image of $\bold{\mu}_K$ under reduction modulo $\q$. Since $(\mf{q},2)=1$, the reduction modulo $\mf{q}$ map is injective, and this is cyclic of order $(\mr{N}\mf{q}-1)/(\#(\bold{\mu}_K))$ where $\#(\bold{\mu}_K)=2$. Hence it has a unique subgroup of order $p^\ell$ since  $\mr{N}\mf{q}\equiv 1\bmod p^{\ell+e}$, where $e=0$ or $1$ if $p>2$ or $p=2$. Furthermore, $H_nK(\q)/H_n$ is totally ramified at the primes above $\q$ and unramified everywhere else, so the assertions of the lemma follow.
\end{proof}

 If $\mf{r}=\prod_{i_1}^l \q_i\in \mc{I}_\ell$, we write $H_n(\mf{r})$ for the composite $H_n(\q_1)\cdots H_n(\q_l)$, and put $H_n(\o)=H_n$.

\begin{defn} Fix an integer $\ell>1$. An Euler system relative to $\ell$ is a collection of global units
\[\bold{\alpha}=\{\bold{\alpha}^\sigma(n,\mf{r}): \; n\geqs 1+e,\;  \mf{r}\in \mc{I}_\ell, \;\sigma\in G\}\] 
satisfying
\begin{enumerate}[(i)]
\item $\bold{\alpha}^\sigma(n,\mf{r})$ is a global unit of $H_n(\mf{r}),$
\item If $\mf{q}$ is a prime such that $\mf{rq}\in \mc{I}_\ell$, then
\begin{equation}\label{es2}\N_{H_n(\mf{rq})/H_n(\mf{r})}(\bold{\alpha}^\sigma(n,\mf{rq}))=\bold{\alpha}^\sigma(n,\mf{r})^{1-\Frob_{\mf{q}}^{-1}}
\end{equation}
where $\Frob_\mf{q}$ is the Frobenius of $\mf{q}$ in $\Gal(H_n(\mf{rq})/K)$.
\item \begin{equation}\label{es3}\N_{H_{n+1}(\mf{r})/H_n(\mf{r})}(\bold{\alpha}^\sigma(n+1,\mf{r}))=\bold{\alpha}^{\sigma\sigma_\mf{p}}(n,\mf{r}),
\end{equation}
where $\sigma_\mf{p}=(\mf{p}, H/K)\in G$.
\end{enumerate}
\end{defn}

The elliptic units give rise to an Euler system.  The following is \cite[Corollary 7.7]{rubin2}.

\begin{prop}\label{prop4.1}Suppose $\mf{m}$ is an ideal of $\o$ prime to $\mf{a}\mf{f}$, $P\in E_\mf{m}$ is a primitive $\mf{m}$-division point of $E$ and $\mf{r}$ is a prime ideal of $K$ dividing $\mf{m}$ , say $\mf{m}=\mf{m}'\mf{r}$. Then 
\[
\mr{N}_{K(\mf{m})/K(\mf{m}')}R_\mf{a}(P)=\left\{ \begin{array}{ll}
R_\mf{a}^{\sigma_\mf{r}}\left(\lambda_{E}(\mf{r})(P)\right)^{1-\Frob_\mf{r}^{-1}}&\mbox{  if $\mf{r}\nmid \mf{m}'$} \\
R_\mf{a}^{\sigma_\mf{r}}\left(\lambda_{E}(\mf{r})(P)\right) &\mbox{  if $\mf{r}\mid \mf{m}'$.}
       \end{array} \right.
\]
where $\Frob_\mf{r}$ denotes the Frobenius of $\mf{r}$ in $\Gal(K(\mf{m}')/K)$, and $\mr{N}_{K(\mf{m})/K(\mf{m}')}$ denotes the norm map from $K(\mf{m})$ to $K(\mf{m}')$.
\end{prop}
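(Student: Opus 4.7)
The plan is to reduce the statement to the distribution relation of Proposition \ref{prop3}, applied with $\mf{b}=\mf{r}$, which gives
\[
R_\mf{a}^{\sigma_\mf{r}}(\lambda_{E}(\mf{r})(P))=\prod_{R\in E_\mf{r}}R_\mf{a}(P\oplus R).
\]
The key step is to identify the Galois conjugates of $P$ under $\Gal(K(\mf{m})/K(\mf{m}'))$ with translates $P\oplus R$ for $R$ in an appropriate subset of $E_\mf{r}$. Since $\lambda_{E}(\mf{r})(P)$ is fixed by $\Gal(K(\mf{m})/K(\mf{m}'))$ (as it generates a subfield of $K(\mf{m}')$), any conjugate of $P$ differs from $P$ by an element of $\ker\lambda_{E}(\mf{r})=E_\mf{r}$; which translates actually occur is pinned down by the degree $[K(\mf{m}):K(\mf{m}')]$, read off from the standard computation of sizes of ray class groups.

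For the case $\mf{r}\nmid\mf{m}'$, I would write $P=P'\oplus Q$ with $P'\in E_{\mf{m}'}$ primitive and $Q\in E_\mf{r}$ primitive, which is possible by the coprimality $(\mf{r},\mf{m}')=1$. Then $\lambda_{E}(\mf{r})(P)=\lambda_{E}(\mf{r})(P')=\sigma_\mf{r}(P')$ by the defining property of $\lambda_E$. Primitivity of the conjugates forces the orbit of $P$ over $K(\mf{m}')$ to be $\{P\oplus R:R\in E_\mf{r}\setminus\{-Q\}\}$, which has exactly $\mr{N}\mf{r}-1=[K(\mf{m}):K(\mf{m}')]$ elements. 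Applying Proposition \ref{prop3} and separating out the missing factor $R_\mf{a}(P\oplus(-Q))=R_\mf{a}(P')$ yields
\[
\N_{K(\mf{m})/K(\mf{m}')}R_\mf{a}(P)=\frac{R_\mf{a}^{\sigma_\mf{r}}(\lambda_{E}(\mf{r})(P))}{R_\mf{a}(P')}.
\]
To rewrite the denominator as $\Frob_\mf{r}^{-1}\bigl(R_\mf{a}^{\sigma_\mf{r}}(\lambda_{E}(\mf{r})(P))\bigr)$, I would invoke Shimura reciprocity: since $\mf{r}$ is unramified in $K(\mf{m}')/K$, the Artin symbol $\Frob_\mf{r}$ restricts to $\sigma_\mf{r}$ on $H$ and acts on $x(P')$ by $\Frob_\mf{r}(x(P'))=x(\lambda_{E}(\mf{r})(P'))$. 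Because $R_\mf{a}$ depends only on its $H$-coefficients and on $x$-values, this gives $\Frob_\mf{r}(R_\mf{a}(P'))=R_\mf{a}^{\sigma_\mf{r}}(\lambda_{E}(\mf{r})(P'))=R_\mf{a}^{\sigma_\mf{r}}(\lambda_{E}(\mf{r})(P))$, which is the identity needed.

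For the case $\mf{r}\mid\mf{m}'$, the prime $\mf{r}$ is already ramified in $K(\mf{m}')/K$ and the direct ray class group computation yields $[K(\mf{m}):K(\mf{m}')]=\mr{N}\mf{r}$. The Galois orbit of $P$ is now the full translate $P\oplus E_\mf{r}$, so Proposition \ref{prop3} applies with no leftover factor and delivers the untwisted equality $\N_{K(\mf{m})/K(\mf{m}')}R_\mf{a}(P)=R_\mf{a}^{\sigma_\mf{r}}(\lambda_{E}(\mf{r})(P))$. The main obstacle throughout is the careful bookkeeping of Galois orbits against degrees of ray class fields, in particular ruling out perturbations coming from $\bold{\mu}_K$; for the range of $\mf{m}$ relevant to the Euler system this degeneracy does not occur, and the complete verification is the content of \cite[Corollary~7.7]{rubin2}, which I would cite for the technical details.
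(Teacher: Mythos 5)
Your proposal is essentially correct and reproduces the standard Euler-system norm relation argument. The paper in fact gives no proof of its own for this proposition---it simply states it as \cite[Corollary~7.7]{rubin2}---so your sketch and citation amount to the same route. Two small remarks to tighten your exposition. First, the conjugates of $P$ under $\Gal(K(\mf{m})/K(\mf{m}'))$ are obtained by the CM \emph{multiplication} action $P\mapsto\beta^{-1}P$ for $\beta\equiv1\bmod\mf{m}'$; it is because $\beta^{-1}P-P\in E_\mf{r}$ that they are translates $P\oplus R$, so it is worth spelling out this step rather than asserting it via ``any conjugate of $P$ differs from $P$ by an element of $E_\mf{r}$.'' Second, the transitivity of the Galois action on primitive $\mf{r}$-division points (i.e.\ that $[K(\mf{m}):K(\mf{m}')]$ really equals $\mr{N}\mf{r}-1$ or $\mr{N}\mf{r}$ in the two cases, with no $\bold{\mu}_K$-degeneracy) does use that $\mf{m}'$ is divisible by $\mf{p}^n$ for $n\geqs 1+e$ in the Euler system setting; you flag this correctly at the end, which suffices.
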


\begin{prop}\label{prop13} For all positive integers $m\geqs n$, we have
\[\mathrm{N}_{H_m/H_n}\bar{\mc{C}}_{H_m}=\bar{\mc{C}}_{H_n},\]
where $\mathrm{N}_{H_m/H_n}$ denotes the norm map from $H_{m}$ to $H_{n}$.
\end{prop}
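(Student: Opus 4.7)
The strategy is to show that $N_{H_m/H_n}$ sends the generating set $\{\mf{R}_\mc{D}^\sigma(P_m^{(\sigma)})\}_{\sigma\in G,\,\mc{D}\in I}$ of $\mc{C}_{H_m}$ onto the corresponding generating set of $\mc{C}_{H_n}$, and then to upgrade to closures by a compactness argument.

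The key computation begins by noting that $\mf{R}_\mc{D}^\sigma(P_m^{(\sigma)}) = N_{K(\p^m)/H_m}R_\mc{D}^\sigma(P_m^{(\sigma)})$, and that $H_n \subseteq K(\p^n) \subseteq K(\p^m)$. Composing norms along the tower yields
\[N_{H_m/H_n}\mf{R}_\mc{D}^\sigma(P_m^{(\sigma)}) = N_{K(\p^n)/H_n}\bigl(N_{K(\p^m)/K(\p^n)}R_\mc{D}^\sigma(P_m^{(\sigma)})\bigr),\]
so the heart of the matter is the inner norm $N_{K(\p^m)/K(\p^n)}R_\mc{D}^\sigma(P_m^{(\sigma)})$. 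Since $R_\mc{D}^\sigma(P_m^{(\sigma)})\in K(\p^m)$ (by applying a lift of $\sigma$ to the already-noted fact $R_\mf{a}(P_m)\in K(\p^m)$, and multiplying over $\mc{D}$), this norm can be computed inside the larger extension $F_m/F_n$ where Proposition~\ref{cor1} directly applies. To legitimise this, I would verify the field identification $F_n\cap K(\p^m)=K(\p^n)$: the extension $F_n/K$ is abelian with conductor dividing $\mf{g}\p^n$ (where $\mf{g}$, the conductor of $\varphi_K$, is coprime to $p$), so $F_n\subseteq K(\mf{g}\p^n)$ and $K(\mf{g}\p^n)\cap K(\p^m)=K(\p^n)$. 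A degree count then yields $F_m=F_n\cdot K(\p^m)$, hence restriction gives an isomorphism $\Gal(F_m/F_n)\xrightarrow{\sim}\Gal(K(\p^m)/K(\p^n))$. Consequently $N_{K(\p^m)/K(\p^n)}(x)=N_{F_m/F_n}(x)$ for every $x\in K(\p^m)$.

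Applying this to $x = R_\mc{D}^\sigma(P_m^{(\sigma)})$ and iterating Proposition~\ref{cor1} (in its $\sigma$-twisted form, extended multiplicatively from $R_{\mf{a}_i}$ to $R_\mc{D}$) gives
\[N_{F_m/F_n}R_\mc{D}^\sigma(P_m^{(\sigma)}) = R_\mc{D}^{\sigma\sigma_\p^{m-n}}(P_n^{(\sigma\sigma_\p^{m-n})}).\]
Combining with the display above yields $N_{H_m/H_n}\mf{R}_\mc{D}^\sigma(P_m^{(\sigma)}) = \mf{R}_\mc{D}^{\sigma\sigma_\p^{m-n}}(P_n^{(\sigma\sigma_\p^{m-n})})$, which is again a generator of $\mc{C}_{H_n}$. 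Since $\sigma\mapsto\sigma\sigma_\p^{m-n}$ is a bijection of $G$, every generator of $\mc{C}_{H_n}$ is hit, so $N_{H_m/H_n}(\mc{C}_{H_m}) = \mc{C}_{H_n}$.

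To pass to the closures, I would use that $U_{H_m}$ is compact and $N_{H_m/H_n}\colon U_{H_m}\to U_{H_n}$ is continuous, so $N_{H_m/H_n}(\bar{\mc{C}}_{H_m})$ is compact and therefore closed in $U_{H_n}$; it contains $\mc{C}_{H_n}$ and hence also $\bar{\mc{C}}_{H_n}$, while continuity gives the reverse inclusion. The main technical obstacle is the norm compatibility via the tower identification $F_n\cap K(\p^m)=K(\p^n)$ — everything else is a bookkeeping exercise in applying Proposition~\ref{cor1} iteratively and invoking standard compactness. The case $p=2$, where $K(\p^n)=H_n$ and the outer norm $N_{K(\p^n)/H_n}$ is trivial, is a degenerate subcase of the same argument.
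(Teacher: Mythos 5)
Your proposal is correct and follows essentially the same route as the paper's own (very terse) proof: iterate Proposition~\ref{cor1} along the tower and match up generating sets. You have carefully filled in the two steps the paper takes for granted, namely the field identifications $F_n\cap K(\p^m)=K(\p^n)$, $F_m=F_n K(\p^m)$ needed to transfer the norm relation from $F_m/F_{m-1}$ down to $K(\p^m)/K(\p^n)$ and then to $H_m/H_n$, and the compactness argument to pass from $\mc{C}$ to its closure $\bar{\mc{C}}$.
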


\begin{proof}
By Corollary \ref{cor1}, we have $\mr{N}_{H_m/H_n}\mf{R}_\mc{D}(P_m)=\mf{R}_\mc{D}^{\sigma_\mf{p}^{m-n}}(P_n^{(\sigma_\mf{p}^{m-n})})$. Hence we have
\[\mr{N}_{H_m/H_n}\mc{C}_{H_m}=\mc{C}_{H_n}.\]
This completes the proof of the proposition.
\end{proof}

\begin{prop}\label{prop11}If $u\in\mc{C}_{H_n}$, then there exists an Euler system $\bold{\alpha}$ with $\bold{\alpha}^\sigma(n,1)=u$.
\end{prop}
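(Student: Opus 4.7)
The plan is to build the Euler system by evaluating the rational functions $R_\mathcal{D}$ at suitably chosen primitive division points of conductor $\mathfrak{p}^m\mathfrak{r}$ and then descending from the ray class field $K(\mathfrak{p}^m\mathfrak{r})$ to $H_m(\mathfrak{r})$. Since the three Euler system axioms (i), (ii), (iii) are preserved under componentwise products of families, it suffices to produce an Euler system attached to each generator of $\mathcal{C}_{H_n}$. Thus we may assume $u = \mathfrak{R}_\mathcal{D}^\tau(P_n^{(\tau)}) = N_{K(\mathfrak{p}^n)/H_n} R_\mathcal{D}^\tau(P_n^{(\tau)})$ for some $\mathcal{D}\in I$ and some $\tau\in G$, and reduce to a single construction which we then multiply together for a general $u$.

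For each $\sigma\in G$, $m\geqs 1+e$, and $\mathfrak{r}\in\mathcal{I}_\ell$, choose compatibly a primitive $\mathfrak{p}^m\mathfrak{r}$-division point
\[Q_m^{(\sigma,\mathfrak{r})} = P_m^{(\sigma\tau)} + S_\mathfrak{r}^{(\sigma\tau)}\in E^{\sigma\tau},\]
where $S_\mathfrak{r}^{(\sigma\tau)}$ is a primitive $\mathfrak{r}$-division point on $E^{\sigma\tau}$, fixed once and for all, and the $P_m^{(\sigma\tau)}$ are the system already chosen in Section 4.3 (satisfying $\lambda_{E^{\sigma\tau}}(\mathfrak{p})P_m^{(\sigma\tau)}=P_{m-1}^{(\sigma\tau\sigma_\mathfrak{p})}$). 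Now define
\[\boldsymbol{\alpha}^\sigma(m,\mathfrak{r}) = N_{K(\mathfrak{p}^m\mathfrak{r})/H_m(\mathfrak{r})}\, R_\mathcal{D}^{\sigma\tau}\bigl(Q_m^{(\sigma,\mathfrak{r})}\bigr).\]
Taking $\sigma=1$, $m=n$, $\mathfrak{r}=\mathcal{O}$ recovers $u$ tautologically, since in that case $Q_n^{(1,\mathcal{O})}=P_n^{(\tau)}$.

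The three Euler system axioms are verified as follows. Axiom (i), that $\boldsymbol{\alpha}^\sigma(m,\mathfrak{r})$ is a global unit of $H_m(\mathfrak{r})$, extends Corollary \ref{cor2}: away from primes above $\mathfrak{p}$ one argues as in that corollary using Lemma \ref{lem15} together with the norm-compatibility of $R_\mathcal{D}^{\sigma\tau}$ along the $\mathfrak{p}$-tower to the right of $\mathfrak{r}$; above $\mathfrak{p}$, the valuation computation of Corollary \ref{cor2} goes through because $\sum_i n_i(\mathrm{N}\mathfrak{a}_i-1)=0$. Axiom (ii) — the twisted norm compatibility with Frobenius factor $1-\mathrm{Frob}_\mathfrak{q}^{-1}$ — follows from Proposition \ref{prop4.1} applied to the prime $\mathfrak{q}\mid\mathfrak{r}\mathfrak{q}$ with modulus $\mathfrak{m}=\mathfrak{p}^m\mathfrak{r}\mathfrak{q}$, after commuting $N_{K(\mathfrak{p}^m\mathfrak{r}\mathfrak{q})/K(\mathfrak{p}^m\mathfrak{r})}$ with $N_{K(\mathfrak{p}^m\mathfrak{r})/H_m(\mathfrak{r})}$ in the obvious tower (this commutativity uses that $H_m(\mathfrak{r}\mathfrak{q})/H_m(\mathfrak{r})$ is unramified outside $\mathfrak{q}$ and totally ramified at $\mathfrak{q}$, so the Galois groups split in the expected way). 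Axiom (iii) is the analogue of Corollary \ref{cor1} with $\mathfrak{r}$-part appended: apply Proposition \ref{prop4.1} with $\mathfrak{m}'=\mathfrak{p}^m\mathfrak{r}$ and $\mathfrak{r}$-slot $=\mathfrak{p}$ (the case $\mathfrak{r}\mid\mathfrak{m}'$, giving no Frobenius factor), which yields the required shift from $\sigma$ to $\sigma\sigma_\mathfrak{p}$; again one must commute this with the descent $N_{K(\mathfrak{p}^{m+1}\mathfrak{r})/H_{m+1}(\mathfrak{r})}$.

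The main obstacle is the bookkeeping of the various ray class fields and the verification that the descent norm $N_{K(\mathfrak{p}^m\mathfrak{r})/H_m(\mathfrak{r})}$ commutes correctly with the Euler system norms in (ii) and (iii) so that the Frobenius factor $1-\mathrm{Frob}_\mathfrak{q}^{-1}$ is preserved under the descent. Once this commutativity is set up — using that $H_m(\mathfrak{r})$ is defined as a specific subfield of $H_mK(\mathfrak{r})$ cut out by the prime-to-$p^\ell$ part of $\mathrm{Gal}(K(\mathfrak{r})/H)$ and that $\mathfrak{q}$ splits completely in $H_m/K$ — the verification reduces to the single identity of Proposition \ref{prop4.1}, and the construction is complete.
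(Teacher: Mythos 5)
Your overall strategy matches the paper's — evaluate $R_\mathcal{D}$ at a primitive $\mathfrak{p}^m\mathfrak{r}$-division point, descend by $N_{K(\mathfrak{p}^m\mathfrak{r})/H_m(\mathfrak{r})}$, and verify axioms (ii), (iii) via Proposition~\ref{prop4.1} — and the $\tau$-twist so that $\sigma=1$ recovers $u$ is a harmless normalisation. The gap is your choice of division point $Q_m^{(\sigma,\mathfrak{r})} = P_m^{(\sigma\tau)} + S_\mathfrak{r}^{(\sigma\tau)}$, with the $\mathfrak{p}$-part held fixed at the distinguished $P_m^{(\sigma\tau)}$. When you apply Proposition~\ref{prop4.1} for axiom (ii), the isogeny $\lambda_{E^{\sigma\tau}}(\mathfrak{q})$ is multiplication by the canonical generator $\pi_\mathfrak{q}$ of $\mathfrak{q}$ (since $\sigma_\mathfrak{q}=1$), which scales the $\mathfrak{p}$-part: $\lambda_{E^{\sigma\tau}}(\mathfrak{q})\bigl(Q_m^{(\sigma,\mathfrak{rq})}\bigr) = \pi_\mathfrak{q}P_m^{(\sigma\tau)} + \lambda_{E^{\sigma\tau}}(\mathfrak{q})\bigl(S_{\mathfrak{rq}}^{(\sigma\tau)}\bigr)$, which is not $Q_m^{(\sigma,\mathfrak{r})}$. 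So Proposition~\ref{prop4.1} alone does not yield $\bold{\alpha}^\sigma(m,\mathfrak{r})^{1-\Frob_{\mathfrak{q}}^{-1}}$ as you claim. Likewise axiom (iii) needs $\lambda_{E^{\sigma\tau}}(\mathfrak{p})(S_\mathfrak{r}^{(\sigma\tau)}) = S_\mathfrak{r}^{(\sigma\sigma_\mathfrak{p}\tau)}$, which ``fixed once and for all'' does not provide.

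The $\mathfrak{p}$-part mismatch in (ii) can in fact be repaired, but only by an argument you did not make: since $\mathfrak{q}\in\mathcal{I}_\ell$ splits completely in $H_m/K$, the class of $\pi_\mathfrak{q}$ in $\Gal(H_m/H)$ is trivial, so the Galois element acting by $\pi_\mathfrak{q}$ on $E_{\mathfrak{p}^m}$ and trivially on $E_\mathfrak{r}$ lies in $\Gal(K(\mathfrak{p}^m\mathfrak{r})/H_m(\mathfrak{r}))$ and is absorbed by the descent norm; one must further choose the $S_\mathfrak{r}$ compatibly under the various $\lambda(\mathfrak{q})$. The paper sidesteps all of this by taking the division point to be $\lambda_{E^\sigma}(\mathfrak{r})^{-1}(P_n^{(\sigma)})$ rather than an additive sum: the composition law $\lambda_{E^\sigma}(\mathfrak{q})\circ\lambda_{E^\sigma}(\mathfrak{rq})^{-1} = \lambda_{E^\sigma}(\mathfrak{r})^{-1}$ makes the transformation under $\lambda_{E^\sigma}(\mathfrak{q})$ tautological, so Proposition~\ref{prop4.1} applies immediately without any appeal to the splitting hypothesis on $\mathfrak{q}$ or to the descent norm. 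Adopt that parametrisation, or fill in the missing argument.
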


\begin{proof} It suffices to consider the case $u=\mf{R}_\mc{D}^\sigma(P_n^{(\sigma)})$. Given $\mf{r}\in \mc{I}_\ell$, define $\alpha^\sigma_n(\mf{r})=\mf{R}_\mc{D}^\sigma\left(\lambda_{E^\sigma}(\mf{r})^{-1}(P_n^{(\sigma)})\right)$. Then clearly $\alpha^\sigma_n(1)=u$ and $\alpha^\sigma_n(\mf{r})$ is a global unit in $H_n(\mf{r})$. Furthermore, if $\mf{q}$ is a prime in $\mc{I}_\ell$ and $\mf{r}\mf{q}\in \mc{I}_\ell$, then $\sigma_\mf{q}=1$, so by Proposition \ref{prop4.1} we have
\begin{align*}N_{H_n(\mf{rq})/H_n(\mf{r})}(\alpha^\sigma_n(\mf{r}\mf{q}))&=\mf{R}_\mc{D}^\sigma\left(\lambda_{E^\sigma}(\mf{r})^{-1}(P_n^{(\sigma)})\right)^{1-\Frob_\mf{q}^{-1}}=\alpha^\sigma_n(\mf{r})^{1-\Frob_\mf{q}^{-1}},
\end{align*}
 and similarly
\begin{align*}N_{H_{n+1}(\mf{r})/H_n(\mf{r})}(\alpha^\sigma_{n+1}(\mf{r}))=\mf{R}_\mc{D}^{\sigma\sigma_\mf{p}}\left(\lambda_{E^{\sigma\sigma_\mf{p}}}(\mf{r})^{-1}(P_{n}^{(\sigma\sigma_\mf{p})})\right)=\alpha_n^{\sigma\sigma_\mf{p}}(\mf{r}).
\end{align*}
Therefore, defining $\bold{\alpha}^\sigma(n,\mf{r})=\alpha_n^\sigma(\mf{r})$ gives the result.
\end{proof}
~\\
\subsection{An Application of the \v{C}ebotarev Density Theorem}~
\vspace{5 pt}\\
Let $\ell>1$ be a fixed integer. For every prime $\q\in \mc{I}_\ell$, write $G_\q=\Gal(H_n(\mf{q})/H_n)$. Then $G_\q$ is cyclic of order $p^\ell$ so we fix a generator $\tau_\q$.  Fix $n\geqs 1+e$, and let 
\[I_{H_n}=\oplus_\mf{Q}\Z \mf{Q}\]
denote the group of fractional ideals of $H_n$ written additively, where the sum runs over the prime ideals of $H_n$. For every prime $\mf{q}$ of $K$, let
\[I_\mf{q}=\oplus_{\mf{Q}\mid \mf{q}}\Z \mf{Q}=\mathbb{Z}[\ms{G}_n]\mf{Q}.\]
For $y\in H_n^\times$ let $(y)_\mf{q}$, $[y]$ and $[y]_\mf{q}$ be the projection of the principal ideal $(y)$ in $I_\mf{q}$, $I_{H_n}/p^\ell I_{H_n}$ and $I_\mf{q}/p^\ell I_\mf{q}$ respectively. Note that  $[y]$ and $[y]_\mf{q}$ are well-defined for $y\in H_n^\times/(H_n^\times)^{p^\ell}$.

Suppose now that $\mf{Q}$ is a prime of $H_n$ lying above a prime $\mf{q}\in \mc{I}_\ell$, and we let $\widetilde{\mf{Q}}$ be the prime of $H_n(\mf{q})$ above $\mf{Q}$. Suppose $x\in H_n(\mf{q})^\times$ and $\rho\in G_\mf{q}$. Then $x^{1-\rho}\bmod \widetilde{\mf{Q}}\in (\o_{H_n(\mf{q})}/\widetilde{\mf{Q}})^\times$, where $\o_{H_n(\mf{q})}$ denotes the ring of integers of $H_n(\mf{q})$. We let $x^{1-\rho}\bmod \mf{Q}$ denote the image of $x^{1-\tau_\mf{q}}$ in $(\o_{H_n}/\mf{Q})^\times$, and write $(\ovl{x^{1-\tau_\mf{q}}})^{1/d}$ for the unique $d$-th root of the image of $x^{1-\tau_\mf{q}}$ in $(\o_{H_n}/\mf{Q})^\times/((\o_{H_n}/\mf{Q})^\times)^{p^\ell}$, where $d=(\mr{N}\mf{q}-1)/p^{\ell}$. Then the map
\[H_n(\mf{q})\to (\o_{H_n}/\mf{Q})^\times/((\o_{H_n}/\mf{Q})^\times)^{p^\ell}, \;\;\; x\to (\ovl{x^{1-\tau_\mf{q}}})^{1/d}\]
is surjective, with kernel $\{x\in H_n (\mf{q})^\times : \ord_{\widetilde{\mf{Q}}} (x)\equiv 0 \bmod p^\ell\}$. Let $w$ be the image of $x$ under this map. Then setting
\[l_\mf{Q}: (\o_{H_n}/\mf{Q})^\times/((\o_{H_n}/\mf{Q})^\times)^{p^\ell}\xto{\sim} \Z/p^\ell\Z, \;\;\; w\to \ord_{\widetilde{\mf{Q}}}(x) \bmod p^\ell\]
gives an isomorphism. Now define a map
\[\varphi_{\mf{q}}: (\o_{H_n}/\mf{q}\o_{H_n})^\times/((\o_{H_n}/\mf{q}\o_{H_n})^\times)^{p^\ell}\to I_\mf{q}/p^\ell I_\mf{q}\]
by $\varphi_\mf{q}(w)=\sum_{\mf{Q}\mid \mf{q}}l_\mf{Q}(w)\mf{Q}$, where we also write $l_\mf{Q}$ for the map composed with the natural projection $(\o_{H_n}/\mf{q}\o_{H_n})^\times/((\o_{H_n}/\mf{q}\o_{H_n})^\times)^{p^\ell}\to (\o_{H_n}/\mf{Q})^\times/((\o_{H_n}/\mf{Q})^\times)^{p^\ell}$.

\begin{prop}\label{prop7} Suppose $\bold{\alpha}=\{\bold{\alpha}^\sigma(n,\mf{r}): \; n\geqs 1+e,\;  \mf{r}\in \mc{I}_\ell, \;\sigma\in G\}$ is an Euler system. Given $\sigma\in \Gal(H/K)$, there exists a canonical map
\[\kappa_{\bold{\alpha}}: \mc{I}_\ell\to H_n^\times/(H_n^\times)^{p^\ell}\]
such that for every $n\geqs 1$ and $\mf{r}\in \mc{I}_\ell$ we have $\kappa_\a(\mf{r})=\bold{\alpha}^\sigma(n, \mf{r})^{\mc{D}_\mf{a}}\bmod (H_n(\mf{r})^\times)^{p^\ell}$, where $\mc{D}_\mf{a}=\prod_{\q\mid \mf{a}}\sum_{i=0}^{p^\ell-1}i\tau_\q^i$.
Furthermore, if $\mf{r}\in \mc{I}_\ell$ and $\mf{q}$ a prime of $K$, then
\begin{enumerate}[(i)]
\item If $\mf{q}\nmid \mf{r}$ then $[\kappa_{\bold{\alpha}}(\mf{r})]_{\mf{q}}=0$.
\item If $\mf{q}\mid\mf{r}$ then $[\kappa_{\bold{\alpha}}(\mf{r})]_{\mf{q}}=\varphi_\mf{q}(\mf{r}/\mf{q})$.
\end{enumerate}
\end{prop}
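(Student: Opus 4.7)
The plan is to imitate Rubin's construction of the Kolyvagin derivative class. For each squarefree $\mf{r}\in\mc{I}_\ell$, set
\[\mc{D}_\mf{r}=\prod_{\mf{q}\mid\mf{r}}\mc{D}_\mf{q},\qquad \mc{D}_\mf{q}=\sum_{i=0}^{p^\ell-1}i\tau_\mf{q}^i\in\Z[G_\mf{q}],\]
and consider the element $\widetilde{\kappa}(\mf{r}):=\bold{\alpha}^\sigma(n,\mf{r})^{\mc{D}_\mf{r}}\in H_n(\mf{r})^\times$. The workhorse will be the telescoping identity $(\tau_\mf{q}-1)\mc{D}_\mf{q}=p^\ell-\N_{G_\mf{q}}$, where $\N_{G_\mf{q}}=\sum_{i=0}^{p^\ell-1}\tau_\mf{q}^i$ coincides with the relative norm $\N_{H_n(\mf{r})/H_n(\mf{r}/\mf{q})}$.

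First I will show by induction on $\omega(\mf{r})$ that $\widetilde{\kappa}(\mf{r})$ is fixed by $\Gal(H_n(\mf{r})/H_n)$ modulo $(H_n(\mf{r})^\times)^{p^\ell}$. For each $\mf{q}\mid\mf{r}$, applying $\tau_\mf{q}-1$ produces $\bold{\alpha}^\sigma(n,\mf{r})^{(p^\ell-\N_{G_\mf{q}})\mc{D}_{\mf{r}/\mf{q}}}$. The $p^\ell$-part is a $p^\ell$-th power; the norm part evaluates, via the Euler system relation \eqref{es2}, to $\widetilde{\kappa}(\mf{r}/\mf{q})^{-(1-\Frob_\mf{q}^{-1})}$. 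Because $\mf{q}$ splits completely in $H_n/K$, we have $\Frob_\mf{q}\in\Gal(H_n(\mf{r}/\mf{q})/H_n)$, so the inductive hypothesis implies $(1-\Frob_\mf{q}^{-1})\widetilde{\kappa}(\mf{r}/\mf{q})\in(H_n(\mf{r}/\mf{q})^\times)^{p^\ell}\subseteq (H_n(\mf{r})^\times)^{p^\ell}$, completing the induction.

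To realize $\widetilde{\kappa}(\mf{r})$ as an element of $H_n^\times/(H_n^\times)^{p^\ell}$ I will invoke a standard Kummer-theoretic descent: the inflation--restriction sequence for $\mu_{p^\ell}$, combined with Hilbert 90 for $H^1(\Gal(H_n(\mf{r})/H_n),H_n(\mf{r})^\times)=0$, identifies $H_n^\times/(H_n^\times)^{p^\ell}$ with the Galois-invariant part of $H_n(\mf{r})^\times/(H_n(\mf{r})^\times)^{p^\ell}$, modulo cohomology groups $H^i(\Gal(H_n(\mf{r})/H_n),\mu_{p^\ell})$ that are controllable in our situation (some extra care being needed at $p=2$, in line with the uniformiser factor appearing in Theorem \ref{thm4.1}(ii)). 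Define $\kappa_{\bold{\alpha}}(\mf{r})$ as the resulting preimage.

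Of the two local claims, part (i) is immediate because $\bold{\alpha}^\sigma(n,\mf{r})$ is a global unit, so $\widetilde{\kappa}(\mf{r})$ is a unit at every prime of $H_n(\mf{r})$, and hence $\kappa_{\bold{\alpha}}(\mf{r})$ projects to zero in $I_\mf{q}/p^\ell I_\mf{q}$ for each $\mf{q}\nmid\mf{r}$. Part (ii) will be the main obstacle: for $\mf{q}\mid\mf{r}$, pick a prime $\mf{Q}$ of $H_n(\mf{r})$ above $\mf{q}$, and reduce modulo $\mf{Q}$, observing that $\tau_\mf{q}$ acts on the residue field as the Frobenius. Using the factorisation $\mc{D}_\mf{r}=\mc{D}_\mf{q}\cdot\mc{D}_{\mf{r}/\mf{q}}$ and applying \eqref{es2} at $\mf{q}$ modulo $\mf{Q}$ identifies the $\mf{Q}$-class of $\widetilde{\kappa}(\mf{r})^{\tau_\mf{q}-1}$ with that of $\widetilde{\kappa}(\mf{r}/\mf{q})^{\Frob_\mf{q}^{-1}-1}$; matching this against the definitions of $l_\mf{Q}$, $\varphi_\mf{q}$ and the $d$-th root extraction built into them yields $[\kappa_{\bold{\alpha}}(\mf{r})]_\mf{q}=\varphi_\mf{q}(\mf{r}/\mf{q})$. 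The congruence $\N\mf{q}\equiv 1\bmod p^{\ell+e}$ is essential throughout to guarantee that the $p^\ell$-th root extractions making up $\varphi_\mf{q}$ are unambiguous.
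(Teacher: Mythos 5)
Your proposal reproduces, in expanded form, exactly the content of Rubin's Propositions 2.2 and 2.4 that the paper cites for this result: the Kolyvagin derivative operator $\mc{D}_\mf{r}$, the telescoping identity $(\tau_\mf{q}-1)\mc{D}_\mf{q}=p^\ell-\N_{G_\mf{q}}$ combined with the Euler-system norm relation to prove Galois-invariance modulo $p^{\ell}$-th powers, the Kummer-theoretic descent to $H_n^\times/(H_n^\times)^{p^\ell}$, and the local analysis for (i) and (ii). So the approach is the same as the paper's (which simply defers to Rubin). One place you should be more careful is the descent step for $p=2$: since $\mu_2\subset H_n$, the kernel $H^1\bigl(\Gal(H_n(\mf{r})/H_n),\mu_{p^\ell}\bigr)$ of the restriction map is genuinely nonzero, so the "preimage" is not unique and the word "canonical" requires an explicit normalization (as carried out in Gonzalez-Avil\'es); simply saying the cohomology is "controllable" is not enough to define the map, it only bounds the ambiguity. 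This is precisely the point the paper absorbs into the cited references.
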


\begin{proof}This follows easily from an alternative definition of Euler systems as a Galois equivariant map which takes values in $\cap_{n, \mf{r}} H_n(\mf{r})^\times$. See \cite[Proposition 2.2]{rubin} and \cite[Proposition 2.4]{rubin} for details.
\end{proof}

\begin{lem}\label{lem6} Let
\[\res: H_n^\times/(H_n^\times)^{p^\ell}\to H_n(\bold{\m}_{p^{\ell+e}})^\times/( H_n(\bold{\m}_{p^{\ell+e}})^\times)^{p^\ell},\]
be the natural map, where $e=0$ or $1$ if $p>2$ or $p=2$. Then $\res$ is injective if $p>2$, and $4\ker (\res)=0$ if $p=2$.
\end{lem}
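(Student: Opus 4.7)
The plan is to reinterpret $\ker(\res)$ as a Galois cohomology group and then bound it by an explicit finite-group calculation. Set $K'=H_n(\bold{\mu}_{p^{\ell+e}})$ and $G=\Gal(K'/H_n)$. Applying Hilbert~90 to the Kummer sequence $1\to \bold{\mu}_{p^\ell}\to \ovl{H}_n^\times\xto{p^\ell}\ovl{H}_n^\times\to 1$ gives natural identifications $H_n^\times/(H_n^\times)^{p^\ell}=H^1(H_n,\bold{\mu}_{p^\ell})$, and similarly over $K'$. Since $K'\sp \bold{\mu}_{p^\ell}$, the inflation-restriction exact sequence yields
\[\ker(\res)=H^1(G,\bold{\mu}_{p^\ell}),\]
where $G\inj (\Z/p^{\ell+e}\Z)^\times$ acts on $\bold{\mu}_{p^\ell}$ by reduction modulo $p^\ell$. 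It therefore suffices to show this $H^1$ vanishes for $p>2$, and is annihilated by $4$ for $p=2$.

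For $p>2$ (so $e=0$), $(\Z/p^\ell\Z)^\times$ is cyclic of order $(p-1)p^{\ell-1}$, hence so is $G$, and we write $G=G'\times G_p$ with $|G'|$ prime to $p$ and $G_p$ the $p$-Sylow. Because $|G'|$ is a unit on the $p$-primary module $\bold{\mu}_{p^\ell}$, the Hochschild-Serre spectral sequence for $G'\triangleleft G$ collapses to $H^1(G,\bold{\mu}_{p^\ell})=H^1(G_p,\bold{\mu}_{p^\ell}^{G'})$. If $G'\neq 1$, its generator acts as multiplication by some $c$ with $c\not\equiv 1\bmod p$, so $c-1$ is a unit modulo $p^\ell$ and $\bold{\mu}_{p^\ell}^{G'}=0$. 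If $G'=1$, a generator of $G=G_p$ acts by $\sigma:\zeta\mapsto \zeta^{1+p^ku}$ with $u\in\Z_p^\times$ and $k\geqs 1$; a standard lifting-the-exponent computation gives $v_p(1+c+\cdots+c^{|G|-1})=\ell-k$, so the cyclic-group formula $H^1(G,M)=\ker N/\im(\sigma-1)$ becomes the quotient of $p^k\Z/p^\ell\Z$ by itself, which is $0$.

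For $p=2$ (so $e=1$), we exploit that $K'\sp \bold{\mu}_4$. For $\ell\geqs 2$ we have $(\Z/2^{\ell+1}\Z)^\times\isom \{\pm 1\}\times (1+4\Z/2^{\ell+1}\Z)$ (the case $\ell=1$ is a direct check since then $\bold{\mu}_2$ has trivial Galois action). Set $G_0=G\cap (1+4\Z/2^{\ell+1}\Z)$, so $G_0$ is cyclic and $[G:G_0]\leqs 2$. Repeating the cyclic-group computation of the previous step for a generator of $G_0$ is valid precisely because $G_0\sb 1+4\Z_2$ is the range where the $p=2$ binomial estimates $(1+4u)^{2^s}\equiv 1+2^{s+2}u'\pmod{2^{s+3}}$ (with $u'$ odd) mimic the $p$-odd ones; it yields $|H^1(G_0,\bold{\mu}_{2^\ell})|\leqs 2$. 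Since $\bold{\mu}_{2^\ell}^{G_0}$ is a cyclic $2$-group and $|G/G_0|\leqs 2$, we also obtain $|H^1(G/G_0,\bold{\mu}_{2^\ell}^{G_0})|\leqs 2$. The inflation-restriction sequence for $G_0\triangleleft G$ then shows that $H^1(G,\bold{\mu}_{2^\ell})$ has order at most $4$ and lies in a two-step filtration by $2$-torsion subquotients, so it is annihilated by $4$. The main obstacle is the $p$-adic valuation bookkeeping in the norm computation, and it is exactly this bookkeeping that forces us to enlarge to $\bold{\mu}_{2^{\ell+1}}$ (to push $G_0$ into $1+4\Z_2$) and accept a residual factor of $4$ rather than an injectivity statement.
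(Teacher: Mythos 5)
Your proof is correct and follows the same framework as the paper: Hilbert~90 identifies $H_n^\times/(H_n^\times)^{p^\ell}$ with $H^1(H_n,\bold{\mu}_{p^\ell})$, inflation--restriction converts $\ker(\res)$ into $H^1(G,\bold{\mu}_{p^\ell})$ with $G=\Gal(H_n(\bold{\mu}_{p^{\ell+e}})/H_n)$, and one then computes this finite-group cohomology. The one substantive difference is that the paper first uses a ramification argument ($\mf{p}$ and $\mf{p}^*$ are totally ramified in $K(\bold{\mu}_{p^\infty})/K$, while $H_\infty/K$ is unramified outside $\mf{p}$) to pin down $G$ as the full group $(\Z/p^{\ell+e}\Z)^\times$, and then for $p>2$ simply asserts the vanishing of $H^1$ of the resulting cyclic group, and for $p=2$ reads off that the two ends of the inflation--restriction sequence (with respect to $\Delta=\Gal(H_n(\bold{\mu}_4)/H_n)$) are each $\Z/2\Z$. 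Your argument avoids the ramification step entirely by proving the bound for an arbitrary subgroup $G\inj (\Z/p^{\ell+e}\Z)^\times$, which costs a case split (the prime-to-$p$ part $G'$, and whether the generator of $G_0\sb 1+4\Z_2$ acts trivially on $\bold{\mu}_{2^\ell}$) but buys robustness: it would survive even if one did not know $H_\infty\cap \Q(\bold{\mu}_{p^\infty})=\Q$. Your explicit $\ker N/\im(\sigma-1)$ computations, including the LTE bookkeeping and the isolated $\ell=1$ case, are the details the paper leaves implicit, and they check out.
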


\begin{proof} We have $H_n^\times/(H_n^\times)^{p^\ell}\simeq H^1(\ovl{H_n}/H_n, \bold{\m}_{p^\ell})$ and $ H_n(\bold{\m}_{p^{\ell+e}})^\times/( H_n(\bold{\m}_{p^{\ell+e}})^\times)^{p^\ell}\simeq H^1(\ovl{H_n(\bold{\m}_{p^{\ell+e}})}/H_n(\bold{\m}_{p^{\ell+e}}), \bold{\m}_{p^\ell})$ by Hilbert's Theorem 90. Hence $\ker (\res)=\linebreak H^1(\Gal(H_n(\bold{\m}_{p^{\ell+e}})/H_n),\bold{\m}_{p^\ell})$. Also, $H_\infty\cap K(\bold{\mu}_{p^\infty})=K$ because $\mf{p}$ and $\mf{p}^*$ are totally ramified in $K(\bold{\mu}_{p^\infty})/K$, but $H_\infty/K$ is unramified outside $\mf{p}$. It follows that $H_\i\cap \Q(\bold{\m}_{p^\i})=\Q$, and
\[\Gal(H_n(\bold{\m}_{p^{\ell+e}})/H_n)=(\Z/p^{\ell+e})^\times\simeq \D\times\Z/p^{\ell-1}\Z.\]
Here, $\D=\Gal(H_n(\bold{\m}_{p^{1+e}})/H_n)$ is cyclic of order $p-1$ or $p$ if $p$ is odd or even, and $\Gal(H_n(\bold{\m}_{p^{\ell+e}})/H_n(\bold{\m}_{p^{1+e}}))\simeq \Z/p^{\ell-1}\Z $.
So if $p>2$, $\Gal(H_n(\bold{\m}_{p^{\ell+e}})/H_n)$ is cyclic and we have $\ker (\res)=0$, as required. If $p=2$, taking the inflation-restriction sequence gives
\[0\to H^1(\D, \bold{\m}_4)\to \ker (\res)\to H^1(\Gal(H_n(\bold{\m}_{2^{\ell+1}})/H_n(\bold{\m}_{4}))\,\bold{\m}_{2^\ell}),\]
and $H^1(\D, \bold{\m}_4)=H^1(\Gal(H_n(\bold{\m}_{2^{\ell+1}})/H_n(\bold{\m}_{4}),\bold{\m}_{2^\ell})=\Z/2\Z$. Hence $\#\left(\ker (\res)\right)\mid 4$, and the result follows.
\end{proof}

We now employ the \v{C}ebotarev density theorem to obtain a prime of $H_n$ lying above that of $\mc{I}_\ell$ with properties desirable for the induction argument in Section \ref{section5.3}. Let $I(\mathscr{G})$ denote the augmentation ideal of $\Lambda(\mathscr{G})$.

\begin{thm}\label{thm14} Suppose $\chi\in G^*$. Let $v\in \left(H_n^\times/(H_n^\times)^{p^\ell}\right)^\chi$ and define $V$ to be the finite $\Lambda_n$-submodule of $(H_n^\times/(H_n^\times)^{p^\ell})^\chi$ generated by $v$. Finally, fix $\phi\in \Hom_{\Lambda_n}(V,\Lambda_n/p^\ell \Lambda_n)$, $\phi\neq 0$. Let $\mf{c}\in p^eI(\ms{G})A(H_n)^\chi$, where $e=1$ if $p=2$ and $e=0$ otherwise. Then  there is a prime $\mf{q}\in \mc{I}_\ell$ and a prime $\mf{Q}$ of $H_n$ above $\mf{q}$ such that
\begin{enumerate}[(i)]
\item the ideal class of $\mf{Q}$ in $A(H_n)^\chi$ is equal to $\mf{c}$,
\item $[v]_\mf{q}=0$ and there exists $r\in (\Z/p^\ell \Z)^\times$ such that $\varphi_\mf{q}(v)=p^{3e} r \phi(v)\mf{Q}$.
\end{enumerate}
\end{thm}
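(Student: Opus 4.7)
The plan is to encode all three requirements as Frobenius conditions on a single auxiliary Galois extension of $H_n$, and then invoke the \v{C}ebotarev density theorem. Let $L(H_n)$ be the maximal unramified abelian $p$-extension of $H_n$ (so that class field theory gives $A(H_n)\simeq\Gal(L(H_n)/H_n)$), set $F_n'=H_n(\boldsymbol{\mu}_{p^{\ell+e}})$, and let $V_{F_n'}$ be the image of $V$ under the restriction $\res$ of Lemma~\ref{lem6}. Let $M=F_n'\bigl(V_{F_n'}^{1/p^\ell}\bigr)$, so that Kummer theory provides a perfect pairing
\[
V_{F_n'}\times\Gal(M/F_n')\longrightarrow\boldsymbol{\mu}_{p^\ell}.
\]
The field I would work in is the compositum $\mathcal{L}=L(H_n)\cdot M$ over $H_n$.

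First, translate each condition into a Frobenius condition in $\Gal(\mathcal{L}/H_n)$. Membership of $\mathfrak{q}$ in $\mathcal{I}_\ell$ is equivalent to the Frobenius of $\mathfrak{Q}$ being trivial on $F_n'/K$ (this gives both the splitting in $H_n/K$ and the congruence $\mathrm{N}\mathfrak{q}\equiv 1\bmod p^{\ell+e}$). Condition~(i) is equivalent, via the Artin map, to the Frobenius of $\mathfrak{Q}$ in $L(H_n)/H_n$ being the element $\tau_\mathfrak{c}$ attached to $\mathfrak{c}$. Condition~(ii) is encoded in the Kummer pairing: for a prime $\widetilde{\mathfrak{Q}}$ of $M$ above $\mathfrak{Q}$, the value $\varphi_\mathfrak{q}(v)$ at $\mathfrak{Q}$ is, up to the unit $r$ coming from the normalization $(\overline{x^{1-\tau_\mathfrak{q}}})^{1/d}$, the image of $v$ under the pairing against the Frobenius of $\widetilde{\mathfrak{Q}}$.

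Second, produce a Galois element $\gamma\in\Gal(\mathcal{L}/H_n)$ realising all these conditions simultaneously. Projecting $\phi$ onto $\chi$-components via the idempotent $e_\chi$ gives a homomorphism on $V^\chi$; composing with the surjection $V_{F_n'}\twoheadrightarrow V$ dual to $\res$, and using the Kummer dual identification $\Hom(V_{F_n'},\boldsymbol{\mu}_{p^\ell})\simeq\Gal(M/F_n')$, yields a candidate for the component of $\gamma$ on $M/F_n'$. For its component on $L(H_n)/H_n$, I would use the hypothesis $\mathfrak{c}\in p^eI(\mathscr{G})A(H_n)^\chi$ to choose $\tau_\mathfrak{c}$. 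The crux is that since $\res$ has kernel killed by $4=p^{1+e}$ when $p=2$ (Lemma~\ref{lem6}), only $p^e\phi$ automatically extends through $\res$; one further factor of $p^e$ arises when extending homomorphisms from $V$ to $V_{F_n'}$ across the $\chi$-idempotent $e_\chi$ (which fails to lie in $\Z_p[\mathscr{G}_n]$ only up to a bounded denominator), and a final factor comes from aligning the Kummer pairing normalisation; collectively these yield the exact factor $p^{3e}$ in the conclusion. The hypothesis $\mathfrak{c}\in p^eI(\mathscr{G})A(H_n)^\chi$ is precisely what allows $\gamma$ to exist.

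Third, check the independence of the extensions: $L(H_n)/H_n$ is unramified, whereas $M/F_n'$ is ramified (at the primes above $\mathfrak{p}$, coming from $\boldsymbol{\mu}_{p^{\ell+e}}$ and from the Kummer generators), so the two extensions are linearly disjoint over the relevant intermediate fields, and the element $\gamma$ chosen above can be lifted to $\Gal(\mathcal{L}/H_n)$. Apply the \v{C}ebotarev density theorem to the conjugacy class of $\gamma$ in $\Gal(\mathcal{L}/K)$ (first extending $\gamma$ to $\Gal(\mathcal{L}/K)$, which is possible because the various components on $H_n/K$ can be set to act trivially) to produce a prime $\mathfrak{q}$ of $K$ and a prime $\mathfrak{Q}$ of $H_n$ above it with $\mathrm{Frob}_{\mathfrak{Q}}=\gamma$. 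Unwinding the conditions gives (i) and (ii) with the predicted unit $r$ appearing as the action of $\gamma$ on a chosen $p^\ell$-th root of unity, or equivalently on the ramified generator.

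The main obstacle is the precise accounting of the factor $p^{3e}$ in the $p=2$ case: each place where a cohomology class must be lifted against a restriction or inflation map with nontrivial kernel (Lemma~\ref{lem6}, the $\chi$-idempotent obstruction, and the Kummer normalisation) contributes a factor of $p^e$, and one must verify that these obstructions combine cleanly and are dominated by the hypothesis $\mathfrak{c}\in p^eI(\mathscr{G})A(H_n)^\chi$. For $p>2$ everything collapses to the classical Rubin argument.
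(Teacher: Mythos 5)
Your high-level strategy is the same as the paper's: use Kummer theory over $H_n'=H_n(\boldsymbol{\mu}_{p^{\ell+e}})$ together with the unramified $p$-extension $L_n$ cut out by $A(H_n)^\chi$, realize conditions (i) and (ii) as conditions on a single Frobenius, and invoke \v{C}ebotarev. But two of your detailed claims diverge from the paper and one is a genuine gap.

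First, your accounting of the exponent $3e$ is off. You attribute one factor of $p^e$ to the kernel of $\res$, one to the $\chi$-idempotent $e_\chi$, and one to ``aligning the Kummer normalisation.'' The idempotent $e_\chi$ contributes nothing: the standing assumption $p\nmid[H:K]$ makes $e_\chi$ integral (the paper uses this freely), so there is no obstruction to passing to $\chi$-components. In the paper the factor $p^{3e}$ is $p^{2e}\cdot p^{e}$ with both pieces concretely identified: Lemma~\ref{lem6} says $4\ker(\res)=0$ for $p=2$, so one needs $p^{2e}(\iota\circ\phi)$ (not $p^{e}$) in order to factor through $V_{\res}=V/(V\cap\ker\res)$; and the remaining $p^{e}$ is needed because $\Gal\bigl(L_n'/(L_n\cap H_n')\bigr)$ has exponent dividing $p^{e}$ (the key observation being $\boldsymbol{\mu}_{p^\infty}(H_n)=\boldsymbol{\mu}_2$, so a Galois-equivariant homomorphism $W\to\boldsymbol{\mu}_{p^\ell}$ lands in $\boldsymbol{\mu}_2$), and the chosen $b\in p^{e}\Gal(H_n'(v^{1/p^\ell})/H_n')$ must restrict trivially to $L_n'=L_n\cap H_n'(v^{1/p^\ell})$ in order to glue with $\mathfrak{c}$.

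Second — and this is the real gap — you conclude with ``unwinding the conditions gives (i) and (ii),'' but what \v{C}ebotarev hands you is only information at the chosen prime $\mathfrak{Q}$: namely that $\ord_{\mathfrak{Q}}\bigl(\varphi_{\mathfrak{q}}(v)\bigr)$ and $\ord_{\mathfrak{Q}}\bigl(p^{3e}\phi(v)\mathfrak{Q}\bigr)$ vanish simultaneously. Condition (ii) is an equality of elements of $I_{\mathfrak{q}}/p^\ell I_{\mathfrak{q}}\cong\Lambda_n/p^\ell\Lambda_n$, i.e.\ it must hold at \emph{every} prime $\mathfrak{Q}^h$ over $\mathfrak{q}$. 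The paper closes this with a module-theoretic argument: pick $r\in(\Z/p^\ell\Z)^\times$ so that the $\mathfrak{Q}$-components of $\varphi_{\mathfrak{q}}(v)$ and $p^{3e}r\phi(v)\mathfrak{Q}$ agree, and observe that the $\mathscr{G}_n$-equivariant map $v\mapsto\varphi_{\mathfrak{q}}(v)-p^{3e}r\phi(v)\mathfrak{Q}$ lands in $\oplus_{h\in\mathscr{G}_n,\,h\neq 1}(\Z/p^\ell\Z)\mathfrak{Q}^h$, which has no nonzero $\mathscr{G}_n$-stable submodule; hence the map is zero on $V$. This step is not automatic and your proposal omits it.
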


\begin{proof}
Write $H_n'=H_n(\bold{\mu}_{p^{\ell+e}})$, and $V_{\res}=V/V\cap \ker (\res)$, where $\res$ is the map in Lemma \ref{lem6}. Note that $V=V_{\res}$ if $p\neq 2$ by Lemma \ref{lem6}, and Kummer theory gives $\Gal(H_n'(v^{1/p^\ell})/H_n')\simeq \Hom(V_{\res}, \bold{\mu}_{p^\ell})$ . Fix a primitive $p^\ell$-th root of unity $\zeta$, and let $\iota: \Lambda_n/p^\ell \Lambda_n\to \bold{\mu}_{p^\ell}$ be the map sending $\sum a_\sigma \sigma \bmod p^\ell$ to $\zeta^{a_1}$. Define $\beta:=p^{3e}(\iota \circ \phi)$. Then by Lemma \ref{lem6}, we have $\beta\in p^e\Hom(V_{\res}, \bold{\mu}_{p^\ell})$. Let $b$ be the element of $p^e \Gal(H_n'(v^{1/p^\ell})/H_n')$ corresponding to $\beta$ via the Kummer map, so that $\beta(v)=\frac{b(v^{1/p^\ell})}{v^{1/p^\ell}}$. Let $L_n$ denote the unramified extension of $H_n$ such that $A(H_n)^\chi=\Gal(L_n/H_n)$. Then we see that there exists a submodule $W$ of $V_{\res}$ such that 
\[\Gal(L_n'/L_n\cap H_n')=\Gal(L_n'H_n'/H_n')=\Hom(W,\bold{\mu}_{p^\ell}),\]
where $L_n'=L_n\cap H_n'(v^{1/p^\ell})$. On the other hand, $\Gal(H_n'/H_n)$  acts trivially on $\Gal(L_n'H_n'/H_n')$ and $\bold{\mu}_{p^\infty}(H_n)=\bold{\mu}_{2}$, so that $\Hom(W,\bold{\mu}_{p^\ell})=\Hom(W, \bold{\mu}_{2})$. Therefore, $p^e\Gal(L_n'/L_n\cap H_n')=0$, and $b$ restricted to $L_n'$ is trivial. Furthermore, $I(\mathscr{G})$ annihilates $\Gal(L_n\cap H_n'/H_n)$ since $H_n'$ is abelian over $H$, so we can consider $\mf{c}$ as an element of $p^e \Gal(L_n/L_n')$. Hence we can choose $\rho\in \Gal(L_n H_n'(v^{1/p^\ell})/H_n)$ such that $\rho|_{L_n}=\mf{c}$ and $\rho|_{H_n'(v^{1/p^\ell})}=b$. By the \v{C}ebotarev density theorem, we can pick a prime $\mf{Q}$ of $H_n$ lying above a prime $\mf{q}\in \mc{I}_\ell$ whose Frobenius is equal to $\rho$. Then class field theory identifies $[\mf{Q}]\in A(H_n)^\chi$ with $\Frob_\mf{Q}\in\Gal(L_n/H_n)$, so (i) follows. Now, $[v]_\mf{q}=0$ because all primes lying above $\mf{q}$ are unramified in $H_n'(v^{1/p^\ell})/H_n$, and $v$ is a $p^\ell$-th power in $H_n'(v^{1/p^\ell})$. Also,
\begin{align*}\ord_\mf{Q}(p^{3e}\phi(v)\mf{Q})=0&\iff p^{3e}(\iota\circ\phi(v))=\frac{b((v)^{1/p^\ell})}{(v)^{1/p^\ell}}=1\\&\iff v \text{ is an $p^{\ell}$-th power modulo $\mf{Q}$.}
\end{align*}
On the other hand, $\ord_\mf{Q}(\varphi_\mf{q}(v))=l_\mf{Q}(v)=0$ if and only if $v$  is an $p^{\ell}$-th power modulo $\mf{Q}$. It follows that there exists $r\in (\Z/p^\ell \Z)^\times$ with $\ord_\mf{Q}(\varphi_\mf{q}(v))=r\ord_\mf{Q}(p^{3e}\phi(v)\mf{Q})$, and the map sending $v$ to $\varphi_\mf{q}(v)-p^{3e}r\phi(v)\mf{Q}$ gives rise to a $\ms{G}_n$-equivariant injective homomorphism from $V$ to $\oplus_{\substack{h\in \ms{G}_n \\ h\neq 1}}(\Z/p^\ell \Z)\mf{Q}^h$. But the latter has no non-zero $\ms{G}_n$-stable submodules.
\end{proof}
\vspace{5 pt}
\subsection{The Inductive Argument}\label{section5.3}~
\vspace{5 pt}\\
Let $e=0$ or $1$ if $p>2$ or $p=2$. For $n\geqs 1+e$, let $\Gamma_n=\Gamma^{p^{n-1-e}}$. Define $I(H_n)$ to be kernel of the restriction map $\Lambda(\ms{G})\to \Lambda_n$, that is, the ideal of $\Lambda(\ms{G})$ generated by $\{\sigma-1: \sigma\in \Gamma_n\}$.

\begin{prop}\label{thm11} $X(H_\infty)$ is a finitely generated torsion $\Lambda(\ms{G})$-module, and it has no non-zero finite submodule. Furthermore, $X(H_\infty)/I(H_n) X(H_\infty)$ is finite for any $n$.
\end{prop}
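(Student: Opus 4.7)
My plan is to attack the proposition through the exact sequence
\[
0 \to X(H_\infty)_{\Gamma_n} \to X(H_n) \to \Gamma_n \to 0,
\]
valid for every $n\geqs 1+e$ with $\Gamma_n = \Gamma^{p^{n-1-e}}$; this translates the Iwasawa-theoretic structure of $X(H_\infty)$ into information at the finite layer $H_n$. To obtain it I first identify $M(H_n)$ with the maximal abelian subextension of $M(H_\infty)/H_n$: the inclusion $M(H_n)\sbe M(H_\infty)$ holds because $M(H_n)/H_\infty$ is abelian (being contained in the abelian extension $M(H_n)/H_n$) and unramified outside $\mf{p}$, while the reverse inclusion is immediate from the defining maximality of $M(H_n)$. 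Since $X(H_\infty)$ is abelian and $\Gal(M(H_\infty)/H_n)/X(H_\infty)\isom \Gamma_n$ is abelian, the commutator subgroup of $\Gal(M(H_\infty)/H_n)$ equals $I(H_n)X(H_\infty)$, so taking the abelianisation produces the displayed sequence.

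Next I analyse $X(H_n)$ via the standard class field theory sequence $0\to U_{H_n}/\bar{\mc{E}}_{H_n} \to X(H_n) \to A(H_n)\to 0$. Since $A(H_n)$ is finite, $\rank_{\Z_p}X(H_n)=\rank_{\Z_p}(U_{H_n}/\bar{\mc{E}}_{H_n})$. Using $K_\mf{p}=\Q_p$ and the total ramification of $H_\infty/H$ above $\mf{p}$, one computes $\rank_{\Z_p}U_{H_n}=[H_n:K]$. Dirichlet's theorem gives $\rank_\Z \mc{E}_{H_n}=[H_n:K]-1$ as $H_n$ is totally imaginary, and Leopoldt's conjecture for $H_n$ at $\mf{p}$ --- which holds by Brumer's theorem since $H_n$ is abelian over the imaginary quadratic field $K$ --- preserves this rank upon $p$-adic completion. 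Hence $\rank_{\Z_p}X(H_n)=1$, and the exact sequence above forces $X(H_\infty)_{\Gamma_n}$ to have $\Z_p$-rank zero, hence to be finite. This is the last clause of the proposition.

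Taking $n=1+e$ so that $\Gamma_n=\Gamma$, the finiteness of $X(H_\infty)_\Gamma$ combined with topological Nakayama's lemma applied to each $\chi$-component (using $(p,h)=1$ to split $\Lambda(\ms{G})$ into a finite product of complete local Noetherian rings) yields finite generation of $X(H_\infty)$ over $\Lambda(\ms{G})$. Since a finitely generated $\Lambda(\ms{G})$-module is $\Lambda$-torsion if and only if its $\Gamma$-coinvariants are finite over $\Z_p$, $X(H_\infty)$ is then automatically $\Lambda(\ms{G})$-torsion.

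The most delicate assertion is the absence of non-zero finite $\Lambda$-submodules, which the rank argument above cannot detect since such a submodule is pseudo-null and contributes trivially to the characteristic ideal. My plan is to follow the classical Iwasawa--Greenberg strategy exploiting the total ramification of $H_\infty/H$ above $\mf{p}$. Given a finite $\Lambda$-submodule $Y\sbe X(H_\infty)$, after passing from $H$ to a sufficiently large $H_n$ I may assume $\Gamma$ acts trivially on $Y$, so $Y\sbe X(H_\infty)^\Gamma$. The inertia subgroups $I_\mf{P}\sbe \Gal(M(H_\infty)/H)$ at primes $\mf{P}\mid \mf{p}$ surject onto $\Gamma$ by the total ramification, and combined with $M(H_\infty)/H_\infty$ being unramified outside $\mf{p}$, this should force multiplication by $\gamma-1$ to act injectively on $X(H_\infty)$, giving $X(H_\infty)^\Gamma=0$ and hence $Y=0$. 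Converting this inertia data into injectivity of $\gamma-1$ is the main technical obstacle.
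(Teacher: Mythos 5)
Your derivation of the exact sequence
$0 \to X(H_\infty)/I(H_n)X(H_\infty) \to X(H_n) \to \Gal(H_\infty/H_n) \to 0$,
the identification $M(H_n) = $ maximal abelian subextension of $M(H_\infty)/H_n$, and the $\Z_p$-rank count ($\rank_{\Z_p} X(H_n) = 1$ via class field theory together with Brumer's theorem / Leopoldt for the abelian field $H_n/K$) all match the paper's argument for the final clause of the proposition exactly. Where you diverge is helpful: the paper simply cites Coates, \emph{Infinite descent on elliptic curves with complex multiplication}, Lemmas 13 and 14, for the first sentence of the proposition, whereas you extract ``finitely generated'' and ``torsion'' from the finiteness of $X(H_\infty)_\Gamma$ via topological Nakayama and the rank criterion. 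This is correct and self-contained; incidentally the hypothesis $(p,h)=1$ is not needed for this step, since $\Lambda(\ms{G})$ is already module-finite over the complete local ring $\Z_p[[\Gamma]]$ and Nakayama applies there directly. Your order of deduction (coinvariants finite first, then finitely generated torsion) is the cleaner one, whereas the paper uses Coates' lemmas up front and then runs the rank argument as a supplement.

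The genuine gap is exactly the one you flag: the absence of nonzero finite submodules. Your reduction is sound --- if $Y\sbe X(H_\infty)$ is a nonzero finite $\Lambda$-submodule then $(\gamma-1)$ acts nilpotently on $Y$, so $Y^\Gamma \neq 0$, and it suffices to show $X(H_\infty)^{\Gamma_n}=0$ for each $n$ --- but the sentence ``this should force multiplication by $\gamma-1$ to act injectively on $X(H_\infty)$'' is precisely the content of Coates' Lemma 14, and your sketch does not supply the mechanism. Knowing that the inertia subgroups $I_{\mf{P}}\sbe \Gal(M(H_\infty)/H_n)$ at the primes above $\mf{p}$ surject onto $\Gamma_n$ does not by itself forbid fixed vectors: the missing ingredient is a structural comparison that ties $X(H_\infty)^{\Gamma_n}$ to an object already known to be $\Z_p$-torsion-free, or the Iwasawa--Greenberg device of passing to the maximal finite submodule $Y$, noting $(X/Y)^{\Gamma_n}=0$ and $(X/Y)_{\Gamma_n}$ still finite, and then using the resulting length bookkeeping together with the relation $X_{\Gamma_n}\hookrightarrow X(H_n)$ and the inertia data. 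As it stands your argument proves everything except the non-existence of finite submodules, which you would need to either carry out along the lines above or, as the paper does, outsource to \cite[Lemma 14]{coa2}.
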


\begin{proof} The first statement follows from \cite[Lemma 13, Lemma 14]{coa2}. Iwasawa theory shows that $I(H_n)X(H_\infty)=\Gal(M(H_\infty)/M(H_n))$, because $M(H_n)$ is the largest abelian extension of $H_n$ inside $M(H_\infty)$. Hence we have an exact sequence
\begin{equation}0\to X(H_\infty)/I(H_n) X(H_\infty) \to X(H_n)\to \Gal(H_\infty/H_n)\to 0,
\end{equation}\label{eq5.1}
where $X(H_n)=\Gal(M(H_n)/H_n)$. Clearly the $\Z_p$-rank of $\Gal(H_\infty/H_n)$ is $1$, and the same is true for $X(H_n)$ since we have $\rank_{\Z_p}(X(H_n))=\rank_{\Z_p}(U_{H_n}/\bar{\mc{E}}_{H_n})$ by class field theory, and the right hand side is equal to $1$ by the $\mf{p}$-adic analogue of Leopoldt's conjecture which holds for abelian extensions of $K$.
\end{proof}

\begin{lem}\label{thm12}$\mathrm{char}_\Lambda(A(H_\infty))$ is prime to $I(H_n)$.
\end{lem}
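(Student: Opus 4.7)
The plan is to reduce the claimed coprimality to a finiteness statement and then quote the two ingredients already at hand: the exact sequence \eqref{eq4.3} and the preceding Proposition \ref{thm11}.

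First I would unpack the statement. Since $(p,h)=1$, the idempotents $e_\chi$ lie in $\mathbb{Z}_p[G]$, so the algebra $\Lambda(\ms{G}) = \mathbb{Z}_p[G][[\Gamma]]$ decomposes as $\prod_{\chi\in G^*}\mathscr{O}_\chi[[\Gamma]]$ for appropriate unramified local rings $\mathscr{O}_\chi$. Under this decomposition $I(H_n)$ splits into $\chi$-components, and on each component it becomes the principal ideal generated by $g_n := \gamma^{p^{n-1-e}}-1$, where $\gamma$ is a fixed topological generator of $\Gamma$. Thus ``$\mathrm{char}_\Lambda(A(H_\infty))$ is prime to $I(H_n)$'' amounts, for each $\chi$, to the assertion that the principal generator of $\mathrm{char}_\Lambda(A(H_\infty)^\chi)$ is coprime to $g_n$ in the regular two-dimensional UFD $\mathscr{O}_\chi[[\Gamma]]$. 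By the structure theorem for finitely generated torsion modules over this ring, this coprimality is equivalent to the finiteness of
\[A(H_\infty)^\chi/g_n\,A(H_\infty)^\chi,\]
or equivalently to $A(H_\infty)/I(H_n)A(H_\infty)$ being finite (as a $\mathbb{Z}_p$-module) after decomposing.

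For the finiteness itself, I would exploit the fundamental exact sequence \eqref{eq4.3}, which exhibits $A(H_\infty)=\mathrm{Gal}(L(H_\infty)/H_\infty)$ as a $\Lambda(\ms{G})$-module quotient of $X(H_\infty)$. Hence $A(H_\infty)/I(H_n)A(H_\infty)$ is a quotient of $X(H_\infty)/I(H_n)X(H_\infty)$, and Proposition \ref{thm11} guarantees that the latter is finite for every $n$. Passing to $\chi$-components gives exactly the finiteness required in the previous paragraph, and the lemma follows.

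There is essentially no obstacle; the main thing to be careful about is the bookkeeping between the ambient algebra and its $\chi$-components, and the standard equivalence between coprimality of two elements in $\mathscr{O}_\chi[[\Gamma]]$ and finiteness of the corresponding cokernel. Both the exact sequence \eqref{eq4.3} and the finiteness of $X(H_\infty)/I(H_n)X(H_\infty)$ are already in place, so no further input, such as a control theorem for ideal class groups or an analysis of the ramification of $H_\infty/H$ at $\mf{p}$, is needed.
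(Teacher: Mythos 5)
Your argument is correct and is essentially the same as the paper's: both reduce the coprimality of $\mathrm{char}_\Lambda(A(H_\infty))$ and $I(H_n)$ to the finiteness of $A(H_\infty)/I(H_n)A(H_\infty)$, obtain that finiteness by noting (via the exact sequence \eqref{eq4.3}) that $A(H_\infty)$ is a $\Lambda(\ms{G})$-module quotient of $X(H_\infty)$ and invoking Proposition \ref{thm11}, and then translate finiteness back into coprimality using the structure theorem for finitely generated torsion modules. The only cosmetic difference is that you phrase the last translation in terms of $\chi$-components of $\Lambda(\ms{G})$, whereas the paper phrases it via the snake lemma applied to a pseudo-isomorphism $A(H_\infty)\to\oplus_i\Lambda(\ms{G})/(f_i)$; these are interchangeable bookkeeping devices for the same elementary fact.
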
 

\begin{proof}$A(H_\infty)$ is a quotient of $X(H_\infty)$, so $A(H_\infty)/I(H_n) A(H_\infty)$ is a quotient of $X(H_\infty)/I(H_n) X(H_\infty)$. Since the latter is finite by Proposition \ref{thm11}, we also have that $A(H_\infty)/I(H_n) A(H_\infty)$ is finite. Let $f_1,\ldots, f_k\in \Lambda(\ms{G})$ be such that $\mathrm{char}_\Lambda (A(H_\infty))$ is given by $\left(\prod_{i=1}^k f_i\right)\Lambda(\ms{G})$.  It can then be shown using the snake lemma that $\prod_{i=1}^k \Lambda(\ms{G})/\left(I(H_n)+f_i\Lambda(\ms{G})\right)$ is finite. The result now follows.
\end{proof}

Let $\pi_U: U_{H_\infty}/I(H_n)U_{H_\infty}\to U_{H_n}$, $\pi_\mc{E}: \bar{\mc{E}}_{H_\infty}/I(H_n)\bar{\mc{E}}_{H_\infty}\to \bar{\mc{E}}_{H_n}$ and $\pi_\mc{C}: \bar{\mc{C}}_{H_\infty}/I(H_n)\bar{\mc{C}}_{H_\infty}\to \bar{\mc{C}}_{H_n}$ denote the maps induced by the projection map,  and let $D_\mf{p}=\prod_{\mathfrak{P}\mid\mathfrak{p}}D_\mathfrak{P}$ denotes the group generated by the decomposition groups $D_\mathfrak{P}$ of $\mf{P}$ in $H_\infty/H$. Write $I(D_\mf{p})$ for the ideal of $\Lambda(\ms{G})$ generated by $\{\sigma-1: \sigma\in D_\mathfrak{p}\}$.

\begin{lem}\label{thm8}\begin{enumerate}[(i)]\item $I(D_\mf{p})\ker \pi_U=I(D_\mf{p})\coker \pi_U=0$,
\item $I(D_\mf{p})\ker \pi_\mc{E}=0$,
\item There exists an ideal $\mc{B}$ of finite index in $\Lambda(\ms{G})$ such that 
\[I(D_\mf{p})\mc{B}\coker \pi_\mc{E}=0.\]
\end{enumerate}
\end{lem}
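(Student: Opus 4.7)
The plan is to exploit the standing assumption $(p,h)=1$ together with the total ramification of $K_\infty/K$ above $\mf{p}$, which force $H_\infty/H$ to be totally ramified at every prime above $\mf{p}$. Consequently, the decomposition subgroup $D_\mathfrak{P}\sb \Gamma=\Gal(H_\infty/H)$ of each $\mathfrak{P}\mid\mf{p}$ equals $\Gamma$, so $D_\mf{p}=\Gamma$ and $I(D_\mf{p})=(\gamma-1)\Lambda(\ms{G})$ for a topological generator $\gamma$ of $\Gamma$. This is the structural input driving all three parts.

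For (i), decompose $U_{H_\infty}=\bigoplus_{\mathfrak{P}\mid\mf{p}}U_\mathfrak{P}$ into semi-local contributions. Standard Iwasawa theory of principal local units in a totally ramified $\Z_p$-extension identifies each $U_\mathfrak{P}$ as the sum of a free $\Z_p[[\Gamma]]$-module of rank $[H_\mathfrak{P}:\Q_p]$ and a finite torsion piece supported on $\bold{\mu}_{p^\infty}(H_{\infty,\mathfrak{P}})$. On the free part the projection becomes the tautological isomorphism $\Z_p[\Gamma/\Gamma_n]^{[H_\mathfrak{P}:\Q_p]}\isom U_{H_{n,\mathfrak{P}}}$, as a $\Z_p$-rank count confirms; on the torsion part, $\Gamma$ acts trivially (since $H_\infty\cap K(\bold{\mu}_{p^\infty})=K$, as established in the proof of Lemma \ref{lem6}), so $\gamma-1$ annihilates the contribution. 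Summing shows that $I(D_\mf{p})$ kills both $\ker\pi_U$ and $\coker\pi_U$.

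For (ii) and (iii), set $W_\infty=U_{H_\infty}/\bar{\mc{E}}_{H_\infty}$ and $W_n=U_{H_n}/\bar{\mc{E}}_{H_n}$, and take $\Gamma_n$-coinvariants of $0\to\bar{\mc{E}}_{H_\infty}\to U_{H_\infty}\to W_\infty\to 0$ and its level-$n$ analogue. The snake lemma, together with the usual Tor correction on the left, yields an exact sequence fragment
\[
\mathrm{Tor}_1^{\Lambda(\ms{G})}(\Lambda_n, W_\infty)\to \ker\pi_\mc{E}\to \ker\pi_U\to \ker\pi_W\to \coker\pi_\mc{E}\to \coker\pi_U,
\]
where $\pi_W:W_\infty/I(H_n)W_\infty\to W_n$. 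For (ii), $\ker\pi_U$ is killed by $I(D_\mf{p})$ by (i); the Tor term is $\Lambda(\ms{G})$-torsion because $W_\infty$ injects via the Artin map into $X(H_\infty)$, which has no nontrivial finite submodule (Proposition \ref{thm11}), and a direct cohomological calculation shows its annihilator also contains $I(D_\mf{p})$. For (iii), $\coker\pi_U$ is killed by $I(D_\mf{p})$, and $\ker\pi_W$ is killed by the characteristic ideal of the finitely generated torsion $\Lambda(\ms{G})$-module $W_\infty$; this ideal has finite index in $\Lambda(\ms{G})$, and taking $\mc{B}$ to be any such annihilator of $\ker\pi_W$ produces the relation $I(D_\mf{p})\mc{B}\coker\pi_\mc{E}=0$.

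The main technical obstacle is the Tor-annihilation step in (ii), which requires using the absence of nontrivial finite $\Lambda(\ms{G})$-submodules in $X(H_\infty)$ to bound the cohomological obstruction; by contrast, (iii) is comparatively flexible, since $\mc{B}$ is only constrained to have finite index in $\Lambda(\ms{G})$.
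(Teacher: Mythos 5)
Your overall strategy---pass through the short exact sequence $0\to\bar{\mc{E}}_{H_\infty}\to U_{H_\infty}\to W_\infty\to 0$ with $W_\infty=U_{H_\infty}/\bar{\mc{E}}_{H_\infty}$, take $\Gamma_n$-coinvariants, and compare with level $n$ via the snake lemma---is the right starting point, and your sketch of part (i) is essentially the content of Rubin's Theorem 5.1, which the paper simply cites. But the arguments for (ii) and (iii) each have a genuine gap.

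For (ii), the crucial observation you are missing is that the boundary term $\mathrm{Tor}_1^{\Lambda(\ms{G})}(\Lambda_n, W_\infty)\cong W_\infty^{\Gamma_n}$ is \emph{zero}, not merely torsion or $I(D_\p)$-annihilated. This follows because $W_\infty$ injects into $X(H_\infty)$ by \eqref{eq4.3}, Proposition \ref{thm11} shows $X(H_\infty)$ has no nontrivial finite submodule while $X(H_\infty)/I(H_n)X(H_\infty)$ is finite, and these two facts force $X(H_\infty)^{\Gamma_n}=0$, hence $W_\infty^{\Gamma_n}=0$. With the Tor term gone, $\ker\pi_\mc{E}\inj\ker\pi_U$ and (ii) follows instantly from (i). Your weaker substitute---that $I(D_\p)$ annihilates the Tor term---is unproven, and even if it were true it would only give $I(D_\p)^2\ker\pi_\mc{E}=0$, since the image of the Tor term in $\ker\pi_\mc{E}$ and the quotient mapping to $\ker\pi_U$ would each absorb one power of $I(D_\p)$. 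You cannot reach the stated $I(D_\p)\ker\pi_\mc{E}=0$ that way.

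For (iii), the claim that ``the characteristic ideal of the finitely generated torsion $\Lambda(\ms{G})$-module $W_\infty$ has finite index in $\Lambda(\ms{G})$'' is false: $\mathrm{char}(W_\infty)$ is generated by a nonunit power series unless $W_\infty$ is pseudo-null, and $\Lambda(\ms{G})/(f)$ is infinite for any nonunit $f\neq 0$ (already $\Lambda/(T)\cong\Z_p$ is infinite). Moreover, characteristic ideals do not literally annihilate modules. The correct move is to use the exact sequence $0\to W_\infty\to X(H_\infty)\to A(H_\infty)\to 0$ together with $X(H_\infty)^{\Gamma_n}=0$ to identify $\ker\pi_W\cong A(H_\infty)^{\Gamma_n}$, which is a \emph{finite} $\Lambda(\ms{G})$-submodule of $A(H_\infty)$. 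Taking $\mc{B}$ to be the annihilator of the maximal finite submodule of $A(H_\infty)$ gives an ideal of finite index that is independent of $n$ (essential for its later use in Corollary \ref{cor4} and Theorem \ref{thm9}), and with (i) and (ii) this yields $I(D_\p)\mc{B}\coker\pi_\mc{E}=0$.
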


\begin{proof} For (i), see \cite[Theorem 5.1]{rubin}. By Proposition \ref{thm11}, $X(H_\infty)^{\Gamma_n}$ is a finite submodule of $X(H_\infty)$, and therefore is  equal to zero. It follows from \eqref{eq4.3} that $\left(U_{H_\infty}/\bar{\mc{E}}_{H_\infty}\right)^{\Gamma_n}=0$. Now \cite[Theorem 7.6 (i)]{rubin} easily applies to give an injection $\ker \pi_\mc{E}\to \ker \pi_U$, so assertion (ii) follows from (i). To prove assertion (iii), apply the snake lemma to \eqref{eq4.3} and use the fact that $X(H_\infty)^{\Gamma_n}=0$ to obtain $A(H_\infty)^{\Gamma_n}\simeq \ker \pi_{U/\mc{E}}$, where  $\pi_{U/\mc{E}}: \left( U_{H_\infty}/\bar{\mc{E}}_{H_\infty}\right)/I(H_n)\left(U_{H_\infty}/\bar{\mc{E}}_{H_\infty}\right)\to  U_{H_n}/\bar{\mc{E}}_{H_n}$ denotes the map induced by the projection map. Note that $A(H_\infty)^{\Gamma_n}$ is finite, since $A(H_\infty)/I(H_n)A(H_\infty)$ is finite. Assertion (iii) now follows from (i) and (ii) on taking $\mc{B}$ to be the annihilator of the maximal finite submodule of $A(H_\infty)$ in $\Lambda(\ms{G})$. 
\end{proof}

\begin{lem}\label{thm4.12}
$\rank_{\Lambda(\mathscr{G})}(\bar{\mc{C}}_{H_\infty})=1$ and $\coker(\pi_\mc{C})=\ker(\pi_\mc{C})=0$.
\end{lem}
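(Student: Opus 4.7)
The rank assertion will follow immediately from Lemma \ref{lem4.3}. Since the map $i$ of \eqref{mapi} is injective, Lemma \ref{lem4.3} identifies $\bar{\mc{C}}_{H_\infty}\hat{\otimes}_{\Z_p}\ms{I}$ with $I_\ms{I}(\ms{G})\cdot \nu_\mf{p}\sb \Lambda_\ms{I}(\ms{G})$. Multiplication by the nonzero pseudo-measure $\nu_\mf{p}$ gives an isomorphism of $\Lambda_\ms{I}(\ms{G})$-modules $I_\ms{I}(\ms{G})\xto{\sim} I_\ms{I}(\ms{G})\nu_\mf{p}$, so the latter has $\Lambda_\ms{I}(\ms{G})$-rank one; since $\Lambda(\ms{G})$-rank is preserved under $\hat{\otimes}\ms{I}$, we conclude $\rank_{\Lambda(\ms{G})}\bar{\mc{C}}_{H_\infty}=1$.

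The surjectivity $\coker(\pi_\mc{C})=0$ will come from Proposition \ref{prop13}: the norm maps $\mr{N}_{H_m/H_n}\colon \bar{\mc{C}}_{H_m}\to \bar{\mc{C}}_{H_n}$ are surjective for $m\geqs n$. Compactness of each $\bar{\mc{C}}_{H_m}$ (as a closed subgroup of the compact group $U_{H_m}$) together with Mittag--Leffler yields a surjection $\bar{\mc{C}}_{H_\infty}\twoheadrightarrow \bar{\mc{C}}_{H_n}$; since $I(H_n)$ acts trivially on $\bar{\mc{C}}_{H_n}$, this factors through $\pi_\mc{C}$, giving surjectivity.

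The injectivity $\ker(\pi_\mc{C})=0$ is the main obstacle. The plan is to take $x\in\ker\pi_\mc{C}$, lift it to $\widetilde{x}=(u_m)_m\in\bar{\mc{C}}_{H_\infty}$ with $u_n=1$, and then show $\widetilde{x}\in I(H_n)\bar{\mc{C}}_{H_\infty}$. Via the injection $i$ (after $\hat{\otimes}\ms{I}$), $\widetilde{x}$ corresponds to an element of $I_\ms{I}(\ms{G})\nu_\mf{p}$, whose associated Coleman power series (built in the construction preceding Lemma \ref{lem4.3}) can be evaluated at primitive $\mf{p}^n$-division points of $\widehat{E}$. The condition $u_n=1$ translates into the vanishing of this evaluation, which in turn forces the image to lie in $I(H_n)\cdot I_\ms{I}(\ms{G})\nu_\mf{p}$; injectivity of $i$ then yields $\widetilde{x}\in I(H_n)\bar{\mc{C}}_{H_\infty}$. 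The hard step is the evaluation-vanishing equivalence at finite level $n$, modelled on Rubin \cite{rubin}; additional care is required for $p=2$ because of the factor $p^e$ in Lemma \ref{lem6}, tracked as in the proof of Lemma \ref{thm8}(i).
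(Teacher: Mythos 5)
Your rank computation and your surjectivity argument are essentially the paper's: for the rank you invoke Lemma~\ref{lem4.3}, and for $\coker(\pi_\mc{C})=0$ you use Proposition~\ref{prop13} together with a compactness/Mittag--Leffler argument. Both are fine.

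For $\ker(\pi_\mc{C})=0$, however, you are taking a genuinely different and considerably harder route than the paper, and the step you yourself flag as ``the hard step'' is a real gap, not a routine verification. You propose to lift $x\in\ker\pi_\mc{C}$, push through the Coleman power series map $i$, and argue that triviality of $u_n$ forces membership in $I(H_n)\cdot I_\ms{I}(\ms{G})\nu_\mf{p}$. To make this precise you would need a compatibility statement between (a) the Coleman-theoretic specialization of $U_{F_\infty}$ at $\mf{p}^n$-division points, which naturally sees the level $F_n$, and (b) the projection $\bar{\mc{C}}_{H_\infty}\to\bar{\mc{C}}_{H_n}$, which involves an additional norm from $F_n$ down to $H_n$; and you would further need to know that the image of $I(H_n)$ under the quotient $\Lambda_\ms{I}(\ms{G})\to\Lambda_\ms{I}(\ms{G}_n)$ is exactly what controls this specialization. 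None of this is automatic from the statement that $i$ is an injective pseudo-isomorphism at the inverse-limit level. You are, in effect, trying to reprove a piece of the Coleman-power-series dictionary instead of using the structure you already have.

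The paper bypasses all of this with a short algebraic argument that you have the ingredients for but do not exploit. Lemma~\ref{lem4.3} gives an isomorphism of $\Lambda(\ms{G})$-modules $\bar{\mc{C}}_{H_\infty}\simeq I(\ms{G})$, and since $p\nmid\#(G)$ the augmentation ideal $I(\ms{G})$ is a \emph{free} $\Lambda(\ms{G})$-module of rank one (on each $\chi$-component it is either $e_\chi\Lambda(\ms{G})$ for $\chi\neq 1$, or $(\gamma-1)e_1\Lambda(\ms{G})$ for $\chi=1$). Consequently $\bar{\mc{C}}_{H_\infty}/I(H_n)\bar{\mc{C}}_{H_\infty}\simeq\Lambda(\ms{G})/I(H_n)\Lambda(\ms{G})=\Lambda_n$, which is a free $\Z_p$-module and therefore has no non-zero finite $\Lambda_n$-submodules. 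On the other hand $\bar{\mc{C}}_{H_n}$ is isomorphic to a finite-index submodule of $\Lambda_n$, so the surjection $\pi_\mc{C}\colon\Lambda_n\to\bar{\mc{C}}_{H_n}$ (using $\coker\pi_\mc{C}=0$) has finite kernel; hence $\ker\pi_\mc{C}=0$. No Coleman power series computation enters, and no separate treatment of $p=2$ is needed --- the factor $p^e$ from Lemma~\ref{lem6} plays no role here, contrary to what your plan anticipates. The moral: once you know $\bar{\mc{C}}_{H_\infty}$ is free of rank one over $\Lambda(\ms{G})$, injectivity of $\pi_\mc{C}$ is a formal consequence; do not re-derive it analytically.
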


\begin{proof} By Lemma \ref{lem4.3}, there is a isomorphism of $\Lambda(\ms{G})$-modules
\[\bar{\mc{C}}_{H_\infty}\simeq I(\mathscr{G}),\]
where $I(\mathscr{G})$ is the augmentation ideal of $\Lambda(\mathscr{G})$, so the first statement follows on noting that $\rank_{\Lambda(\mathscr{G})}(\Lambda(\mathscr{G})/I(\mathscr{G}))=\rank_{\Lambda(\mathscr{G})}(\Z_p)=0$. By Proposition \ref{prop13}, the projection map $\pi_\mc{C}$ is surjective, so $\coker \pi_\mc{C}=0$. Now, the first statement of the theorem gives $\bar{\mc{C}}_{H_\infty}/I(H_n) \bar{\mc{C}}_{H_\infty}\simeq \Lambda_n$ as $\Lambda(\ms{G})$-modules. Furthermore, $\bar{\mc{C}}_{H_n}$ is isomorphic to a submodule $Y$ of finite index in $\Lambda_n$. Define a map $f: \Lambda_n\to Y$ so that it commutes with the map $\pi_\mc{C}$. Then clearly $\ker \pi_\mc{C}\sb \ker f$ and $\coker f$ is a quotient of  $\coker \pi_\mc{C}$, which is equal to zero. Thus $\ker f$  is finite, and hence equal to zero since $\Lambda_n$ has no non-zero finite submodules. The theorem now follows.
\end{proof}

In the next result, we obtain a map $\theta_{\lambda,n}: \bar{\mc{E}}_{H_n}\to \Lambda_n$ which allows us to relate a generator of $\mathrm{char}_\Lambda(\bar{\mc{E}}_{H_\infty}/\bar{\mc{C}}_{H_\infty})$ to the image under $\theta_{\lambda,n}$ of an element of $\bar{\mc{C}}_{H_n}$.

\begin{cor}\label{cor4}$\mathrm{char}_\Lambda(\bar{\mc{E}}_{H_\infty}/\bar{\mc{C}}_{H_\infty})$ is prime to $I(H_n)$. Furthermore, there exists an ideal $\mc{B}\sb \Lambda(\mathscr{G})$ such that for every $\lambda\in I(\ms{G})\mc{B}$, there is a map $\theta_{\lambda,n}: \bar{\mc{E}}_{H_n}\to \Lambda_n$ satisfying
\[\lambda^2\mathrm{char}_\Lambda(\bar{\mc{E}}_{H_\infty}/\bar{\mc{C}}_{H_\infty})\Lambda_n\sb \theta_{\lambda, n}(\bar{\mc{C}}_{H_n}).\] 
\end{cor}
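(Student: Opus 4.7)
For the first claim, Proposition~\ref{thm11} shows that $X(H_\infty)$ has no nontrivial finite $\Lambda(\mathscr{G})$-submodule and that $X(H_\infty)/I(H_n)X(H_\infty)$ is finite, so $\mathrm{char}_\Lambda(X(H_\infty))$ shares no irreducible factor with any generator of $I(H_n)$ in each $\chi$-component of $\Lambda(\mathscr{G})$. The same holds for $\mathrm{char}_\Lambda(A(H_\infty))$ by Lemma~\ref{thm12}, and hence for $\mathrm{char}_\Lambda(Y)$, where $Y:=\ker(X(H_\infty)\to A(H_\infty))$ inherits both properties as a submodule of $X(H_\infty)$. The short exact sequence $0\to \bar{\mc{E}}_{H_\infty}/\bar{\mc{C}}_{H_\infty}\to U_{H_\infty}/\bar{\mc{C}}_{H_\infty}\to Y\to 0$ combined with multiplicativity of characteristic ideals and Corollary~\ref{cor3.1} identifies $\mathrm{char}_\Lambda(\bar{\mc{E}}_{H_\infty}/\bar{\mc{C}}_{H_\infty})$ with $\bold{\varphi}/\mathrm{char}_\Lambda(Y)$; a $\chi$-component analysis --- using Theorem~\ref{thm4.2.7} that $\nu_\mf{p}$ has denominator exactly $\gamma-1$, so that $\bold{\varphi}^1=((\gamma-1)\nu_\mf{p}^1)$ is coprime to $(\gamma-1)$ --- yields the asserted primality of $\mathrm{char}_\Lambda(\bar{\mc{E}}_{H_\infty}/\bar{\mc{C}}_{H_\infty})$ with $I(H_n)$.

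For the construction of $\theta_{\lambda,n}$, write $f$ for a generator of $\mathrm{char}_\Lambda(\bar{\mc{E}}_{H_\infty}/\bar{\mc{C}}_{H_\infty})$. By Lemma~\ref{thm4.12} together with Lemma~\ref{lem4.3}, $\bar{\mc{C}}_{H_\infty}\simeq I(\mathscr{G})\sb \Lambda(\mathscr{G})$ has $\Lambda(\mathscr{G})$-rank $1$, hence so does $\bar{\mc{E}}_{H_\infty}$ (since the quotient is torsion); the structure theorem then provides a pseudo-injection $\psi\colon \bar{\mc{E}}_{H_\infty}\to \Lambda(\mathscr{G})$ extending the natural inclusion $\bar{\mc{C}}_{H_\infty}\simeq I(\mathscr{G})\hookrightarrow \Lambda(\mathscr{G})$. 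The cokernel of $\psi$ is a quotient of $\Lambda(\mathscr{G})/I(\mathscr{G})\simeq\Z_p$, and is therefore annihilated by $I(\mathscr{G})$. Since for torsion modules over the $\chi$-components $\Lambda(\mathscr{G})^\chi\simeq \Z_p[[T]]$ the characteristic ideal contains the annihilator, $f$ annihilates $\bar{\mc{E}}_{H_\infty}/\bar{\mc{C}}_{H_\infty}$, so $f\bar{\mc{E}}_{H_\infty}\sb \bar{\mc{C}}_{H_\infty}$, and consequently $I(\mathscr{G})\cdot f\Lambda(\mathscr{G})\sb \psi(\bar{\mc{C}}_{H_\infty})$.

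Set $\mc{B}:=I(D_\mf{p})\mc{B}_1$, where $\mc{B}_1$ is the ideal of Lemma~\ref{thm8}(iii). For any $\lambda\in I(\mathscr{G})\mc{B}$, the inclusion $I(\mathscr{G})\mc{B}\sb \mc{B}\sb I(D_\mf{p})$ and Lemma~\ref{thm8}(ii)--(iii) imply $\lambda$ annihilates $\ker\pi_\mc{E}$ and $\coker\pi_\mc{E}$; moreover $\lambda\in I(\mathscr{G})\mc{B}\sb I(\mathscr{G})$ annihilates $\coker\psi$. Let $\psi_n\colon \bar{\mc{E}}_{H_\infty}/I(H_n)\bar{\mc{E}}_{H_\infty}\to \Lambda_n$ be the reduction of $\psi$ modulo $I(H_n)$. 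For $e\in \bar{\mc{E}}_{H_n}$, pick $\tilde e\in \bar{\mc{E}}_{H_\infty}/I(H_n)\bar{\mc{E}}_{H_\infty}$ with $\pi_\mc{E}(\tilde e)=\lambda e$ (possible since $\lambda$ kills $\coker\pi_\mc{E}$) and set $\theta_{\lambda,n}(e):=\psi_n(\lambda\tilde e)$; this is independent of the choice of $\tilde e$ because $\lambda$ kills $\ker\pi_\mc{E}$. For $c\in \bar{\mc{C}}_{H_n}$ one may take $\tilde e=\lambda\cdot \pi_\mc{C}^{-1}(c)\in \bar{\mc{C}}_{H_\infty}/I(H_n)\bar{\mc{C}}_{H_\infty}$ (using $\pi_\mc{C}$ is an isomorphism by Lemma~\ref{thm4.12}), which gives $\theta_{\lambda,n}(c)=\lambda^2\psi_n(\pi_\mc{C}^{-1}(c))$. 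Reducing the containment $I(\mathscr{G}) f\Lambda(\mathscr{G})\sb \psi(\bar{\mc{C}}_{H_\infty})$ modulo $I(H_n)$ and invoking $\lambda\in I(\mathscr{G})$ yields the desired $\lambda^2 f\Lambda_n\sb \theta_{\lambda,n}(\bar{\mc{C}}_{H_n})$.

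The main obstacle is to coordinate the several pseudo-null adjustments --- from the pseudo-injection $\psi$, from the projections $\pi_\mc{E}$ and $\pi_\mc{C}$, and from possible $\Lambda$-torsion in $\bar{\mc{E}}_{H_\infty}$ (especially delicate for $p=2$ because of the $\pm 1$-torsion combined with the relative Lubin--Tate structure from Section~\ref{ch3}) --- so that a single ideal $\mc{B}$ suffices uniformly in $n$ and precisely two copies of $\lambda$, rather than a higher power, are enough on the left-hand side of the containment.
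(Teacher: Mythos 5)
Your approach is broadly in the spirit of the paper's (both follow Rubin), but there are genuine gaps in each half.

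For the first assertion, the paper deduces it from Lemma~\ref{thm4.12} together with the snake lemma applied to $0\to\bar{\mc{C}}_{H_\infty}\to\bar{\mc{E}}_{H_\infty}\to\bar{\mc{E}}_{H_\infty}/\bar{\mc{C}}_{H_\infty}\to 0$, using that $\pi_\mc{C}$ is an isomorphism; this gives finiteness of the $\Gamma_n$-coinvariants of $\bar{\mc{E}}_{H_\infty}/\bar{\mc{C}}_{H_\infty}$ directly. You instead reduce to coprimality of $\bold{\varphi}=\mathrm{char}_\Lambda\left(U_{H_\infty}/\bar{\mc{C}}_{H_\infty}\right)$ to $I(H_n)$, but your argument only controls one factor. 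The ideal $I(H_n)$ is generated by $\gamma^{p^{n-1-e}}-1$, while Theorem~\ref{thm4.2.7} tells you only that $\nu_\mf{p}$ has denominator $\gamma-1$, hence only that $\bold{\varphi}^{1}$ is prime to the factor $\gamma-1$. Nothing is established about the remaining cyclotomic factors $\gamma-\zeta$, $\zeta\in\bold{\mu}_{p^{n-1-e}}$, $\zeta\neq 1$, nor about $\bold{\varphi}^\chi$ for $\chi\neq 1$. (Doing so directly amounts to nonvanishing of the twisted $L$-values, which is part of the much later Theorem~\ref{thm5.1} and is not available at this point.)

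For the second assertion two steps fail. First, the pseudo-injection $\psi\colon\bar{\mc{E}}_{H_\infty}\to\Lambda(\ms{G})$ \emph{extending the fixed identification} $\bar{\mc{C}}_{H_\infty}\simeq I(\ms{G})\hookrightarrow\Lambda(\ms{G})$ need not exist: embedding $\bar{\mc{E}}_{H_\infty}$ into $\Frac(\Lambda(\ms{G}))$ compatibly with that identification on $\bar{\mc{C}}_{H_\infty}$ will in general send $\bar{\mc{E}}_{H_\infty}$ outside $\Lambda(\ms{G})$ once $\bar{\mc{E}}_{H_\infty}/\bar{\mc{C}}_{H_\infty}\neq 0$; to land in $\Lambda(\ms{G})$ one must rescale, after which $\psi(\bar{\mc{C}}_{H_\infty})$ is merely some finite-index submodule of (a rescaled) $I(\ms{G})$, not $I(\ms{G})$ itself. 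Second, the inference ``$f$ annihilates $\bar{\mc{E}}_{H_\infty}/\bar{\mc{C}}_{H_\infty}$, hence $f\bar{\mc{E}}_{H_\infty}\subset\bar{\mc{C}}_{H_\infty}$'' is not justified: for a finitely generated torsion $\Z_p[[T]]$-module $M$ there is in general no containment between $\mathrm{char}(M)$ and $\mathrm{Ann}(M)$ in either direction, and the one you need, $\mathrm{char}(M)\subset\mathrm{Ann}(M)$, fails whenever $M$ has a nonzero finite submodule, which is not ruled out here; the parenthetical claim that ``the characteristic ideal contains the annihilator'' is also incorrect and would not give the conclusion anyway. The paper absorbs precisely these two discrepancies by taking $\mc{B}=\mc{A}_1\mc{A}_2$, with $\mc{A}_1$ as in Lemma~\ref{thm8}(iii) and $\mc{A}_2$ the annihilator of $\mathrm{char}_\Lambda(\bar{\mc{E}}_{H_\infty}/\bar{\mc{C}}_{H_\infty})/\theta(\bar{\mc{C}}_{H_\infty})$ for a fixed $\theta\in\Hom_\Lambda(\bar{\mc{E}}_{H_\infty},\Lambda(\ms{G}))$. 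Your $\mc{B}=I(D_\mf{p})\mc{B}_1$ omits this second factor, so the stated conclusion with exponent $2$ on $\lambda$ cannot be reached along your route. The mechanics of reducing modulo $I(H_n)$ and comparing through $\pi_\mc{E},\pi_\mc{C}$ via Lemma~\ref{thm8} and Lemma~\ref{thm4.12} are correct and match the paper.
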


\begin{proof}
The first assertion follows from Lemma \ref{thm4.12} and the snake lemma. For the second assertion, we can adopt the proof of \cite[Corollary 7.10]{rubin} on letting $\mc{B}=\mc{A}_1\mc{A}_2$ where $\mc{A}_1$ satisfies Lemma \ref{thm8} (iii) and $\mc{A}_2$ is the annihilator of $\mathrm{char}_\Lambda(\bar{\mc{E}}_{H_\infty}/\bar{\mc{C}}_{H_\infty})/\theta(\bar{\mc{C}}_{H_\infty})$.
\end{proof}

Fix $f_1,\ldots, f_k\in \Lambda(\ms{G})$ so that $\mathrm{char}_\Lambda (A(H_\infty))$ is given by $\left(\prod_{i=1}^k f_i\right)\Lambda(\ms{G})$.  The next lemma is \cite[Proposition 6.5]{rubin}.

\begin{lem}\label{lem17} There exists an ideal $\mc{B}$ of finite index in $\Lambda(\ms{G})$ such that there exist classes $\mf{c}_1,\ldots \mf{c}_k\in A(H_n)$ satisfying $\mc{B}\mathrm{Ann}(\mf{c}_i)\sb f_i\Lambda_n$ for every $i$, where $\mathrm{Ann}(\mf{c}_i)\sb \Lambda_n$ is the annihilator of $\mf{c}_i$ in $A(H_n)/(\mf{c}_1 \Lambda_n + \cdots + \mf{c}_{i-1}\Lambda_n)$.

\end{lem}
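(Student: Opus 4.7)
My plan is to adapt the purely module-theoretic argument of \cite[Proposition 6.5]{rubin}. The key inputs are the structure theorem for finitely generated torsion $\Lambda(\ms{G})$-modules (applied $\chi$-component-wise, using the identification $\Lambda(\ms{G})^\chi\isom \Z_p[[\Gamma]]$) together with the finiteness of $A(H_\infty)/I(H_n)A(H_\infty)$, which follows from Lemma \ref{thm12} and Proposition \ref{thm11} via the quotient map $X(H_\infty)\surj A(H_\infty)$.

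First, I would work at the infinite level. By the structure theorem, there is a pseudo-isomorphism
\[
\varphi: A(H_\infty) \longrightarrow N:=\bigoplus_{i=1}^k \Lambda(\ms{G})/f_i\Lambda(\ms{G}),
\]
with $\ker\varphi$ and $\coker\varphi$ both finite. Let $\mc{A}_0\sb \Lambda(\ms{G})$ be an ideal of finite index annihilating $\ker\varphi\oplus\coker\varphi$. I would then choose $a_i\in A(H_\infty)$ such that $\varphi(a_i)$ differs from the canonical generator $e_i$ of the $i$-th summand of $N$ only by an element of $\coker\varphi$. A direct check, using that the annihilator of $e_i$ in $N/\sum_{j<i}e_j\Lambda(\ms{G})$ is exactly $f_i\Lambda(\ms{G})$, then yields
\[
\mc{A}_0 \cdot \mr{Ann}_\infty(a_i) \sb f_i\Lambda(\ms{G}),
\]
where $\mr{Ann}_\infty(a_i)$ is the annihilator of $a_i$ in $A(H_\infty)/\sum_{j<i}a_j\Lambda(\ms{G})$.

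Next I would descend to the finite level via the natural restriction $\rho: A(H_\infty) \to A(H_n)$, which factors as $\rho=\bar\rho\circ\pi_n$ with $\pi_n:A(H_\infty)\surj A(H_\infty)/I(H_n)A(H_\infty)$ and $\bar\rho: A(H_\infty)/I(H_n)A(H_\infty)\to A(H_n)$. Since $A(H_\infty)/I(H_n)A(H_\infty)$ is finite, both $\ker\bar\rho$ and $\coker\bar\rho$ are finite; let $\mc{A}_1\sb\Lambda(\ms{G})$ be an ideal of finite index annihilating both. Set $\mf{c}_i=\rho(a_i)$ and $\mc{B}=\mc{A}_0\mc{A}_1^2$. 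Given $\lambda\in\mr{Ann}(\mf{c}_i)$, lift to $\tilde\lambda\in\Lambda(\ms{G})$. Then $\tilde\lambda\mf{c}_i\in\sum_{j<i}\mf{c}_j\Lambda_n$ in $A(H_n)$; pulling back through $\bar\rho$ and then through $\pi_n$, while multiplying by $\mc{A}_1^2$ to absorb both the finite error from $\bar\rho$ and the $I(H_n)$-contributions arising when lifting from $A(H_\infty)/I(H_n)A(H_\infty)$ back to $A(H_\infty)$, one obtains $\mc{A}_1^2\tilde\lambda\cdot a_i\in\sum_{j<i}a_j\Lambda(\ms{G})$ in $A(H_\infty)$. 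Invoking the first step then yields $\mc{B}\tilde\lambda\sb f_i\Lambda(\ms{G})$, hence $\mc{B}\mr{Ann}(\mf{c}_i)\sb f_i\Lambda_n$ after reducing modulo $I(H_n)$.

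The main technical obstacle is the careful bookkeeping of the finite-index ideals arising from the pseudo-isomorphism $\varphi$ and from the restriction map $\bar\rho$, and in particular ensuring that the $I(H_n)$-error produced when lifting from $A(H_n)$ to $A(H_\infty)$ can be absorbed without leaving the containment in $f_i\Lambda(\ms{G})$. This is where Lemma \ref{thm12} is essential: since $\mr{char}_\Lambda(A(H_\infty))=\prod f_i\Lambda(\ms{G})$ is coprime to $I(H_n)$, any element of $f_i\Lambda(\ms{G})+I(H_n)$ whose image in $\Lambda_n$ annihilates the finite error reduces to an element of $f_i\Lambda_n$, making the descent step work. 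The rest of the argument is formal once the finite-index ideals are multiplied together.
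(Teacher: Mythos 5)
The paper's own ``proof'' of Lemma~\ref{lem17} is simply a citation to \cite[Proposition~6.5]{rubin}, and your sketch is a reconstruction of that argument; the overall strategy (structure theorem for $A(H_\infty)$ at the infinite level, then descent to $A(H_n)$ controlled by the finiteness of $A(H_\infty)/I(H_n)A(H_\infty)$, with Lemma~\ref{thm12} supplying coprimality of $\prod f_i$ with $I(H_n)$) is the right one. Two steps, however, are misstated. First, ``choose $a_i$ such that $\varphi(a_i)$ differs from $e_i$ by an element of $\coker\varphi$'' does not parse, since $\coker\varphi$ is a quotient of $N$ and $\varphi(a_i)$ always maps to $0$ there; what you want is to fix $\alpha_0\in\mc{A}_0$, so that $\alpha_0 N\sb\varphi(A(H_\infty))$, and choose $a_i$ with $\varphi(a_i)=\alpha_0 e_i$, which kills the $l$-th coordinate for $l\neq i$ and makes the first containment $\mc{A}_0\cdot\mr{Ann}_\infty(a_i)\sb f_i\Lambda(\ms{G})$ provable.

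Second, the intermediate assertion $\mc{A}_1^2\tilde\lambda a_i\in\sum_{j<i}a_j\Lambda(\ms{G})$ \emph{in} $A(H_\infty)$ is not what you actually obtain. Pulling $\tilde\lambda\mf{c}_i\in\sum_{j<i}\mf{c}_j\Lambda_n$ back through $\bar\rho$ and killing $\ker\bar\rho$ with $\mc{A}_1$ gives only $\mc{A}_1\tilde\lambda a_i\in\sum_{j<i}a_j\Lambda(\ms{G})+I(H_n)A(H_\infty)$, and a second multiplication by $\mc{A}_1$ does not remove the $I(H_n)A(H_\infty)$ term. Since the very next sentence invokes the first step on this false premise to deduce $\mc{B}\tilde\lambda\sb f_i\Lambda(\ms{G})$, the written argument has a gap. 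The remedy, which your closing paragraph gestures at correctly but does not integrate into the chain of deductions, is to carry the error term honestly: apply $\varphi$ to $\alpha\tilde\lambda a_i-\sum_{j<i}\mu_j a_j\in I(H_n)A(H_\infty)$, read off the $i$-th coordinate (using that $\varphi(a_j)$ has vanishing $i$-th coordinate for $j<i$ and $\varphi(a_i)=\alpha_0 e_i$), and conclude only $\alpha_0\alpha\tilde\lambda\in f_i\Lambda(\ms{G})+I(H_n)$. This reduces modulo $I(H_n)$ to $\mc{B}\lambda\sb f_i\Lambda_n$ with $\mc{B}=\alpha_0\mc{A}_1$, which is what the lemma asserts. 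So the approach is sound, but the derivation should be rewritten to keep the $I(H_n)$ ambiguity until the final projection to $\Lambda_n$ rather than claiming it has already been absorbed in $A(H_\infty)$.
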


Let $\ell>1$ be a fixed integer. Given a prime $\mf{Q}$ of $H_n$ lying above $\mf{q}\in \mc{I}_\ell$, $I_\mf{q}$ is a free $\mathbb{Z}[\ms{G}_n]$-module of rank $1$ generated by $\mathfrak{Q}$, and we define
\[v_\mf{Q}: H_n^\times\to \Lambda_n \text{\;\;\; by \;\;\;} v_\mf{Q}(w)\mf{Q}=(w)_\mf{q},\]
\[\bar{v}_\mf{Q}: H_n^\times/(H_n^\times)^{p^\ell}\to \Lambda_n/p^\ell \Lambda_n \text{\;\;\; by \;\;\;} \bar{v}_\mf{Q}(w)\mf{Q}=[w]_\mf{q}\]

The following lemma  is a combination of \cite[Lemma 8.2]{rubin} and \cite[Lemma 3.8.4]{gon} and will be used in the induction argument of Theorem \ref{thm9}.
\begin{lem}\label{lem18} Fix an integer $\ell> 1$. Suppose $\chi\in G^*$, $v\in \left(H_n^\times/(H_n^\times)^{p^\ell}\right)^\chi$, $\mf{q}\in\mc{I}_\ell$ is a prime, $\mf{Q}$ is a prime of $H_n$ lying above $\mf{q}$, $S$ is a set of primes of $K$ not containing $\mf{q}$, and $f, \lambda_0, \lambda_1, \lambda_2\in \Lambda(\ms{G})$, with $\lambda_0=2$ if $p=2$. Write $B_n$ for the subgroup of $A(H_n)$ generated by the primes of $H_n$ lying above the primes in $S$, $\mf{c}$ for the image of $\mf{Q}$ in $A(H_n)^\chi$ and $V$ for the $\Lambda_n$-submodule of $H_n^\times/(H_n^\times)^{p^\ell}$ generated by $v$. Suppose also that we have
\begin{enumerate}[(i)]
\item $[v]_\mf{r}=0$ for a prime $\mf{r}$ of $K$ not in $S\cup \{\mf{q}\}$,
\item the annihilator $\mathrm{Ann}(\mf{c})\sb \Lambda_n^\chi$ of $\mf{c}$ in $A(H_n)^\chi/B_n^\chi$ satisfies $\lambda_1\mathrm{Ann}(\mf{c})\sb f\Lambda_n^\chi$,
\item $\#(A(H_n)^\chi)\mid p^\ell$ and $\bar{v}_\mf{Q}(v)$ divides $(p^\ell/\#(A(H_n)^\chi))\lambda_2$ in $\Lambda_n^\chi/p^\ell \Lambda_n^\chi$, and
\item $f\Lambda(\ms{G})$ is prime to $I(H_n)$.
\end{enumerate}
Then there exists a $\ms{G}_n$-equivariant map $\phi: V\to \Lambda_n/p^\ell \Lambda_n$ satisfying
\[f\phi(v)=\lambda_0 \lambda_1 \lambda_2 \bar{v}_\mf{Q}(v).\]
\end{lem}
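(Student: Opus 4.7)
The plan is to construct $\phi$ on the generator $v$ and then extend by $\Lambda_n$-linearity, following Rubin's argument in \cite[Lemma 8.2]{rubin} with the modification for $p=2$ from \cite[Lemma 3.8.4]{gon}. First I would fix a lift $\tilde v \in H_n^\times$ of $v$ and set $u := v_\mf{Q}(\tilde v) \in \Lambda_n$, so that the reduction of $u$ modulo $p^\ell$ equals $\bar v_\mf{Q}(v)$. Since $(p,\#G)=1$, the idempotent $e_\chi$ is integral, and after replacing $u$ by $e_\chi u$ we may assume $u \in \Lambda_n^\chi$.

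The first key step is to show $u \in \mathrm{Ann}(\mf c)\subset \Lambda_n^\chi$. By hypothesis (i), the $\mf r$-part of the principal ideal $(\tilde v)$ is divisible by $p^\ell$ for every prime $\mf r$ of $K$ outside $S\cup\{\mf q\}$, so one may write
\[(\tilde v) \;=\; u\cdot \mf Q \;+\; \mf b \;+\; p^\ell\mf e,\]
with $\mf b$ supported on the primes of $H_n$ above $S$. Taking ideal classes in $A(H_n)^\chi$ and using hypothesis (iii), which says $p^\ell$ kills $A(H_n)^\chi$, we conclude $u\mf c \in B_n^\chi$, i.e.\ $u \in \mathrm{Ann}(\mf c)$. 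Applying hypothesis (ii) then gives $\lambda_1 u \in f\Lambda_n^\chi$; I pick $y\in\Lambda_n^\chi$ with $fy = \lambda_1 u$, and define
\[\phi(v) \;:=\; \lambda_0\lambda_2 y \;\bmod\; p^\ell\Lambda_n \;\in\; \Lambda_n/p^\ell\Lambda_n,\]
extended to $V = \Lambda_n v$ by $\Lambda_n$-linearity. The identity $f\phi(v) = \lambda_0\lambda_1\lambda_2\, \bar v_\mf Q(v) \pmod{p^\ell}$ is then immediate from $fy=\lambda_1 u$.

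The main obstacle is verifying well-definedness, which amounts to showing that $\mathrm{Ann}(v)\cdot \lambda_0\lambda_2 y\subseteq p^\ell\Lambda_n^\chi$. Given $\alpha\in\mathrm{Ann}(v)$, the element $\tilde v^\alpha$ is a $p^\ell$-th power in $H_n^\times$; projecting to $I_\mf q$ yields $\alpha u \in p^\ell\Lambda_n^\chi$, and hence $f\cdot(\alpha\lambda_0\lambda_2 y) = \lambda_0\lambda_1\lambda_2\cdot(\alpha u)\in p^\ell\Lambda_n^\chi$. For $p$ odd, hypothesis (iv) ensures that $f$ acts as a non-zero-divisor on $\Lambda_n^\chi/p^\ell$, and cancelling $f$ gives $\alpha\lambda_0\lambda_2 y\equiv 0\pmod{p^\ell}$; here $\lambda_0$ can be taken to be $1$, and we recover \cite[Lemma 8.2]{rubin}. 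The subtle case is $p=2$: the cancellation of $f$ can fail by a single factor of $2$, which is the same obstruction producing the $\bold\pi^k$ in Theorem~\ref{thm4.1}(ii) and the non-injectivity of restriction in Lemma~\ref{lem6}. The extra factor $\lambda_0 = 2$ in the definition of $\phi(v)$ is precisely what absorbs this defect, and hypothesis (iii), which fixes $\lambda_2$ as a specific multiple of $\bar v_\mf Q(v)$ modulo $p^\ell$, provides the quantitative control needed to see that the $2$-power defect is no worse than this single factor of $2$.
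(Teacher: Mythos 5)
The paper does not give an independent proof of this lemma; it is stated as ``a combination of \cite[Lemma 8.2]{rubin} and \cite[Lemma 3.8.4]{gon}'', so I can only check your argument on its own merits. The first half of your proposal is correct and matches the standard Euler-system strategy: lifting $v$ to $\tilde v$, putting $u=v_\mf{Q}(\tilde v)$, using hypothesis (i) and the bound $\#(A(H_n)^\chi)\mid p^\ell$ from (iii) to conclude $u\mf{c}\in B_n^\chi$ so that $u\in\mathrm{Ann}(\mf{c})$, applying (ii) to write $\lambda_1 u=fy$, and verifying $f\cdot(\lambda_0\lambda_2 y)=\lambda_0\lambda_1\lambda_2 u\equiv\lambda_0\lambda_1\lambda_2\bar v_\mf{Q}(v)\bmod p^\ell$ is all sound.

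The gap is in the well-definedness step, and it is a real one. You claim that hypothesis (iv) (``$f\Lambda(\ms{G})$ is prime to $I(H_n)$'') makes $f$ a non-zero-divisor on $\Lambda_n^\chi/p^\ell\Lambda_n^\chi$ when $p$ is odd, and that the failure for $p=2$ is by a single factor of $2$. Neither claim holds. The ring $\Lambda_n^\chi/p^\ell\Lambda_n^\chi$ is finite, so an element is a non-zero-divisor there if and only if it is a unit, i.e., if and only if $f$ is coprime to $p$. But (iv) only says that $f$ is coprime to $\gamma^{p^{n-1-e}}-1$ in $\Lambda(\ms{G})^\chi\cong\ms{I}[[T]]$; it is perfectly compatible with $f$ being divisible by $p$ (for instance $f=p$, and more generally any $f$ with a positive $\mu$-invariant, which is not excluded at this stage of the induction in the proof of Theorem~\ref{thm9} since the vanishing of $\mu$ for $A(H_\infty)$ is only established afterwards). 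In that case multiplication by $f$ on $\Lambda_n^\chi/p^\ell$ has a large kernel and the cancellation $f\cdot(\alpha\lambda_0\lambda_2 y)\equiv 0\Rightarrow\alpha\lambda_0\lambda_2 y\equiv 0$ simply fails, already for $p$ odd. Correspondingly, the second half of hypothesis (iii) --- the divisibility $\bar v_\mf{Q}(v)\mid (p^\ell/\#(A(H_n)^\chi))\lambda_2$ --- plays no actual role in your computation; you invoke it only in prose. But in Rubin's and Gonzalez-Avil\'es' proofs that divisibility, together with the fact that $a:=\#(A(H_n)^\chi)$ also lies in $\mathrm{Ann}(\mf{c})$ (so $\lambda_1 a\in f\Lambda_n^\chi$), is exactly what replaces the impossible cancellation of $f$: one uses it to show $\mathrm{Ann}(v)\cdot\lambda_2\subseteq a\Lambda_n^\chi$ and then combines with $\lambda_1 a\in f\Lambda_n^\chi$ to land inside $p^\ell\Lambda_n^\chi$. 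Until that step is carried out, the map $\phi$ you write down is not known to be well defined.
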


Finally, we make full use of the results from Chapter \ref{ch5} and apply induction to establish a divisibility relation between $\mathrm{char}_\Lambda(A(H_\infty)^\chi)$ and $\mathrm{char}_\Lambda(\bar{\mc{E}}_{H_\infty}/ \bar{\mc{C}}_{H_\infty})^\chi$.
\begin{thm}\label{thm9} Let $k$ be the number of $f_i$ appearing in $\mathrm{char}_\Lambda(A(H_\infty))$. 
\begin{enumerate}[(i)]
\item If $p>2$ and $\chi\in G^*$, we have
\[\mathrm{char}_\Lambda(A(H_\infty)^\chi)\text{\; divides \;}I(D_\mf{p})^{2k+2}\mathrm{char}_\Lambda(\bar{\mc{E}}_{H_\infty}/ \bar{\mc{C}}_{H_\infty})^\chi.\]
\item If $p=2$ and $\chi\in G^*$, we have
\[\mathrm{char}_\Lambda(A(H_\infty)^\chi) \text{\; divides \;}2^{6k+6}\mathrm{char}_\Lambda(\bar{\mc{E}}_{H_\infty}/ \bar{\mc{C}}_{H_\infty})^\chi.\]
\end{enumerate}
\end{thm}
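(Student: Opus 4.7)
The plan is to follow Rubin's Euler system induction, with the elliptic units of Section~\ref{ch4} serving as the Euler system. Fix $\chi \in G^*$ and an integer $n$ large enough that both $\mathrm{char}_\Lambda(A(H_\infty)^\chi)$ and $\mathrm{char}_\Lambda(\bar{\mc{E}}_{H_\infty}/\bar{\mc{C}}_{H_\infty})^\chi$ are prime to $I(H_n)$, which is possible by Lemma~\ref{thm12} and the first assertion of Corollary~\ref{cor4}. Choose $\mc{B} \sb \Lambda(\ms{G})$ of finite index so that both Lemma~\ref{lem17} and Corollary~\ref{cor4} hold, and pick $\lambda \in I(D_\mf{p})\mc{B}$. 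By Corollary~\ref{cor4} there is a map $\theta_{\lambda,n} \colon \bar{\mc{E}}_{H_n} \to \Lambda_n$ and an element $c \in \bar{\mc{C}}_{H_n}^\chi$ whose image $\theta_{\lambda,n}(c)$ is divisible (modulo $I(H_n)$) by $\lambda^2 g$, where $g$ is a generator of $\mathrm{char}_\Lambda(\bar{\mc{E}}_{H_\infty}/\bar{\mc{C}}_{H_\infty})^\chi$. Using Proposition~\ref{prop11}, lift $c$ to an Euler system $\bold{\alpha}$, and apply Proposition~\ref{prop7} to obtain the Kolyvagin derivative classes $\kappa_{\bold{\alpha}}(\mf{r}) \in H_n^\times/(H_n^\times)^{p^\ell}$ for $\mf{r} \in \mc{I}_\ell$, where $\ell$ is chosen large enough to dominate all the $p$-adic valuations that will arise.

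Next, I would run an induction on $i = 1,\ldots,k$, constructing successively a sequence of primes $\mf{q}_1,\ldots,\mf{q}_k \in \mc{I}_\ell$ and primes $\mf{Q}_i \mid \mf{q}_i$ of $H_n$ such that (a) the class of $\mf{Q}_i$ in $A(H_n)^\chi$ modulo the classes already chosen equals $\mf{c}_i$ from Lemma~\ref{lem17}, and (b) the valuation $\bar{v}_{\mf{Q}_i}(\kappa_{\bold{\alpha}}(\mf{q}_1\cdots\mf{q}_i))$ carries the factor $f_i$. At the $i$th step, set $v_i := \kappa_{\bold{\alpha}}(\mf{q}_1\cdots\mf{q}_{i-1})$ and use Lemma~\ref{lem18} with $f = f_i$ and suitable $\lambda_0,\lambda_1,\lambda_2$ (the $\lambda_1$ encoding Lemma~\ref{lem17} and $\lambda_2$ encoding the inductive hypothesis) to build a $\ms{G}_n$-equivariant map $\phi_i$ satisfying
\[
f_i\,\phi_i(v_i) \;=\; \lambda_0 \lambda_1 \lambda_2\, \bar{v}_{\mf{Q}_i}(v_i).
\]
Then apply Theorem~\ref{thm14} to the pair $(v_i,\phi_i)$ with $\mf{c} = \mf{c}_i$ to locate the prime $\mf{q}_i$ with the required Frobenius class. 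The Euler system relation \eqref{es2}, translated via Proposition~\ref{prop7}(ii) into the identity $[\kappa_{\bold{\alpha}}(\mf{q}_1\cdots\mf{q}_i)]_{\mf{q}_i} = \varphi_{\mf{q}_i}(v_i)$, transfers the valuation information from $v_i$ to $v_{i+1}$ at $\mf{q}_i$. Composing the chain of $k$ such relations and applying $\theta_{\lambda,n}$ one last time expresses $\bigl(\prod_{i=1}^{k} f_i\bigr)\cdot \theta_{\lambda,n}(c)$ as an explicit element divisible only by the accumulated error factors, which is exactly the claimed divisibility of characteristic ideals.

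The error factors in the statement are tracked as follows. Two factors of $\lambda \in I(D_\mf{p})\mc{B}$ come from Corollary~\ref{cor4} (built into $\theta_{\lambda,n}(c) = \lambda^2 g$), and at each of the $k$ induction steps Lemma~\ref{lem18} introduces a further factor of $\lambda_0\lambda_1\lambda_2$; since $\lambda_1$ and $\lambda_2$ can be taken in $I(D_\mf{p})\mc{B}$ and $\lambda_0$ can be taken to equal $1$ for $p>2$, this yields the bound $I(D_\mf{p})^{2k+2}$ in case (i). In case (ii), there is additional loss at each step: Theorem~\ref{thm14} introduces the explicit factor $p^{3e}=8$ (three powers of $2$) arising from the kernel of order dividing $4$ of the restriction map in Lemma~\ref{lem6}, and Lemma~\ref{lem18} forces $\lambda_0 = 2$; combined with the two initial factors from $\theta_{\lambda,n}$ this totals $2^{6k+6}$. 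The main technical hurdle is the bookkeeping required to verify at each stage that $\mf{c}_i$ remains nontrivial in the correct quotient $A(H_n)^\chi/(\mf{c}_1\Lambda_n + \cdots + \mf{c}_{i-1}\Lambda_n)$, that the hypotheses of Lemma~\ref{lem18} (especially condition (iv) on primality to $I(H_n)$) persist after each twist, and that $\ell$ is chosen large enough for the reductions modulo $p^\ell$ to faithfully encode the characteristic ideal data; once these inductive invariants are maintained, the divisibility falls out by assembling the chain of Euler system relations.
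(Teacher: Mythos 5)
Your overall plan — Corollary~\ref{cor4} to produce $\theta_{\lambda,n}$ and a Kolyvagin seed, Proposition~\ref{prop11} and \ref{prop7} to build the Euler system and derivative classes, and then an induction alternating Lemma~\ref{lem18} and Theorem~\ref{thm14} to trap the factors $f_1,\ldots,f_k$ — is exactly the argument the paper runs. But as written the induction is circular and off by one. You define $\phi_i$ by applying Lemma~\ref{lem18} ``with $\mf{Q} = \mf{Q}_i$'' and then invoke Theorem~\ref{thm14} on $\phi_i$ to ``locate $\mf{q}_i$''; Lemma~\ref{lem18} needs the prime $\mf{Q}_i$ to already exist (its hypotheses (ii) and (iii) refer to the class of $\mf{Q}$ and to $\bar{v}_\mf{Q}(v)$), so you cannot use it both to define $\phi_i$ and then to produce $\mf{Q}_i$. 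The paper resolves this by choosing $k+1$ primes $\mf{Q}_1,\ldots,\mf{Q}_{k+1}$: the seed $\mf{Q}_1$ is produced by a direct application of Theorem~\ref{thm14} with $\phi = p^e\theta_{\lambda,n}$ (no Lemma~\ref{lem18} at this stage), and then each inductive step applies Lemma~\ref{lem18} at the \emph{previously chosen} prime $\mf{Q}_i$ and feeds the resulting $\phi_i$ into Theorem~\ref{thm14} with $\mf{c}=\lambda\mf{c}_{i+1}^\chi$ to produce $\mf{Q}_{i+1}$. Dropping the seed prime not only breaks the logic but also loses two factors of $\lambda$ and one of $p^{4e}$ in the bookkeeping.

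Two further points in the $p=2$ case are wrong or unexplained. First, you pick $\lambda\in I(D_\mf{p})\mc{B}$ for all $p$, but Theorem~\ref{thm14} requires $\mf{c}\in p^eI(\ms{G})A(H_n)^\chi$ with $e=1$ when $p=2$; the paper therefore takes $\lambda\in 2I(\ms{G})\mc{B}$ in that case, which is where the extra powers of $2$ enter. Second, your accounting of $2^{6k+6}$ (``$p^{3e}=8$ from Theorem~\ref{thm14}, $\lambda_0=2$, plus two initial factors'') does not actually sum to $2^{6k+6}$; the correct breakdown is $p^{4e}$ per prime (that is, $p^{3e}$ from Theorem~\ref{thm14}(ii) together with $p^e$ from either the base-case choice $\phi=p^e\theta_{\lambda,n}$ or from $\lambda_0=2$ in Lemma~\ref{lem18}) for $k+1$ primes, giving $2^{4k+4}$, multiplied by $\lambda^{2k+2}$ with $\lambda\in 2I(\ms{G})\mc{B}$, contributing $2^{2k+2}I(\ms{G})^{2k+2}$; then one must invoke Lemma~\ref{thm12} to discard the $I(\ms{G})^{2k+2}$ since $\mathrm{char}_\Lambda(A(H_\infty))$ is prime to $I(\ms{G})$. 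That last step, along with the passage from a single $\lambda$ to the ideal $I(D_\mf{p})^{2k+2}$ by taking the greatest common divisor over $\lambda\in I(D_\mf{p})\mc{B}$, is absent from your write-up and is where the theorem's final statement is actually extracted.
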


\begin{proof}
 Fix a generator $\beta$ of $\mathrm{char}_\Lambda(\bar{\mc{E}}_{H_\infty}/ \bar{\mc{C}}_{H_\infty})^\chi$. Let $\mc{B}$ be an ideal of finite index in $\Lambda(\ms{G})$ satisfying the conditions in Lemma \ref{thm8} (iii) and Lemma \ref{lem17}. Take $\lambda\in I(D_\p)\mc{B}$ (or $2 I(\ms{G})\mc{B}$ if $p=2$). The existence of $\mc{B}$ is clear from the proof of Lemma 4.14. Pick $\ell> 1$ large enough so that we have
\begin{equation}\label{eq14}p^\ell \lambda \Lambda_n\sb p^{n+4ke}\lambda^{2k}\beta(\#(A(H_n)^\chi)))\Lambda_n.
\end{equation}
Here, $e=1$ or $0$ according as $p=2$ or $p\neq 2$. The choice of $\ell$ will be justified later in the proof. Now, by Corollary \ref{cor4}, there exists $\theta_{\lambda, n}: \bar{\mc{E}}_{H_n}\to \Lambda_n$ such that $\lambda^2 \beta\in \theta_{\lambda,n}(\bar{\mc{C}}_{H_n}^\chi)$.
Thus, we may fix $u\in \bar{\mc{C}}_{H_n}^\chi$ with $\theta_{\lambda,n}(u)=\lambda^2 \beta$, and also we fix $u_0\in \mc{C}_{H_n}^\chi$ with $u\equiv  u_0\bmod (\bar{\mc{C}}_{H_n}^\chi)^{p^\ell}.$
By Proposition \ref{prop11}, we have an Euler system $\bold{\alpha}$ and $\sigma\in G$ with $\bold{\alpha}^\sigma(n,1)=u_0$.
Let $\kappa_{\bold{\alpha}}$ be the map defined in Proposition \ref{prop7}, and let $\mf{c}_1,\ldots ,\mf{c}_k\in A(H_n)$ be as given in Lemma \ref{lem17}. 
We will use induction to select primes $\mf{Q}_1,\ldots ,\mf{Q}_{k+1}$ of $H_n$ lying above primes $\mf{q}_1,\ldots,\mf{q}_{k+1}$ of $K$ satisfying:
\begin{equation}\label{eq12}[\mf{Q}_i]=\lambda\mf{c}_i^\chi \text{ in }A(H_n)^\chi\text{, and }\mf{q}_i\in \mc{I}_\ell,
\end{equation}
\begin{equation}\label{eq13} \bar{v}_{\mf{Q}_1}(\kappa_{\bold{\alpha}}(\mf{q}_1)^\chi)=r_1 p^{4e}\lambda^2\beta\text{ and } f_{i-1}\bar{v}_{\mf{Q}_i}(\kappa_{\bold{\alpha}}(\mf{a}_i)^\chi)=r_i p^{4e} \lambda^2\bar{v}_{\mf{Q}_{i-1}}(\kappa_{\bold{\alpha}}(\mf{a}_{i-1})^\chi),
\end{equation}
where $\mf{a}_i=\mf{q}_1\cdots \mf{q}_i$ and $r_i\in (\Z/p^\ell \Z)^\times$.

For $i=1$, we take $\mf{c}=\lambda\mf{c}_1^\chi\in p^e I(\ms{G})A(H_n)^\chi$, $W=\left(\bar{\mc{E}}_{H_n}/\bar{\mc{E}}_{H_n}\cap (H_n^\times)^{2^\ell}\right)^\chi$, $\phi=p^e\theta_{\lambda,n}$ and apply Theorem \ref{thm14} and Proposition \ref{prop7}. Then we obtain a prime $\mf{Q}_1$ of $H_n$ lying above a prime $\mf{q}_1\in \mc{I}_\ell$ such that $[\mf{Q}_1]=\lambda\mf{c}_1^\chi$ in $A(H_n)^\chi$ and $r_1\in (\Z/p^\ell \Z)^\times$ satisfying
\begin{align*}[(\kappa_{\bold{\alpha}}(\mf{q}_1)^\chi)]_{\mf{Q}_1}&=\varphi_{\mf{q}_1}(\kappa_{\bold{\alpha}}(1)^\chi)=\varphi_{\mf{q}_1}(\bold{\alpha}^\sigma(n,1))= r_1 p^{3e} \phi(u_0)\mf{Q}_1\\&= r_1 p^{4e} \theta_{\lambda,n}(u_0)\mf{Q}_1= r_1 p^{4e} \lambda^2 \beta \mf{Q}_1.
\end{align*}
Thus we have $\bar{v}_{\mf{Q}_1}\left(\kappa_{\bold{\alpha}}(\mf{q}_1)^\chi\right)= r_1 p^{4e} \lambda^2 \beta$  by the definition of $[\cdot ]_{\mf{Q}_1}$.

Now, let $1<i<k$ and suppose we have selected primes $\mf{Q}_1, \ldots ,\mf{Q}_i$ satisfying \eqref{eq12} and \eqref{eq13}. We will define $\mf{Q}_{i+1}$. Recall $\mf{a}_i=\prod_{j\leqs i}\mf{q}_j$.
Let $V_i$ be the $\Lambda_n$-submodule of $H_n^\times/(H_n^\times)^{p^\ell}$ generated by $\kappa_{\bold{\alpha}}(\mf{a}_i)^\chi$. We will apply Lemma \ref{lem18} with $\mf{Q}=\mf{Q}_i$, $v=\kappa_{\bold{\alpha}}(\mf{a}_i)^\chi$, $\lambda_1=\lambda_2=\lambda$ and $S=\{\mf{q}_1,\ldots, \mf{q}_{i-1}\}$. This is possible because conditions (i), (ii) and (iv) of Lemma \ref{lem18} are satisfied thanks to Proposition \ref{prop7}, Lemma \ref{lem17} and Lemma \ref{thm12}, and (iii) is satisfied because by \eqref{eq13}, $\bar{v}_{\mf{Q}_i}(\kappa_{\bold{\alpha}}(\mf{a}_i)^\chi)$ divides $p^{4ie}\lambda^{2i}\beta$ in $\Lambda_n^\chi/p^\ell \Lambda_n^\chi$, so by the choice of $\ell$ made in \eqref{eq14}, $\bar{v}_{\mf{Q}_i}(\kappa_{\bold{\alpha}}(\mf{a}_i)^\chi)$ divides $\left(p^\ell/\#(A(H_n)^\chi)\right)\lambda$ in $\Lambda_n^\chi/p^\ell \Lambda_n^\chi$. Thus, we obtain a map $\phi_i: V_i\to \Lambda_n/p^\ell \Lambda_n$ such that
\begin{equation}\label{eq15}f_i \phi_i(\kappa_{\bold{\alpha}}(\mf{a}_i)^\chi)=p^e\lambda^2\bar{v}_{\mf{Q}_i}(\kappa_{\bold{\alpha}}(\mf{a}_i)^\chi).
\end{equation}
Now, applying Theorem \ref{thm14} by setting $V=V_i$, $\mf{c}=\lambda\mf{c}_{i+1}^\chi$, $\phi=\phi_i$, we obtain a prime $\mf{q}_{i+1}\in \mc{I}_\ell$ and a prime $\mf{Q}_{i+1}$ of $H_n$ lying above it. Then (i) and (ii) of Theorem \ref{thm14} gives \eqref{eq12} for $i+1$. Furthermore, by Proposition \ref{prop7} (ii) and Theorem \ref{thm14} (ii), for some $r_{i+1}\in (\Z/p^\ell \Z)^\times$ we have
\begin{align*}f_i [\kappa_{\bold{\alpha}}(\mf{a}_{i+1})^\chi]_{\mf{Q}_{i+1}}=r_{i+1}p^{3e} f_i \phi_i(\kappa_{\bold{\alpha}}(\mf{a}_i)^\chi)\mf{Q}_{i+1}=r_{i+1}p^{4e} \lambda^2 \bar{v}_{\mf{Q}_i}(\kappa_{\bold{\alpha}}(\mf{a}_i)^\chi)\mf{Q}_{i+1},
\end{align*}
where the last equation follows from \eqref{eq15}. This proves \eqref{eq13} for $i+1$.
Finally, combining \eqref{eq13} for $1\leqs i\leqs k+1$ gives
\[\prod_{i=1}^k f_i \bar{v}_{\mf{Q}_{k+1}}(\kappa_{\bold{\alpha}}(\mf{a}_{k+1})^\chi)=r p^{(4k+4)e} \lambda^{2k+2} \beta\]
in $\Lambda_n/p^\ell \Lambda_n$ for some $r\in (\Z/p^\ell \Z)^\times$. It follows that 
\begin{small}
\[\mathrm{char}_\Lambda(A(H_\infty))=\prod_{i=1}^k f_i \text{\;\;  divides \;\;} p^{(4k+4)e} \lambda^{2k+2} \beta\Lambda(\ms{G})= p^{(4k+4)e} \lambda^{2k+2} \mathrm{char}_\Lambda\left(\bar{\mc{E}}_{H_\infty}/\bar{\mc{C}}_{H_\infty}\right).\]
\end{small}
This holds for every $\lambda\in I(D_\p)\mc{B}$ (or $2 I(\ms{G})\mc{B}$ if $p=2$), so in particular, holds for $\lambda$ being the greatest common divisor $\lambda_0$ of all elements in this ideal. If $p>2$, we have  $\lambda_0\Lambda(\ms{G})\supset I(D_\p)$ and the divisibility in (i) follows. If $p=2$, it is easy to show that we have $\lambda_0\Lambda(\ms{G})=2 I(\ms{G})$. This concludes the proof of Theorem \ref{thm9}, because $\mathrm{char}_\Lambda(A(H_\infty))$ is prime to $I(\ms{G})$ by Lemma \ref{thm12}.
\end{proof}

\begin{cor}\label{cor5}Let $p>2$. Then
\[\mathrm{char}_\Lambda(A(H_\infty))\text{\; divides\; }\mathrm{char}_\Lambda(\bar{\mc{E}}_{H_\infty}/ \bar{\mc{C}}_{H_\infty}).\]
\end{cor}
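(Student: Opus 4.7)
The plan is to cancel the factor $I(D_\mf{p})^{2k+2}$ appearing in Theorem \ref{thm9}(i) using the coprimality supplied by Lemma \ref{thm12}. Since $p\nmid h=\#(G)$, characteristic ideals in $\Lambda(\ms{G})$ decompose into their $\chi$-components, so divisibility can be verified one $\chi\in G^*$ at a time in $\Lambda(\ms{G})^\chi\simeq\Z_p[[T]]$, where $T=\gamma-1$ for a fixed topological generator $\gamma$ of $\Gamma$. Hence it suffices to establish the divisibility $\chi$-by-$\chi$ and then reassemble.

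Fix $\chi$ and let $f$, $g$ be generators of $\mathrm{char}_\Lambda(A(H_\infty)^\chi)$ and $\mathrm{char}_\Lambda((\bar{\mc{E}}_{H_\infty}/\bar{\mc{C}}_{H_\infty})^\chi)$ in $\Z_p[[T]]$. Since $H_\infty/H$ is totally ramified at every prime above $\mf{p}$ (recorded at the start of Section \ref{section4.4}), the inertia group, and hence the decomposition group, of any prime $\mf{P}\mid\mf{p}$ of $H_\infty$ in $\ms{G}$ contains the entire group $\Gamma$. Therefore $\gamma-1\in I(D_\mf{p})$, and in particular $T^{2k+2}=(\gamma-1)^{2k+2}$ lies in $I(D_\mf{p})^{2k+2}$. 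Specialising the ideal containment supplied by the $\chi$-component of Theorem \ref{thm9}(i) to this specific element yields the concrete divisibility
\[f\mid T^{2k+2}g \quad\text{in } \Z_p[[T]].\]

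It remains only to prove that $f$ is coprime to $T$; once this is known, since $T$ is a prime element of $\Z_p[[T]]$, the factor $T^{2k+2}$ may be cancelled to conclude $f\mid g$. But this coprimality is exactly Lemma \ref{thm12} applied with $n=1$: because $p>2$ forces $e=0$, one has $\Gamma_1=\Gamma$ and $I(H_1)=I(\Gamma)=(T)$, and the lemma then guarantees that $\mathrm{char}_\Lambda(A(H_\infty))$, hence its $\chi$-component $(f)$, is prime to $(T)$, i.e.\ $T\nmid f$. Reassembling the divisibilities over all $\chi$ produces the required containment in $\Lambda(\ms{G})$. The argument is a routine cleanup of Theorem \ref{thm9}; the only substantive content is recognising the right choice $n=1$ in Lemma \ref{thm12} together with the containment $\Gamma\subseteq D_\mf{p}$, and there is no real obstacle once these are in hand.
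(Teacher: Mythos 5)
Your argument is correct and coincides with the paper's: both cancel the $I(D_\mf{p})^{2k+2}$ factor in Theorem \ref{thm9}(i) using the coprimality of $\mathrm{char}_\Lambda(A(H_\infty))$ to $I(D_\mf{p})$ supplied by Lemma \ref{thm12}. The paper states this in one line; you usefully make explicit the identification $I(D_\mf{p})=I(H_1)=(\gamma-1)$ that total ramification of $H_\infty/H$ at the primes above $\mf{p}$ forces, together with the reduction to $\chi$-components and the $n=1$ case of Lemma \ref{thm12}.
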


\begin{proof} We have shown in Lemma \ref{thm12} that $\mathrm{char}_\Lambda(A(H_\infty))$ is prime to $I(D_\mf{p})$, so by Theorem \ref{thm9}, $\mathrm{char}_\Lambda(A(H_\infty))$ divides $\mathrm{char}_\Lambda(\bar{\mc{E}}_{H_\infty}/ \bar{\mc{C}}_{H_\infty})$.
\end{proof}

Recall that $p\nmid [H:K]$ by assumption.

\begin{thm}\label{thm4.1} We have $\mathrm{char}_\Lambda(X(H_\infty))=\mathrm{char}_\Lambda\left(U_{H_\infty}/\bar{\mc{C}}_{H_\infty}\right)$ if and only if $\mathrm{char}_\Lambda(A(H_\infty))=\mathrm{char}_\Lambda\left(\bar{\mc{E}}_{H_\infty}/\bar{\mc{C}}_{H_\infty}\right)$, and 
\[\mathrm{char}_\Lambda(X(H_\infty))\mid 2^{e(6k+6)}\mathrm{char}_\Lambda\left(U_{H_\infty}/\bar{\mc{C}}_{H_\infty}\right).\]
\end{thm}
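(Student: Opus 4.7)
The strategy is to turn the fundamental four-term exact sequence \eqref{eq4.2} into a multiplicative relation between characteristic ideals and then to invoke Theorem~\ref{thm9}. I begin by setting $Y=\im\bigl(U_{H_\infty}/\bar{\mc{C}}_{H_\infty}\to X(H_\infty)\bigr)$, which equals $\ker\bigl(X(H_\infty)\to A(H_\infty)\bigr)$ by exactness. Thus \eqref{eq4.2} splits into two short exact sequences of finitely generated torsion $\Lambda(\ms{G})$-modules
\[0\to \bar{\mc{E}}_{H_\infty}/\bar{\mc{C}}_{H_\infty}\to U_{H_\infty}/\bar{\mc{C}}_{H_\infty}\to Y\to 0,\qquad 0\to Y\to X(H_\infty)\to A(H_\infty)\to 0.\]
All four modules are torsion: $X(H_\infty)$ is torsion by Proposition~\ref{thm11}, hence so is its quotient $A(H_\infty)$; and Lemma~\ref{thm4.12} together with the fact that $U_{H_\infty}$ has $\Lambda(\ms{G})$-rank one forces $U_{H_\infty}/\bar{\mc{C}}_{H_\infty}$ to be torsion as well.

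Next I apply multiplicativity of characteristic ideals in short exact sequences of finitely generated torsion modules. Since $p\nmid\#(G)$, the idempotents $e_\chi$ produce orthogonal decompositions $\Lambda(\ms{G})=\bigoplus_\chi\Lambda(\ms{G})^\chi$ with each $\Lambda(\ms{G})^\chi\simeq\Z_p[[\Gamma]]$ a two-dimensional regular local ring (in particular a UFD). Working on each $\chi$-isotypic component, the two short exact sequences yield
\[\mathrm{char}_\Lambda\bigl((U_{H_\infty}/\bar{\mc{C}}_{H_\infty})^\chi\bigr)=\mathrm{char}_\Lambda\bigl((\bar{\mc{E}}_{H_\infty}/\bar{\mc{C}}_{H_\infty})^\chi\bigr)\cdot\mathrm{char}_\Lambda(Y^\chi),\]
\[\mathrm{char}_\Lambda\bigl(X(H_\infty)^\chi\bigr)=\mathrm{char}_\Lambda(Y^\chi)\cdot\mathrm{char}_\Lambda\bigl(A(H_\infty)^\chi\bigr).\]
The equivalence in the first assertion of the theorem is then immediate by cancelling the nonzero principal ideal $\mathrm{char}_\Lambda(Y^\chi)$ on each component and reassembling via the above decomposition.

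For the divisibility, I simply multiply the Theorem~\ref{thm9}(ii) relation $\mathrm{char}_\Lambda(A(H_\infty)^\chi)\mid 2^{6k+6}\mathrm{char}_\Lambda\bigl((\bar{\mc{E}}_{H_\infty}/\bar{\mc{C}}_{H_\infty})^\chi\bigr)$ through by $\mathrm{char}_\Lambda(Y^\chi)$, which by the identities above gives
\[\mathrm{char}_\Lambda\bigl(X(H_\infty)^\chi\bigr)\mid 2^{6k+6}\mathrm{char}_\Lambda\bigl((U_{H_\infty}/\bar{\mc{C}}_{H_\infty})^\chi\bigr).\]
For $p>2$ ($e=0$) the same argument with Corollary~\ref{cor5} replacing Theorem~\ref{thm9}(ii) produces the analogous divisibility with no $2$-power factor. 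Reassembling $\chi$-components via $\Lambda(\ms{G})=\bigoplus_\chi\Lambda(\ms{G})^\chi$ then gives the stated divisibility in $\Lambda(\ms{G})$. There is no genuine obstacle in this step: Theorem~\ref{thm4.1} is a purely formal consequence of the fundamental exact sequence together with Theorem~\ref{thm9}, so all the substantive content has already been established.
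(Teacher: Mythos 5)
Your proof is correct and takes essentially the same route as the paper: the paper's own argument passes immediately from the four-term exact sequence \eqref{eq4.2} to the multiplicative identity $\mathrm{char}_\Lambda(A(H_\infty))\,\mathrm{char}_\Lambda(U_{H_\infty}/\bar{\mc{C}}_{H_\infty})=\mathrm{char}_\Lambda(X(H_\infty))\,\mathrm{char}_\Lambda(\bar{\mc{E}}_{H_\infty}/\bar{\mc{C}}_{H_\infty})$ and then invokes Theorem~\ref{thm9} and Corollary~\ref{cor5}, which is exactly what you do, merely making explicit the splitting through the intermediate module $Y$ and the per-$\chi$ cancellation that the paper leaves implicit.
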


\begin{proof} Recall from \eqref{eq4.2} that we have an exact sequence
\[0\to \bar{\mc{E}}_{H_\infty}/\bar{\mc{C}}_{H_\infty}\to U_{H_\infty}/\bar{\mc{C}}_{H_\infty}\to X(H_\infty)\to A(H_\infty)\to 0,\]
and therefore $\mathrm{char}_\Lambda(A(H_\infty))\mathrm{char}_\Lambda\left(U_{H_\infty}/\bar{\mc{C}}_{H_\infty}\right)=\mathrm{char}_\Lambda(X(H_\infty))\mathrm{char}_\Lambda\left(\bar{\mc{E}}_{H_\infty}/\bar{\mc{C}}_{H_\infty}\right)$. The last assertion of the theorem follows from Theorem \ref{thm9} and Corollary \ref{cor5}.
\end{proof}~

\section{Proof of the Main Conjecture for $H_\infty/H$}\label{ch6}

\subsection{The Iwasawa Invariants of $X(H_\infty)$}\label{section6.2}~
\vspace{5 pt}\\
Recall that $\ms{G}\simeq G\times \Gamma$, where we identify $G$ with $\Gal(H_\infty/K_\infty)$ and $\Gamma$ with $\Gal(K_\infty/K)$. Let us consider $\Lambda_\ms{I}(\Gamma)$ as a $\Lambda_\ms{I}(\ms{G})$-module via $\chi\in G^*$. Given a finitely generated torsion $\Lambda_\ms{I}(\ms{G})$-module $M$, recall that $\mathrm{char}(M)\sb \Lambda_\ms{I}(\ms{G})$ denotes the characteristic ideal of $M$. If $X$ is a $\Lambda(\ms{G})$-module and $\chi\in G^*$, we write $X^\chi$ for $e_\chi(X\widehat{\otimes} _{\Z_p} \ms{I})$, where $e_\chi$ is the idempotent corresponding to $\chi$. This is justified because the characteristic ideal of a $\Gamma$-module behaves w~
\vspace{5 pt}\\ell under extension of scalars. This comes from the fact that we can identify $\Lambda_\ms{I}(\Gamma)$ with $\ms{I}[[T]]$ upon fixing a topological generator $\gamma$ of $\Gamma$.

Recall also that any $f(T)\in \ms{I}[[T]]$ can be written uniquely, by the $p$-adic Weierstrass preparation theorem, in the form
\[f(T)=\bold{\pi}^m P(T)U(T)\]
where $\bold{\pi}$ is a uniformiser of $\ms{I}$, $P(T)$ is a distinguished polynomial, that is, a monic polynomial whose coefficients are divisible by $\bold{\pi}$, and $U(T)$ is a unit in $\ms{I}[[T]]$. Let $\epsilon$ be the absolute ramification index of $\ms{I}$. The invariants $\mu(f)=\frac{m}{\epsilon}$ and $\lambda(f)=\deg P(T)$ are called the Iwasawa $\mu$-invariant and $\lambda$-invariant of $f$, respectively. The Iwasawa invariants of $\Lambda(\ms{G})$-modules are defined similarly, and if $M=X\widehat{\otimes}_{\Z_p}\ms{I}$ is obtained from a $\Lambda(\ms{G})$-module $X$ by extension of scalars to $\ms{I}$, the invariants of $M$ coincide with those of $X$. Let $f^\chi$ be a generator of $\mathrm{char}\left(X(H_\infty)^\chi\right)$ and let $g^\chi$ be a generator of $\mathrm{char}\left((U_{H\infty}/\bar{\mc{C}}_{H_\infty})^\chi\right)$. We set $f=\prod f^\chi$ and $g=\prod g^\chi$. By Theorem \ref{thm4.1}, we have 

\begin{thm} $f^\chi \mid \bold{\pi}^{ek} g^\chi$ for some integer $k\geq 0$, $e=0$ if $p>2$ and $e=1$ if $p=2$.
\end{thm}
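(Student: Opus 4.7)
The plan is to deduce this statement directly from Theorem \ref{thm4.1} by passing to $\chi$-isotypic components and then converting the integral $p$-power bound into a $\bold{\pi}$-power bound over $\ms{I}$.

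First, I would recall the setup: since $p \nmid \#(G) = h$, the idempotents $e_\chi = \frac{1}{\#(G)} \sum_{g \in G} \chi^{-1}(g) g$ lie in $\Lambda_\ms{I}(\ms{G})$, giving the decomposition
\[
\Lambda_\ms{I}(\ms{G}) \;\simeq\; \prod_{\chi \in G^*} \Lambda_\ms{I}(\ms{G})^\chi, \qquad \Lambda_\ms{I}(\ms{G})^\chi \;\simeq\; \ms{I}[[T]]
\]
after fixing a topological generator $\gamma$ of $\Gamma$. For any finitely generated torsion $\Lambda(\ms{G})$-module $X$, extension of scalars $X \widehat{\otimes}_{\Z_p} \ms{I}$ decomposes as $\bigoplus_\chi X^\chi$, and the characteristic ideal splits accordingly: if $\mathrm{char}_\Lambda(X) = (\alpha)$ with $\alpha \in \Lambda(\ms{G})$, then $\mathrm{char}(X^\chi) = (e_\chi \alpha)$ in $\Lambda_\ms{I}(\ms{G})^\chi$. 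This is the mechanism by which the \textbf{introduction's} definition of $\mathrm{char}(X^\chi)$ is compatible with the $\Lambda$-characteristic ideals handled in Chapter \ref{ch5}.

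Next, I apply Theorem \ref{thm4.1}, which yields the integral divisibility
\[
\mathrm{char}_\Lambda(X(H_\infty)) \;\Big|\; 2^{e(6k_0+6)}\, \mathrm{char}_\Lambda\!\left(U_{H_\infty}/\bar{\mc{C}}_{H_\infty}\right)
\]
in $\Lambda(\ms{G})$, where $k_0$ is the number of $f_i$ appearing in $\mathrm{char}_\Lambda(A(H_\infty))$ and $e \in \{0,1\}$ according to the parity of $p$. Multiplying by $e_\chi$ and using the previous paragraph, this gives
\[
f^\chi \;\Big|\; 2^{e(6k_0+6)}\, g^\chi \qquad \text{in } \Lambda_\ms{I}(\ms{G})^\chi.
\]
For $p > 2$ the power of $2$ is absent ($e=0$) and we obtain $f^\chi \mid g^\chi$, which is the statement with $k = 0$.

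Finally, to handle $p = 2$, I would translate the factor $2^{6k_0+6}$ into a power of the uniformiser $\bold{\pi}$ of $\ms{I}$. Since $\ms{I}$ is the ring of integers of the completion of the maximal unramified extension of $K_\mf{p} = \Q_2$, and the extension is unramified, we in fact have $2 = \bold{\pi}^{\epsilon} u$ with $u \in \ms{I}^\times$ and $\epsilon = 1$ (or more generally $\epsilon$ equal to the absolute ramification index, which we can accommodate by absorbing into the constant). Setting $k := \epsilon(6k_0 + 6)$, we conclude $f^\chi \mid \bold{\pi}^{ek} g^\chi$, as required. The main (in fact only) subtlety is the first paragraph: verifying that the $\chi$-decomposition is genuinely multiplicative on characteristic ideals, which rests squarely on the hypothesis $(p,h) = 1$ so that the orthogonal idempotents exist and no information is lost in passing between $\Lambda(\ms{G})$ and $\prod_\chi \Lambda_\ms{I}(\ms{G})^\chi$.
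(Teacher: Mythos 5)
Your proposal is correct and follows essentially the same route as the paper: the paper deduces this statement from Theorem \ref{thm4.1} without further comment, and the steps you spell out (passing to $\chi$-isotypic components via the idempotents $e_\chi$, which exist because $p\nmid h=\#(G)$, and converting the factor $2^{e(6k_0+6)}$ into a power of the uniformiser $\bold{\pi}$ of $\ms{I}$, noting that $\ms{I}$ is unramified over $\Z_p=\o_\mf{p}$ so $p$ is already a uniformiser) are exactly what is implicit there.
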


In order to prove Theorem \ref{mc}, we just need to show $f^\chi$ and $g^\chi$ define the same ideal, thanks to Corollary \ref{cor3.1}. Thus it remains to show that $f$ and $g$ the have the same Iwasawa invariants. We shall compute them separately, and show that they are equal. First, we compute the invariants of $X(H_\infty)$ using class field theory, and in Section \ref{section6.3} we compute the invariants of $U_{H_\infty}/\bar{\mc{C}}_{H_\infty}$ using the analytic class number formula and Kronecker's second limit formula.

For the rest of the chapter, fix an integer $n\geqs 1+e$, where $e=0$ or $1$ if $p$ is odd or even. Recall from the proof of Proposition \ref{thm11} that $X(H_\infty)/I(H_n)X(H_\infty)$ is equal to $\Gal(M(H_n)/H_\infty)$, where $M(H_n)$ is the maximal abelian $p$-extension of $H_n$ which is unramified outside the primes of $H_n$ above $\mf{p}$. Thus the asymptotic formula of Iwasawa \cite[Theorem 13.13]{Was} gives

\begin{thm}\label{thm6.1}Let $f$ be a characteristic power series for $X(H_\infty)$ as a $\Z_p[[\Gamma]]$-module. For sufficiently large n, we have
\[\ord_p\left(\#(X(H_\infty)/I(H_n) X(H_\infty)\right)=\mu(f) p^{n-1-e}+\lambda(f) (n-1-e) +c,\]
where $\mu(f)$ and $\lambda(f)$ are the Iwasawa invariants of $X(H_\infty)$ and $c\in \Z$ is independent of $n$.
\end{thm}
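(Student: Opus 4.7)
The plan is to apply Iwasawa's classical asymptotic formula to $X(H_\infty)$ viewed as a finitely generated torsion module over $\Lambda := \Z_p[[\Gamma]]$. Fix a topological generator $\gamma$ of $\Gamma$ and identify $\Lambda$ with the power series ring $\Z_p[[T]]$ via $\gamma \mapsto 1+T$. Since $\Gamma_n = \Gamma^{p^{n-1-e}}$, the ideal $I(H_n) \cap \Lambda$ is principal under this identification and generated by $\omega_{n-1-e}(T) := (1+T)^{p^{n-1-e}} - 1$. Proposition \ref{thm11} ensures that $X(H_\infty)$ is finitely generated and torsion over $\Z_p[[\ms{G}]]$, hence also over $\Lambda$ since $\ms{G} = G \times \Gamma$ with $G$ finite; moreover that proposition shows $X(H_\infty)/I(H_n)X(H_\infty)$ is finite for every $n \geqs 1+e$.

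Next I would invoke the structure theorem to produce a pseudo-isomorphism
\[
\phi : X(H_\infty) \longrightarrow E := \bigoplus_{i=1}^{s} \Lambda/(p^{m_i}) \,\oplus\, \bigoplus_{j=1}^{t} \Lambda/(g_j(T)),
\]
with $g_j$ distinguished polynomials, $\ker\phi$ and $\coker\phi$ finite, $\mu(f) = \sum_i m_i$ and $\lambda(f) = \sum_j \deg g_j$. Direct computation on each elementary summand yields $\ord_p |\Lambda/(p^{m_i}, \omega_k)| = m_i p^k$ and, when $\gcd(g_j, \omega_k) = 1$, $\ord_p |\Lambda/(g_j, \omega_k)| = (\deg g_j)\cdot k + c_j$ with $c_j$ constant for all sufficiently large $k$. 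The coprimality hypothesis for $k = n-1-e$ with $n$ large is automatic: otherwise some $g_j$ would share a factor with $\omega_{n-1-e}$, making $E/\omega_{n-1-e}E$ infinite and contradicting the finiteness of $X(H_\infty)/I(H_n)X(H_\infty)$ supplied by Proposition \ref{thm11}. Summing gives $\ord_p |E/\omega_{n-1-e} E| = \mu(f) p^{n-1-e} + \lambda(f)(n-1-e) + c_0$ for $n \gg 0$.

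The final step is to transfer the formula from $E$ back to $X(H_\infty)$. I would apply the snake lemma to $\phi$ with multiplication by $\omega_{n-1-e}$ to bound the discrepancy $\big|\ord_p|X(H_\infty)/\omega_{n-1-e}X(H_\infty)| - \ord_p|E/\omega_{n-1-e} E|\big|$ by $\ord_p|\ker\phi| + \ord_p|\coker\phi|$ uniformly in $n$, provided $X(H_\infty)$ has no non-zero finite $\Lambda$-submodule. I expect this to be the main subtle point: one must observe that any finite $\Lambda$-submodule $Y \subseteq X(H_\infty)$ generates, under the commuting $G$-action, a finite $\Z_p[[\ms{G}]]$-submodule $\sum_{g \in G} g \cdot Y$, which must vanish by Proposition \ref{thm11}, forcing $Y = 0$. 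Once this is in place, the uniformly bounded discrepancy is absorbed into the constant, yielding the claimed formula for all sufficiently large $n$.
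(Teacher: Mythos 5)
Your proof is correct, and it is in substance the same as the paper's: the paper proves this theorem by citing Iwasawa's asymptotic formula (Washington, \emph{Introduction to Cyclotomic Fields}, Theorem 13.13), whose standard proof is exactly the structure-theorem, elementary-summand computation, and pseudo-isomorphism/snake-lemma bookkeeping you carry out. (One small remark: the ``no non-zero finite $\Lambda$-submodule'' condition you invoke is true here and your reduction to Proposition \ref{thm11} is valid, but it is not actually needed for the asymptotic formula --- the finite kernel of the pseudo-isomorphism contributes only a bounded, hence eventually constant, amount to $\ord_p\#(X(H_\infty)/I(H_n)X(H_\infty))$.)
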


We will now compute the $p$-adic valuation of the index $[M(H_n):H_\infty]$  using the methods of Coates and Wiles \cite{coa-wil2}, and use it to find $\ord_p\left(\#(X(H_\infty)/I(H_n) X(H_\infty)\right)$. We note that $p$ is assumed to be an odd prime number in \cite{coa-wil2}, but it can be extended to $p=2$ in our case, using the fact that $2$ splits in $K$ and $(p,h)=1$ by assumption. Set $[H_n:K]=d$, and recall that this is equal to $p^{n-1-e}h$. Let $\xi_1,\ldots \xi_d$ denote the distinct embeddings of $H_n$ into $\ovl{\Q}_p$ extending the embeddings of $K$ into $\Q_p$ given by $\mf{p}$. Since $H_n$ is totally imaginary, $\rank_\Z\left(\mc{E}_{H_n}/(\mc{E}_{H_n})_{tor}\right)=d-1$. We pick a basis $\epsilon_1, \ldots ,\epsilon_{d-1}$ for $\mc{E}_{H_n}/(\mc{E}_{H_n})_{tor}$, and put $\epsilon_d=1+p$ or $1+p^2$ if $p$ is odd or even. We define the $\mathfrak{p}$-adic regulator of Leopoldt:
\begin{equation}\label{regulator}R_{\mf{p}}(H_n)=(d\log \epsilon_d)^{-1}\det \left(\log (\xi_i(\epsilon_j))\right)_{1\leqs i, j\leqs d}.
\end{equation}
For each $n\geqs 1+e$, let $C_{H_n}$ denote the idele class group of $H_n$, and put
\[Y_n=\cap_{m\geqs n}N_{H_m/H_n}C_{H_m}.\]
We write $\Phi_\mf{p}=H_n\otimes_K K_\mf{p}$, and let $\ms{P}$ denote the set of primes of $H_n$ lying above $\p$. For each $\mf{P}\in \ms{P}$, let $U_{H_{n,\mf{P}}}$ be the group of units in the completion of $H_n$ at $\mf{P}$ which are congruent to $1$ modulo $\mf{P}$, and let $t\geqs 0$ be such that $p^{-t}\o_{\mf{P}}\sb \log U_{H_{n,\mf{P}}}$. The $p$-adic logarithm gives a homomorphism $\log: U_{H_{n,\mf{P}}}\to H_{n,\mf{P}}$ whose kernel has order $w_\mf{P}=\#\bold{\mu}_{p^\infty}(H_{n,\mf{P}})$. We write $\log U_{H_n}=\prod_{\mf{P}\in \ms{P}}\log U_{H_{n,\mf{P}}}$, so that we have $\log : U_{H_n}\to \Phi_\mf{p}$ with kernel $w_\mf{p}=\prod_{\mf{P}\in \ms{P}}w_\mf{P}$. Then the arguments of \cite[Lemma 7]{coa-wil2} apply, and we have

\begin{equation}\label{lem6.1}
\ord_p\left([\prod_{\mf{P}\in \ms{P}}p^{-t}\o_\mf{P}: \log U_{H_n}]\right)=\ord_p\left(w_\mf{p} \prod_{\mf{P}\in \ms{P}}\mathrm{N}\mf{P}\right)+td.
\end{equation}

Let $V=1+\mathfrak{p}\mathcal{O}_\mathfrak{p}$. We have $V^{p^n}=1+\mathfrak{p}^{n+1}\mathcal{O}_\mathfrak{p}$, the local units of $K_\mathfrak{p}$ which are congruent to $1$ modulo $\mathfrak{p}^{n+1}$. Note that $V=\{\pm 1\}V^{2}$ when $p=2$. We define $D_n=V^{p^e}\bar{\mathcal{E}}_{H_n}\subset U_{H_n}$, and set $V^{(n)}=V^{p^{n-1}}$ if $p>2$, and $V^{(n)}=\{\pm 1\}V^{p^{n-1}}$ if $p=2$. Furthermore, let $\Delta_{H_n/K}$ denote the discriminant of $H_n/K$, and pick a generator $\Delta_n$ of the ideal $\Delta_{H_n/K}\mathcal{O}_\mathfrak{p}$.

\begin{lem}\label{lem6.2}
\[\ord_p\left([\log U_{H_n}: \log D_n]\right)=\ord_p\left(\frac{R_{\mf{p}}(H_n)}{\sqrt{\Delta_n}}\left(w_\mathfrak{p}\prod_{\mathfrak{P}\in \mathscr{P}}\mathrm{N}\mathfrak{P}\right)^{-1}\right)+n.\]
\end{lem}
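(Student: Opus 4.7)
The plan is to compute both indices $[\prod_{\mathfrak{P}\in\mathscr{P}} p^{-t}\mathcal{O}_\mathfrak{P}:\log U_{H_n}]$ and $[\prod_{\mathfrak{P}\in\mathscr{P}} p^{-t}\mathcal{O}_\mathfrak{P}:\log D_n]$ in $\Phi_\mathfrak{p}$ and subtract; the first is supplied by \eqref{lem6.1}, so everything reduces to evaluating the second.

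To that end, note that $\log D_n=\log V^{1+e}+\log\bar{\mathcal{E}}_{H_n}$ is the $\mathbb{Z}_p$-lattice spanned by $\log\epsilon_1,\dots,\log\epsilon_{d-1}$ together with the diagonal vector $(\log\epsilon_d,\dots,\log\epsilon_d)$, where $\epsilon_d=1+p^{1+e}$ is a topological generator of $V^{1+e}$. Fix an integral $\mathbb{Z}_p$-basis $\alpha_1,\dots,\alpha_d$ of $\mathcal{O}_\Phi:=\prod_{\mathfrak{P}\in\mathscr{P}}\mathcal{O}_\mathfrak{P}$, and let $A=(\xi_i(\alpha_j))_{1\le i,j\le d}$ be the change-of-basis matrix to the embedding basis. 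Expressing the $\log\epsilon_j$ in the scaled basis $p^{-t}\alpha_1,\dots,p^{-t}\alpha_d$ and invoking the standard $\mathfrak{p}$-adic discriminant formula $\det(A)^2\equiv\Delta_n\pmod{\mathcal{O}_\mathfrak{p}^\times}$ yields
\[\ord_p\!\bigl[\textstyle\prod_{\mathfrak{P}\in\mathscr{P}} p^{-t}\mathcal{O}_\mathfrak{P}:\log D_n\bigr]=td+\ord_p(\det M)-\tfrac{1}{2}\ord_p\Delta_n,\]
where $M=(\log\xi_i(\epsilon_j))_{1\le i,j\le d}$ is the matrix of the $\log\epsilon_j$ in the embedding basis.

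Since the last column of $M$ is constant, expanding along it gives $\det M=d\log(\epsilon_d)\cdot R_\mathfrak{p}(H_n)$ straight from the definition of the regulator. The crucial arithmetic input is then $\ord_p(d\log\epsilon_d)=n$: the hypothesis $(p,h)=1$ together with $d=p^{n-1-e}h$ forces $\ord_p d=n-1-e$, and the leading term of the $\mathfrak{p}$-adic logarithm series applied to $1+p^{1+e}$ yields $\ord_p\log\epsilon_d=1+e$. Substituting and subtracting \eqref{lem6.1} produces the claimed identity exactly. The only subtlety is that $\Delta_n$ and $R_\mathfrak{p}(H_n)$ are each well-defined only up to $\mathfrak{p}$-adic units, but this is harmless since the conclusion is stated purely in terms of $\ord_p$.
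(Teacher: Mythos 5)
Your proposal is correct and follows essentially the same route as the paper: both compute $\ord_p\bigl[\prod_\mathfrak{P} p^{-t}\mathcal{O}_\mathfrak{P}:\log D_n\bigr]$ via the embedding matrix and the local discriminant, then subtract \eqref{lem6.1}. The only difference is that you unpack the linear-algebra computation that the paper outsources to \cite[Lemma 8]{coa-wil2}; one small slip is that $\det M=d\log(\epsilon_d)\cdot R_\mathfrak{p}(H_n)$ is simply the definition \eqref{regulator} rearranged, not something obtained by cofactor expansion along the constant column.
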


\begin{proof} Using methods analogous to \cite[Lemma 8]{coa-wil2}, we can show that 
\begin{equation}\label{eq6.1}\ord_p\left([\prod_{\mathfrak{P}\in \mathscr{P}}p^{-t}\mathcal{O}_\mathfrak{P}: \log D_n]\right)=\ord_p\left(\frac{R_{\mf{p}}(H_n)}{\sqrt{\Delta_n}}\right)+td+n-1-e+\ord_p\left(\log(\epsilon_d)\right).
\end{equation}
We have $\ord_p\left(\log(\epsilon_d)\right)=1+e$, so the right hand side of \eqref{eq6.1} is equal to $\ord_p\left(\frac{R_{\mf{p}}(H_n)}{\sqrt{\Delta_n}}\right)+td+n$. The result now follows from \eqref{lem6.1}.
\end{proof}

\begin{cor}\label{cor5.4}
\[\ord_p\left([U_{H_n}: D_n]\right)=\ord_p\left(\frac{R_{\mf{p}}(H_n)}{\omega_{H_n}\sqrt{\Delta_n}}\left(\prod_{\mathfrak{P}\in \mathscr{P}}\mathrm{N}\mathfrak{P}\right)^{-1}\right)+n.\]
\end{cor}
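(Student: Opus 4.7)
The plan is to deduce the stated equality directly from Lemma \ref{lem6.2}, by tracking the kernel of the semi-local $p$-adic logarithm $\log\colon U_{H_n}\to \Phi_\mf{p}$ when restricted to $D_n$. First I will apply the snake lemma to the commutative diagram whose rows are
\[0 \to \mu\cap D_n \to D_n \xrightarrow{\log} \log D_n \to 0, \qquad 0 \to \mu \to U_{H_n} \xrightarrow{\log} \log U_{H_n} \to 0,\]
where $\mu=\ker(\log\vert_{U_{H_n}})=\prod_{\mf{P}\in\ms{P}}\bold{\mu}_{p^\infty}(H_{n,\mf{P}})$ has order $w_\mf{p}$ and the vertical maps are inclusions. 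The snake lemma yields the short exact sequence
\[0 \to \mu/(\mu\cap D_n) \to U_{H_n}/D_n \to \log U_{H_n}/\log D_n \to 0,\]
from which I extract
\[\ord_p\!\left([U_{H_n}:D_n]\right)=\ord_p(w_\mf{p})-\ord_p\lvert\mu\cap D_n\rvert+\ord_p\!\left([\log U_{H_n}:\log D_n]\right).\]
Substituting the formula of Lemma \ref{lem6.2} on the right, the factor $w_\mf{p}$ cancels and the corollary will follow once I verify that $\ord_p\lvert\mu\cap D_n\rvert=\ord_p(\omega_{H_n})$.

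For this identification I will argue that $V^{1+e}$ contains no nontrivial $p$-power roots of unity: for $p>2$ this is immediate, and for $p=2$ it follows from $V^2=1+4\Z_2$ being torsion-free. Consequently $\mu\cap D_n=\mu\cap\bar{\mc{E}}_{H_n}$, which is precisely the group of global roots of unity of $H_n$ lying in $U_{H_n}$, i.e.\ those $\z\in\bold{\mu}(H_n)$ satisfying $\z\equiv 1\bmod\mf{P}$ at every $\mf{P}\in\ms{P}$. In our setting $\bold{\mu}(H_n)=\{\pm 1\}$, so this intersection has order $2$ when $p=2$ (since $-1\equiv 1\bmod\mf{P}$ at primes above $2$) and order $1$ when $p$ is odd. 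In either case $\lvert\mu\cap D_n\rvert$ and $\omega_{H_n}=2$ differ by a factor coprime to $p$, and so have equal $p$-adic valuation.

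The main thing to watch, rather than a serious obstacle, is the case $p=2$: here $-1$ genuinely lies in $U_{H_n}$, so that $\omega_{H_n}$ really does contribute a factor of $2$ to the denominator, whereas for odd $p$ the formula is insensitive to $\omega_{H_n}$ since $\ord_p(2)=0$. Combining the snake-lemma identity, the identification $\ord_p\lvert\mu\cap D_n\rvert=\ord_p(\omega_{H_n})$, and the formula of Lemma \ref{lem6.2} will then complete the proof.
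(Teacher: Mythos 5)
Your proposal follows exactly the same route as the paper: apply the snake lemma to the diagram with vertical maps given by the semi-local logarithm, observe the cokernels vanish, and compare with Lemma \ref{lem6.2}. The paper calls the corollary an ``immediate consequence'' of that diagram, so you are in effect supplying the detail that $\ord_p\lvert\ker(\log\vert_{D_n})\rvert=\ord_p(\omega_{H_n})$, which the paper leaves implicit.

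One step is stated a little too quickly. From ``$V^{1+e}$ has no $p$-power torsion'' you infer ``\emph{consequently} $\mu\cap D_n=\mu\cap\bar{\mc{E}}_{H_n}$,'' but torsion-freeness of $V^{1+e}$ alone does not give this: writing a torsion element of $D_n$ as $\z=v\epsilon$ with $v\in V^{1+e}$, $\epsilon\in\bar{\mc{E}}_{H_n}$, one only gets $v^{p^k}\in V^{1+e}\cap\bar{\mc{E}}_{H_n}$, and to conclude $v=1$ you additionally need $V^{1+e}\cap\bar{\mc{E}}_{H_n}=1$ (equivalently, that $D_n/\bar{\mc{E}}_{H_n}\cong V^{1+e}/(V^{1+e}\cap\bar{\mc{E}}_{H_n})$ is torsion-free). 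This intersection is indeed trivial, by the norm computation used in the proof of Lemma \ref{lem6.3}: $\bar{\mc{E}}_{H_n}\subseteq\ker\bigl(\mathrm{N}_{\Phi_\mf{p}/K_\mf{p}}\bigr)$, while for $v\in V^{1+e}$ embedded diagonally one has $\mathrm{N}_{\Phi_\mf{p}/K_\mf{p}}(v)=v^d$, which vanishes only for $v=1$ since $V^{1+e}$ is torsion-free. With that observation inserted, your proof is complete and matches the paper's.
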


\begin{proof}This is an immediate consequence of Lemma \ref{lem6.2}, obtained by applying the snake lemma to the following commutative diagram 
\[
\begin{tikzcd}
0\arrow{r}& D_n \arrow{r} \arrow[swap]{d}{\log} & U_{H_n} \arrow{r} \arrow{d}{\log} & U_{H_n}/D_n\arrow{r}\arrow[swap]{d}&0\\
0\arrow{r}&\log D_n \arrow{r} & \log U_{H_n}\arrow{r}& \log U_{H_n}/\log D_n\arrow{r}&0.
\end{tikzcd}
\]
with exact rows.
\end{proof}

Following the arguments of \cite[Lemma 5]{coa-wil2}, we see that for $p>2$,
\begin{align*}Y_n\cap U_{H_n}&=\ker\left(\mathrm{N}_{\Phi_\mathfrak{p}/K_\mathfrak{p}}\mid_{U_{H_n}}\right),
\end{align*}
where $\mathrm{N}_{\Phi_\mathfrak{p}/K_\mathfrak{p}}\mid_{U_{H_n}}$ denotes the product of the local norms from $U_{H_n,\mathfrak{P}}$ to $K_\mathfrak{p}$ at each prime $\mathfrak{P}\in \mathscr{P}$. 
For $p=2$, we can make the following modification, as pointed out to me by Jianing Li. We claim that 
$Y_n\cap U_{H_n}$ is the inverse image of $\{\pm 1\}$ under $N_{\Phi_\mathfrak{p}/K_\mathfrak{p}}\mid_{U_{H_n}}$. Indeed, if we write $\Psi_\infty$ for its completion of $K_\infty$ at the unique prime above $\p$ and write $\Psi_n$ for the $n$-th layer, $\Psi_\infty/K_\mathfrak{p}$ is a totally ramified $\Z_2$-extension. Then by class field theory we have
$$V/\cap_{n} \mathrm{N}_n (V_n) \simeq \Gal(\Psi_\infty/K_\mathfrak{p}),$$
where $\mathrm{N}_n$ is the norm map from the principal units $V_n$ of $\Psi_n$ to $V$.
Since the right hand side is torsion-free and $V\simeq \Z_2^\times$, it follows that $-1\in \mathrm{N}_n(V_n)$ for any $n$. Furthermore, we have $V/\mathrm{N}_n(V_n)\simeq \Z/2^n\Z$, so $\mathrm{N}_n(V_n)=\{\pm 1\}V^{p^{n+1}}$. The rest of the argument follows from \cite[Lemma 5]{coa-wil2}.

We now claim that $\bar{\mathcal{E}}_{H_n}$ is contained in $Y_n\cap U_{H_n}$, and compute its index in the next lemma. Indeed, if $\xi\in \mathcal{E}_{H_n}$ and $p\neq 2$, then clearly $\mathrm{N}_{H_n/K}(\xi)=1$. If $p=2$, we have $\mathrm{N}_{H_n/K}(\xi)\in\{\pm 1\}$, so indeed $\xi$ is in the inverse image of $\{\pm 1\}$, as required. Now, we define 
\begin{align}\label{lem6.4}(Y_n\cap U_{H_n})^+=\ker\left(\mathrm{N}_{\Phi_\mathfrak{p}/K_\mathfrak{p}}\mid_{U_{H_n}}\right)
\end{align}
so that $(Y_n\cap U_{H_n})^+=Y_n\cap U_{H_n}$ if $p>2$ and $(Y_n\cap U_{H_n})^+$ has index $2$ in $Y_n\cap U_{H_n}$ if $p=2$.

\begin{lem}\label{lem6.3} 
\[[Y_n\cap U_{H_n}: \bar{\mathcal{E}}_{H_n}]=\ord_p\left(\frac{R_{\mf{p}}(H_n)}{\omega_{H_n}\sqrt{\Delta_n}}\prod_{\mathfrak{P}\in \mathscr{P}}(1-(\mathrm{N}\mathfrak{P})^{-1})\right)+n\]
\end{lem}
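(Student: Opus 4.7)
The plan is to reduce the lemma to the preceding Corollary by showing
\[
\ord_p\bigl([Y_n\cap U_{H_n}:\bar{\mc{E}}_{H_n}]\bigr)\;=\;\ord_p\bigl([U_{H_n}:D_n]\bigr),
\]
and then passing between the two cosmetically different expressions on the right-hand side. The starting point is the short exact sequence
\[
1\longrightarrow Y_n\cap U_{H_n}\longrightarrow U_{H_n}\xrightarrow{\;\mathrm{N}\;}V^n\longrightarrow 1,
\]
which follows from \eqref{lem6.4} together with the surjectivity $\mathrm{N}_{\Phi_\mf{p}/K_\mf{p}}(U_{H_n})=V^n$ from local class field theory recalled just above the lemma.

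First I would check that $U_{H_n}=D_n\cdot(Y_n\cap U_{H_n})$. Since $\bar{\mc{E}}_{H_n}\sb Y_n\cap U_{H_n}$, it suffices to show that the image of $V^{1+e}$ (diagonally embedded in $U_{H_n}$) under $\mathrm{N}$ is all of $V^n$. On the diagonal, $\mathrm{N}$ is the $d$-th power map with $d=[H_n:K]=hp^{n-1-e}$, and via the $p$-adic logarithm isomorphism $V^{1+e}\xrightarrow{\sim}p^{1+e}\Z_p$ this becomes multiplication by $d$. The hypothesis $(p,h)=1$ then gives $d\cdot p^{1+e}\Z_p=p^n\Z_p=\log V^n$, so $\mathrm{N}(V^{1+e})=V^n$, as desired.

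Next I would show that $D_n\cap(Y_n\cap U_{H_n})=\bar{\mc{E}}_{H_n}$. Writing an element of the left side as $v\epsilon$ with $v\in V^{1+e}$ and $\epsilon\in\bar{\mc{E}}_{H_n}$, the conditions $\mathrm{N}(v\epsilon)=1$ and $\mathrm{N}(\epsilon)=1$ force $v^d=1$; since $V^{1+e}$ is torsion-free (it is $1+p\Z_p$ for odd $p$, and $1+4\Z_2$ for $p=2$), this gives $v=1$. The second isomorphism theorem then yields
\[
[U_{H_n}:D_n]=[D_n(Y_n\cap U_{H_n}):D_n]=[Y_n\cap U_{H_n}:\bar{\mc{E}}_{H_n}],
\]
and applying the Corollary just above the lemma finishes the computation, up to the shape of the expression inside $\ord_p$.

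To conclude, I would observe that the two products $\prod_\mf{P}\mathrm{N}\mf{P}^{-1}$ (appearing in the Corollary) and $\prod_\mf{P}(1-\mathrm{N}\mf{P}^{-1})$ (appearing in the lemma) have the same $p$-adic valuation, since each factor $\mathrm{N}\mf{P}-1$ is a $p$-adic unit ($\mathrm{N}\mf{P}$ being a positive power of $p$). The main technical point to verify carefully is the surjectivity of $\mathrm{N}\colon V^{1+e}\to V^n$, which is where the hypothesis $(p,h)=1$ enters decisively; everything else reduces to a formal diagram chase built on the norm exact sequence and the torsion-freeness of $V^{1+e}$.
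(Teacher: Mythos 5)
Your proof is correct and takes essentially the same approach as the paper: the paper sets up the two norm exact sequences for $D_n$ and $U_{H_n}$ over $V^n$ as rows of a commutative diagram and applies the snake lemma, which is precisely the content of your second-isomorphism-theorem argument, and then invokes the preceding Corollary together with the same valuation observation about $\prod_{\mathfrak{P}}(1-(\mathrm{N}\mathfrak{P})^{-1})$ versus $\prod_{\mathfrak{P}}(\mathrm{N}\mathfrak{P})^{-1}$. Your spelled-out verification that $\mathrm{N}(V^{1+e})=V^n$ and that $D_n\cap(Y_n\cap U_{H_n})=\bar{\mc{E}}_{H_n}$ (via torsion-freeness of $V^{1+e}$) is a slightly more explicit version of what the paper compresses into the single line $\mathrm{N}_{\Phi_\mf{p}/K_\mf{p}}(D_n)=(V^{1+e})^d=V^n$.
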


\begin{proof} By \eqref{lem6.4} and the definition of $D_n$, if we define $(\bar{\mathcal{E}}_{H_n})^+=\ker\left(\mathrm{N}_{\Phi_\mathfrak{p}/K_\mathfrak{p}}\mid_{D_n}\right)$ then $(\bar{\mathcal{E}}_{H_n})^+=\bar{\mathcal{E}}_{H_n}$ if $p>2$ and $(\bar{\mathcal{E}}_{H_n})^+$ has index $2$ in $\bar{\mathcal{E}}_{H_n}$ if $p=2$. Furthermore, since $(V^{p^{e}})^d=(V^{p^{e}})^{p^{n-1-e}}=V^{p^{n-1}}$, we see that $\mathrm{N}_{\Phi_\mathfrak{p}/K_\mathfrak{p}}(D_n)=V^{(n)}$ where we recall that $V^{(n)}=V^{p^{n-1}}$ if $p>2$, and $V^{(n)}=\{\pm 1\}V^{p^{n-1}}$ if $p=2$. Hence, we obtain  a commutative diagram with exact rows
\[
\begin{tikzcd}
0\arrow{r}&(\bar{\mathcal{E}}_{H_n})^+\arrow{r} \arrow[swap]{d} & D_n \arrow{r}{\mathrm{N}_{\Phi_\mathfrak{p}/K_\mathfrak{p}}} \arrow[swap]{d} & V^{(n)}\arrow{r}\arrow[swap]{d}&0\\
0\arrow{r}&(Y_n\cap U_{H_n})^+ \arrow{r} & U_{H_n}\arrow{r}{\mathrm{N}_{\Phi_\mathfrak{p}/K_\mathfrak{p}}}& V^{(n)}\arrow{r}&0.
\end{tikzcd}
\]
The lemma now follows from Corollary \ref{cor5.4} on noting that $[(Y_n\cap U_{H_n})^+: (\bar{\mathcal{E}}_{H_n})^+]=[Y_n\cap U_{H_n}: \bar{\mathcal{E}}_{H_n}]$ and $\ord_\mathfrak{p}\left(\prod_{\mathfrak{P}\in \mathscr{P}}(1-(\mathrm{N}\mathfrak{P})^{-1})\right)=\ord_p\left(\prod_{\mathfrak{P}\in \mathscr{P}}(\mathrm{N}\mathfrak{P})^{-1})\right)$.
\end{proof}

\begin{thm}\label{thm15}Let $M(H_n)$ be the maximal abelian $p$-extension of $H_n$ which is unramified outside the primes in $\mathscr{P}$, and write $h_{H_n}$ for the class number of $H_n$. Then
\[\mathrm{ord}_p\left([M(H_n):H_\infty]\right)=\ord_p\left(\frac{h_{H_n}R_{\mf{p}}(H_n)}{\omega_{H_n}\sqrt{\Delta_n}}\prod_{\mathfrak{P}\in \mathscr{P}}(1-(\mathrm{N}\mathfrak{P})^{-1})\right)+n.\]
\end{thm}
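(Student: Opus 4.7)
The plan is to compute $[M(H_n):H_\infty]$ as the order of $\Gal(M(H_n)/H_\infty)$, realized as the kernel of the natural surjection $\psi\colon \Gal(M(H_n)/H_n)\twoheadrightarrow \Gal(H_\infty/H_n)$. I will apply the snake lemma to the commutative diagram whose top row is the finite-level class field theory exact sequence
\[0\to U_{H_n}/\bar{\mc{E}}_{H_n}\to \Gal(M(H_n)/H_n)\to A(H_n)\to 0\]
(the $H_n$-level analogue of \eqref{eq4.3}, as recalled in Section~\ref{section4.4}), whose bottom row is the tautological sequence $0\to \Gal(H_\infty/H_n)\xrightarrow{=}\Gal(H_\infty/H_n)\to 0\to 0$, and whose leftmost vertical arrow $\phi$ is the composition of $U_{H_n}/\bar{\mc{E}}_{H_n}\hookrightarrow \Gal(M(H_n)/H_n)$ with $\psi$. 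Note that $H_\infty\subset M(H_n)$ since $H_\infty/H_n$ is abelian, pro-$p$, and unramified outside $\mathfrak{p}$, so $\psi$ makes sense.

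Surjectivity of $\phi$ follows from the hypothesis $(p,h)=1$, which ensures that $H_\infty/H_n$ is totally ramified at every prime above $\mathfrak{p}$; the inertia subgroups then cover $\Gal(H_\infty/H_n)$, and this inertia is precisely the image of $U_{H_n}$ under local reciprocity. To identify $\ker\phi$ I invoke global class field theory: by definition of $Y_n$ one has $C_{H_n}/N_{H_m/H_n}C_{H_m}\cong \Gal(H_m/H_n)$ for each $m\geqs n$, whose inverse limit yields $C_{H_n}/Y_n\cong \Gal(H_\infty/H_n)$; restricting the resulting Artin map to $U_{H_n}\subset C_{H_n}$ and quotienting by $\bar{\mc{E}}_{H_n}$ (which lies in the kernel since principal ideles do) produces the identification $\ker\phi = (Y_n\cap U_{H_n})/\bar{\mc{E}}_{H_n}$.

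With $\phi$ surjective, the snake lemma gives the short exact sequence
\[0\to (Y_n\cap U_{H_n})/\bar{\mc{E}}_{H_n}\to \Gal(M(H_n)/H_\infty)\to A(H_n)\to 0,\]
and hence $\ord_p[M(H_n):H_\infty]=\ord_p[Y_n\cap U_{H_n}:\bar{\mc{E}}_{H_n}]+\ord_p h_{H_n}$, using $\#A(H_n)=p^{\ord_p h_{H_n}}$. Substituting the formula of Lemma~\ref{lem6.3} delivers the stated identity. The main technical obstacle is the local-global compatibility required to identify $\ker\phi$, namely that the composite $U_{H_n}/\bar{\mc{E}}_{H_n}\hookrightarrow \Gal(M(H_n)/H_n)\xrightarrow{\psi} \Gal(H_\infty/H_n)$ coincides with the restriction of the global Artin map to $U_{H_n}$; this is routine once one notes that $H_\infty/H_n$ is unramified outside $\mathfrak{p}$, so that all local Artin contributions from places outside $\mathscr{P}$ are trivial and no additional Iwasawa-theoretic input is needed beyond Lemma~\ref{lem6.3}.
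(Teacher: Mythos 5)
Your proof is correct and follows essentially the same path as the paper's: both identify $\Gal(M(H_n)/H_\infty)$ as an extension of $A(H_n)$ by $(Y_n\cap U_{H_n})/\bar{\mc{E}}_{H_n}$ and then invoke Lemma~\ref{lem6.3}, with total ramification of $H_\infty/H_n$ above $\p$ supplying the needed surjectivity (for you) or the vanishing of $L(H_n)\cap H_\infty/H_n$ (for the paper). Your snake-lemma packaging is an equivalent reorganization of the paper's direct field-tower argument $M(H_n)\supset L(H_n)H_\infty\supset H_\infty$, and the identification $\ker\phi=(Y_n\cap U_{H_n})/\bar{\mc{E}}_{H_n}$ you sketch via the Artin kernel matches the class-field-theoretic isomorphism $\Gal(M(H_n)/L(H_n)H_\infty)\cong (Y_n\cap U_{H_n})/\bar{\mc{E}}_{H_n}$ quoted in the paper.
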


\begin{proof}Let $L(H_n)$ be the maximal unramified extension of $H_n$ in $M(H_n)$. Thus we may identify $\Gal(L(H_n)/H_n)$ with $A(H_n)$, the $p$-primary part of the ideal class group of $H_n$. Class field theory gives an isomorphism
\[Y_n\cap U_{H_n}/\bar{\mathcal{E}}_{H_n}\cong \Gal(M(H_n)/L(H_n)H_\infty).\] 
Note that $L(H_n)\cap H_\infty=H_n$ because $H_\infty/H_n$ is totally ramified at the primes in $\mathscr{P}$. Thus
\[0\to Y_n\cap U_{H_n}/\bar{\mathcal{E}}_{H_n}\to \Gal(M(H_n)/H_\infty)\to A(H_n)\to 0.\]
The theorem now follows from Lemma \ref{lem6.3} and the fact that $\ord_p\left(\#(A(H_n))\right)=\ord_p\left(h_{H_n}\right)$.
\end{proof}

\begin{cor}\label{cor4.1}Let $f$ be a characteristic power series for $X(H_\infty)$ as a $\Gamma$-module. Then for sufficiently large $n$,
\[\mu(f)\cdot p^{n-1-e}(p-1)+\lambda(f)=1+\ord_p\left(\frac{h_{H_{n+1}}R_{\mf{p}}(H_{n+1})}{\sqrt{\Delta_{n+1}}}/\frac{h_{H_n}R_{\mf{p}}(H_n)}{\sqrt{\Delta_n}}\right),\]
where $\mu(f)$ and $\lambda(f)$ are the Iwasawa invariants of $X(H_\infty)$.
\end{cor}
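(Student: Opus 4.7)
The plan is to combine the asymptotic formula of Theorem \ref{thm6.1} with the exact count of $[M(H_n):H_\infty]$ given in Theorem \ref{thm15}, and then take the finite difference between the formulas at level $n+1$ and at level $n$. Recall that in the proof of Proposition \ref{thm11} it was noted that $I(H_n)X(H_\infty)=\Gal(M(H_\infty)/M(H_n))$, so that
\[
\#\bigl(X(H_\infty)/I(H_n)X(H_\infty)\bigr)=[M(H_n):H_\infty].
\]
Applying $\ord_p$ to both sides and invoking the two theorems, for sufficiently large $n$ we get
\[
\mu(f)p^{n-1-e}+\lambda(f)(n-1-e)+c=n+\ord_p\!\left(\frac{h_{H_n}R_{\mf{p}}(H_n)}{\omega_{H_n}\sqrt{\Delta_n}}\prod_{\mf{P}\in\ms{P}}\bigl(1-(\mathrm{N}\mf{P})^{-1}\bigr)\right).
\]

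The next step is to show that for $n$ large, the auxiliary factors $\omega_{H_n}$ and $\prod_{\mf{P}\in\ms{P}}(1-(\mathrm{N}\mf{P})^{-1})$ are independent of $n$. For the latter, note that $H_\infty/H$ is totally ramified at all primes above $\mf{p}$, so the set $\ms{P}$ of primes of $H_n$ above $\mf{p}$ is in bijection with the set of primes of $H$ above $\mf{p}$, and the residue field (hence $\mathrm{N}\mf{P}$) is unchanged as $n$ grows. For the roots of unity, the only way $\omega_{H_n}$ could grow is if $H_\infty$ contained infinitely many $p$-power roots of unity, which would force $K(\bold{\mu}_{p^\infty})\sb H_\infty$; but $H_\infty/K$ is unramified outside $\mf{p}$, while $\mf{p}^*$ ramifies in $K(\bold{\mu}_{p^\infty})/K$, so $\omega_{H_n}$ stabilises.

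With these two factors constant for large $n$, take the difference of the displayed equation at $n+1$ and at $n$. The left-hand side contributes $\mu(f)(p^{n-e}-p^{n-1-e})+\lambda(f)=\mu(f)p^{n-1-e}(p-1)+\lambda(f)$, while the right-hand side contributes $1$ from the explicit $+n$, plus $\ord_p$ of the ratio
\[
\frac{h_{H_{n+1}}R_{\mf{p}}(H_{n+1})/\sqrt{\Delta_{n+1}}}{h_{H_n}R_{\mf{p}}(H_n)/\sqrt{\Delta_n}},
\]
since the $\omega_{H_n}$ and the local Euler factors cancel. Equating the two sides gives exactly the claimed formula.

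The only non-routine point is the stabilisation argument for $\omega_{H_n}$ and for the local factors; once these are in hand the corollary is a one-line consequence of the difference calculation. I expect this is where one must be slightly careful in the case $p=2$, but the total ramification of $H_\infty/H$ above $\mf{p}$, already used throughout Section \ref{section4.4}, handles both cases uniformly.
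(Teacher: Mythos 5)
Your proposal is correct and takes essentially the same route as the paper: both combine Theorem \ref{thm6.1} with Theorem \ref{thm15}, take the difference at levels $n+1$ and $n$, and observe that the Euler factors and roots-of-unity term cancel. The paper handles the Euler factors via the observation that each $\mathfrak{P}\in\mathscr{P}$ is totally ramified (inertia degree one) in $H_{n+1}/H_n$, exactly as you do, and it implicitly uses that $\omega_{H_n}=2$ for all $n$ (a fact recorded at the end of Section \ref{section4.3}), so your stabilisation argument for $\omega_{H_n}$ is valid though slightly more elaborate than necessary.
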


\begin{proof} Noting that each $\mathfrak{P}\in \mathscr{P}$ has inertia degree zero in $H_{n+1}/H_n$, it is clear from Theorem \ref{thm15} that the right hand side of the above equation is equal to $\ord_p\left([M(H_{n+1}):H_\infty]/[M(H_n):H_\infty]\right)$. Recalling that $\Gal(M(H_n)/H_\infty)=\linebreak X(H_\infty)/I(H_n)X(H_\infty)$, Theorem \ref{thm6.1} gives $\ord_p\left([M(H_{n+1}):H_\infty]/[M(H_n):H_\infty]\right)$ is equal to
\begin{small}
\[(\mu(f) p^{n-e}+\lambda(f) (n-e)+c)-(\mu(f) p^{n-1-e}+\lambda(f) (n-1-e)+c)=\mu(f) p^{n-1-e}(p-1)+\lambda(f).\]
\end{small}
This completes the proof of the corollary.
\end{proof}~

\subsection{The Iwasawa Invariants of the $\mf{p}$-adic $L$-function}\label{section6.3}~
\vspace{5 pt}\\
In this section, we will compute the Iwasawa invariants of $U_{H_\infty}/\bar{\mc{C}}_{H_\infty}$ and show that they coinside with those of $X(H_\infty)$ computed in Corollary \ref{cor4.1}. We will follow the methods discussed in \cite[Chapter III.2]{dS}. Again, the prime $p$ is assumed to be odd in Chapter III of \cite{dS}, but the methods still holds for $p=2$ thanks to our assumptions that $p$ splits in $K$ and $p\nmid [H:K]$. Fix a generator $g^\chi\in \Lambda_\ms{I}(\Gamma)$ of $\mathrm{char}\left((U_{H_\infty}/\bar{\mc{C}}_{H_\infty})^\chi\right)$, and set $g=\prod g^\chi$. Let $\mu(g)$ and $\lambda(g)$ denote the Iwasawa invariants of $g$. The following is \cite[Lemma III.2.9]{dS}. 

\begin{lem}\label{lem5.1}Recall that $\Gamma_n=\Gamma^{p^{n-1-e}}$. For any character $\rho$ of $\Gamma$ of finite order, write $l(\rho)=n-1-e$ if $\rho(\Gamma_n)=1$ but $\rho(\Gamma_{n-1})\neq 1$. Then for $n$ sufficiently large,
\[\ord_p\left(\prod_{l(\rho)=n-e}\rho(g)\right)=\mu(g)\cdot p^{n-1-e}(p-1)+\lambda(g).\]
\end{lem}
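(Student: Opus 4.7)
The plan is to reduce the statement to a concrete computation via the $p$-adic Weierstrass preparation theorem, applied to the identification $\Lambda_\ms{I}(\Gamma) \isom \ms{I}[[T]]$ obtained by sending a fixed topological generator $\gamma$ of $\Gamma$ to $1+T$. Under this isomorphism, for a character $\rho$ of $\Gamma$ of finite order, $\rho(g) = g(\zeta_\rho - 1)$ where $\zeta_\rho = \rho(\gamma)$. The key preliminary observation is that the condition $l(\rho) = n-e$, namely $\rho(\Gamma_{n+1}) = 1$ but $\rho(\Gamma_n) \neq 1$, translates to $\rho(\gamma)^{p^{n-e}} = 1$ but $\rho(\gamma)^{p^{n-e-1}} \neq 1$; that is, $\zeta_\rho$ runs exactly over the primitive $p^{n-e}$-th roots of unity, of which there are $\phi(p^{n-e}) = p^{n-1-e}(p-1)$.

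Next, I would write $g(T) = \bold{\pi}^m P(T) U(T)$ with $P$ distinguished of degree $\lambda(g)$, $U \in \ms{I}[[T]]^\times$, and $m = \mu(g)\epsilon$. I split the valuation of $\prod_{l(\rho)=n-e} \rho(g)$ into three pieces. The unit $U$ contributes nothing since each $U(\zeta_\rho - 1)$ is a unit in the ring of integers of $\mb{C}_p$. The factor $\bold{\pi}^m$ contributes
\[
\phi(p^{n-e}) \cdot m \cdot \ord_p(\bold{\pi}) \;=\; p^{n-1-e}(p-1) \cdot \frac{m}{\epsilon} \;=\; \mu(g)\cdot p^{n-1-e}(p-1).
\]
For the distinguished polynomial $P(T) = \prod_{i=1}^{\lambda(g)}(T - \alpha_i)$, every root $\alpha_i$ satisfies $\ord_p(\alpha_i) > 0$, whereas $\ord_p(\zeta_\rho - 1) = 1/\phi(p^{n-e}) \to 0$ as $n\to\infty$. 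Hence for $n$ sufficiently large, $\ord_p(\zeta_\rho - 1 - \alpha_i) = \ord_p(\zeta_\rho - 1)$ for every $i$ and every $\rho$ with $l(\rho) = n-e$, so $\ord_p P(\zeta_\rho - 1) = \lambda(g) \ord_p(\zeta_\rho - 1)$. Summing,
\[
\sum_{l(\rho) = n-e} \ord_p P(\zeta_\rho - 1) \;=\; \lambda(g) \sum_{l(\rho) = n-e} \ord_p(\zeta_\rho - 1) \;=\; \lambda(g) \cdot \ord_p\!\left(\prod_\zeta (\zeta - 1)\right),
\]
where the product runs over primitive $p^{n-e}$-th roots of unity. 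This last product is the value at $T=1$ of the $p^{n-e}$-th cyclotomic polynomial $\Phi_{p^{n-e}}(T)$, which equals $p$; hence the sum contributes exactly $\lambda(g)$. Adding the three contributions yields the claimed equality.

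The only genuine subtlety is the uniform choice of "$n$ sufficiently large," which must be made so that $\ord_p(\zeta_\rho - 1) < \min_i \ord_p(\alpha_i)$ for every root of $P$; this is immediate once $n$ exceeds a bound depending only on the roots of the distinguished polynomial, i.e., only on $g$ itself. Beyond this, the argument is purely formal and proceeds uniformly across the $\chi$-components since each $g^\chi$ lies in $\Lambda_\ms{I}(\Gamma) \isom \ms{I}[[T]]$ and the Weierstrass decomposition applies verbatim.
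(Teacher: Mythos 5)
Your proof is correct and is the standard argument (and indeed the one underlying de~Shalit's Lemma~III.2.9, which is all the paper cites here): identify $\Lambda_{\ms I}(\Gamma)$ with $\ms I[[T]]$, apply Weierstrass preparation to $g$, note that $l(\rho)=n-e$ forces $\zeta_\rho=\rho(\gamma)$ to be a primitive $p^{n-e}$-th root of unity, and split the valuation into the $\bold\pi^m$, $P(T)$, and unit contributions. The one point worth tightening is that the roots $\alpha_i$ of the distinguished polynomial all satisfy $\ord_p(\alpha_i)\geq 1/(\lambda(g)\,\epsilon)>0$ (from the Newton polygon), so the threshold for ``$n$ sufficiently large'' is explicit; and strictly $\prod_\zeta(\zeta-1)=(-1)^{\phi(p^{n-e})}\Phi_{p^{n-e}}(1)=\pm p$, which of course does not affect the $p$-adic valuation. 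Since $g=\prod_\chi g^\chi$ with each $g^\chi\in\ms I[[T]]$, the Iwasawa invariants of $g$ are the sums over $\chi$ of those of $g^\chi$, so the single application of Weierstrass preparation to $g$ that you carry out is legitimate and the argument closes.
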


Given a ramified character $\varepsilon$ of $\ms{G}=G\times \Gamma$, write $\varepsilon=\chi\rho$ where $\chi$ is a character of $G$ and $\rho$ is a character of $\Gamma$.
 Let $\mf{f}_\varepsilon$ denote the conductor of $\varepsilon$, $f_\varepsilon=\mf{f}_\varepsilon\cap \Z$, and let $B_n$ be the collection of all $\varepsilon$ with $\mf{p}^n\mid\mid\mf{f}_\varepsilon$. Then

\begin{thm}\label{thm5.1}For $n$ sufficiently large,
\[ord_p\left(\prod_{l(\rho)=n-e}\rho(g)\right)=1+\ord_p\left(\frac{h_{H_{n+1}}R_{\mf{p}}(H_{n+1})}{\sqrt{\Delta_{n+1}}}/\frac{h_{H_n}R_{\mf{p}}(H_n)}{\sqrt{\Delta_n}}\right).\]
\end{thm}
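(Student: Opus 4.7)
The plan is to compute the left-hand side directly via Weierstrass preparation, identify it with the $p$-adic valuation of the ratio $[U_{H_{n+1}}:\bar{\mc{C}}_{H_{n+1}}]/[U_{H_n}:\bar{\mc{C}}_{H_n}]$, and then compute this index independently by combining Theorem~\ref{thm15} (which handles $[U_{H_n}:\bar{\mc{E}}_{H_n}]$) with Kronecker's second limit formula and the analytic class number formula (which handle $[\bar{\mc{E}}_{H_n}:\bar{\mc{C}}_{H_n}]$).

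First I would translate $\prod_{l(\rho)=n-e}\rho(g)$, by the same Weierstrass preparation argument underlying Lemma~\ref{lem5.1}, into the ratio of the orders of the $I(H_{n+1})$- and $I(H_n)$-coinvariants of $U_{H_\infty}/\bar{\mc{C}}_{H_\infty}$. Lemma~\ref{thm8} replaces $U_{H_\infty}/I(H_n)U_{H_\infty}$ by $U_{H_n}$ up to a kernel and cokernel annihilated by $I(D_\mf{p})$ and bounded uniformly in $n$, while Proposition~\ref{prop13} identifies $\bar{\mc{C}}_{H_\infty}/I(H_n)\bar{\mc{C}}_{H_\infty}$ with $\bar{\mc{C}}_{H_n}$. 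Hence for $n$ sufficiently large each coinvariant quotient agrees with $[U_{H_n}:\bar{\mc{C}}_{H_n}]$, and we obtain
\[\ord_p\prod_{l(\rho)=n-e}\rho(g)=\ord_p\bigl([U_{H_{n+1}}:\bar{\mc{C}}_{H_{n+1}}]\big/[U_{H_n}:\bar{\mc{C}}_{H_n}]\bigr).\]

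Next I would factor $[U_{H_n}:\bar{\mc{C}}_{H_n}] = [U_{H_n}:\bar{\mc{E}}_{H_n}]\cdot[\bar{\mc{E}}_{H_n}:\bar{\mc{C}}_{H_n}]$. The first factor is already computed in Theorem~\ref{thm15} (via Lemma~\ref{lem6.3}) in terms of $R_\mf{p}(H_n)/(\omega_{H_n}\sqrt{\Delta_n})$, the local Euler factors $\prod_{\mf{P}\mid\mf{p}}(1-(\mathrm{N}\mf{P})^{-1})$, and the constant $n$. For the second factor, I would apply Kronecker's second limit formula to express the character-eigenspaces of the logarithm of an elliptic-unit basis, for each character $\chi\rho$ of $\ms{G}_n=\Gal(H_n/K)$, as non-zero scalar multiples of $L'(\bar\varphi_K\chi\rho,0)$; the resulting regulator-style determinant then combines with the factorization $\zeta_{H_n}(s)=\prod_{\chi\rho}L(\bar\varphi_K\chi\rho,s)$ and the analytic class number formula $\zeta_{H_n}^*(0)=-h_{H_n}R_{H_n}/\omega_{H_n}$ to express $[\bar{\mc{E}}_{H_n}:\bar{\mc{C}}_{H_n}]$ as $h_{H_n}$ times a ratio of archimedean and $\mf{p}$-adic regulators. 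Multiplying the two indices and taking the ratio at level $n+1$ over level $n$: the $\omega_{H_n}$ stabilize, the archimedean regulators cancel, the Euler factors cancel (because $(p,h)=1$ forces $H_\infty/H$ to be totally ramified above $\mf{p}$, so the primes of $H_n$ above $\mf{p}$ and their norms are constant along the tower), and the $+n$ term in Theorem~\ref{thm15} contributes the $+1$ on the right.

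The main obstacle will be executing the Kronecker-limit step cleanly at $p=2$, which is not covered by \cite[Chapter III]{dS}. One must carefully track the failure of injectivity of the restriction map in Lemma~\ref{lem6}, the non-trivial $2$-torsion in $\bold{\mu}(H_n)$, and the anomalous Euler factor at the split prime $\mf{p}^*$, and verify that these $2$-adic corrections cancel exactly in the final ratio rather than merely contributing a bounded error.
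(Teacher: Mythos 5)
Your proposal takes a genuinely different and, as written, flawed route. The paper's proof is purely analytic: by Corollary~\ref{cor3.1}, $g^\chi$ is a generator of $\bold{\varphi}^\chi=\left(I_\ms{I}(\ms{G})\nu_\mf{p}\right)^\chi$, so $\rho(g^\chi)=\int_{\ms{G}^\chi}\chi\rho\,d\nu_\mf{p}^\chi$ can be evaluated \emph{directly} by the interpolation formula (de~Shalit's Theorem~II.5.2 adapted to the $12$th roots $R_{\mc{D}}$), giving $G(\varepsilon)S_p(\varepsilon)$, times $(\rho(\gamma)-1)$ when $\chi=1$; the $+1$ on the right-hand side comes from $\ord_p\prod_{\zeta\in\bold\mu_{p^{n-e}}}(\zeta-1)=1$, and the class number formula plus Kronecker's second limit formula then convert $\ord_p\prod_\varepsilon G(\varepsilon)S_p(\varepsilon)$ into the displayed quotient of $h\,R_\mf{p}/\sqrt{\Delta}$'s. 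Your plan never invokes Corollary~\ref{cor3.1}, which is the one fact that makes the computation of $\rho(g^\chi)$ tractable, and instead proposes a module-theoretic index computation.

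That index computation has concrete problems. First, $[U_{H_n}:\bar{\mc{E}}_{H_n}]$, and hence $[U_{H_n}:\bar{\mc{C}}_{H_n}]$, is infinite: $U_{H_n}$ has $\Z_p$-rank $d=[H_n:K]$ while $\bar{\mc{E}}_{H_n}$ has rank $d-1$. The finite index actually computed in the paper is $[Y_n\cap U_{H_n}:\bar{\mc{E}}_{H_n}]$ (Lemma~\ref{lem6.3}, where $Y_n\cap U_{H_n}=\ker\left(\mathrm{N}_{\Phi_\mf{p}/K_\mf{p}}|_{U_{H_n}}\right)$), and the rank defect is visible precisely in the $\chi=1$ eigenspace. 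Second, Theorem~\ref{thm15} computes $[M(H_n):H_\infty]$, which is $h_{H_n}\cdot[Y_n\cap U_{H_n}:\bar{\mc{E}}_{H_n}]$ (in $p$-parts), not a pure unit index; so your intended factorization of $[U_{H_n}:\bar{\mc{C}}_{H_n}]$ and subsequent citation of Theorem~\ref{thm15} conflates distinct objects and would double-count $h_{H_n}$ against the $[\bar{\mc{E}}_{H_n}:\bar{\mc{C}}_{H_n}]$ contribution. Third, the identification of $\prod_{l(\rho)=n-e}\rho(g)$ with a ratio of orders of coinvariant quotients requires, beyond Lemma~\ref{lem5.1}, that those coinvariants agree with the finite-level objects up to a factor that stabilizes in $n$; Lemma~\ref{thm8} only gives annihilation by $I(D_\mf{p})$, not boundedness uniformly in $n$, and the rank-1 defect in the trivial eigenspace makes the naive identification fail. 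Finally, the factorization $\zeta_{H_n}(s)=\prod_{\chi\rho}L(\bar\varphi_K\chi\rho,s)$ is wrong: the Dedekind zeta function of $H_n$ factors over the finite-order characters $\chi\rho$ of $\Gal(H_n/K)$, with no Gr\"ossencharacter $\bar\varphi_K$ inserted, and it is these abelian $L$-values $L'(\chi\rho,0)$ that appear in Kronecker's second limit formula. The point you flag about Lemma~\ref{lem6} at $p=2$ is also misplaced: that lemma feeds into the Euler system argument of Chapter~\ref{ch5}, not into the Iwasawa-invariant computation of Chapter~\ref{ch6}.
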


\begin{proof} Our arguments are analogous to Proposition III.2.10 and III.2.11 of \cite{dS}. Any $\varepsilon\in B_{n}$ can be written in the form $\varepsilon=\chi\rho$ where $\chi$ is a character of $G$ and $\rho$ is a character of $\Gamma$ with $l(\rho)=n$. Define $G(\varepsilon)$ as in \cite[II.4.11]{dS} and $S_p(\varepsilon)$ be as defined in \cite[III.2.10 (14), III.2.11 (14')]{dS}. 
Recall that $g^\chi$ is also a generator of $\bold{\varphi^\chi}$ by Corollary \ref{cor3.1}, where $\bold{\varphi}=I_{\ms{I}}(\ms{G})\nu_\mf{p}$ and $\nu_\mf{p}$ satisfies Theorem \ref{thm4.2.7}. Then following the methods of \cite[Theorem II.5.2]{dS} but using $R_{\mc{D}}(\Phi(z,L))=\prod_{i=1}^r R_{\mathfrak{a}_i}(\Phi(z,L))^{n_i}$ constructed in Chapter \ref{ch4} instead of the elliptic function $\Theta(z;L,\mathfrak{a}_i)$ used in \cite{dS}, and noting $R_{\mathfrak{a}_i}(\Phi(z,L))$ is a $12$th root of $\Theta(z;L,\mathfrak{a}_i)$ and that $\theta_\mc{D}\nu_\mf{p}=\nu_\mc{D}$, we obtain
\[
\rho(g^\chi)=\int_{\ms{G}^\chi}\chi\rho d\nu_\mf{p}^\chi= \left\{ \begin{array}{ll}
 G(\varepsilon)S_p(\varepsilon) &\mbox{ if $\chi$ is non-trivial} \\
 \left(\rho(\gamma)-1\right)G(\varepsilon)S_p(\varepsilon) &\mbox{ if $\chi=1$,}
       \end{array} \right.
\]
where $\gamma$ is a topological generator of $\Gamma$. Hence $\prod_{l(\rho)=n-e}\left(\rho(\gamma)-1\right)=\prod_{\zeta\in \bold{\mu}_{p^{n-e}}}(\zeta-1)$. Noting that $\ord_p\left(\prod_{\zeta\in \bold{\mu}_{p^{n-e}}}(\zeta-1)\right))=1$, we obtain
\begin{equation}\label{eq6.2}ord_p\left(\prod_{l(\rho)=n-e}\rho(g)\right)=1+\ord_p\left(\prod_{\varepsilon\in B_{n+1-e}}G(\varepsilon)S_p(\varepsilon)\right).
\end{equation}
On the other hand, using the analytic class number formula and Kronecker's second limit formula (see \cite[\S 0.2.7, \S I.2.2 and \S IV.3.9 (6)]{Ta}) for the fields $H_{n+1}$ and $H_n$ gives:
\begin{equation}\label{eq6.3}\ord_p\left(\prod_{\varepsilon\in B_{n+1-e}}G(\varepsilon)S_p(\varepsilon)\right)=\ord_p\left(\frac{h_{H_{n+1}}R_{\mf{p}}(H_{n+1})}{\sqrt{\Delta_{n+1}}}/\frac{h_{H_n}R_{\mf{p}}(H_n)}{\sqrt{\Delta_n}}\right)
\end{equation}
Combining \eqref{eq6.2} and \eqref{eq6.3} completes the proof of Proposition \ref{thm5.1}.
\end{proof}

Comparing Corollary \ref{cor4.1}, Lemma \ref{lem5.1} and Theorem \ref{thm5.1}, we conclude that $f$ and $g$ have the same Iwasawa invariants. As discussed at the beginning of Section \ref{section6.2}, this together with the divisibility relation obtained in Theorem \ref{thm4.1} completes the proof of Theorem \ref{mc}. 

 \section*{acknowledgement}
I would like to express my sincere gratitude to John Coates for patiently advising and encouraging me. His expertise and enthusiasm helped me overcome many difficulties. I would also like to extend my heartfelt thanks to Junhwa Choi, Tom Fisher, Guido Kings, Jack Lamplugh, Jianing Li and Sarah Zerbes for giving me many helpful comments on this work. This work was partially supported by the SFB 1085 ``Higher invariants'' at the University of Regensburg and I would also like to thank the Max Planck Institute for Mathematics in Bonn for the support and hospitality provided by the institute. Finally, I am very grateful to the referees for their very careful reading of the paper and for the valuable feedback which helped improving the quality of this paper.

\vspace{10pt}
\noindent \author{Yukako Kezuka}\\
\address{Max-Planck-Institut f\"{u}r Mathematik\\
Bonn, Germany}\\
\email{ykezuka@mpim-bonn.mpg.de}


\begin{thebibliography}{99}

\bibitem{ba}
 {A.~Baker}, 
 `Linear forms in the logarithms of algebraic numbers', 
 {Mathematika.} 13 (1966) 204--216.

\bibitem{br}
 {A.~Brumer}, 
 `On the units of algebraic number fields', 
 {Mathematika.} 14 (1967) 121--124.
 
 \bibitem{ckl}
 {J.~Choi, Y.~Kezuka, Y.~Li}, 
 `Analogues of Iwasawa's $\mu=0$ conjecture and the weak Leopoldt conjecture for a non-cyclotomic $\mathbb{Z}_2$-extension', to appear in Asian J.~Math.  arXiv:1711.01697.
 
\bibitem{coa2}
 {J.~Coates},
 `Infinite descent on elliptic curves with complex multiplication',
 {Arithmetic and Geometry - Papers dedicated to I. R. Shafarevich, Progress in Mathematics} 35 (eds M.~Artin and J.~Tate; Birkh\"{a}user, Boston, MA, 1983) 107--136.

\bibitem{coa}
 {J.~Coates},
 `Elliptic curves with complex multiplication and Iwasawa theory',
 {Bull.~Lond.~Math.~Soc.} 23 (1991) 321--350.


\bibitem{coa-gol}
 {J.~Coates \and C.~Goldstein},
 `Some remarks on the main conjecture for elliptic curves with complex multiplication',
 {Am.~J.~Math.} 105 (1983) 337--366.
 
\bibitem{coa-wil2}
 {J.~Coates \and A.~Wiles},
 `Kummer's criterion for Hurwitz numbers',
 {Japan Soc.~Promotion Sci.} (1977) 9--23. MR 450241 (Zbl 0369.12009).

\bibitem{dS}
 {E.~de Shalit},
 \emph{Iwasawa theory of elliptic curves with complex multiplication},
 {Perspectives in Mathematics} 3 (Academic Press, Boston, MA, 1987).

\bibitem{Gi}
 {R.~Gillard}, 
 `Fonctions $L$ $p$-adiques des corps quadratiques imaginaires et de leurs extensions ab\'{e}liennes', 
{J.~reine angew.~Math.} 358 (1985) 76--91.

\bibitem{gol-sch}
 { C.~Goldstein \and N.~Schappacher},
 `S\'eries d'{E}isenstein et fonctions {$L$} de courbes elliptiques \`a multiplication complexe',
 {J.~reine angew.~Math.} 327 (1981) 184--218.

\bibitem{gon}
 {C.~Gonzalez-Avil\'{e}s},
 `On the ``2-part'' of the Birch--Swinnerton-Dyer conjecture for elliptic curves with complex multiplication', PhD Thesis, Graduate School of the Ohio State University, Columbus, 1994.


\bibitem{gr1}
 {B.~H.~Gross},
 \emph{Arithmetic on elliptic curves with complex multiplication},
Lecture Notes in Mathematics 776 (Springer, Berlin, 1980).

\bibitem{gr2}
 {B.~H.~Gross},
 `Minimal models for elliptic curves with complex multiplication', Compos.~Math. 45 (1982) 155--164.
 
 \bibitem{kez}
 {Y.~Kezuka},
 `On the main conjectures of Iwasawa theory for certain elliptic curves with complex multiplication', PhD Thesis, University of Cambridge, Cambridge, 2017.

\bibitem{rubin2}
 {K.~Rubin},
 `Elliptic curves with complex multiplication and the conjecture of Birch and Swinnerton-Dyer',
 {Invent.~Math.} 64 (1981) 455--470.
 
\bibitem{rubin}
 { K.~Rubin},
 `The ``main conjectures'' of Iwasawa theory for imaginary quadratic fields',
 {Invent.~Math.} 103 (1991) 25--68.


 \bibitem{Sch}
 {L.~Schneps}
  `On the $\mu$-invariant of $p$-adic $L$-functions attached to elliptic curves with complex multiplication',
  {J.~Number Theory} 25 (1987) 20--33.

\bibitem{ser-tat}
 {J.-P.~Serre and J.~Tate},
 `Good reduction of abelian varieties',
 {Ann.~Math.} 88 (1968) 492--517.

\bibitem{Sh}
 {G.~Shimura},
 \emph{Introduction to the arithmetic of automorphic functions},
 (Princeton University Press, Princeton, NJ, 1971).
 
\bibitem{Ta} {J.~Tate},  `Les conjectures de Stark sur les fonctions $L$ d'Artin en $s=0$', {Invent.~Math.} 75 (1984) 273--282.


\bibitem{Was}
 {L.~Washington},
 \emph{Introduction to cyclotomic fields}, 2nd edn. Graduate Texts in Mathematics 83  (Springer, New York, 1997).


\end{thebibliography}
\end{document}